\pgfplotsset{compat=1.11}
\theoremstyle{plain}
\newtheorem{theorem}{Theorem}[section]
\newtheorem*{theorem*}{Theorem}
\newtheorem{lemma}[theorem]{Lemma}
\newtheorem{proposition}[theorem]{Proposition}
\newtheorem{corollary}[theorem]{Corollary}
\newtheorem{definition}[theorem]{Definition}
\newtheorem{exmp}[theorem]{Example}
\newtheorem{rem}[theorem]{Remark}
\newtheorem*{theorema}{Theorem A}
\newtheorem*{theoremb}{Theorem B}
\newtheorem*{theoremc}{Theorem C}
\theoremstyle{remark}
\newtheorem*{note}{Note}
\newtheorem*{claim}{Claim}
	\newcommand{\churro}[1][]{		
		\pgfkeys{/churro, default, #1}
		\begin{scope}[scale=\Churroscale,xshift=\Churrox*28.45/\Churroscale pt, yshift=\Churroy*28.45/\Churroscale pt] 
			\draw [fill=\Churrocolor] 
			(0,0) to[out=0,in=-90] (0.3,0.3)
			to[out=90,in=-120] (0.2,1.25)
			to[out=60,in=120] (0.8,1.25)
			to[out=-60,in=90] (0.7,0.3)
			to[out=-90,in=180] (1,0);
			
			\draw 
			(0.3,1.1) to[out=-80,in=-100] (0.7,1.1);
			
			\draw 
			(0.35,1.06) to[out=70,in=110] (0.65,1.06);
		\end{scope}
	}
	\newcommand{\huequito}[1][]{		
		\pgfkeys{/huequito, default, #1}
		\begin{scope}[xshift=\Huequitox*28.45 pt, yshift=\Huequitoy*28.45 pt]
			\draw 
			(-.2,.04) to[out=-80,in=-100] (.2,.04);
			
			\draw 
			(-.15,0) to[out=70,in=110] (.15,0);
		\end{scope}
	}
\newcommand{\Hom}{\operatorname{Hom}}
\newcommand{\id}{\operatorname{id}}
\newcommand{\im}{\operatorname{im}}
\renewcommand{\int}{\operatorname{int}}
\newcommand{\rk}{\operatorname{rk}}
\newcommand{\Aut}{\operatorname{Aut}}
\newcommand{\incl}{\operatorname{incl}}
\newcommand{\res}{\operatorname{res}}
\newcommand{\colim}{\operatorname{colim}}
\newcommand{\coker}{\operatorname{coker}}
\newcommand{\Diff}{\operatorname{Diff}}
\newcommand{\Alg}{\operatorname{Alg}}
\newcommand{\hofib}{\operatorname{hofib}}
\newcommand{\Arf}{\operatorname{Arf}}
\newcommand{\bR}{\overline{R}}
\newcommand{\gr}{\operatorname{gr}}
\newcommand{\Sp}{\operatorname{Sp}}
\renewcommand{\mod}{\operatorname{mod}}
\newcommand{\FF}{\operatorname{\mathbb{F}_2}}
\newcommand{\Lk}{\operatorname{Lk}}	
\newcommand{\Emb}{\operatorname{Emb}}
\newcommand{\rad}{\operatorname{rad}}
\newcommand{\Diffhalf}{\operatorname{Diff_{\frac{1}{2}\partial}}}
\newcommand{\Mhalf}{\mathcal{M}[I^{2n-1}]}
\newcommand{\Fr}{\operatorname{Fr}}
\newcommand{\Tors}{\operatorname{Tors}}
\newcommand{\Ann}{\operatorname{Ann}}
\title[Homological stability of diffeomorphism groups]{Homological stability of diffeomorphism groups of high dimensional manifolds via $E_k$-algebras}
\author{Ismael Sierra}
\subjclass[2010]{55R40, 57R50, 57S05}
\keywords{Homological stability, diffeomorphism groups, $E_k$-algebras}
\begin{document}
\large
\maketitle

\begin{abstract}
We will study homological stability of the diffeomorphism groups of the manifolds $W_{g,1}:=D^{2n} \# (S^n \times S^n)^{\#g }$ using $E_k$-algebras. 
This will lead to new improvements in the stability results, especially when working with rational coefficients. 
Moreover, we will prove a new type of stability result---quantised homological stability---which says that either the best stability result is a linear bound of slope $1/2$ or the stability is at least as good as a line of slope $2/3$. 
\end{abstract}

\section{Introduction} \label{section 1}

\subsection{Statement of results} \label{section 1.1}

For a $d$-dimensional manifold with boundary $W$ let us denote the group of diffeomorphisms of $W$ fixing pointwise a neighbourhood of the boundary by $\Diff_{\partial}(W)$. 
The classifying space of this group is of geometric importance since it classifies smooth fibre bundles with fibre $W$ and a trivialization over $\partial W$, and hence $H^*(B \Diff_{\partial}(W))$ is the ring of characteristic classes of such bundles. 
Since homology essentially contains the same amount of information as cohomology, computing $H_*(B\Diff_{\partial}(W))$ has lots of potential applications. 

We will focus on the following manifolds, which generalize orientable surfaces with one boundary component to high dimensions:  
for a fixed $n \geq 1$ we define the following $2n$-dimensional manifold for each $g \geq 0$
$$W_{g,1}:= D^{2n} \# (S^n \times S^n)^{\#g }.$$ 
We can view $W_{g,1}$ as the boundary connected sum of $W_{g-1,1}$ with $W_{1,1}$, and hence extension by the identity gives inclusions $\Diff_{\partial}(W_{g-1,1}) \hookrightarrow \Diff_{\partial}(W_{g,1})$. 
By \cite[Theorems 1.1, 1.2]{stablemoduli} the \textit{stable homology} $\colim_{g}H_d(B\Diff_{\partial}(W_{g,1}))$ has an expression in terms of a certain Thom spectrum, and has been explicitly computed with rational coefficients.
Furthermore, by \cite[Theorem 1.2]{high}, for $n \geq 3$ the maps
$$H_d(B\Diff_{\partial}(W_{g-1,1})) \rightarrow H_d(B\Diff_{\partial}(W_{g,1}))$$
are isomorphisms for $2d \leq g-4$. 
Thus, one can access the homology groups $H_d(B\Diff_{\partial}(W_{g,1}))$ in the range $d \lesssim g/2$. 

The goal of this paper is to improve the homological stability bound of the family of groups $\{\Diff_{\partial}(W_{g,1})\}_{g \geq 1}$, and hence improving the range in which their homology groups can be computed. 
The main result of the paper is

\begin{theorema} \label{theorem A}
For $n \geq 3$ odd, consider the stabilization map 
$$H_d(B \Diff_{\partial}(W_{g-1,1});\mathds{k}) \rightarrow H_d(B \Diff_{\partial}(W_{g,1});\mathds{k})$$
Then
\begin{enumerate}[(i)]
    \item If $n=3$ or $7$ and $\mathds{k}=\mathbb{Z}$ it is surjective for $3d \leq 2g-1$ and an isomorphism for $3d \leq 2g-4$. 
    \item If $n \neq 3$ or $7$ and $\mathds{k}=\mathbb{Z}$ it is surjective for $2d \leq g-2$ and an isomorphism for $2d \leq g-4$. 
    \item If $n \neq 3$ or $7$ and $\mathds{k}=\mathbb{Z}[\frac{1}{2}]$ it is surjective for $3d \leq 2g-4$ and an isomorphism for $3d \leq 2g-7$. 
    \item If $n=3$ or $7$ and $\mathds{k}=\mathbb{Q}$ it is surjective for $d< \frac{3n-6}{3n-5}g$ and an isomorphism for $d< \frac{3n-6}{3n-5}g-1$.
    \item If $n \neq 3$ or $7$ and $\mathds{k}=\mathbb{Q}$ it is surjective for $d< \frac{3n-6}{3n-5}(g-1)$ and an isomorphism for $d< \frac{3n-6}{3n-5}(g-1)-1$. 
\end{enumerate}
\end{theorema}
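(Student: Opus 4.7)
The plan is to follow the framework of cellular $E_k$-algebras developed by Galatius--Kupers--Randal-Williams, applied to the $E_2$-algebra assembled from $\{B\Diff_\partial(W_{g,1})\}_{g \geq 0}$ under boundary connected sum. Since $2n \geq 6$, boundary connected sum naturally endows this disjoint union $R = \coprod_g B\Diff_\partial(W_{g,1})$ with an $E_2$-structure (in fact more, but $E_2$ is what is needed). The homological stability statement will be reduced to a vanishing line for the ``indecomposables,'' i.e.\ the $E_2$-homology groups $H^{E_2}_{g,d}(R;\mathds{k})$, with respect to the generating stabilization class $\sigma \in H_0(R_1)$: concretely, vanishing of $H^{E_2,\sigma}_{g,d}(R;\mathds{k})$ in a range of slope $s$ implies isomorphism/surjectivity for the stabilization map in a range of the same slope.

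First I would compute the generic vanishing line by analyzing the splitting/destabilization semi-simplicial space whose connectivity controls $E_2$-homology; by Galatius--Randal-Williams--type arguments this yields the classical slope $1/2$ line, reproducing the result of \cite{high}. The heart of the paper is then to push past slope $1/2$ by a secondary cell-attachment argument: one adjoins cells to $R$ to kill the lowest-degree $E_2$-homology classes and then reruns the spectral sequence for the quotient $E_2$-algebra to obtain a new vanishing line. This will produce the slope $2/3$ bounds in parts (i)--(iii), provided the obstruction classes that appear in degrees just above the generators can be identified explicitly and shown to be either zero or to split off under an appropriate hypothesis on $n$ and $\mathds{k}$.

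The main obstacle, and the place where the dichotomy into the stated cases emerges, is the identification of those obstruction classes. I expect the key class to be an Arf/Kervaire-invariant style element supported in the first bidegree above the generator, built from the tangent bundle of $S^n$. When $n=3$ or $7$ the sphere $S^n$ is parallelizable (via the Lie group or octonion structures), so a framing exists and this class either vanishes or is hit by a further cell, permitting the improved slope $2/3$ bound over $\mathbb{Z}$ in (i); for general odd $n$ one only gets cancellation after inverting $2$, giving (iii), while integrally only slope $1/2$ survives, giving (ii). For parts (iv)--(v), rational coefficients allow one to kill all remaining torsion obstructions and to invoke the rational computation of the stable homology from \cite{stablemoduli}; feeding this into the cellular $E_2$-algebra spectral sequence upgrades the slope to $\tfrac{3n-6}{3n-5}$, with the shift by $g-1$ (rather than $g$) in (v) coming precisely from the extra cell that has to be attached when $n \neq 3,7$ to kill the Arf-type class.

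The last step is bookkeeping: once the vanishing lines for $H^{E_2,\sigma}_{*,*}(R;\mathds{k})$ are established in each case, translating them into the surjectivity and isomorphism ranges stated in (i)--(v) is a direct application of the standard CW-approximation/skeletal filtration argument for $E_k$-algebras, with the $-1$, $-4$, $-7$ constants reflecting the precise degrees in which the generating cell, the relation cell, and the Arf-type cell are attached.
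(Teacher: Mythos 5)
Your outline captures the general shape of the argument (cellular $E_k$-algebras, a connectivity estimate on $E_k$-homology, generic stability results), but several concrete choices in your plan are wrong or incomplete, and they would matter.

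First, the $E_k$-algebra in the paper is \emph{not} $\coprod_g B\Diff_\partial(W_{g,1})$. The splitting-complex argument that drives the vanishing line requires cutting a manifold along a codimension-one disc and studying the two pieces, and those pieces are not again $W_{g,1}$'s relative to the full boundary; they can have non-standard (indeed exotic) boundary data. The paper therefore builds the algebra $\mathbf{R}$ out of moduli spaces $\mathcal{M}[A]$ whose components are $B\Diff_{\frac12\partial}(W)$, diffeomorphisms fixing only half the boundary, with the top face $\mathcal{D}(W)$ allowed to be a homotopy disc. Passing from this to the statement about $B\Diff_\partial(W_{g,1})$ is a nontrivial step (Theorem \ref{theorem fibration} and Theorem \ref{theorem stab comparison}): one shows that the restriction map to $\Diff_\partial(\mathcal{D}(W))$ has trivial image on $\pi_0$, using vanishing of the inertia group of $W_{2g}$, and then compares relative homology groups through a fibration argument. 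This comparison is genuinely where the content is, and your plan as stated does not produce it.

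Second, your description of how the Arf invariant enters is not how the paper handles it. The paper does not treat the Arf class as an obstruction to be killed by an extra cell attachment. Rather, for $n\neq 3,7$ the quadratic refinement of the intersection form takes values in $\mathbb{Z}/2$, and the Arf invariant becomes an extra piece of the grading monoid: $\mathsf{G}_n \cong \mathsf{H} = \{0\}\sqcup \mathbb{N}_{>0}\times\mathbb{Z}/2$, rather than $\mathbb{N}$. The stability results then rest on the generic theorems (\cite[Theorem 18.1]{Ek}, and Lemmas \ref{lem stab 1}, \ref{lem stab 2} from \cite{Sierra2022-st}) whose hypotheses include explicit computations of $H_{*,0}(\overline{\mathbf{X}})$ and $H_{*,1}$ of the algebra in low genus. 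Verifying these low-degree hypotheses requires genuine input from the structure of the mapping class groups $\Gamma_{\frac12\partial}(W)$ and the quadratic symplectic groups $\Sp_{2g}^\epsilon(\mathbb{Z})$, via the Kreck/Krannich extension and Theorems \ref{theorem aritheoremetic 3,7}, \ref{theorem aritheoremetic odd}. Your ``identify the obstruction classes and show they split off'' is doing an enormous amount of unspecified work here.

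Third, and most importantly, the rational slope $\frac{3n-6}{3n-5}$ is not obtained by ``invoking the rational computation of the stable homology from \cite{stablemoduli}.'' The mechanism is Theorem \ref{theorem rational stability general}, which converts a \emph{partial} stability result (stability in homological degrees $d<\min\{g,D\}$) plus the standard $E_k$-homology vanishing line into global slope $\frac{D}{D+1}$. The input value $D=3n-6$ comes from the rational fibration arguments of Berglund--Madsen and Krannich (Theorem \ref{berglundmadsen}), which give stability for $d\leq\min\{g(W),3n-7\}$, not from the Madsen--Weiss-type stable computation. The distinction matters because the bound really does degrade as $n\to\infty$ due to the pseudoisotopy range in \cite{secondderivative}, which a stable-homology argument alone would not see.

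In short: your plan has the right high-level chapter headings but not the right algebra, not the right grading, and not the right mechanism for the rational improvement, and each of these is load-bearing.
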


Let us briefly comment on the different parts of this result: part (ii) is essentially the same stability result found in \cite[Theorem 1.2]{high}, however our proof is done using different methods. 
Part (i) gives an improvement of the previously known stability result for $n=3$ or $7$. 
Part (iii) improves the previous stability results when $n \neq 3$ or $7$ but only away from the prime $2$. 
Finally, parts (iv) and (v) give a significant improvement in homological stability when restricting to rational coefficients. 

In order to prove Theorem \hyperref[theorem A]{A} we will use the cellular $E_k$-algebras approach developed in \cite{Ek}, which has already been applied in other contexts to improve several homological stability results. 
This paper is in some sense a high-dimensional analogue of the ideas of \cite{E2}, where cellular $E_2$-algebras are used prove the best possible homological stability results of mapping class groups of orientable surfaces.
One of the main novelties of this paper is that it deals with the topological groups $\Diff_{\partial}(W_{g,1})$ rather than discrete groups, being the first example of application of the cellular $E_k$-algebras method in this context. 

One of the main features of the $E_k$-algebras method is that it can give different homological stability bounds for different coefficients, as one can see in Theorem \hyperref[theorem A]{A}. 
In our example, the stability range we get with $\mathbb{Q}$ coefficients is $d \lesssim \frac{3n-6}{3n-5}g$, which suggests the conjecture that the optimal stability bound should have slope at least $1$, meaning that if should be of the form $d \leq \lambda g +c$ for $\lambda \geq 1$ and $c$ a constant. 

Finally, let us remark that the restriction to $n$ odd is due to a technical step explained in Remark \ref{rem n odd}. 
It has to do with the fact that the intersection form on $W_{g,1}$ is either symmetric or skew-symmetric depending on the parity of $n$, and these two types of forms behave very differently.
All the other steps in Sections \ref{section 2}, \ref{section 3}, \ref{section 4} can be carried out for the case $n$ even, and in fact it is likely that one could eliminate the restriction of $n$ odd by using a different ``algebraic arc complex'' to the one of Section \ref{section 6} of this paper.  

Using the ingredients of the proof of Theorem \hyperref[theorem A]{A} and results from \cite{Sierra2022-st} we will also prove the following ``quantisation stability'' result, which, as explained in Remark \ref{remark quantised} differs from other similar results in the literature. 

\begin{theoremb} \label{theorem B}
Let $n \geq 5$ be odd, $n \neq 7$, then one of the following two options holds
\begin{enumerate}[(i)]
    \item The relative homology groups 
    $H_{2k}(B \Diff_{\partial}(W_{4k,1}), B \Diff_{\partial}(W_{4k-1,1});\mathbb{Z})$ are non-zero for all $k \geq 1$. 
    \item $H_d(B \Diff_{\partial}(W_{g-1,1});\mathbb{Z}) \rightarrow H_d(B \Diff_{\partial}(W_{g,1});\mathbb{Z})$ is surjective for $3d \leq 2g-6$ and an isomorphism for $3d \leq 2g-9$. 
\end{enumerate}
\end{theoremb}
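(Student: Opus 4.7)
The plan is to apply the cellular $E_k$-algebra framework of \cite{Ek} to the non-unital $E_2$-algebra $\mathbf{R} = \bigsqcup_{g \geq 1} B\Diff_{\partial}(W_{g,1})$ constructed in the proof of Theorem A, where vanishing of the $E_k$-homology $H^{E_k}_{g,d}(\mathbf{R})$ in a range $d < f(g)$ forces homological stability of the same slope via the standard skeletal filtration spectral sequence. Note that by Theorem A part (iii), $H^{E_k}_{g,d}(\mathbf{R}; \mathbb{Z}[\tfrac{1}{2}])$ already vanishes in the slope-$\tfrac{2}{3}$ range, so every obstruction to the improved integral vanishing is $2$-primary. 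The strategy is to isolate a single such obstruction and show it controls the entire dichotomy.

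Using the ingredients developed in Sections \ref{section 2}--\ref{section 6} together with the persistence techniques of \cite{Sierra2022-st}, identify a distinguished $2$-torsion class $\tau \in H^{E_2}_{g_0,d_0}(\mathbf{R};\mathbb{Z})$ lying on the slope-$\tfrac{1}{2}$ line (e.g.\ in bidegree $(2,1)$), which is the unique integral obstruction to pushing the vanishing range from slope $\tfrac{1}{2}$ to slope $\tfrac{2}{3}$. By the multiplicative structure of the $E_2$-algebra and the action of the stability generator $\sigma \in H^{E_2}_{1,0}$, produce from $\tau$ a family of classes $\tau_k$ at bidegrees $(4k, 2k)$. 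Then argue the dichotomy: if $\tau \neq 0$, use the long exact sequence of the pair combined with the persistence argument of \cite{Sierra2022-st} to conclude that each $\tau_k$ detects a non-zero class in $H_{2k}(B\Diff_{\partial}(W_{4k,1}), B\Diff_{\partial}(W_{4k-1,1});\mathbb{Z})$, giving case (i); if instead $\tau = 0$, replace the $E_2$-cell structure on $\mathbf{R}$ by one in which this cell has been retracted, and re-run the vanishing computations from the proof of Theorem A to obtain $H^{E_2}_{g,d}(\mathbf{R};\mathbb{Z}) = 0$ in the range of slope $\tfrac{2}{3}$, which translates into case (ii) with the explicit constants $3d \leq 2g-6$ for surjectivity and $3d \leq 2g-9$ for isomorphism.

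The main obstacle is twofold. First, showing that a single obstruction class $\tau$ in low bidegree actually produces the entire family of non-trivial classes $\tau_k$ at bidegrees $(4k,2k)$ for all $k \geq 1$: this is the content of the persistence argument, and requires importing from \cite{Sierra2022-st} the mechanism by which iterated multiplication by $\sigma$ preserves non-vanishing on the stability line, which in the present geometric setting must be checked against the explicit algebraic arc complex of Section \ref{section 6}. Second, verifying that the vanishing of $\tau$ alone really suffices to unlock the slope-$\tfrac{2}{3}$ range integrally; this requires knowing that no additional $2$-primary obstruction can appear between slopes $\tfrac{1}{2}$ and $\tfrac{2}{3}$, which should follow from a careful comparison with Theorem A(iii) together with a Bockstein argument relating the integral and $\mathbb{Z}[\tfrac{1}{2}]$ vanishing ranges. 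The restriction $n \neq 7$ enters precisely to avoid the exceptional improvements of Theorem A part (i), which would make this dichotomy vacuous.
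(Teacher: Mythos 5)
Your proposal captures the right high‐level shape of the argument -- a dichotomy controlled by a single $2$-primary obstruction on the slope-$1/2$ line, with the nonvanishing branch giving a persistent family at bidegrees $(4k,2k)$ -- and this is indeed essentially the content of the result from \cite{Sierra2022-st} that the paper invokes (\cite[Corollary~3.5]{Sierra2022-st}). However there are genuine gaps.

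First, and most seriously, you misidentify the algebra. The $E_{2n-1}$-algebra $\mathbf{R}$ in the paper has path-components $B\Diff_{\frac{1}{2}\partial}(W)$, not $B\Diff_{\partial}(W_{g,1})$, and the disjoint union $\bigsqcup_g B\Diff_{\partial}(W_{g,1})$ does \emph{not} carry an $E_2$-algebra structure supporting the secondary stabilisation used in the dichotomy. This is precisely the subtlety flagged in Remark~\ref{remark quantised}: the secondary stabilisation map lives in $\mathbf{R}$ but is not defined on $\bigsqcup_g B\Diff_{\partial}(W_{g,1})$, so you cannot simply run the argument on the latter. The conversion from statements about $\overline{\mathbf{R}}/\sigma_0$ to statements about the relative homology of $B\Diff_\partial(W_{g,1})$ requires the comparison of Theorem~\ref{theorem stab comparison}, which your proposal never invokes; without it the conclusion about $H_{2k}(B\Diff_{\partial}(W_{4k,1}),B\Diff_{\partial}(W_{4k-1,1}))$ does not follow.

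Second, the obstruction class $\tau$ is not a class in $H^{E_2}_{*,*}(\mathbf{R};\mathbb{Z})$; the relevant class is $Q^1_{\mathbb{F}_2}(\sigma_0)$ in the ordinary $\mathbb{F}_2$-homology of $\mathbf{R}$, and the dichotomy condition is whether $\sigma_0\cdot Q^1_{\mathbb{F}_2}(\sigma_0)$ destabilises twice. The argument is run over $\mathbb{F}_2$ and then converted to integral coefficients using the universal coefficients theorem together with finite generation of the homology groups (\cite[Theorem~6.1, Remark~6.2]{finitenessproperties}), combined with the $\mathbb{Z}[1/2]$-range from Theorem~A(iii); your plan to produce an \emph{integral} $E_2$-class and bypass the coefficient change hides exactly the step that requires work. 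You also need Theorem~A(ii) to pin the degree of the detected nonzero relative class down to exactly $2k$ (a priori it could be smaller), which your outline omits.

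Third, the step ``if $\tau=0$, replace the cell structure so that this cell has been retracted and re-run'' is not how case (ii) is obtained, and would not by itself produce the constants $3d\leq 2g-6$ / $3d\leq 2g-9$. There is no argument in the proposal that there is a \emph{unique} $2$-primary obstruction between slopes $1/2$ and $2/3$, and even granting vanishing in $E_2$-homology, turning that into the stated explicit relative-homology ranges requires the spectral sequence machinery encapsulated in Lemmas~\ref{lem stab 1}, \ref{lem stab 2} and the cited \cite[Corollary~3.5]{Sierra2022-st}, not an ad hoc ``cell retraction.'' In short: the dichotomy intuition is right, but the proposal does not correctly set up the algebra, does not handle the passage from half-boundary to full-boundary diffeomorphism groups, does not correctly handle coefficients, and does not actually derive the stated numerical ranges.
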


In other words, it says that either part (ii) of Theorem \hyperref[theorem A]{A} is essentially optimal or the true stability result has slope at least $2/3$. 
We do not know which of the two alternatives holds. 

\subsection{Overview of cellular $E_k$-algebras} \label{section 1.2}

The purpose of this section is to explain the methods from \cite{Ek} used in this paper: we aim for an informal discussion and refer to \cite{Ek} for details.

In the $E_k$-algebras part of the paper we will work in the category $\mathsf{sMod}_{\mathds{k}}^{\mathsf{G}}$ of $\mathsf{G}$-graded simplicial $\mathds{k}$-modules, for $\mathds{k}$ a commutative ring and $\mathsf{G}$ a discrete symmetric monoid. 
Formally, $\mathsf{sMod}_{\mathds{k}}^{\mathsf{G}}$ denotes the category of functors from $\mathsf{G}$, viewed as a category with objects the elements of $\mathsf{G}$ and only identity morphisms, to $\mathsf{sMod}_{\mathds{k}}$. 
This means that each object $M$ consists of a simplicial $\mathds{k}$-module $M_{\bullet}(x)$ for each $x \in \mathsf{G}$. 
The tensor product $\otimes$ in this category is given by Day convolution, i.e.
$$ (M \otimes N)_p(x)= \bigoplus_{y+z=x}{M_p(y) \otimes_{\mathds{k}} N_p(z)}$$
where $+$ denotes the monoidal structure of $\mathsf{G}$. 

In a similar way one can define the category of $\mathsf{G}$-graded spaces, denoted by $\mathsf{Top}^{\mathsf{G}}$ and endow it with a monoidal structure by Day convolution using cartesian product of spaces. 

The \textit{little k-cubes} operad in $\mathsf{Top}$ has $n$-ary operations given by rectilinear embeddings $I^k \times \{1,\cdots,n\} \hookrightarrow I^k$ such that the interiors of the images of the cubes are disjoint.
(The space of 0-ary operations is empty.) 
We define the little $k$-cubes operad in $\mathsf{sMod}_{\mathds{k}}$ by applying the symmetric monoidal functor
$(-)_{\mathds{k}}: \mathsf{Top} \rightarrow \mathsf{sMod}_{\mathds{k}}$ given by the free $\mathds{k}$-module on the singular simplicial set of a space.
Moreover, $(-)_{\mathds{k}}$ can be promoted to a functor $(-)_{\mathds{k}}: \mathsf{Top}^{\mathsf{G}} \rightarrow \mathsf{sMod}_{\mathds{k}}^{\mathsf{G}}$ between the graded categories, and we define the little $k$-cubes operad in these by concentrating it in grading $0$, where $0 \in \mathsf{G}$ denotes the identity of the monoid.  
We shall denote this operad by $\mathcal{C}_k$ in all the categories  $\mathsf{Top}, \mathsf{Top}^{\mathsf{G}}, \mathsf{sMod}_{\mathds{k}}^{\mathsf{G}}$ which we use, and define an $E_k$-\textit{algebra} to mean an algebra over this operad. 

The $E_k$-\textit{indecomposables} of an $E_k$-algebra $\textbf{R}$ in $\mathsf{sMod}_{\mathds{k}}^{\mathsf{G}}$ is defined by the exact sequence of graded simplicial $\mathds{k}$-modules

$$ \bigoplus_{n \geq 2}{\mathcal{C}_k(n) \otimes \textbf{R}^{\otimes n}} \rightarrow \textbf{R} \rightarrow Q^{E_k}(\textbf{R}) \rightarrow 0.$$

The functor $\textbf{R} \mapsto Q^{E_k}(\textbf{R})$ is not homotopy-invariant but has a derived functor $Q_{\mathbb{L}}^{E_k}(-)$ which is.
See \cite[Section 13]{Ek} for details and how to define it in more general categories such as $E_k$-algebras in $\mathsf{Top}$ or $\mathsf{Top}^{\mathsf{G}}$. 
The $E_k$-\textit{homology} groups of $\textbf{R}$ are defined to be
$$H_{x,d}^{E_k}(\textbf{R}):=H_d(Q_{\mathbb{L}}^{E_k}(\textbf{R})(x))$$
for $x \in \mathsf{G}$ and $d \in \mathbb{N}$. 
One formal property of the derived indecomposables, see \cite[Lemma 18.2]{Ek}, is that it commutes with $(-)_{\mathds{k}}$, so for $\textbf{R} \in \Alg_{E_k}(\mathsf{Top}^{\mathsf{G}})$ its $E_k$-homology with $\mathds{k}$ coefficients is the same as the $E_k$-homology of $\textbf{R}_{\mathds{k}}$. 

Thus, in order to study homological stability of $\textbf{R}$ with different coefficients we can work with the $E_k$-algebras $\textbf{R}_{\mathds{k}}$ instead, which enjoy better properties as they are cofibrant and the category of graded simplicial $\mathds{k}$-modules offers some technical advantages as explained in \cite[Section 11]{Ek}.
However, at the same time, we can do computations in $\mathsf{Top}$ of the homology or $E_k$-homology of $\textbf{R}$ and then transfer them to $\mathsf{sMod}_{\mathds{k}}$. 

In \cite[Section 6]{Ek} the notion of a \textit{CW $E_k$-algebra} is defined, built in terms of free $E_k$-algebras by iteratively attaching cells in the category of $E_k$-algebras in order of dimension. 

Let $\Delta^{x,d} \in \mathsf{sSet}^{\mathsf{G}}$ be the standard $d$-simplex placed in grading $x$ and let $\partial \Delta^{x,d} \in \mathsf{sSet}^{\mathsf{G}}$ be its boundary. 
By applying the free $\mathds{k}$-module functor we get objects $\Delta_{\mathds{k}}^{x,d}, \partial \Delta_{\mathds{k}}^{x,d} \in \mathsf{sMod}_{\mathds{k}}^{\mathsf{G}}$. 
We then define the graded spheres in $\mathsf{sMod}_{\mathds{k}}^{\mathsf{G}}$ via $S_{\mathds{k}}^{x,d}:=\Delta_{\mathds{k}}^{x,d}/ \partial \Delta_{\mathds{k}}^{x,d}$, where the quotient denotes the cofibre of the inclusion of the boundary into the disc. 
In $\mathsf{sMod}_{\mathds{k}}^{\mathsf{G}}$, the data for a cell attachment to an $E_k$-algebra $\textbf{R}$ is an \textit{attaching map} $e: \partial \Delta_{\mathds{k}}^{x,d} \rightarrow \textbf{R}$, which is the same as a map $\partial \Delta^d_{\mathds{k}} \rightarrow \textbf{R}(x)$ of simplicial $\mathds{k}$-modules. 
To attach the cell we form the pushout in $\Alg_{E_k}(\mathsf{sMod}_{\mathds{k}}^{\mathsf{G}})$

\centerline{\xymatrix{ \mathbf{E_k}(\partial \Delta_{\mathds{k}}^{x,d}) \ar[r] \ar[d] & \textbf{R} \ar[d] \\
\mathbf{E_k}(\Delta_{\mathds{k}}^{x,d}) \ar[r] & \textbf{R} \cup_{e}^{E_k} D_{\mathds{k}}^{x,d}.
}}

A weak equivalence $\textbf{C} \xrightarrow{\sim} \textbf{R}$ from a CW $E_k$-algebra is called a \textit{CW-approximation} to
$\textbf{R}$, and a key result, \cite[Theorem 11.21]{Ek}, is that if $\textbf{R}(0) \simeq 0$ then $\textbf{R}$ admits a CW-approximation. 
Moreover, whenever $\mathds{k}$ is a field, we can construct a CW-approximation in which the number of $(x,d)$-cells needed is precisely the dimension of $H_{x,d}^{E_k}(\textbf{R})$.
By “giving the $d$-cells filtration $d$”, see \cite[Section 11]{Ek} for a more precise discussion of what this
means, one gets a skeletal filtration of this $E_k$-algebra and a spectral sequence computing the homology of $\textbf{R}$. 

In order to discuss homological stability of $E_k$-algebras we will need some preparation. 
For the rest of this section let $\textbf{R} \in \Alg_{E_k}(\mathsf{sMod}_{\mathds{k}}^{\mathsf{G}})$, where $\mathsf{G}$ is equipped with a symmetric monoidal functor $\rk: \mathsf{G} \rightarrow \mathbb{N}$; and suppose we are given a homology class $\sigma \in H_{x,0}(\textbf{R})$ with $\rk(x)=1$.
By definition $\sigma$ is a homotopy class of maps $\sigma: S_{\mathds{k}}^{x,0} \rightarrow \mathbf{R}$.

Following \cite[Section 12.2]{Ek}, there is a strictly associative algebra $\mathbf{\overline{R}}$ which is equivalent to the unitalization $\mathbf{R^+}:=\mathds{1} \oplus \textbf{R}$, where $\mathds{1}$ is the monoidal unit in simplicial modules. 
Then, $\sigma$ gives a map $\sigma \cdot-: S_{\mathds{k}}^{x,0} \otimes \mathbf{\overline{R}} \rightarrow \mathbf{\overline{R}}$ by using the associative product of $\mathbf{\overline{R}}$. 
We then define $\mathbf{\overline{R}}/\sigma$ to be the cofibre of this map.
Observe that a-priori $\sigma \cdot -$ is not a (left) $\mathbf{\overline{R}}$-module map, so the cofibre $\mathbf{\overline{R}}/\sigma$ is not a (left) $\mathbf{\overline{R}}$-module.
However, for $k \geq 2$, by the ``adapters construction'' in \cite[Section 12.2]{Ek} and its  applications in \cite[Section 12.2.3]{Ek}, there is a way of defining a cofibration sequence $S^{1,0} \otimes \mathbf{\overline{R}} \xrightarrow{\sigma \cdot - } \mathbf{\overline{R}} \rightarrow \mathbf{\overline{R}}/\sigma$ in the category of left $\mathbf{\overline{R}}$-modules in such a way that forgetting the $\mathbf{\overline{R}}$-module structure recovers the previous construction; we will make use of this fact in some of the proofs Sections \ref{section 2} and \ref{section 5.7}. 

By construction $\sigma \cdot -$ induces maps $\textbf{R}(y) \rightarrow \textbf{R}(x+y)$ between the different graded components of $\textbf{R}$ and the homology of the object $\mathbf{\overline{R}}/\sigma$ captures the relative homology of these. 
Thus, homological stability results of $\textbf{R}$ using $\sigma$ to stabilize can be reformulated as vanishing ranges for $H_{x,d}(\mathbf{\overline{R}}/\sigma)$; the advantage of doing so is that using filtrations for CW approximations of $\textbf{R}$ one also gets filtrations of $\mathbf{\overline{R}}/\sigma$ and hence spectral sequences capable of detecting vanishing ranges. 

Finally, in Sections \ref{section 2} and \ref{section 5.7}, we will also make use of some bar constructions, defined as follows:

For $\mathbf{R}$ as above, $\mathbf{M}$ a right $\mathbf{\overline{R}}$-module and $\mathbf{N}$ a left $\mathbf{\overline{R}}$-module we define the \textit{bar construction} $B(\mathbf{M},\mathbf{\overline{R}},\mathbf{N})$ to be the geometric realization of the semisimplicial object $B_{\bullet}(\mathbf{M},\mathbf{\overline{R}},\mathbf{N})$ with $p$-simplices $\mathbf{M} \otimes \mathbf{\overline{R}}^{\otimes p} \otimes \mathbf{N}$, and face maps given by using the $\mathbf{\overline{R}}$-module structures and multiplication. 
By \cite[Lemma 9.16]{Ek}, $B(\mathbf{M},\mathbf{\overline{R}},\mathbf{N})$ computes the derived tensor product $\mathbf{M} \otimes_{\mathbf{\overline{R}}}^{\mathbb{L}} \mathbf{N}$, and by \cite[Corollary 9.17, Theorem 13.7]{Ek} we can compute $E_1$-indecomposables using bar constructions via
$$Q_{\mathbb{L}}^{E_1}(\mathbf{R})\simeq B(\mathds{1},\mathbf{\overline{R}},\mathbf{R}).$$
This formula will be essential for us in order to understand the $E_k$ homology of the algebras considered in this paper. 

\subsection{Organization of the paper and outline of proof} \label{section 1.3}

In Section \ref{section 2} we will see some generic homological stability results for $E_k$-algebras. 
These results will begin with an $E_k$-algebra $\textbf{X}$ in $\mathsf{sMod}_{\mathds{k}}^{\mathsf{G}}$ for some appropriate $\mathsf{G}$, satisfying the following two types of assumptions, and will conclude homological stability results for $\textbf{X}$. 
\begin{enumerate}[(i)]
    \item The ``\textit{standard connectivity estimate}'', meaning that $H_{x,d}^{E_k}(\textbf{X})=0$ for $d<\rk(x)-1$. 
    \item Certain conditions on $H_{x,d}(\mathbf{X})$ for small values of $d$ and $\rk(x)$, chosen so that they hold for the later applications in Section \ref{section 4}. 
\end{enumerate}

This section is based on ideas of \cite{Sierra2022-st}, and in fact some of the generic stability results used in this paper are taken directly from there.

In Section \ref{section 3} we shall define certain moduli spaces of $2n$-manifolds, and use them to construct a graded $E_{2n-1}$-algebra $\textbf{R}$ in $\mathsf{Top}^{\mathsf{G}}$, where $\mathsf{G}=\mathsf{G}_n$ depends on $n$.
The monoids $\mathsf{G}_n$ come equipped with a certain symmetric monoidal functor $\rk: \mathsf{G}_n \rightarrow \mathbb{N}$ explicitly constructed. 
We will establish some basic properties of $\textbf{R}$, and interpret its path-components as classifying spaces of diffeomorphism groups fixing ``half of the boundary'' of the manifolds. 
Finally we will formulate the following.  

\begin{theoremc} \label{theorem C}
For $n \geq 3$ odd the $E_{2n-1}$-algebra $\textbf{R}$ satisfies $H_{x,d}^{E_1}(\textbf{R})=0$ for $x \in \mathsf{G}_n$ with $d<\rk(x)-1$. 
\end{theoremc}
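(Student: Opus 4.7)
The plan is to apply the bar-construction formula for $E_1$-indecomposables recalled at the end of Section \ref{section 1.2}: since $Q^{E_1}_{\mathbb{L}}(\mathbf{R}) \simeq B(\mathds{1}, \overline{\mathbf{R}}, \mathbf{R})$, Theorem \hyperref[theorem C]{C} amounts to showing that for each $x \in \mathsf{G}_n$ the bar complex $B(\mathds{1}, \overline{\mathbf{R}}, \mathbf{R})(x)$ has trivial reduced homology in degrees $d < \rk(x)-1$. The bar construction is semi-simplicial with $p$-simplices indexed by ordered factorisations $x = y_1 + \cdots + y_p + z$; after quotienting out unit tensor factors, and using that $\rk$ is monoidal and takes positive values on nonzero elements, the reduced object lives in semi-simplicial degrees $\leq \rk(x)-1$.

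The first step of the proof is to give the bar complex a geometric model. By the construction of $\mathbf{R}$ in Section \ref{section 3}, the path components of $\mathbf{R}(w)$ are classifying spaces of diffeomorphism groups of $2n$-manifolds fixing half of the boundary, and the associative $E_1$-product on $\overline{\mathbf{R}}$ is realised by gluing along the complementary half-boundary. Consequently a $p$-simplex of the bar complex in grading $x$ records an isotopy class of ordered splittings of such a manifold along $p$ disjoint parallel separating hypersurfaces into $p+1$ pieces of strictly smaller rank. This lets one identify the bar complex, up to an appropriate equivalence, with an augmented semi-simplicial space of ordered destabilisations of the ambient manifold.

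The main obstacle is then to prove a high-connectivity estimate for the corresponding destabilisation or ``splitting'' complex $\mathcal{S}(x)$, namely that $\mathcal{S}(x)$ is $(\rk(x)-2)$-connected. I plan to carry this out in two stages, following the strategy developed in \cite{high}. First, a transversality and handle-trading argument translates the geometric connectivity question into one about a purely algebraic complex built from the intersection form on $H_n(W_{g,1}; \mathbb{Z})$; this reduction works uniformly in the parity of $n$. Second, one must prove connectivity of this algebraic complex, and this is precisely where the hypothesis $n$ odd becomes essential: then the intersection form is skew-symmetric and the algebraic object becomes a Tits-type complex of isotropic/hyperbolic summands of a symplectic lattice, whose connectivity is classical. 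The even case would require a different algebraic complex, which is the source of the restriction in Remark \ref{rem n odd}; the algebraic input is the content of Section \ref{section 6}.

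Feeding the connectivity of $\mathcal{S}(x)$ back into a skeletal spectral sequence for the bar construction, together with an induction on $\rk(x)$ which uses the vanishing already established for smaller ranks to identify the lower-dimensional skeleta, yields $H_d(B(\mathds{1}, \overline{\mathbf{R}}, \mathbf{R})(x)) = 0$ for $d < \rk(x)-1$. This is Theorem \hyperref[theorem C]{C}.
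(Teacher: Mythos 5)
Your high-level strategy matches the paper's: pass to the bar-construction model for $E_1$-indecomposables (Lemma \ref{lem Q vs B}), identify the homotopy fibre of the augmentation with a splitting complex of ordered separating hypersurfaces (Proposition \ref{prop hofib}), and then prove high connectivity of that complex by reducing to an algebraic arc complex and feeding back through a nerve/Cohen--Macaulay argument. However, there are two substantive inaccuracies.

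First, you have glossed over the fact that the splitting complex $S^{E_1}_{\bullet}(W)$ is a semisimplicial \emph{space}, not a set, because $\Diff_{\partial^-W}^p(W)$ is a topological group. One cannot directly run a nerve-theorem/Cohen--Macaulay induction on a topological poset: the paper must first prove connectivity for the levelwise \emph{discretization} $\mathcal{S}^{E_1,\delta}(W)$ (Theorem \ref{theorem splitting poset connectivity}, Corollary \ref{cor weakly cohen macaulay}), and then invoke a separate discretization theorem (Theorem \ref{poset discretization}, modelled on \cite[Theorem 5.6]{high}) which requires the stronger $f$-weakly Cohen--Macaulay hypothesis on the discretized poset, not mere high connectivity. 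This is one of the genuinely new moves needed to run the $E_k$-machinery for non-discrete groups, and your outline as stated would not compile into a proof without it.

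Second, your description of the algebraic target is off. The algebraic arc complex of Definition \ref{definition alg complex} is not a Tits-type complex of isotropic or hyperbolic summands; its vertices are unimodular functionals $\alpha \in M^{\vee}$ lying in a fixed coset of $\lambda^{\vee}(M)$, and simplices are sets of such functionals that are jointly unimodular. Its connectivity (Theorem \ref{theorem connectivity alg complex}) is not a classical building theorem: it is deduced from a new technical result (Theorem \ref{theorem technical}) proved by adapting \cite{Nina}, working with cosets $\delta_0 + N'$ inside $N^{\infty}$. Moreover, the place where $n$ odd enters is not in establishing connectivity of this complex but in the genus estimate for cut manifolds (Lemma \ref{lem technical cutting one arc}, Proposition \ref{prop cut manifold}, Remark \ref{rem n odd}): one needs $\lambda(t,t)=\lambda(x,x)=0$, which holds only for skew-symmetric forms, to show that cutting a single arc in the direction of a maximal-order boundary class does not drop the genus. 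Without this input the inductive step in Theorem \ref{theorem splitting poset connectivity} fails, so the role of $n$ odd is more delicate than ``the algebraic complex becomes a symplectic building.''
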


Moreover, by Corollary \ref{cor theorem key} the same vanishing range holds for $H_{x,d}^{E_{2n-1}}(\textbf{R})$. 

This theorem is one of the main results of this paper.
It is essential in showing Theorems \hyperref[theorem A]{A}, \hyperref[theorem B]{B}, and could be used in the future to further refine those results if one can get more information about the homology groups of $\mathbf{R}$ in small bidegrees. 
However, its proof is highly technical and none of the details of it are needed to prove the stability results, so it will be delayed to Sections \ref{section 5} and \ref{section 6}. 

In Section \ref{section 4} we will prove Theorems \hyperref[theorem A]{A} and \hyperref[theorem B]{B} by applying the generic stability results of Section \ref{section 2} and some results of \cite{Sierra2022-st} to the algebras $\textbf{R}_{\mathds{k}}$. 
This will use Theorem \hyperref[theorem C]{C} and some computations of the homology groups of the moduli spaces of manifolds of Section \ref{section 3} with appropriate coefficients $\mathds{k}$.
Since the components of $\mathbf{R}$ are classifying spaces of diffeomorphism groups relative to half of the boundary and we want to understand diffeomorphism groups relative the full boundary we need to compare these two, which will be done in Section \ref{section 4.2}.
Finally in Sections \ref{section 4.5} and \ref{section 4.6} we put everything together. 

The remaining sections of the paper are devoted to proving Theorem \hyperref[theorem C]{C}. 
Section \ref{section 5} will assume a certain connectivity property of a simplicial complex called the ``\textit{arc complex}'' in analogy to the one used for surfaces in \cite[Section 4]{E2}, and will deduce Theorem \hyperref[theorem C]{C} from it. 
To do so we will begin by defining a certain poset of splittings, similar to the one appearing in \cite[Theorem 3.4]{E2}, and we will show this is highly connected by mimicking the proof of \cite[Theorem 3.4]{E2} and appealing to the high connectivity of the arc complex. 
Then we will define a certain semisimplicial space called \textit{splitting complex} and use a ``discretization technique'', inspired by the proof of \cite[Theorem 5.6]{high}, to show that the splitting complex is highly connected from the high connectivity results of the poset of splittings. 
Finally, we will relate the high connectivity of the splitting complexes to the vanishing of the $E_1$-homology of $\textbf{R}$. 

In Section \ref{section 6} we will show the high connectivity of the arc complex.
This will be done by firstly finding an algebraic analogue of the arc complex, called the \textit{algebraic arc complex}, and showing that it is highly connected. 
Finally, we will use a similar technique to the one in \cite[Lemma 5.5]{high}, based on the Whitney trick, to get the high connectivity of the arc complex from that of the algebraic arc complex. 

\section*{Acknowledgements.}
I am supported by an EPSRC PhD Studentship, grant no. 2261123, and by O. Randal-Williams’ Philip Leverhulme Prize from the Leverhulme Trust.
I would like to give special thanks to my PhD supervisor Oscar Randal-Williams for all his advice and all the helpful discussions and corrections. 
I would also like to thank Alexander Kupers and Manuel Krannich for their suggestions. 

\section{Homological stability results on $E_k$-algebras} \label{section 2}

This section is devoted to generic homological stability results about $E_k$-algebras that will apply to the $E_{2n-1}$-algebra constructed from moduli spaces of manifolds in Section \ref{section 3}.  
The first two results, Lemmas \ref{lem stab 1} and \ref{lem stab 2} are inspired by the generic homological stability result, \cite[Theorem 18.1]{Ek}, and proved in \cite{Sierra2022-st}. 
We included the statements here as they will be crucial in the proof of Theorem \hyperref[theorem A]{A}. 
The third result, Theorem \ref{theorem rational stability general}, is new and its proof uses a different approach and can be found at the end of this section. 
The statements of these results are essential for the rest of the paper but their proofs can be skipped on a first reading of the paper as they rely on technicalities of cellular $E_k$-algebras. 

Let us define $\mathsf{H}$ to be the discrete monoid $\{0\} \cup \mathbb{N}_{>0} \times \mathbb{Z}/2$, where the monoidal structure $+$ is given by addition in both coordinates. 
We denote by $\rk: \mathsf{H} \rightarrow \mathbb{N}$ the symmetric monoidal functor given by projection to the first coordinate. 

The following two lemmas correspond to \cite[Theorem 2.1, Theorem 2.2]{Sierra2022-st} respectively. 

\begin{lemma} \label{lem stab 1}
Let $\mathds{k}$ be a commutative ring and let $\mathbf{X} \in \Alg_{E_2}(\mathsf{sMod}_{\mathds{k}}^{\mathsf{H}})$ be such that $H_{0,0}(\mathbf{X})=0$, $H_{x,d}^{E_2}(\mathbf{X})=0$ for $d<\rk(x)-1$, and 
$H_{*,0}(\mathbf{\overline{X}})=\frac{\mathds{k}[\sigma_0,\sigma_1]}{(\sigma_1^2-\sigma_0^2)}$ as a ring, for some classes $\sigma_{\epsilon} \in H_{(1,\epsilon),0}(\mathbf{X})$. 
Then, for any $\epsilon \in \{0,1\}$ and any $x \in \mathsf{H}$ we have $H_{x,d}(\mathbf{\overline{X}}/\sigma_{\epsilon})=0$ for $2d \leq \rk(x)-2$.
\end{lemma}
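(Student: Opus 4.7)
The plan is to apply the cellular $E_2$-algebra machinery from \cite{Ek}: build a CW-approximation $\mathbf{C} \xrightarrow{\sim} \mathbf{X}$ whose cells reflect the hypotheses, endow $\mathbf{\overline{C}}/\sigma_\epsilon$ with the induced skeletal filtration, and analyse the resulting spectral sequence converging to $H_{x,*}(\mathbf{\overline{X}}/\sigma_\epsilon)$.

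Since $H_{0,0}(\mathbf{X})=0$, \cite[Theorem 11.21]{Ek} provides a CW-approximation $\mathbf{C} \xrightarrow{\sim} \mathbf{X}$. I would construct $\mathbf{C}$ by first attaching two $0$-cells of bidegrees $((1,0),0)$ and $((1,1),0)$ representing $\sigma_0$ and $\sigma_1$; note that both $\sigma_0^2$ and $\sigma_1^2$ lie in $H_{(2,0),0}$ because the second coordinate of $\mathsf{H}$ is taken in $\mathbb{Z}/2$, so the class $\sigma_1^2-\sigma_0^2$ makes sense. Next, attach a single $1$-cell of bidegree $((2,0),1)$ killing this class, so that the resulting $E_2$-algebra has $H_{*,0}$ equal to $\mathds{k}[\sigma_0,\sigma_1]/(\sigma_1^2-\sigma_0^2)$. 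The standard connectivity estimate $H^{E_2}_{x,d}(\mathbf{X})=0$ for $d<\rk(x)-1$ then permits attaching all remaining cells in bidegrees $(x,d)$ with $d \geq \rk(x)-1$.

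Equipping $\mathbf{C}$ with its skeletal filtration (cells placed in filtration equal to their topological dimension), the adapters construction of \cite[Section 12.2]{Ek} induces a filtration on the left $\mathbf{\overline{C}}$-module $\mathbf{\overline{C}}/\sigma_\epsilon$, giving a spectral sequence converging to $H_{x,*}(\mathbf{\overline{X}}/\sigma_\epsilon)$. Its $E^1$-page is the analogous invariant of the associated graded free $E_2$-algebra $\gr(\mathbf{C})$. The bar-construction formula $Q_{\mathbb{L}}^{E_1}(\mathbf{R}) \simeq B(\mathds{1},\mathbf{\overline{R}},\mathbf{R})$ from \cite[Corollary 9.17]{Ek}, combined with the standard description of the homology of a free $E_2$-algebra (polynomial generators together with iterated Browder brackets and Dyer--Lashof operations), lets this $E^1$-page be described explicitly.

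The main obstacle is verifying that the $E^1$-page vanishes for $2d \leq \rk(x)-2$. Every cell of $\mathbf{C}$ sits on or above the slope-$1/2$ line $2d=\rk(x)-1$: the two $0$-cells trivially, the relation $1$-cell with $2d=2>0=\rk-2$, and the higher cells by the connectivity estimate. The subtle point is that products of generators can fall below this line (for example $\sigma_{1-\epsilon}^n$ has $d=0$ and $\rk=n$), but the relation cell's attaching map forces $\sigma_{1-\epsilon}^2=0$ in $\gr(\mathbf{\overline{C}})/\sigma_\epsilon$, killing all such high powers and collapsing the polynomial part to the two classes $\{1,\sigma_{1-\epsilon}\}$ together with a single torsion class coming from the relation cell, all sitting on or above the slope-$1/2$ line. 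Iterated $E_2$-operations between this surviving polynomial part and the higher cells then preserve the inequality $2d \geq \rk(x)-1$, yielding the required vanishing.
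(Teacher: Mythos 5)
The paper does not prove Lemma \ref{lem stab 1} — it is quoted verbatim from \cite[Theorem 2.1]{Sierra2022-st}, so there is no in-paper proof to compare against. Your strategic outline (CW-approximation, skeletal filtration, spectral sequence) is the right general framework and is how \cite[Theorem 18.1]{Ek} and its variants are established. But your sketch contains a genuine error in the spectral sequence analysis that, as written, would prevent the argument from closing.

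The problem is the claim that the $E^1$-page vanishes for $2d\le\rk(x)-2$. It does not. The $E^1$-page is $H_{*,*}(\gr(\mathbf{\overline{C}})/\sigma_\epsilon)$, and $\gr(\mathbf{C})$ is the \emph{free} $E_2$-algebra on the cells — the cell attachment structure is forgotten in the associated graded. In particular $\rho$ is a free generator in $\gr(\mathbf{C})$, not a relation, so $\sigma_{1-\epsilon}^n$ is nonzero on $E^1$ for every $n$, and these classes lie well below your line ($(\rk,d)=(n,0)$ violates $2d\ge\rk-1$ for $n\ge2$). Your subsequent statement that ``the relation cell's attaching map forces $\sigma_{1-\epsilon}^2=0$ in $\gr(\mathbf{\overline{C}})/\sigma_\epsilon$'' is false on the $E^1$-page: the relation is only restored via the $d^1$-differential $d^1(\rho)=\pm\sigma_{1-\epsilon}^2$, so the vanishing occurs at $E^2$ at the earliest. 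This is not a cosmetic issue: the claim you need is vanishing at $E^\infty$, and the passage from $E^1$ to later pages is exactly where the work lies.

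Even after correcting the page bookkeeping, the concluding assertion — that ``iterated $E_2$-operations between this surviving polynomial part and the higher cells preserve the inequality $2d\ge\rk(x)-1$'' — is unjustified. Products of classes on the slope-$1/2$ line degrade the slope (a product of $m$ classes with $2d_i\ge\rk_i-1$ only satisfies $2d\ge\rk-m$), so classes such as $\sigma_{1-\epsilon}^n\cdot\tau$, with $\tau$ a higher cell in bidegree $(\rk,d)=(2,1)$ say, fall below the line as soon as $n\ge2$. To show these die at $E^2$ you would want to hit them with $d^1(\rho\,\sigma_{1-\epsilon}^{n-2}\tau)$, but this only works if $\tau$ is itself a $d^1$-cycle, which need not hold since $\tau$ has a nontrivial attaching map in general. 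The honest approach (mirrored in Step 0 of the proof of Theorem \ref{theorem rational stability general}) is to impose an auxiliary multiplicative filtration on the $E^1$-chain complex separating $\{\sigma_{1-\epsilon},\rho\}$ from everything of higher word-length, compute the homology of the associated graded explicitly (over $\mathbb{Q}$ this is a Koszul-type computation; over $\mathbb{Z}$ or $\mathbb{F}_p$ one must also grapple with Browder brackets and Dyer--Lashof operations and their interaction with the product structure), and only then conclude the vanishing range. Finally, the ``single torsion class coming from the relation cell'' is not identified and does not appear in the correct $E^\infty$-computation over $\mathbb{Q}$; if it is meant to account for $p$-torsion phenomena, that needs a separate argument.
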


\begin{lemma} \label{lem stab 2}
Let $\mathds{k}$ be a commutative $\mathbb{Z}[1/2]$-algebra, let $\textbf{X} \in \Alg_{E_2}(\mathsf{sMod}_{\mathds{k}}^{\mathsf{H}})$ be such that $H_{0,0}(\mathbf{X})=0$,  $H_{x,d}^{E_2}(\textbf{X})=0$ for $d<\rk(x)-1$, and  $H_{*,0}(\mathbf{\overline{X}})=\frac{\mathds{k}[\sigma_0,\sigma_1]}{(\sigma_1^2-\sigma_0^2)}$ as a ring, for some classes $\sigma_{\epsilon} \in H_{(1,\epsilon),0}(\textbf{X})$. 
Suppose in addition that for some $\epsilon \in \{0,1\}$ we have:
\begin{enumerate}[(i)]
    \item $\sigma_{\epsilon} \cdot - : H_{(1,1-\epsilon),1}(\textbf{X}) \rightarrow H_{(2,1),1}(\textbf{X})$ is surjective.
    \item $\coker(\sigma_{\epsilon} \cdot-: H_{(1,\epsilon),1}(\textbf{X}) \rightarrow H_{(2,0),1}(\textbf{X}))$ is generated by $Q_{\mathds{k}}^1(\sigma_0)$ as a $\mathbb{Z}$-module.
    \item $\sigma_{1-\epsilon} \cdot Q_{\mathds{k}}^1(\sigma_0) \in H_{(3,1-\epsilon),1}(\textbf{X})$ lies in the image of $\sigma_{\epsilon}^2 \cdot -:H_{(1,1-\epsilon),1}(\textbf{X}) \rightarrow H_{(3,1-\epsilon),1}(\textbf{X})$.
\end{enumerate}
Then $H_{x,d}(\mathbf{\overline{X}}/\sigma_{\epsilon})=0$ for $3d \leq 2 \rk(x)-4$. 
\end{lemma}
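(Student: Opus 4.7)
The plan is to follow the cellular $E_2$-algebra template sketched in Section \ref{section 1.2}, building a CW approximation of $\mathbf{X}$ whose cells realize its $E_2$-homology and then analyzing the skeletal spectral sequence that converges to $H_{*,*}(\overline{\mathbf{X}}/\sigma_\epsilon)$. The target vanishing range has slope $3/2$, one step better than Lemma \ref{lem stab 1}, so the argument parallels that lemma but requires the extra low-degree input provided by hypotheses (i)--(iii).

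First, I would invoke \cite[Theorem 11.21]{Ek} to obtain a CW $E_2$-algebra $\mathbf{C} \xrightarrow{\sim} \mathbf{X}$ with cells in bidegrees $(x,d)$ controlled by $H^{E_2}_{x,d}(\mathbf{X})$; in particular, the standard connectivity hypothesis forces every cell to satisfy $d \geq \rk(x)-1$. The ring structure on $H_{*,0}(\overline{\mathbf{X}})$ pins down the $0$-cells as exactly $\sigma_0$ and $\sigma_1$, and the relation $\sigma_0^2 = \sigma_1^2$ forces at least one $((2,0),1)$-cell that realizes that relation.

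Second, I would form $\overline{\mathbf{C}}/\sigma_\epsilon$, using the strictly associative replacement and the adapter construction of \cite[Section 12.2]{Ek} so that the quotient inherits a skeletal filtration, and then read off the associated spectral sequence. On the $E^1$-page, contributions come from the cells of $\mathbf{C}$ together with shifts from the $\sigma_\epsilon$-multiplication; these all lie on or above the line $d = \rk(x)-1$, which by itself only recovers the slope $1$ conclusion of Lemma \ref{lem stab 1}. To upgrade to slope $3/2$ it suffices to exhibit differentials that cancel all classes lying strictly between the lines $d = \rk(x)-1$ and $3d = 2\rk(x)-4$. Hypotheses (i)--(iii) are designed to carry out precisely this cancellation at ranks $2$ and $3$, after which a routine induction along the symmetric monoidal structure extends the vanishing to higher rank. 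Concretely, (i) ensures that every class of bigrading $((2,1),1)$ is a $\sigma_\epsilon$-multiple and dies in the quotient; (ii) cuts the possible survivors in bigrading $((2,0),1)$ down to the single Dyer--Lashof-type class $Q^1_{\mathds{k}}(\sigma_0)$; and (iii) provides a null-homotopy of $\sigma_{1-\epsilon}\cdot Q^1_{\mathds{k}}(\sigma_0)$ in $H_{(3,1-\epsilon),1}(\mathbf{X})$ modulo $\sigma_\epsilon$, which translates into the differential needed to cancel $Q^1_{\mathds{k}}(\sigma_0)$ on the next page.

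The main obstacle is making the third step fully precise: the class $Q^1_{\mathds{k}}(\sigma_0)$ is a secondary operation intrinsic to the $E_2$-structure, so one has to identify it with a specific $((2,0),1)$ cell attaching map on the CW approximation and then verify that hypothesis (iii) produces exactly the claimed differential in the spectral sequence, not merely one up to an ambiguity. The standing assumption that $\mathds{k}$ is a $\mathbb{Z}[1/2]$-algebra is essential here, since $Q^1_{\mathds{k}}(\sigma_0)$ is related to the Browder bracket $[\sigma_0,\sigma_0]$ by a factor of $2$; without inverting $2$ one cannot extract $Q^1_{\mathds{k}}(\sigma_0)$ cleanly from the $E_2$-operations, which is why the same slope $3/2$ conclusion fails integrally and one must settle for Lemma \ref{lem stab 1} instead. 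Once this identification is in place, the remaining spectral sequence bookkeeping and the inductive step on $\rk$ are routine.
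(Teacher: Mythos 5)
The paper does not prove this lemma. It explicitly states at the start of Section~\ref{section 2}: ``The following two lemmas correspond to [Sierra2022-st, Theorem 2.1, Theorem 2.2] respectively,'' and that they ``are inspired by the generic homological stability result, [Ek, Theorem 18.1], and proved in [Sierra2022-st].'' So the only ``proof'' this paper offers is a citation. That means your attempt cannot be checked against an in-text argument; what follows is an assessment of its plausibility on its own terms.

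Your high-level strategy --- minimal CW approximation via \cite[Theorem 11.21]{Ek}, adapters to make $\overline{\mathbf{C}}/\sigma_\epsilon$ a module and give it a skeletal filtration, then a spectral sequence --- is indeed the standard framework for proving generic stability ranges in this setting, and it is almost certainly the skeleton of the cited proof. You also correctly flag that the $\mathbb{Z}[1/2]$-hypothesis is what makes $Q^1_{\mathds{k}}(\sigma_0)$ (equivalently $[\sigma_0,\sigma_0]$ up to a factor of $2$) a clean object to work with. That part of the diagnosis is sound.

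There is, however, a genuine gap, and it is exactly the sentence you wave past: ``after which a routine induction along the symmetric monoidal structure extends the vanishing to higher rank.'' This is not a routine induction and it is where essentially all the work lives. Knowing that $H_{(2,1),1}$, $H_{(2,0),1}$, $H_{(3,1-\epsilon),1}$ behave as stated only gives control of a finite patch of the plane; to conclude a vanishing range of slope $2/3$ uniformly in $\rk(x)$ one needs a mechanism that propagates this low-rank information. In \cite[Theorem 18.1]{Ek} and in the relevant theorem of \cite{Sierra2022-st} that mechanism is a careful description of the $E^1$-page of the cell-attachment filtration spectral sequence as the homology of a free $E_2$-algebra on prescribed generators (computed via \cite[Section 16]{Ek}), together with the transferring-vanishing-lines machinery (\cite[Theorem 14.4, Theorem 15.9, Corollary 15.10]{Ek}) to pass between $E_k$-homology, $\overline{\mathbf{A}}$-module homology, and $\overline{\mathbf{X}}/\sigma$. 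The symmetric monoidal structure of $\mathsf{H}$ alone does not do this. Your proposal never names this machinery and so does not actually bridge from low-rank hypotheses to a global vanishing line.

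A secondary imprecision: you say hypothesis (i) ``ensures that every class of bigrading $((2,1),1)$ is a $\sigma_\epsilon$-multiple and dies in the quotient.'' Surjectivity of $\sigma_\epsilon\cdot-$ in one bidegree does not by itself kill the corresponding group of $\overline{\mathbf{X}}/\sigma_\epsilon$; one needs the long exact sequence of the cofibration $S^{1,0}\otimes\overline{\mathbf{X}}\to\overline{\mathbf{X}}\to\overline{\mathbf{X}}/\sigma_\epsilon$ and control of the adjacent degree. Similarly, translating (ii) and (iii) into specific differentials requires identifying how $Q^1_{\mathds{k}}(\sigma_0)$ sits on the $E^1$-page and verifying the differential hits it without ambiguity, which is not a matter of one cell --- you correctly call this out as an obstacle but do not resolve it. So the proposal is a reasonable outline of the known strategy, but it does not constitute a proof, and the piece that is missing (the transfer of a low-degree vanishing to a global slope) is the substantive content of the cited theorem.
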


The next result will be used to deal with rational coefficients, and it says that under suitable conditions the ``\textit{standard vanishing line}'' on $E_k$-cells, i.e. $H_{x,d}^{E_k}(\textbf{X})$ for $d<\rk(x)-1$, plus some partial homological stability result, i.e. stability up to some homological degree, leads to a global homological stability result valid for all degrees. 
Before properly stating it, let us give some motivation for some of the assumptions that will appear on the theorem. 

If $\textbf{R} \in \Alg_{E_k}(\mathsf{Top}^{\mathbb{N}})$ satisfies that $\textbf{R}(0)=\emptyset$ and that $\textbf{R}(g)$ is path-connected for $g > 0$ then $H_{1,0}(\textbf{R}_{\mathbb{Q}})=\mathbb{Q}\{\sigma_0\}$ for $\sigma_0$ represented by a point in $\textbf{R}(1)$. 
This is represented by a map $S_{\mathbb{Q}}^{1,0} \rightarrow \textbf{R}_{\mathbb{Q}}$ and hence we get an $E_k$-algebra map $\mathbf{E_k}(S_{\mathbb{Q}}^{1,0} \sigma_0) \rightarrow \textbf{R}_{\mathbb{Q}}$. 
The long exact sequence in homology of the map $\sigma_0 \cdot-$ gives $H_{1,0}(\mathbf{\overline{R}}_{\mathbb{Q}}/\sigma_0)=0$.

On the other hand, let $\textbf{R} \in \Alg_{E_k}(\mathsf{Top}^{\mathsf{H}})$ be such that $\textbf{R}(0)=\emptyset$ and $\textbf{R}(g,\epsilon)$ is path-connected for any $(g,\epsilon) \in \mathbb{N}_{>0} \times \mathbb{Z}/2$.
We can also view $\textbf{R}$ as an object in $\Alg_{E_k}(\mathsf{Top}^{\mathbb{N}})$ via $R(g)=R(g,0) \sqcup R(g,1)$, see \cite[Section 3]{Ek} for details. 
Then $H_{1,0}(\textbf{R}_{\mathbb{Q}})=\mathbb{Q}\{\sigma_0,\sigma_1\}$ for $\sigma_{\epsilon}$ represented by a point in $\textbf{R}(1,\epsilon)$, so we get an $E_k$-algebra map $\mathbf{E_k}(S_{\mathbb{Q}}^{1,0} \sigma_0 \oplus S_{\mathbb{Q}}^{1,0} \sigma_1) \rightarrow \textbf{R}_{\mathbb{Q}}$. 
Moreover, by the monoidal structure of $\mathsf{H}$ both $\sigma_0^2$ and $\sigma_1^2$ lie in the same path-component $\mathbf{R}(2,0)$, so $\sigma_1^2-\sigma_0^2=0 \in H_{2,0}(\mathbf{R}_{\mathbb{Q}})$. 
Hence, we can use the cell attachment construction explained in Section \ref{section 1.2}, or \cite[Section 6]{Ek} for further details, to extend the previous map to an $E_k$-algebra map $\mathbf{E_k}(S_{\mathbb{Q}}^{1,0} \sigma_0 \oplus S_{\mathbb{Q}}^{1,0}\sigma_1) \cup_{\sigma_1^2-\sigma_0^2}^{E_k}{D_{\mathbb{Q}}^{2,1} \rho} \rightarrow \textbf{R}_{\mathbb{Q}}$. 
The long exact sequence in homology of the map $\sigma_0 \cdot -$ gives $H_{1,0}(\mathbf{\overline{R}}_{\mathbb{Q}}/\sigma_0)=\mathbb{Q}\{\sigma_1\}$.

\begin{rem} \label{rem application rational}
The above two cases are of special interest for us since they will apply to the $E_{2n-1}$-algebra \textbf{R} of Definition \ref{definition R}: the first case will apply for $n=3,7$ and the second one in all the other dimensions. 
On each of the two cases we get a map from a certain cellular $E_k$-algebra to the rationalization $\mathbf{R}_{\mathbb{Q}}$.  
Moreover, the above discussion gives information about $H_{1,0}(\mathbf{\overline{R}}_{\mathbb{Q}}/\sigma_0)$ in each of the two cases which will be useful later. 
\end{rem}

\begin{theorem} \label{theorem rational stability general}
Let $k \geq 3$, $\mathbf{A}, \mathbf{X} \in \Alg_{E_k}(\mathsf{sMod}_{\mathbb{Q}}^{\mathbb{N}})$ such that $H_{0,0}(\textbf{X})=0$, and $\mathbf{X}$ satisfies $H_{g,d}^{E_k}(\textbf{X})=0$ for $d<g-1$.
Suppose that there is an $E_k$-algebra map $\textbf{A} \rightarrow \textbf{X}$ and a $D \in \mathbb{Z}_{>0} \cup \{\infty\}$ for which one of the following two cases holds.
\begin{enumerate}[(i)]
    \item $\mathbf{A}=\mathbf{E_k}(S_{\mathbb{Q}}^{1,0} \sigma_0)$ and  $H_{g,d}(\mathbf{\overline{X}}/\sigma_0)=0$ for $d<\min\{g,D\}$
    \item $\mathbf{A}=\mathbf{E_k}(S_{\mathbb{Q}}^{1,0} \sigma_0 \oplus S_{\mathbb{Q}}^{1,0}\sigma_1) \cup_{\sigma_1^2-\sigma_0^2}^{E_k}{D_{\mathbb{Q}}^{2,1} \rho}$ and $H_{g,d}(\mathbf{\overline{X}}/\sigma_0)=0$ for $d< \min \{g,D\}$ except for $(g,d)=(1,0)$ where $H_{1,0}(\mathbf{\overline{X}}/\sigma_0)=\mathbb{Q}\{\sigma_1\}$. 
\end{enumerate}

Then in case (i) we have $H_{g,d}(\mathbf{\overline{X}}/\sigma_0)=0$ for $d<\frac{D}{D+1}g$ and in case (ii) we have $H_{g,d}(\mathbf{\overline{X}}/\sigma_0)=0$ for $d<\frac{D}{D+1}(g-1)$. 
\end{theorem}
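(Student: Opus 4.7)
The plan is to bootstrap the partial stability hypothesis to the slope $\frac{D}{D+1}$ conclusion via a CW approximation of $\mathbf{X}$ relative to $\mathbf{A}$, combined with a skeletal spectral sequence analysis of $\mathbf{\overline{X}}/\sigma_0$, in the style of \cite{Ek}. The first step is to verify that $\mathbf{A}$ itself satisfies the standard connectivity $H^{E_k}_{g,d}(\mathbf{A}) = 0$ for $d < g-1$: this is immediate from its explicit $E_k$-CW structure, with one $(1,0)$-cell in case (i), and two $(1,0)$-cells together with one $(2,1)$-cell in case (ii). Combined with the standard connectivity assumed for $\mathbf{X}$ and the long exact sequence in $E_k$-homology of the pair, one gets $H^{E_k}_{g,d}(\mathbf{X}, \mathbf{A}) = 0$ for $d < g-1$, so the map $\mathbf{A} \to \mathbf{X}$ admits a factorization $\mathbf{A} \to \mathbf{C} \xrightarrow{\sim} \mathbf{X}$ where $\mathbf{C}$ is built from $\mathbf{A}$ by attaching $E_k$-cells only in bidegrees $(g_\alpha, d_\alpha)$ with $d_\alpha \geq g_\alpha - 1$.

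Assigning to each cell filtration $d_\alpha$ yields a skeletal filtration on $\mathbf{C}$ and hence on $\mathbf{\overline{C}}/\sigma_0$; the resulting spectral sequence converges to $H_{*,*}(\mathbf{\overline{X}}/\sigma_0)$, and its $E^1$-page is determined by the cells together with $H_{*,*}(\mathbf{\overline{A}}/\sigma_0)$. A direct computation, using the rational freeness of $\mathbf{E_k}$ for $k \geq 2$ plus the single $(2,1)$-cell in case (ii), shows that $H_{*,*}(\mathbf{\overline{A}}/\sigma_0)$ equals $\mathbb{Q}\{1\}$ concentrated in bidegree $(0,0)$ in case (i), and $\mathbb{Q}\{1, \sigma_1\}$ concentrated in bidegrees $(0,0)$ and $(1,0)$ in case (ii); the extra class $\sigma_1$ is precisely what produces the shift from $g$ to $g-1$ in the conclusion of case (ii).

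The main argument is then an induction on $g$ using this spectral sequence, with the partial stability hypothesis as the base case. The key observation is that a cell of $\mathbf{C}$ in bidegree $(g_\alpha, d_\alpha)$ with $d_\alpha < \frac{D}{D+1}g_\alpha$ would, through the structure of the $E^1$-page and $H_{*,*}(\mathbf{\overline{A}}/\sigma_0)$, contribute a class to $H_{*,*}(\mathbf{\overline{X}}/\sigma_0)$ in a range where the partial stability forces vanishing; a careful page-by-page analysis of the differentials rules out survival of such classes to $E^\infty$, forcing the effective cells to satisfy $d_\alpha \geq \frac{D}{D+1}g_\alpha$. Convergence of the spectral sequence then yields the desired vanishing. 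The main technical obstacle will be precisely controlling the higher differentials and verifying the numerology, so that the bootstrap closes up at exactly slope $\frac{D}{D+1}$ rather than some weaker bound; the hypothesis $k \geq 3$ enters at this stage to provide enough ``room'' in the $E_k$-structure for the argument to carry through.
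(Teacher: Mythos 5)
Your approach is structurally different from the paper's: you propose a relative CW approximation $\mathbf{A}\to\mathbf{C}\xrightarrow{\sim}\mathbf{X}$ and a skeletal spectral sequence for $\mathbf{\overline{C}}/\sigma_0$, whereas the paper never builds a CW approximation of $\mathbf{X}$ at all. Instead it works with module indecomposables: it constructs an $\mathbf{\overline{A}}$-module map $\mathbf{\overline{A}}/\sigma_0\to\mathbf{\overline{X}}/\sigma_0$, compares the resulting bar spectral sequences (whose $E^1$-pages tensor with $H_{*,*}(B(\mathds{1},\mathbf{\overline{A}},\mathds{1}))$), identifies $Q_{\mathbb{L}}^{\mathbf{\overline{A}}}(\mathbf{\overline{X}}/\sigma_0)\simeq(\mathds{1}\oplus S^{1,1})\otimes Q_{\mathbb{L}}^{\mathbf{\overline{A}}}(\mathbf{\overline{X}})$, and then transfers the resulting vanishing in $\mathbf{\overline{A}}$-module homology up to $E_k$-homology via \cite[Theorem 15.9]{Ek} and back down via \cite[Corollary 15.10]{Ek}. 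So your plan is a genuinely different route.

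However, there are two concrete gaps. First, your ``direct computation'' of $H_{*,*}(\mathbf{\overline{A}}/\sigma_0)$ is asserted but not established, and it overlooks Browder brackets: rationally $H_{*,*}(\overline{\mathbf{E_k}(S_{\mathbb{Q}}^{1,0}\sigma_0)})=\Lambda(L)$ where $L$ is the free $(k-1)$-Lie algebra on $\sigma_0$, which in general contains $[\sigma_0,\sigma_0]$ in bidegree $(2,k-1)$; hence in case (i) one gets $\Lambda([\sigma_0,\sigma_0])$, not just $\mathbb{Q}\{1\}$, and in case (ii) there are further contributions from $\rho$, $[\sigma_0,\sigma_1]$, $[\sigma_1,\sigma_1]$, etc. The paper's Step~0 does not compute $H_{*,*}(\mathbf{\overline{A}}/\sigma_0)$ exactly; it proves only a vanishing line $d<g-1$ (via an auxiliary filtration in case (ii)), which is all that the hypothesis $k\geq 3$ guarantees, and which turns out to suffice. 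Your stronger claim would need justification, and for case (ii) it is almost certainly false.

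Second, and more seriously, the core bootstrap step --- ``a careful page-by-page analysis of the differentials \dots forcing the effective cells to satisfy $d_\alpha\geq\frac{D}{D+1}g_\alpha$'' --- is exactly where the theorem lives, and it is not addressed. It is not a ``technical obstacle'' to the proof; it is the proof. The mechanism you sketch does not obviously close: in a skeletal spectral sequence for an $E_k$-CW algebra, the $E^1$-page is generated (as an algebra with products and Browder operations) by the cells, so a cell in bidegree $(g_\alpha,d_\alpha)$ does not contribute an isolated class at $(g_\alpha,d_\alpha)$ that can be directly confronted with the partial stability hypothesis. The paper sidesteps this entirely: it shows $H_{g,d}^{E_k}(\mathbf{X},\mathbf{A})=0$ for $d<\min\{g,D\}$ by a minimality argument on $\delta=\min\{d:H_{d+1,d}^{E_k}(\mathbf{X},\mathbf{A})\neq 0\}$ together with the transferring-up theorem \cite[Theorem 15.9]{Ek} applied at the critical bidegree $(\delta+1,\delta)$, and this is where the slope $\frac{D}{D+1}$ ultimately comes from. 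Without a replacement for that step your argument remains an outline.
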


\begin{proof}
The proof is divided into several steps: we will firstly check an universal example and then reduce the general statement to that one. 

\textbf{Step 0.}
Consider the case $\textbf{X}=\textbf{A}$, $\textbf{A}\rightarrow \textbf{X}$ given by the identity.
We want to show the conclusion of the theorem for $D=\infty$.

Consider case $(i)$: 
by \cite[Section 16]{Ek} $H_{*,*}(\overline{\mathbf{E_k}(S_{\mathbb{Q}}^{1,0} \sigma_0)})=\Lambda(L)$ is the free graded-commutative algebra on $L$, where $L$ is the free $(k-1)$-Lie algebra on $\sigma_0$. 
Explicitly, $L$ is generated as a graded $\mathbb{Q}$-vector space by $\sigma_0$ in grading $(1,0)$ and by $[\sigma_0,\sigma_0]$ in grading $(2,k-1)$. 
Thus, the map $\sigma_0 \cdot -$ acts injectively on homology, so the homology long exact sequence of the cofibration 
$$S_{\mathbb{Q}}^{1,0} \otimes \overline{\mathbf{E_k}(S_{\mathbb{Q}}^{1,0} \sigma_0)} \xrightarrow{\sigma_0 \cdot} \overline{\mathbf{E_k}(S_{\mathbb{Q}}^{1,0} \sigma_0)} \rightarrow \overline{\mathbf{E_k}(S_{\mathbb{Q}}^{1,0} \sigma_0)} /\sigma_0$$ 
gives that
$H_{*,*}(\overline{\mathbf{E_k}(S_{\mathbb{Q}}^{1,0} \sigma_0)}/\sigma_0)= \Lambda(\sigma_0,[\sigma_0,\sigma_0])/\sigma_0=\Lambda([\sigma_0,\sigma_0])$ is supported in bidegrees in the line of slope $(k-1)/2$ through the origin, and hence it satisfies the required range as $k \geq 3$. 
(We define the slope of an element to be its homological degree divided by its rank.)

For the case $(ii)$ we let $\mathbf{fA} \in \Alg_{E_k}((\mathsf{sMod}_{\mathbb{Q}}^{\mathbb{N}})^{\mathbb{Z}_{\leq}})$ be the cell attachment filtration of $\mathbf{A}$, i.e. 
$$\mathbf{fA}=\mathbf{E_k}(S_{\mathbb{Q}}^{1,0,0} \sigma_0 \oplus S_{\mathbb{Q}}^{1,0,0}\sigma_1) \cup_{\sigma_1^2-\sigma_0^2}^{E_k}{D_{\mathbb{Q}}^{2,1,1} \rho}$$
where the last grading is the filtration degree. 
This satisfies $\colim(\mathbf{fA})=\mathbf{A}$, and by \cite[Theorem 6.4]{Ek} $\gr(\mathbf{fA})=\mathbf{E_k}(S_{\mathbb{Q}}^{1,0,0} \sigma_0 \oplus S_{\mathbb{Q}}^{1,0,0}\sigma_1 \oplus S_{\mathbb{Q}}^{2,1,1} \rho)$. 

The map $\sigma_0: S_{\mathbb{Q}}^{1,0} \rightarrow \mathbf{A}$ lifts to a filtered map $\sigma_0: S_{\mathbb{Q}}^{1,0,0} \rightarrow \mathbf{fA}$, obtained from $\sigma_0: S_{\mathbb{Q}}^{1,0} \rightarrow \mathbf{fA}(0)= \mathbf{E_k}(S_{\mathbb{Q}}^{1,0} \sigma_0 \oplus S_{\mathbb{Q}}^{1,0}\sigma_1)$. 
Thus, using the adapters construction in \cite[Section 12.2]{Ek}, we can form the $\mathbf{\overline{fA}}$-module $\mathbf{\overline{fA}}/\sigma_0$ whose colimit is $\mathbf{\overline{A}}/\sigma_0$. 
\begin{enumerate}[(i)]
    \item By \cite[Corollary 10.7, Section 16.6]{Ek} there is a multiplicative spectral sequence
    $$E^1_{g,p,q}=H_{g,p+q,p}(\overline{\mathbf{E_k}(S_{\mathbb{Q}}^{1,0,0} \sigma_0 \oplus S_{\mathbb{Q}}^{1,0,0}\sigma_1 \oplus S_{\mathbb{Q}}^{2,1,1} \rho)}) \Rightarrow H_{g,p+q}(\mathbf{\overline{A}})$$
    whose $d^1$ differential satisfies $d^1(\rho)=\sigma_1^2-\sigma_0^2$. 
    \item By \cite[Theorem 10.10]{Ek} there is a spectral sequence
    $$F^1_{g,p,q}=H_{g,p+q,p}(\overline{\mathbf{E_k}(S_{\mathbb{Q}}^{1,0,0} \sigma_0 \oplus S_{\mathbb{Q}}^{1,0,0}\sigma_1 \oplus S_{\mathbb{Q}}^{2,1,1} \rho)}/\sigma_0) \Rightarrow H_{g,p+q}(\mathbf{\overline{A}}/\sigma_0).$$
\end{enumerate}
The $\mathbf{\overline{fA}}$-module structure on $\mathbf{\overline{fA}}/\sigma_0$ gives the second spectral sequence $F^t_{*,*,*}$ the structure of a module over the first one $E^t_{*,*,*}$. 

By \cite[Theorem 16.4]{Ek} $E^1_{*,*,*}$ is given as a $\mathbb{Q}$-algebra by the free graded-commutative algebra $\Lambda(L)$, where $L$ is the free $(k-1)$-Lie algebra on generators $\sigma_0,\sigma_1,\rho$. 
Since $\sigma_0 \cdot -$ is injective on $E^1_{*,*,*}$ then $F^1_{*,*,*}=\Lambda(L/\langle \sigma_0 \rangle)$ inherits the structure of a CDGA because its $d^1$ differential is determined by the one on the first spectral sequence. 
We can split $L=\mathbb{Q}\{\sigma_0\} \oplus \mathbb{Q}\{\sigma_1\} \oplus \mathbb{Q}\{\rho\} \oplus L'$, where $L'$ consists of elements whose word-length is at least 2, and satisfies that every element of $L'$ has slope $\geq 1$ (because $k \geq 3$). 

Let us introduce a filtration on the chain complex $(F^1_{*,*,*},d^1)$ by giving $\sigma_1$ and $\rho$ filtration degree 0, elements of $L'$ filtration equal to their homological degree, and then extending the filtration multiplicatively.
The differential $d^1$ preserves this filtration, and its associated graded is given by a tensor product of chain complexes 
$$(\Lambda(\sigma_1,\rho),\delta) \otimes (\Lambda(L'),0)$$
where $\delta(\sigma_1)=0$ and $\delta(\rho)=\sigma_1^2$. 
By Künneth's theorem the homology of this chain complex is $\mathbb{Q}[\sigma_1]/(\sigma_1^2) \otimes \Lambda(L')$, and so there is a spectral sequence of the form 
$$\mathbb{Q}[\sigma_1]/(\sigma_1^2) \otimes \Lambda(L') \Rightarrow H(F^1_{*,*,*},d^1)=F^2_{*,*,*}.$$
Thus, $F^2_{g,p,q}=0$ for $p+q<g-1$ because $\mathbb{Q}[\sigma_1]/(\sigma_1^2) \otimes \Lambda(L')$
already has this vanishing line, and hence $H_{g,d}(\mathbf{\overline{A}}/\sigma_0)=0$ for $d<g-1$, as required.

\textbf{Step 1.}
We will show that under the hypotheses in the statement of the theorem $H_{g,d}^{\mathbf{\overline{A}}}(\mathbf{\overline{X}}/\sigma_0)=0$ for $d < \min\{g,D\}$.

The composition $S_{\mathbb{Q}}^{1,0} \otimes \mathbf{\overline{A}} \xrightarrow{\sigma_0 \cdot -} \mathbf{\overline{A}} \rightarrow \mathbf{\overline{X}} \rightarrow \mathbf{\overline{X}}/\sigma_0$ is nullhomotopic as an $\mathbf{\overline{A}}$-module map because it agrees with $S_{\mathbb{Q}}^{1,0} \otimes \mathbf{\overline{A}} \rightarrow S_{\mathbb{Q}}^{1,0} \otimes \mathbf{\overline{X}} \xrightarrow{\sigma_0 \cdot -} \mathbf{\overline{X}} \rightarrow \mathbf{\overline{X}/\sigma_0}$ and the last two maps form a cofibration sequence. Thus there is an $\mathbf{\overline{A}}$-module map $\mathbf{\overline{A}}/\sigma_0 \rightarrow \mathbf{\overline{X}}/\sigma_0$.
(We are using adapters throughout so all the previous constructions are in the category of left $\mathbf{\overline{A}}$-modules.)

If $\mathbf{M}$ is an $\mathbf{\overline{A}}$-module then we can descendently filter it by its $\mathbb{N}$-grading, giving an associated graded $\gr(\mathbf{M})$ which is isomorphic to $\mathbf{M}$ itself in $\mathsf{sMod}_{\mathbb{Q}}^{\mathbb{N}}$, but which has the trivial $\mathbf{\overline{A}}$-module structure induced by the augmentation $\mathbf{\overline{A}} \rightarrow \mathds{1}$, c.f. \cite[Remark 19.3]{Ek}. 
This induces a filtration on $B(\mathds{1},\mathbf{\overline{A}},\mathbf{M})$ whose associated graded is $B(\mathds{1},\mathbf{\overline{A}},\mathds{1}) \otimes \gr(\mathbf{M})$ and hence there is a spectral sequence 
$$ H_{*,*}(B(\mathds{1},\mathbf{\overline{A}},\mathds{1})) \otimes H_{*,*}(\textbf{M}) \Rightarrow H_{*,*}^{\mathbf{\overline{A}}}(\mathbf{M})$$
where we are suppressing the internal grading.
Applying this to both $\mathbf{M}= \mathbf{\overline{A}}/\sigma_0$ and $\mathbf{M}=\mathbf{\overline{X}}/\sigma_0$ gives spectral sequences 
$$\mathcal{E}^1_{*,*}=H_{*,*}(B(\mathds{1},\mathbf{\overline{A}},\mathds{1})) \otimes H_{*,*}(\mathbf{\overline{A}}/\sigma_0) \Rightarrow H_{*,*}^{\mathbf{\overline{A}}}(\mathbf{\overline{A}}/\sigma_0)$$
and
$$\mathcal{F}^1_{*,*}=H_{*,*}(B(\mathds{1},\mathbf{\overline{A}},\mathds{1})) \otimes H_{*,*}(\mathbf{\overline{X}}/\sigma_0) \Rightarrow H_{*,*}^{\mathbf{\overline{A}}}(\mathbf{\overline{X}}/\sigma_0).$$
Moreover, the $\mathbf{\overline{A}}$-module map $\mathbf{\overline{A}}/\sigma_0 \rightarrow \mathbf{\overline{X}}/\sigma_0$ induces a morphism of spectral sequences $\mathcal{E}^t_{*,*} \rightarrow \mathcal{F}^t_{*,*}$. 
 
By \cite[Corollary 9.17, Theorem 13.7]{Ek} $Q_{\mathbb{L}}^{E_1}(\mathbf{A}) \simeq B(\mathds{1}, \mathbf{\overline{A}},\mathbf{A})$; and since $\mathbf{A} \rightarrow \mathbf{\overline{A}} \rightarrow \mathds{1}$ is a cofibre sequence of $\mathbf{\overline{A}}$-modules then it follows that  $B(\mathds{1}, \mathbf{\overline{A}},\mathbf{A}) \rightarrow B(\mathds{1}, \mathbf{\overline{A}},\mathbf{\overline{A}}) \simeq \mathds{1} \rightarrow B(\mathds{1}, \mathbf{\overline{A}},\mathds{1})$ is a cofibration sequence too.
Moreover, the map $B(\mathds{1}, \mathbf{\overline{A}},\mathbf{A}) \simeq Q_{\mathbb{L}}^{E_1}(\mathbf{A}) \rightarrow \mathds{1}$ is nullhomotopic by grading reasons: $\mathds{1}$ is concentrated in grading 0 whereas $Q_{\mathbb{L}}^{E_1}(\mathbf{\overline{A}})$ is 0 in grading 0. 
Thus it follows that $B(\mathds{1},\mathbf{\overline{A}},\mathds{1}) \simeq \mathds{1} \oplus S_{\mathbb{Q}}^{0,1} \otimes Q_{\mathbb{L}}^{E_1}(\textbf{A})$. 

By definition of $\textbf{A}$ and the transferring vanishing lines down theorem, \cite[Theorem 14.6]{Ek}, $H_{g,d}^{E_1}(\textbf{A})=0$ for $d<g-1$ in cases (i) and (ii); and thus $H_{g,d}(B(\mathds{1},\mathbf{\overline{A}},\mathds{1}))=0$ for $d<g$. 
In other words, in both spectral sequences the first factor of the tensor product vanishes below slope $1$. 

Thus, by assumption there cannot be elements in $\mathcal{F}^1_{*,*}$ with $d< \min \{g,D\}$ except in case (ii), in which case they are of the form $\zeta \otimes \sigma_1$ for some $\zeta \in H_{*,*}(B(\mathds{1},\mathbf{\overline{A}},\mathds{1}))$ lying on the line $d=g$. 
Thus, all such elements lift to $\mathcal{E}^1_{*,*}$ as $\sigma_1$ does. 
Moreover, $\zeta \otimes \sigma_1 \in \mathcal{E}^1_{*,*}$ is a permanent cycle because it has bidegree of the form $(d+1,d)$ for some $d<D$, and so $d^r(\zeta \otimes \sigma_1)$ has bidegree $(d+1,d-1)$, but $\mathcal{E}^1_{d+1,d-1}=0$. 
Since $Q_{\mathbb{L}}^{\mathbf{\overline{A}}}(\sigma_0 \cdot-)$ is nullhomotopic and $S_{\mathbb{Q}}^{1,0} \otimes \mathbf{\overline{A}} \xrightarrow{\sigma_0 \cdot -} \mathbf{\overline{A}} \rightarrow \mathbf{\overline{A}}/\sigma_0$ is a cofibration sequence of $\mathbf{\overline{A}}$-modules,  $Q_{\mathbb{L}}^{\mathbf{\overline{A}}}(\mathbf{\overline{A}}/\sigma_0) \simeq \mathds{1} \oplus S_{\mathbb{Q}}^{1,1}$. 
Thus $H_{*,*}^{\mathbf{\overline{A}}}(\mathbf{\overline{A}}/\sigma_0)=\mathbb{Q}[0,0]\oplus \mathbb{Q}[1,1]$, so $\mathcal{E}^{\infty}_{d+1,d}=0$ and hence all elements $\zeta \otimes \sigma_1 \in \mathcal{E}^1_{*,*}$ are boundaries. 
Using the morphism of spectral sequences and our assumptions on $H_{*,*}(\mathbf{\overline{X}}/\sigma_0)$ it follows that $\zeta \otimes \sigma_1 \in \mathcal{F}^1_{*,*}$ is also a permanent cycle and a boundary. 
Therefore, $\mathcal{F}^{\infty}_{g,d}=0$ for $d<\min\{g,D\}$, giving the result.

\textbf{Step 2.}
We will show that $H_{g,d}^{\mathbf{\overline{A}}}(\mathbf{\overline{X}})=0$ for $d < \min\{g,D\}$.

By \cite[Section 12.2.4]{Ek}, $\mathbf{\overline{X}}/\sigma_0 \simeq \mathbf{\overline{A}}/\sigma_0 \otimes_{\mathbf{\overline{A}}}^{\mathbb{L}} \mathbf{\overline{X}}$ as left $\mathbf{\overline{A}}$-modules. 
Also, in the proof of Step 1 we showed that $Q_{\mathbb{L}}^{\mathbf{\overline{A}}}(\mathbf{\overline{A}}/\sigma_0) \simeq \mathds{1} \oplus S_{\mathbb{Q}}^{1,1}$. 
By \cite[Corollary 9.17]{Ek} $Q_{\mathbb{L}}^{\mathbf{\overline{A}}}(\mathbf{\overline{X}}/\sigma_0)= \mathds{1} \otimes_{\mathbf{\overline{A}}}^{\mathbb{L}} \mathbf{\overline{X}}/\sigma_0$ and  $Q_{\mathbb{L}}^{\mathbf{\overline{A}}}(\mathbf{\overline{X}})= \mathds{1} \otimes_{\mathbf{\overline{A}}}^{\mathbb{L}} \mathbf{\overline{X}}$. 
Thus, 
$$Q_{\mathbb{L}}^{\mathbf{\overline{A}}}(\mathbf{\overline{X}}/\sigma_0)= (\mathds{1} \oplus S_{\mathbb{Q}}^{1,1}) \otimes Q_{\mathbb{L}}^{\mathbf{\overline{A}}}(\mathbf{\overline{X}})$$
which together with Step 1 implies the required result. 

\textbf{Step 3.}
We will show that $H_{g,d}^{E_k}(\textbf{X},\textbf{A})=0$ for $d< \min\{g,D\}$.

Let $\delta:= \min \{d: H_{d+1,d}^{E_k}(\textbf{X},\textbf{A}) \neq 0\} \in \mathbb{N} \cup \{\infty\}$. 
Our goal is to show that $\delta \geq D$. 

Since $H_{g,d}^{E_k}(\textbf{X})=0=H_{g,d}^{E_k}(\textbf{A})$ for $d<g-1$ it follows that  $H_{g,d}^{E_k}(\textbf{X},\textbf{A})=0$ for $d<g-1$ and $d<\frac{\delta}{\delta+1} g$.
By \cite[Theorem 15.9]{Ek} with $\rho(g)=\frac{\delta}{\delta+1} g= \sigma(g)$, the map $H_{g,d}^{\mathbf{\overline{A}}}(\mathbf{\overline{X}},\mathbf{\overline{A}}) \rightarrow H_{g,d}^{E_k}(\textbf{X},\textbf{A})$ is surjective for $d<\frac{\delta}{\delta+1} g+1$. 
Thus, $H_{\delta+1,\delta}^{\mathbf{\overline{A}}}(\mathbf{\overline{X}}) \xrightarrow{\cong} H_{\delta+1,\delta}^{\mathbf{\overline{A}}}(\mathbf{\overline{X}},\mathbf{\overline{A}}) \rightarrow H_{\delta+1,\delta}^{E_k}(\mathbf{X},\mathbf{A}) \neq 0$ is a surjection, and hence  $H_{\delta+1,\delta}^{\mathbf{\overline{A}}}(\mathbf{\overline{X}}) \neq 0$. 
By Step 2 we have $\delta \geq D$, as required. 

\textbf{Step 4.}
Now we can finish the proof.

By Step 3 and the vanishing lines in $E_k$-homology for both $\textbf{A}$ and $\textbf{X}$ we get $H_{g,d}^{E_k}(\textbf{X},\textbf{A})=0$ for $d<\frac{D}{D+1}g$.
Applying \cite[Corollary 15.10]{Ek} with $\rho(g)=\frac{D}{D+1}g$, $\mathbf{M}=\mathbf{\overline{A}}/\sigma_0$, and $\mu(g)$ either $\frac{D}{D+1}g$ in case (i) or $\frac{D}{D+1}(g-1)$ in case (ii); we find that $H_{g,d}(B(\mathbf{\overline{A}}/\sigma_0,\mathbf{\overline{A}},\mathbf{\overline{X}}))=0$ for $d<\mu(g)$. 
Finally, \cite[Section 12.2.4, Lemma 9.16]{Ek} gives that $B(\mathbf{\overline{A}}/\sigma_0,\mathbf{\overline{A}},\mathbf{\overline{X}})\simeq \mathbf{\overline{X}}/\sigma_0$.
\end{proof}

\section{Spaces of manifolds and $E_{2n-1}$-algebras} \label{section 3}

\subsection{Spaces of manifolds} \label{section 3.1}

In this subsection we define the moduli spaces of manifolds that we will study.
Before doing so let us fix some notation: for each $s \in \mathbb{N}$ let us denote 
$$J_s:=\partial I^{s+1} \setminus \Big(\int(I^{s} \times \{1\}) \cup \int(\{1\} \times I^{s})\Big) \subset \partial I^{s+1}$$ 
where $I$ is the unit interval $[0,1]$. 
This represents the boundary of the standard $(s+1)$-cube once we remove the interiors of the top and right faces. 

From now on let $n$ denote an odd natural number with $n \geq 3$. 

\begin{definition} \label{definition moduli}
Let $A \subset I^{2n-1} \times \mathbb{R}^{\infty}$ be a smooth, $(n-2)$-connected, $(2n-1)$-manifold with corners such that $A \cap (\partial I^{2n-1} \times \mathbb{R}^{\infty})=\partial A= \partial{I^{2n-1}} \times \{0\}$, and $A$ agrees with $I^{2n-1} \times \{0\}$ in a neighbourhood of its boundary.

Then let $\mathcal{M}[A]$ be the set of all $W \subset I^{2n} \times \mathbb{R}^{\infty}$ which are smooth $2n$-submanifolds with corners such that:

\begin{enumerate}[(i)]
    \item $W$ is $(n-1)$-connected and s-parallelizable.
    \item $W \cap (\partial I^{2n} \times \mathbb{R}^{\infty})= \partial W \supset J_{2n-1}$ and $W$ agrees with $I^{2n} \times \{0\}$ in a neighbourhood of $J_{2n-1}$. 
    \item $\partial W \cap (\{1\} \times I^{2n-1} \times \mathbb{R}^{\infty})=\{1\} \times A$. 
    \item $\mathcal{D}(W):= \partial W \cap (I^{2n-1} \times \{1\} \times \mathbb{R}^{\infty}) \subset I^{2n-1} \times \{1\} \times \mathbb{R}^{\infty}$ is a smooth $(2n-1)$-submanifold with corners such that
    \begin{enumerate}
        \item $ \mathcal{D}(W) \cap (\partial I^{2n-1} \times \{1\} \times \mathbb{R}^{\infty})= \partial \mathcal{D}(W)= \partial I^{2n-1} \times \{1\} \times \{0\}$ and $\mathcal{D}(W)$ agrees with $I^{2n-1} \times \{1\} \times \{0\}$ in a neighbourhood of its boundary. 
        \item $\mathcal{D}(W)$ is contractible (hence diffeomorphic to $I^{2n-1}$ by the $h$-cobordism theorem). 
    \end{enumerate}
    \item We have a decomposition $\partial W= \partial^-W \cup \mathcal{D}(W)$, where $\partial^-W:= J_{2n-1} \cup \{1\} \times A$.
    
    \item $W$ has a \textit{product structure near its boundary}, in the following sense: 
    for each $(2n-1)$-face $F$ of $I^{2n}$ let $\pi_F: I^{2n} \rightarrow F$ be the orthogonal projection onto that face. 
    Then $W$ agrees with $\{(u,x) \in I^{2n} \times \mathbb{R}^{\infty}: (\pi_F(u),x) \in \partial W \}$
    in a neighbourhood of $F \times \mathbb{R}^{\infty}$.
\end{enumerate}

   \begin{figure}[H]
      \begin{center}
\begin{tikzpicture}[scale=3]
	\begin{scope}[xshift=.885cm,yshift=.115cm,scale=1.09]
    \draw[dashed,rotate=45,scale=0.5] (0,1)
    -- (.1,.9) 
    to[out=0, in=180] (.5,.5)
    to[out=0, in=180] (1,0)
    to[out=0, in=180] (1.5,-.5)
    to[out=0, in=180] (2,-1)
    to[out=0, in=150] (2.5,-1.5)
    --
    (2.6,-1.6)
    ;
    \end{scope}

    \draw  (1,0) -- (0,0) -- (0.5,0.5) ;
	\churro[scale=0.25, x=0.65, y=0.25]
 \churro[scale=0.3, x=1, y=0.1]
    
	\begin{scope}[xshift=1cm]
    \draw (1,0) -- (0,0);
	\churro[scale=0.25, x=0.65, y=0.25]
    \end{scope}

	\begin{scope}[xshift=2.5cm]
    \draw[densely dotted, rotate=90,scale=0.5] (0,1)
    -- (.1,.9) 
    to[out=0, in=180] (.5,.5)
    to[out=0, in=180] (.9,.1)
    -- (1,0);
    \end{scope}

   \draw[-stealth] (0,0)-- (0,.8); 
    \node at (0,.9) {$\mathbb{R}^{\infty}$};
    
\end{tikzpicture}

    \end{center}
    \caption{Picture for $n=1$: the dotted edge represents $A \subset \{1\} \times I^{2n-1} \times \mathbb{R}^{\infty}$, the dashed edge represents a given $\mathcal{D}(W) \subset I^{2n-1} \times \{1\} \times \mathbb{R}^{\infty}$, and $J_1$ consists of the remaining edges together with the vertex where the dashed and dotted edges meet.}
\end{figure}

The topology of $\mathcal{M}[A]$ is defined in two steps as follows. 

Firstly, for each $W \in \mathcal{M}[A]$ we let
$\Emb_{\partial^- W}^p(W, I^{2n} \times \mathbb{R}^{\infty})$
be the space, with the $C^{\infty}$-topology, of smooth embeddings of $W$ into $I^{2n} \times \mathbb{R}^{\infty}$, sending the interior of $W$ to $\int(I^{2n}) \times \mathbb{R}^{\infty}$, fixing pointwise a neighbourhood of $\partial^- W$, and with a product structure near $\partial W$, in the following sense:
for each $\epsilon \in \Emb_{\partial^- W}^p(W, I^{2n} \times \mathbb{R}^{\infty})$ and each $(2n-1)$-face $F$ of $I^{2n}$ we have 
$$\epsilon(\partial W \cap (F \times \mathbb{R}^{\infty}))= \partial \epsilon(W) \cap (F \times \mathbb{R}^{\infty})$$
and there is a small neighbourhood of $\partial W \cap (F \times \mathbb{R}^{\infty})$ in $W$ such that
$\epsilon(u,x)=(p_1(\epsilon(\pi_F(u),x))+u-\pi_F(u), p_2(\epsilon(\pi_F(u),x)))$ whenever $(u,x) \in I^{2n} \times \mathbb{R}^{\infty}$ lies in that neighbourhood, where $p_1, p_2$ are the projections of $I^{2n} \times \mathbb{R}^{\infty}$ to each of its factors. 

Then we get a function of sets 
$$\Emb_{\partial^- W}^p(W, I^{2n} \times \mathbb{R}^{\infty}) \rightarrow \mathcal{M}[A]$$
via $\epsilon \mapsto \epsilon(W)$; and we give $\mathcal{M}[A]$ the finest topology so that all these functions are continuous. 
\end{definition}

The most natural moduli space to consider is the collection of manifolds inside $I^{2n} \times \mathbb{R}^{\infty}$ agreeing pointwise with $I^{2n} \times \{0\}$ near their boundary, plus some conditions on the manifolds such as the $(n-1)$-connectivity and the s-parallelizability. 
Such a moduli space has a natural $E_{2n}$-algebra structure by ``re-scaling and gluing the manifolds inside the cube''. 

However, in later proofs in Section \ref{section 5} we will need to ``cut arcs of the manifolds'' and study the leftover pieces, so we cannot restrict ourselves to manifolds whose boundary is $\partial I^{2n}$. 
Instead, the only control we will have on their boundary is the $(n-2)$-connectivity. 
This is similar to the fact that \cite[Section 4]{E2} introduces surfaces with more boundary components in order to understand the ones with just one boundary component. 
The need of spaces of manifolds whose boundary is more general than just the standard disc motivates the introduction of the face $A$ in the above definition.

On the other hand, by cutting arcs we can get to manifolds whose boundary is an exotic sphere, and it will be convenient that these form part of the $E_k$-algebra too: see Remark \ref{remark D(W)} for details. 
Hence, we introduced the further freedom of $\mathcal{D}(W)$, so that even when $A$ is the standard face we allow exotic spheres as boundaries and still get an $E_k$-algebra.
This new freedom must be ``movable'', in the sense that $\mathcal{D}(W)$ depends on $W$ itself, in order to get an $E_k$-algebra structure in a natural way. 

In Section \ref{section 3.3} we will give an interpretation of the moduli spaces $\mathcal{M}[A]$ as classifying spaces of diffeomorphism groups fixing some part of the boundary, and in Section \ref{section 4.2} we will relate these to the classifying spaces of diffeomorphism groups relative the full boundary that we are originally interested in.

\subsection{$\Mhalf$ as a (graded) $E_{2n-1}$-algebra} \label{section 3.2}

In this subsection we will give an $E_{2n-1}$-algebra structure to $\Mhalf \in \mathsf{Top}$. 
This will be, up to a small modification, the $E_{2n-1}$-algebra that we will study for the rest of the paper. 

To do so recall the explicit model of the $E_{2n-1}$-operad in Section \ref{section 1.2} given by the space of rectilinear embeddings of little $(2n-1)$-cubes into the standard $I^{2n-1}$. 
The $E_{2n-1}$-algebra structure on $\Mhalf$ is then given by scaling the manifolds using the rectilinear embeddings, noting that any rectilinear embedding $e: I^{2n-1} \hookrightarrow I^{2n-1}$ gives a canonical embedding $e \times I \times \mathbb{R}^{\infty}: I^{2n}  \times \mathbb{R}^{\infty} \hookrightarrow I^{2n}  \times \mathbb{R}^{\infty}$, and then re-gluing them inside $I^{2n} \times \mathbb{R}^{\infty}$; so that outside the little cubes the manifold still agrees with $I^{2n} \times \{0\}$. 
The resulting manifold will then satisfy all the conditions of Definition \ref{definition moduli}, including the s-parallelizability: for each manifold $W$ in $\Mhalf$ we can pick a stable framing which agrees with a given one on a neighbourhood of $\partial^-W \cong D^{2n-1}$, and hence we can glue these stable framings along the above $E_{2n-1}$-product to get a stable framing. 

We want to grade this $E_{2n-1}$-algebra in order to keep track of the different components, so the natural discrete monoid to take is $\mathsf{G}_n:=\pi_0(\Mhalf)$, where the (symmetric) monoidal structure is given by the $E_{2n-1}$-multiplication. 
Thus we can view
$$\Mhalf \in \Alg_{E_{2n-1}}\big(\mathsf{Top}^{\mathsf{G}_n}\big).$$

We will describe $\mathsf{G}_n$ explicitly in Section \ref{section 3.4}. 

\subsection{Interpretation of the moduli spaces as classifying spaces of diffeomorphisms} \label{section 3.3}

For the rest of this subsection we give all the embedding and diffeomorphism spaces the $C^{\infty}$-topology. 
Also, $W$ will always be assumed to be a manifold belonging to one of the spaces $\mathcal{M}[A]$ of Definition \ref{definition moduli}. 

Let us denote by $\Diff_{\partial^- W}(W)$ the group of diffeomorphisms of $W$ fixing pointwise a neighbourhood of $\partial^- W$. 
We say that a diffeomorphism $\phi \in \Diff_{\partial^-W}(W)$ has a \textit{product structure} near the boundary if when composed with the inclusion $W \hookrightarrow I^{2n} \times \mathbb{R}^{\infty}$ it has a product structure in the sense of Definition \ref{definition moduli}.
We denote by $\Diff_{\partial^- W}^p(W)$ the group of diffeomorphisms of $W$ fixing pointwise a neighbourhood of $\partial^- W$ and with a product structure near $\partial W$.

\begin{proposition} \label{prop classifying space}
The path-component of $W \in \mathcal{M}[A]$ agrees with the image of $\Emb_{\partial^- W}^p(W, I^{2n} \times \mathbb{R}^{\infty})$ inside $\mathcal{M}[A]$, and moreover it is a model of $B\Diff_{\partial^- W}^p(W)$.
Thus we can identify 
$$\mathcal{M}[A]= \bigsqcup_{[W]}{B\Diff_{\partial^- W}^p(W)}$$
where the coproduct is taken over path-components. 
\end{proposition}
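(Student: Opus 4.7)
The plan is to identify the map $\Emb_{\partial^- W}^p(W, I^{2n} \times \mathbb{R}^{\infty}) \to \mathcal{M}[A]$, $\epsilon \mapsto \epsilon(W)$, as a principal $\Diff_{\partial^- W}^p(W)$-bundle with weakly contractible total space, whose image is exactly the path-component of $W$. Combining these facts yields the identification of the path-component with $B\Diff_{\partial^- W}^p(W)$, and then taking the disjoint union over components gives the claimed description of $\mathcal{M}[A]$.

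First, I would show that the image of $\Emb_{\partial^- W}^p(W, I^{2n} \times \mathbb{R}^{\infty})$ coincides with the path-component of $W$. The containment $\subseteq$ is immediate because a path of embeddings $\{\epsilon_t\}_{t \in I}$ produces the path $t \mapsto \epsilon_t(W)$ in $\mathcal{M}[A]$ by definition of the topology. For the reverse containment, one uses a parametrised form of isotopy extension, adapted to both the $\partial^- W$ boundary condition and the product structure near $\partial W$, to lift a path $W \leadsto W'$ in $\mathcal{M}[A]$ to a path of embeddings starting at the inclusion $W \hookrightarrow I^{2n} \times \mathbb{R}^{\infty}$.

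Second, I would show that $\Emb_{\partial^- W}^p(W, I^{2n} \times \mathbb{R}^{\infty})$ is weakly contractible. This uses the standard ``infinite-dimensional wiggle room'' argument: one can linearly interpolate between any given embedding and a fixed reference embedding after shifting into unused $\mathbb{R}^{\infty}$-coordinates, and the product-structure condition near $\partial W$ is preserved by performing the interpolation through a collar-preserving homotopy, i.e.\ only in the interior region where the product structure has not yet kicked in. Together with the action of $\Diff_{\partial^- W}^p(W)$ by precomposition, which is free and has orbits equal to the fibers of $\epsilon \mapsto \epsilon(W)$, this gives that the quotient is a model for $B\Diff_{\partial^- W}^p(W)$ once local triviality of the bundle is established, which again follows from a local version of parametrised isotopy extension, producing local sections.

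The main obstacle is making the classical arguments (contractibility of embedding spaces into $\mathbb{R}^{\infty}$, parametrised isotopy extension, local triviality of the embedding-to-submanifold map) compatible with the two non-standard features of Definition~\ref{definition moduli}: the product-structure condition near $\partial W$, and the fact that $\mathcal{D}(W)$ is allowed to vary (so the boundary itself is not fixed, only $\partial^- W$ is). For the first, one works in the collar-preserving subspace and exploits the retraction onto it; for the second, one observes that the embeddings are not constrained on $\mathcal{D}(W)$, so $\mathcal{D}(W)$ is carried along by the embedding and the usual isotopy-extension machinery applies with source $W$ and the partial boundary condition on $\partial^- W$ alone. Modulo these technical checks, which are entirely in the spirit of \cite{high}, the result then follows by the standard $\Emb / \Diff$ argument.
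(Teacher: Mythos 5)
Your proposal follows essentially the same strategy as the paper: show the image of $\Emb_{\partial^- W}^p(W, I^{2n} \times \mathbb{R}^{\infty})$ is exactly the path-component of $W$, show the embedding space is weakly contractible, and show the quotient map is a principal $\Diff_{\partial^- W}^p(W)$-bundle. The paper argues the ``image equals path-component'' step directly from the definition of the topology (finest topology making the evaluation maps continuous) together with path-connectedness of the embedding space, and appeals to the slice theorem of Michor (adapted to preserve product structures) for local triviality, whereas you derive both from parametrised isotopy extension; these are interchangeable at this level of detail, and the technical points you flag (compatibility with the product structure and with the varying face $\mathcal{D}(W)$) are exactly the ones the paper also notes as the needed adaptations.
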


\begin{proof}
The definition of the topology in $\mathcal{M}[A]$ in Definition \ref{definition moduli} forces the path-component of $W$ to lie in the image of $\Emb_{\partial^- W}^p(W, I^{2n} \times \mathbb{R}^{\infty}) \rightarrow \mathcal{M}[A]$, and since $\Emb_{\partial^- W}^p(W, I^{2n} \times \mathbb{R}^{\infty})$ is path-connected by the Whitney embedding theorem then the result follows. 

By adapting the proof of \cite[Chapter 13]{minchor} insisting that all the maps involved have a product structure, the quotient map 
$$\Emb_{\partial^- W}^p(W, I^{2n} \times \mathbb{R}^{\infty}) \rightarrow \Emb_{\partial^- W}^p(W, I^{2n} \times \mathbb{R}^{\infty})/\Diff_{\partial^- W}^p(W)$$
has slices and hence is a principal $\Diff_{\partial^- W}^p(W)$-bundle.
Moreover, its base is canonically homeomorphic to the image of $\Emb_{\partial^- W}^p(W, I^{2n} \times \mathbb{R}^{\infty}) \rightarrow \mathcal{M}[A]$. 
The result follows since $\Emb_{\partial^- W}^p(W, I^{2n} \times \mathbb{R}^{\infty})$ is weakly contractible by the Whitney embedding theorem.
\end{proof}

The following lemma says that up to homotopy the product structure plays no role. However, it is convenient to have it in our moduli spaces for technical purposes. 

\begin{lemma} \label{lem classifying space}
For any $W$ the inclusion $\Diff_{\partial^- W}^p(W) \hookrightarrow \Diff_{\partial^- W}(W)$ is a homotopy equivalence. 
Thus 
$$\mathcal{M}[A] \simeq \bigsqcup_{[W]}{B\Diff_{\partial^- W}(W)}$$
\end{lemma}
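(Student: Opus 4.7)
The plan is to construct a deformation retraction from $\Diff_{\partial^- W}(W)$ onto $\Diff_{\partial^- W}^p(W)$ by ``straightening'' diffeomorphisms near the only face where the product-structure condition is nontrivial, exploiting the contractibility of the space of collars of $\mathcal{D}(W)$ in $W$.

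Among the $(2n-1)$-faces of $I^{2n}$, the product-structure condition for $\phi \in \Diff_{\partial^- W}(W)$ is only nontrivial near $F := I^{2n-1} \times \{1\}$: along every other face, $\phi$ is already the identity (because those faces meet $\partial W$ inside $\partial^- W$), and the identity trivially satisfies the condition. Thus it suffices to deform any $\phi$ into one with a product structure in a neighbourhood of $\mathcal{D}(W) = W \cap (F \times \mathbb{R}^{\infty})$. By condition (vi) of Definition \ref{definition moduli} applied to $W$ itself, there is a canonical ``ambient'' collar $c_0 : \mathcal{D}(W) \times [0, \varepsilon) \hookrightarrow W$, with $c_0(y,t)$ obtained from $y$ by moving a distance $t$ in the $-e_{2n}$-direction inside $I^{2n} \times \mathbb{R}^{\infty}$; unwinding the product-structure clause then shows that $\phi$ has a product structure near $\mathcal{D}(W)$ if and only if $\phi \circ c_0 = c_0 \circ (\phi|_{\mathcal{D}(W)} \times \mathrm{id}_{[0,\varepsilon)})$ on some neighbourhood of $\mathcal{D}(W) \times \{0\}$.

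Given $\phi$, both $\phi \circ c_0$ and $c_0 \circ (\phi|_{\mathcal{D}(W)} \times \mathrm{id})$ are collars of $\mathcal{D}(W)$ in $W$ which agree on $\mathcal{D}(W) \times \{0\}$ and on a neighbourhood of $\partial \mathcal{D}(W) \times [0,\varepsilon)$ (the latter because $\phi$ is the identity near $\partial^- W \supset \partial \mathcal{D}(W)$). A parametric version of the uniqueness of collars then produces an isotopy from the first collar to the second, chosen continuously in $\phi$: this is possible because the space of collars extending a prescribed germ along $\mathcal{D}(W) \times \{0\}$ and $\partial \mathcal{D}(W) \times [0,\varepsilon)$ is convex, hence contractible, and the isotopy can be taken to be the identity whenever the two collars already coincide. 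A smooth cut-off supported near $\mathcal{D}(W)$ converts this into an ambient isotopy of $\phi$ inside $W$, producing a diffeomorphism $\tilde\phi \in \Diff_{\partial^- W}^p(W)$ depending continuously on $\phi$ and agreeing with $\phi$ when $\phi$ already satisfies the condition; this is a strong deformation retraction. The identification $\mathcal{M}[A] \simeq \bigsqcup_{[W]} B\Diff_{\partial^- W}(W)$ then follows immediately from Proposition \ref{prop classifying space}.

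The main technical obstacle is carrying out the straightening continuously in $\phi$: one must implement the collar-uniqueness argument parametrically and check that the cut-off construction yields a genuine smooth diffeomorphism at every stage of the interpolation. Both ingredients are standard in smoothing theory and appear implicitly throughout the diffeomorphism-space arguments of \cite{high}; once set up correctly, the homotopy inverse is completely explicit.
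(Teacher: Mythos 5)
Your argument is correct and takes essentially the same approach as the paper's: both observe that the product-structure condition is only non-trivial near the top face, use the collar of $\mathcal{D}(W)$ coming from the product structure, and deform $\phi$ to a product-preserving diffeomorphism inside that collar, continuously in $\phi$ (the paper carries out the interpolation via geodesics for an auxiliary Riemannian metric, you via parametric uniqueness of collars — two interchangeable implementations of the same step). One small imprecision: the space of collars with a prescribed germ is weakly contractible but not literally convex as a subspace of the embedding space; the correct statement is standard and the rest of your argument goes through unchanged.
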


\begin{proof}
We can construct a deformation-retraction $\Diff_{\partial^-W}(W) \rightarrow \Diff_{\partial^-W}^p(W)$ as follows: 
firstly observe that any $\phi \in \Diff_{\partial^-W}(W)$ automatically preserves the product structure everywhere except possibly at the top face. 
Secondly, the product structure of $W$ at its top face gives a collar $c: \mathcal{D}(W) \times [0,\epsilon] \hookrightarrow W$, and we pick a Riemannian metric on $\partial W$.
Finally, we use geodesic interpolation to modify $\phi$ inside the collar until it becomes product-preserving.
\end{proof}

For the particular case that $A=I^{2n-1}$ we shall denote $\Diff_{\frac{1}{2}\partial}(W):=\Diff_{\partial^- W}(W)$, because $\partial W$ splits as the union of two contractible pieces: $\partial^-W$ and $\mathcal{D}(W)$. 
We will think of $\partial^-W$ as the ``\textit{fixed half of the boundary}'' because it is the standard part of it, whereas the other piece depends on the choice of $W$. 
This notation and point of view will be used in the whole Section \ref{section 4}.

\subsection{Identifying the grading} \label{section 3.4}
In this subsection we will determine $\mathsf{G}_n=\pi_0(\Mhalf)$ explicitly, which explains the choice of grading monoids of Section \ref{section 2}. 

Any $W \in \Mhalf$ can be canonically identified with a smooth manifold $W^s$ by canonically smoothing the corners of the standard cube $I^{2n}$. 
$W$ and $W^s$ are homeomorphic and have diffeomorphic interiors.
$W^s$ is s-parallelizable, $(n-1)$-connected and its boundary is a homotopy sphere. 
By \cite[pages 165-167]{wall} we can associate to any such manifold $W^s$ a triple $(H_n({W^s}),\lambda_{W^s},q_{W^s})$, where
$H_n({W^s})$ denotes its middle homology; $\lambda_{W^s}: H_n({W^s}) \otimes H_n({W^s}) \rightarrow \mathbb{Z}$ its intersection product; and $q_{W^s}: H_n({W^s}) \rightarrow \mathbb{Z}/\Lambda_n$, 
its quadratic refinement, defined in terms of normal bundles using the stable parallelizability of $W^s$, where $\Lambda_n= \left\{ \begin{array}{lcc}
             \mathbb{Z} &   if  \; n=3,7
             \\ \mathbb{Z}/2 &  \text{otherwise.} \\
             \end{array}
   \right.$
   
Since $H_n(W)=H_n(W^s)$ we get an equivalent triple $(H_n(W),\lambda_W,q_W)$. 
By Poincaré duality and the odd parity of $n$ the bilinear form $(H_n({W}),\lambda_{W})$ is skew-symmetric and non-degenerate on a free finitely generated $\mathbb{Z}$-module, and hence it is isomorphic to the standard hyperbolic form of genus $g$ for a unique $g=g(W) \in \mathbb{N}$ called the \textit{genus} of $W$. 
When $n=3,7$ the quadratic refinement vanishes identically, whereas when $n \neq 3,7$ it takes values in $\mathbb{Z}/2$. 
For $g \geq 1$ there are precisely two isomorphism classes of $\mathbb{Z}/2$-valued quadratic refinements on the standard hyperbolic form of genus $g$, distinguished by the  so called Arf invariant. 
We define the \textit{Arf invariant} of $W$, denoted $\Arf(W) \in \{0,1\}$, to the the Arf invariant of the corresponding $q_{W}$. 
When $g=0$ the Arf invariant is just set to be $0$. 

\begin{proposition} \label{prop path components}
For $n=3,7$ the genus gives an isomorphism of monoids $\mathsf{G}_n \cong \mathbb{N}$. 
For $n \neq 3,7$ odd, $n \geq 3$, taking genus and Arf invariant gives an isomorphism of monoids $\mathsf{G}_n \cong \mathsf{H}= \{0\} \bigsqcup \mathbb{N}_{>0} \times \mathbb{Z}/2$. 
\end{proposition}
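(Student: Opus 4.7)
The plan is to show that the map $[W] \mapsto g(W)$ (when $n = 3, 7$) or $[W] \mapsto (g(W), \Arf(W))$ (otherwise) defines a monoid isomorphism onto $\mathbb{N}$ or $\mathsf{H}$ respectively. Well-definedness is immediate because both invariants depend only on the diffeomorphism type of the smoothed manifold $W^s$ and are therefore constant on path components of $\Mhalf$. For monoidality, I would observe that the $E_{2n-1}$-product scales two manifolds in the $I^{2n-1}$ direction and places them side-by-side in $I^{2n} \times \mathbb{R}^\infty$; after smoothing corners this realises the boundary connect sum $W_1^s \natural W_2^s$ along a disc in the standard piece $\partial^- W$. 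Middle homology and its forms then split as orthogonal direct sums, so $g$ and $\Arf$ are additive. Note that $g = 0$ automatically forces $\Arf = 0$ since $H_n = 0$, consistently with the target $\{0\} \sqcup \mathbb{N}_{>0} \times \mathbb{Z}/2$.

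Surjectivity would be handled by explicit models: embedding $W_{g,1} = D^{2n} \natural (S^n \times S^n)^{\natural g}$ into $I^{2n} \times \mathbb{R}^\infty$ subject to the conditions of Definition \ref{definition moduli} provides an element of $\Mhalf$ with invariants $(g, 0)$. When $n \neq 3, 7$ and $g \geq 1$, I would obtain a representative with $\Arf = 1$ by boundary-connect-summing $W_{g-1,1}$ with a Kervaire-type plumbing of two copies of the disc tangent bundle of $S^n$, which is $(n-1)$-connected, stably parallelizable, of genus one, and of Arf invariant one.

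The main obstacle is injectivity. Given $W, W' \in \Mhalf$ with matching invariants, the smoothed manifolds $W^s$ and $(W')^s$ are $(n-1)$-connected, stably parallelizable $2n$-manifolds with boundary a homotopy sphere and carry isomorphic triples $(H_n, \lambda, q)$. Wall's classification of such manifolds supplies a diffeomorphism rel a boundary disc, up to correcting by a connect sum with a homotopy $2n$-disc (equivalently, altering the boundary $(2n-1)$-sphere by an exotic summand). This residual ambiguity is exactly the freedom built into Definition \ref{definition moduli} by allowing $\mathcal{D}(W)$ to vary among contractible $(2n-1)$-manifolds with corners bounded by $\partial I^{2n-1}$: by the Kervaire--Milnor analysis, in dimension $2n - 1 \geq 5$ every element of $\Theta_{2n-1}$ arises as $\partial^- W \cup \mathcal{D}(W)$ for a suitable choice of $\mathcal{D}(W)$. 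Combining the Wall diffeomorphism with an appropriate alteration of $\mathcal{D}(W')$ then gives $[W] = [W'] \in \pi_0(\Mhalf)$, completing the argument.
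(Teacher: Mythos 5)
Your argument follows the same broad outline as the paper's proof (well-definedness and monoidality by diffeomorphism invariance and additivity under orthogonal direct sum, surjectivity via realization of forms, injectivity via Wall's classification of $(n-1)$-connected handlebodies). The surjectivity step is a harmless variation: you construct explicit models (Kervaire plumbings for $\Arf = 1$) where the paper realizes an abstract form by citing \cite[page 168]{wall} and then embeds the result into the cube via isotopy extension; both routes work, though you should in either case say a word about how the resulting manifold is equipped with the data of Definition \ref{definition moduli} (product structure, contractible $\mathcal{D}(W)$, etc.).

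The injectivity step, however, has a genuine gap. You say that Wall supplies a diffeomorphism ``rel a boundary disc, up to correcting by a connect sum with a homotopy $2n$-disc,'' and that this residual ambiguity is ``exactly the freedom built into Definition \ref{definition moduli} by allowing $\mathcal{D}(W)$ to vary.'' But the freedom in the definition is a statement about which submanifolds are \emph{allowed} in $\Mhalf$; it says nothing about which ones lie in the same \emph{path component}. You are given two fixed $W, W' \in \Mhalf$ with the same invariants, and you must produce a diffeomorphism $W' \to W$ fixing a neighbourhood of $\partial^- W' = \partial^- W$ (by Proposition \ref{prop classifying space} this is equivalent to $[W]=[W']$). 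Saying that one could have chosen $\mathcal{D}(W')$ differently does not alter the actual submanifold $W'$; and the Kervaire--Milnor observation that all of $\Theta_{2n-1}$ is realized as boundaries is about the set of path components, not about identifying two of them. The missing ingredient, which the paper supplies, is the $h$-cobordism theorem: adapting Wall's argument one first embeds $W' \hookrightarrow W$ agreeing with the identity near $\partial^-$, observes by Whitehead's theorem that the closure of $W \setminus \im(e)$ is an $h$-cobordism between the contractible ends $\mathcal{D}(W')$ and $\mathcal{D}(W)$, trivializes it, and uses the trivialization to push the inner boundary to the outer one, producing the required diffeomorphism. Without this (or an equivalent inertia-group argument), the homotopy-disc ambiguity is not actually resolved. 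There is also a secondary imprecision in your description of the ambiguity: since the boundary homotopy sphere of a stably parallelizable $(n-1)$-connected handlebody is determined by the quadratic form, the homotopy disc $D_\Sigma$ in your Wall statement must have standard boundary, so it is classified by $\Theta_{2n}$ rather than $\Theta_{2n-1}$ as your phrasing suggests.
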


\begin{proof}
Let us denote by $\mathsf{G}'_n$ the monoid $\mathbb{N}$ when $n=3,7$ and the monoid $\mathsf{H}$ otherwise. 
For each $n$ the genus and Arf invariant give a function $\Psi: \Mhalf \rightarrow \mathsf{G}'_n$. 
Since both the genus and the Arf invariant are diffeomorphism invariants then $\Psi$ factors through $\mathsf{G}_n= \pi_0(\Mhalf)$ by Proposition \ref{prop classifying space}. 
Moreover $\Psi$ is monoidal because both the genus and the Arf invariant are additive under orthogonal direct sums, and the $E_{2n-1}$-product of manifolds induces orthogonal direct sum on their associated algebraic data. 

Surjectivity: given any $x \in \mathsf{G}'_n$ we can find a corresponding quadratic form $(H,\lambda,q)$ of the correct genus and Arf invariant. 
By \cite[page 168]{wall} we can find a smooth, $(n-1)$-connected, $s$-parallelizable $2n$-manifold $W^s$ with $(H_n({W^s}),\lambda_{W^s},q_{W^s}) \cong (H,\lambda,q)$. 
The non-degeneracy of $H$ gives by Poincaré duality that $\partial W^s$ is a homotopy sphere because $\partial W^s$ is $1$-connected by construction. 
Any homotopy sphere can be realized as the canonical smoothing of a smooth $(2n-1)$-dimensional manifold with corners $\Sigma \subset \partial I^{2n} \times \mathbb{R}^{\infty}$ such that $\Sigma$ agrees with $\partial I^{2n} \times \{0\}$ in a neighbourhood of $(J_{2n-1} \cup \{1\} \times I^{2n-1}) \times \mathbb{R}^{\infty}$ and $\Sigma \cap (I^{2n-1} \times \{1\} \times \mathbb{R}^{\infty})$ is contractible. 
By the isotopy extension theorem we can then realize $W^s$ as the canonical smoothing of a manifold $W \in \Mhalf$. 

Injectivity: suppose $W, W' \in \Mhalf$ satisfy $\Psi(W)=\Psi(W')$, we will show that $[W]=[W'] \in \pi_0(\Mhalf)$. 
By definition both $W$ and $W'$ agree with $I^{2n} \times \{0\}$ on a neighbourhood $U$ of $\partial^-(W)=\partial^-(W')=J_{2n-1} \cup \{1\} \times I^{2n-1}$ in $I^{2n} \times \mathbb{R}^{\infty}$. 
We can then write both $W$ and $W'$ as $U \cap (I^{2n} \times \{0\})$ with $2g$ $n$-handles attached.
By the same argument of \cite[page 166]{wall} we can find an embedding $e: W' \hookrightarrow W$ which is the identity on $U \cap (I^{2n} \times \{0\})$: this uses that both forms $(H_n({W^s}),\lambda_{W^s},q_{W^s})$ and $(H_n({W'^s}),\lambda_{W'^s},q_{W'^s})$ are isomorphic, which holds because $\Psi(W)=\Psi(W')$ and the genus and Arf invariant completely classify non-degenerate skew-symmetric quadratic forms.  
By Whitehead's theorem the difference $\overline{W \setminus \im(e)}$ is a $h$-cobordism between the interiors of $\mathcal{D}(W)$ and $\mathcal{D}(W')$, and hence it is trivial by the contractibility of the ends and the $h$-cobordism theorem. 
We then fix a trivialization of the $h$-cobordism relative to the internal boundary and use the trivialization to push this boundary to the external one, giving a diffeomorphism $W' \xrightarrow{e'} W$ which is the identity on a neighbourhood of $\partial^-(W)=\partial^-(W')=J_{2n-1} \cup \{1\} \times I^{2n-1}$ in $I^{2n} \times \mathbb{R}^{\infty}$. 
Observe that $e'$ respects the product structure except possibly near $\mathcal{D}(W')$.
By an argument similar to the one of Lemma \ref{lem classifying space} we can use geodesic interpolation to produce a diffeomorphism $\Tilde{e}: W' \rightarrow W$ which preserves the product structure near the boundary and is the identity near $\partial^-(W)=\partial^-(W')$.

Thus, $W \in \Mhalf$ lies in the image of $\Emb_{\partial^- W'}^p(W', I^{2n} \times \mathbb{R}^{\infty})$. 
Since the later space is path-connected by the Whitney embedding theorem then $W$ and $W'$ lie on the same path-component. 
\end{proof}

\begin{rem}\label{rem representatives of path components}
We can view the manifold $W_{g,1}$ as (the smoothing of) an element in $\Mhalf$. 
Since $W_{g,1}$ has genus $g$ and Arf invariant $0$ (when defined) then:
\begin{enumerate}[(i)]
    \item When $n=3,7$ each path-component of $\Mhalf$ is represented by a $W_{g,1}$. 
    \item When $n \neq 3,7$ the path-component $(g,0) \in \mathsf{G}_n$ is represented by $W_{g,1}$. 
\end{enumerate}
\end{rem}

\subsection{The graded $E_{2n-1}$-algebra $\textbf{R}$} \label{section 3.5}

We should think of $\Mhalf$ as the $E_{2n-1}$-algebra that we study, but the CW-approximation theorem of $E_k$-algebras, \cite[Theorem 11.21]{Ek}, requires us to work with $E_k$-algebras in graded simplicial modules which are 0 in grading 0. 
In order to achieve that, we would like our algebra in spaces to be empty in grading 0.

\begin{definition}\label{definition R}
We define $\textbf{R} \in \Alg_{E_{2n-1}}(\mathsf{Top}^{\mathsf{G}_n})$ to be $\{W \in \Mhalf: g(W)>0 \}$, i.e. the subspace of manifolds in $\Mhalf$ of strictly positive genus, so that by definition $\textbf{R}(0)=\emptyset$. 
\end{definition}

By construction, the associative unital replacement $\mathbf{\overline{R}}(x)$ is path-connected  $\forall x \in \mathsf{G}_n$, and then using the monoidal structure of $\mathsf{G}_n$ of Proposition \ref{prop path components} we get

\begin{corollary}\label{cor 0 connected}
$H_{*,0}(\mathbf{\overline{R}})$ is given (as a a ring) by 
\begin{enumerate}[(i)]
    \item $\mathbb{Z}[\sigma]$, where $\sigma \in H_0(\mathbf{R}(1);\mathbb{Z})$ is the canonical generator, if $n=3,7$. 
    \item $\frac{\mathbb{Z}[\sigma_0,\sigma_1]}{(\sigma_1^2-\sigma_0^2)}$, where $\sigma_{\epsilon} \in H_0(\mathbf{R}(1,\epsilon);\mathbb{Z})$ are the canonical generators, for $n \neq 3,7$. 
\end{enumerate}
\end{corollary}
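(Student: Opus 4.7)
The plan is to identify $H_{*,0}(\mathbf{\overline{R}})$ with the monoid ring $\mathbb{Z}[\mathsf{G}_n]$ of the grading monoid and then substitute the explicit description of $\mathsf{G}_n$ from Proposition \ref{prop path components}.

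First I would verify that $\mathbf{\overline{R}}(x)$ is path-connected for every $x \in \mathsf{G}_n$. For $x=0$ this is automatic since $\mathbf{\overline{R}}(0)=\mathds{1}(0)$ is a point (the unit of the unitalization). For $x \neq 0$ we have $\mathbf{\overline{R}}(x)=\mathbf{R}(x)$, and this is a single path-component by definition: the grading monoid $\mathsf{G}_n$ was chosen to be $\pi_0(\mathcal{M}[I^{2n-1}])$, so each graded piece of $\mathbf{R}$ is by construction one path-component of $\mathcal{M}[I^{2n-1}]$ (the empty ones in grading $0$ being discarded in Definition \ref{definition R}). Consequently $H_0(\mathbf{\overline{R}}(x);\mathbb{Z})\cong \mathbb{Z}$, generated canonically by the class $[x]$ of any point in $\mathbf{\overline{R}}(x)$.

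Next I would check that the ring structure on $H_{*,0}(\mathbf{\overline{R}})$ coming from the (strictly associative) multiplication on $\mathbf{\overline{R}}$, together with Day convolution, sends $[x]\cdot[y]$ to $[x+y]$. This is immediate: the multiplication $\mathbf{\overline{R}}(x)\times \mathbf{\overline{R}}(y)\to \mathbf{\overline{R}}(x+y)$ takes the path-component of $\mathbf{\overline{R}}(x)$ into the path-component of $\mathbf{\overline{R}}(x+y)$ labelled by $x+y$, by the very definition of how $\mathsf{G}_n$ was made into a monoid. Hence $H_{*,0}(\mathbf{\overline{R}})\cong \mathbb{Z}[\mathsf{G}_n]$ as graded rings, with the canonical generator in grading $x$ corresponding to $x\in \mathbb{Z}[\mathsf{G}_n]$.

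Finally I would plug in Proposition \ref{prop path components}. For $n=3,7$ we have $\mathsf{G}_n\cong \mathbb{N}$, so the monoid ring is the polynomial ring $\mathbb{Z}[\sigma]$ with $\sigma$ the generator in grading $1$, giving (i). For $n\neq 3,7$, $\mathsf{G}_n\cong \mathsf{H}$ with generators $\sigma_0=(1,0)$ and $\sigma_1=(1,1)$; since $\sigma_0^2=(2,0)=\sigma_1^2$ in $\mathsf{H}$, there is a surjection $\mathbb{Z}[\sigma_0,\sigma_1]/(\sigma_1^2-\sigma_0^2)\to \mathbb{Z}[\mathsf{H}]$, and a quick count of monomials $\{\sigma_0^a, \sigma_0^a\sigma_1 : a\geq 0\}$ against the $\mathbb{Z}$-basis $\{(0,0)\}\cup\{(g,\epsilon) : g\geq 1\}$ of $\mathbb{Z}[\mathsf{H}]$ shows it is an isomorphism, yielding (ii). There is no real obstacle here: all the substantive input has been absorbed into Proposition \ref{prop path components}, and the remaining work is the routine identification of $H_{*,0}$ of a graded algebra with path-connected pieces with the monoid ring of its grading monoid.
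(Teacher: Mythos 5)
Your proof is correct and takes essentially the same approach as the paper: the paper dispatches this corollary in one sentence by noting that $\mathbf{\overline{R}}(x)$ is path-connected for each $x \in \mathsf{G}_n$ and appealing to the monoidal structure of $\mathsf{G}_n$ from Proposition \ref{prop path components}, which is exactly the identification $H_{*,0}(\mathbf{\overline{R}})\cong\mathbb{Z}[\mathsf{G}_n]$ that you carry out and then compute explicitly. (A minor pedantic point: the strictly associative replacement $\mathbf{\overline{R}}(x)$ is only homotopy equivalent to, not literally equal to, $\mathbf{R}(x)$ for $x\neq 0$ and to a point for $x=0$, but this makes no difference to $H_0$.)
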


The key property that $\textbf{R}$ has and that allows the study of its homological stability properties is an a-priori vanishing line on its $E_{2n-1}$-cells.
We will denote by $\rk: \mathsf{G}_n \rightarrow \mathbb{N}$ the obvious monoidal functor taking the genus. 

\begin{theoremc}
The $E_{2n-1}$-algebra $\textbf{R}$ satisfies $H_{x,d}^{E_1}(\textbf{R})=0$ for $x \in \mathsf{G}_n$ with $d<\rk(x)-1$.
\end{theoremc}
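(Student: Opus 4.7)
The plan is to reduce the vanishing of $E_1$-homology to the high connectivity of a certain space of decompositions of manifolds, which in turn is controlled by an arc complex, following the blueprint laid out in the introduction. The first step is to use the bar construction formula
\[
Q_{\mathbb{L}}^{E_1}(\mathbf{R}) \simeq B(\mathds{1}, \overline{\mathbf{R}}, \mathbf{R})
\]
from the end of Section 1.2. Via this identification, $H^{E_1}_{x,d}(\mathbf{R})$ is the homology in bidegree $(x,d)$ of a bar construction whose $p$-simplices are $\overline{\mathbf{R}}^{\otimes p} \otimes \mathbf{R}$; on each path component this is (essentially) the reduced homology of a semisimplicial space of ordered decompositions of a manifold $W$ of genus $\rk(x)$ into $p+1$ pieces glued in $I^{2n}$. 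The target vanishing range $d < \rk(x)-1$ will follow if one shows that this semisimplicial space of decompositions is $(\rk(x)-2)$-connected.

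The second step is to construct this "splitting complex" explicitly: a $p$-simplex over $W \in \mathbf{R}(x)$ should record an ordered decomposition of $W$ (glued along $\partial^- W$-respecting embeddings of sub-cubes) into $p+1$ positive-genus pieces whose genera sum to $\rk(x)$. To prove it is $(\rk(x)-2)$-connected I would introduce an auxiliary discrete object, the \emph{poset of splittings}, whose elements are isotopy classes of such splittings, and show the two have the same connectivity using a discretization argument in the style of the proof of Theorem~5.6 of \cite{high}: one thickens a continuous family of splittings to a parametrized family, takes a fine subdivision, and uses transversality with respect to the separating walls to realize it up to homotopy by a map from the geometric realization of the poset. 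This part is mostly standard once the framework is set up.

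The third step, and the heart of the matter, is to show that the poset of splittings is $(\rk(x)-2)$-connected. Following the surface argument of Theorem~3.4 in \cite{E2}, I would relate it to an arc complex $\mathrm{Arc}(W)$: a simplex is a system of disjointly embedded arcs in $W$ with prescribed endpoints on $\partial^- W$, cutting off sub-manifolds of smaller genus. One shows by a link-and-bad-simplex argument, or a Hatcher-style flow, that the poset is as connected as $\mathrm{Arc}(W)$. Thus the result is reduced to the high connectivity of the arc complex (assumed in Section~5 and proved in Section~6).

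The final and hardest step is to establish the high connectivity of $\mathrm{Arc}(W)$ itself, which I expect to be the main obstacle: the parity restriction on $n$ and the appeal to the Arf invariant classification enter here. I would first define an \emph{algebraic arc complex} whose simplices are ordered tuples of classes in $H_n(W)$ forming a hyperbolic basis with respect to $(\lambda_W,q_W)$ and orthogonal to a distinguished class; then, using that $(H_n(W),\lambda_W,q_W)$ is a non-degenerate skew-symmetric form, show by a standard unitary-transitivity/Quillen argument that this algebraic complex is highly connected. Finally, one lifts algebraic simplices to geometric ones via the Whitney trick, exactly as in Lemma~5.5 of \cite{high}: representatives of given homology classes can be made embedded and disjoint once $n \geq 3$, with the skew-symmetry of $\lambda_W$ (hence $n$ odd) ensuring that all intersection/self-intersection obstructions can be cancelled in pairs. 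Combining these four steps yields the desired vanishing $H^{E_1}_{x,d}(\mathbf{R}) = 0$ for $d < \rk(x)-1$.
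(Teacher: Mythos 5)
Your sketch follows the paper's proof almost step for step: the bar-construction formula $Q^{E_1}_{\mathbb L}(\mathbf R)\simeq B(\mathds 1,\overline{\mathbf R},\mathbf R)$, reduction to connectivity of a semisimplicial splitting space, a discretization argument in the style of \cite[Theorem 5.6]{high} to pass from a discrete splitting poset to the topologized one, a nerve-type argument reducing the poset's connectivity to that of an arc complex of embedded $n$-discs, and finally an algebraic model of the arc complex whose connectivity is transferred to the geometric one via the Whitney trick. So the overall route is the same one the paper takes.

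Two details in your final step differ from what is actually needed. First, the algebraic arc complex is not built from tuples forming a hyperbolic basis orthogonal to a distinguished class: its vertices are unimodular elements of $H_n(W)^{\vee}\cong H_n(W,\partial W)$ lying in a fixed coset $\delta$ of $\lambda_W^{\vee}(H_n(W))$, and a simplex is a \emph{jointly unimodular} set of such vertices. The quadratic refinement $q_W$ plays no role in this complex, and its high connectivity is a van der Kallen--Hime\v{s}-style unimodular-sequence argument, not a hyperbolic/Witt-type transitivity argument. This choice is forced because the geometric non-separating condition (that the cut manifold stay $(n-1)$-connected) translates precisely into joint unimodularity, and because the induction must allow manifolds with nontrivial algebraic boundary $\partial H_n(W)$. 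Second, the restriction to $n$ odd is not where you placed it: the half-Whitney trick used to make representatives embedded and disjoint works regardless of the parity of $n$ (only $2n\geq 6$ is needed). Parity enters in the purely algebraic lemma estimating how much the genus can drop when one arc is cut, which uses $\lambda(t,t)=0$ for the skew form; this bound fails for symmetric forms and is what makes the inductive nerve argument close up. Your proposal would need to be corrected on both of these points, but the global architecture of the argument is right.
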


\begin{corollary}\label{cor theorem key}
$\textbf{R}$ satisfies the standard connectivity estimate, i.e. that
$H_{x,d}^{E_{2n-1}}(\textbf{R})=0$ for $x \in \mathsf{G}_n$ with $d<\rk(x)-1$. 
Moreover, the algebras $\mathbf{R_{\mathds{k}}}$ also satisfy the standard connectivity estimate. 
\end{corollary}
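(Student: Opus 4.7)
The plan is to deduce both claims directly from Theorem \hyperref[theorem C]{C} by invoking two standard inputs from \cite{Ek}, so essentially all the real work has already been done.

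For the first claim, my approach would be to use the ``transferring vanishing lines'' machinery of \cite[Section 14]{Ek}, applied in the opposite direction to how it appears in Step 1 of the proof of Theorem \ref{theorem rational stability general} (which invoked \cite[Theorem 14.6]{Ek} to pass from $E_k$- down to $E_1$-homology). Since $\mathbf{R}$ is an $E_{2n-1}$-algebra with $2n-1 \geq 2$, restriction along the operad inclusion $E_1 \hookrightarrow E_{2n-1}$ also makes $\mathbf{R}$ an $E_1$-algebra, and Theorem \hyperref[theorem C]{C} furnishes the vanishing $H_{x,d}^{E_1}(\mathbf{R})=0$ for $d<\rk(x)-1$. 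I would then invoke the corresponding ``transferring up'' result in \cite[Section 14]{Ek} to promote this to the same vanishing range for $H_{x,d}^{E_{2n-1}}(\mathbf{R})$. Heuristically, the comparison between $E_1$- and $E_k$-indecomposables is governed by iterated bar-type constructions, which preserve vanishing lines of slope at most one.

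For the second claim, the key input is \cite[Lemma 18.2]{Ek}, which asserts that the functor $(-)_{\mathds{k}} \colon \mathsf{Top}^{\mathsf{G}_n} \to \mathsf{sMod}_{\mathds{k}}^{\mathsf{G}_n}$ commutes with derived $E_k$-indecomposables, so $Q_{\mathbb{L}}^{E_{2n-1}}(\mathbf{R}_{\mathds{k}}) \simeq Q_{\mathbb{L}}^{E_{2n-1}}(\mathbf{R})_{\mathds{k}}$. I would then appeal to the universal coefficient theorem: if a chain complex $C$ has vanishing integral homology in a range $d<N$, then in the short exact sequence $0 \to H_d(C) \otimes \mathds{k} \to H_d(C \otimes \mathds{k}) \to \operatorname{Tor}(H_{d-1}(C), \mathds{k}) \to 0$ both outer terms vanish for $d<N$, giving the same vanishing range for $\mathds{k}$-coefficient homology. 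Applied to $C = Q_{\mathbb{L}}^{E_{2n-1}}(\mathbf{R})$, this produces the desired vanishing for $\mathbf{R}_{\mathds{k}}$.

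The only real obstacle is bookkeeping: identifying the precise statement in \cite[Section 14]{Ek} that performs the $E_1 \to E_{2n-1}$ transfer at the slope we need. The substantive mathematical content of the corollary lies entirely in Theorem \hyperref[theorem C]{C}, whose proof is deferred to Sections \ref{section 5} and \ref{section 6}; once that is in hand, everything else is formal manipulation of bar constructions and universal coefficients.
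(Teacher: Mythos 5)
Your approach matches the paper's exactly. The precise transfer statement you are looking for is \cite[Theorem 14.4]{Ek}, and the paper's proof of the moreover clause cites only \cite[Lemma 18.2]{Ek} while leaving the universal-coefficient step implicit; your explicit account of both halves is correct.
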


\begin{proof}
It is an immediate consequence of Theorem \hyperref[theorem C]{C} and \cite[Theorem 14.4]{Ek}. 
The moreover part follows from \cite[Lemma 18.2]{Ek}.
\end{proof}

The proof of this theorem will be delayed until Sections \ref{section 5} and \ref{section 6}, once we have already deduced the main homological stability results from it.

\section{Homological stability results} \label{section 4}

The goal of this section is to show the homological stability results of Theorems \hyperref[theorem A]{A} and \hyperref[theorem B]{B}. 
In Sections \ref{section 4.1}, \ref{section 4.2} we will compare homological stability of diffeomorphism groups relative the full boundary to homological stability of $\mathbf{R}$, in Sections \ref{section 4.3}, \ref{section 4.4} we will do some homology computations, and finally in Sections \ref{section 4.5}, \ref{section 4.6} we put everything together using the results of Section \ref{section 2}. 

Let us begin by giving the precise definition of \textit{stabilization maps} that will be used for the rest of this section. 
Given $W_i \in \Mhalf$, $i \in \{0,1\}$ we can use the $E_1$-algebra structure on $\Mhalf$ and the element $[0,1/2] \sqcup [1/2,1] \in \mathcal{C}_1(2)$ to define the \textit{boundary connected sum} operation, which produces a new manifold $W_0 \natural W_1 \in \Mhalf$.

\begin{definition}\label{def stab}
For a fixed $W_0 \in \textbf{R}$ the \textit{(left) stabilization maps corresponding to $W_0$} are 
\begin{enumerate}[(i)]
    \item $B(\id_{W_0} \natural -): B \Diffhalf(W_1) \rightarrow B \Diffhalf(W_0 \natural W_1)$
    \item $B(\id_{W_0} \natural -): B \Diff_{\partial}(W_1) \rightarrow B \Diff_{\partial}(W_0 \natural W_1)$
\end{enumerate}
defined for any $W_1 \in \Mhalf$. 
\end{definition}

\begin{rem} \label{comparison stab R half bry}
Under the identification $\textbf{R} \simeq \bigsqcup_{[W]}{B \Diffhalf(W)}$ of Lemma \ref{lem classifying space}, stabilization map (i) agrees with the stabilization map corresponding to $W_0 \in \textbf{R}$ using the $E_{2n-1}$-algebra structure in the sense of Section \ref{section 1.2}.
\end{rem}

\subsection{A technical result} \label{section 4.1}

In order to compare the two notions of stabilisation maps of Definition \ref{def stab} we need to understand $B \Diff_{\partial}(W) \rightarrow B \Diffhalf(W)$ for an arbitrary $W \in \Mhalf$. 
The following result will be used throughout the whole of Section \ref{section 4}. 

\begin{theorem}
\label{theorem fibration}
If $W \in \Mhalf$ then
$$B \Diff_{\partial}(W) \xrightarrow{B \incl} B \Diffhalf(W) \xrightarrow{B\res} B \Diff_{\partial,0}(\mathcal{D}(W))$$
is a homotopy fibration sequence, where $\Diff_{\partial,0}(\mathcal{D}(W)) \subset \Diff_{\partial}(\mathcal{D}(W))$ denotes the path-component of the identity. 
\end{theorem}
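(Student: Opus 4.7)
The plan is to exhibit
\[
\Diff_{\partial}(W) \to \Diffhalf(W) \xrightarrow{\res} \Diff_{\partial,0}(\mathcal{D}(W))
\]
as a principal $\Diff_{\partial}(W)$-bundle of topological groups, after which the fibration sequence of classifying spaces follows by delooping. By Lemma \ref{lem classifying space} one can freely replace both $\Diffhalf(W)$ and $\Diff_{\partial}(W)$ by their product-preserving variants $\Diffhalf^p(W)$ and $\Diff_{\partial}^p(W)$ without changing homotopy types; this replacement is convenient because product-preservation automatically propagates identity behaviour from $\mathcal{D}(W)$ into a collar neighbourhood of $\mathcal{D}(W)$ in $W$.

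Under this reduction the kernel of $\res$ is identified immediately: if $\phi \in \Diffhalf^p(W)$ restricts to the identity on $\mathcal{D}(W)$, then product-preservation forces $\phi$ to be the identity on a collar of $\mathcal{D}(W)$, so $\phi \in \Diff_{\partial}^p(W)$. Local triviality of $\res$ onto its image is the standard isotopy-extension argument: given a family of diffeomorphisms of $\mathcal{D}(W)$ parameterised by a disc together with a lift at the basepoint, extend each one via the product collar to a family of diffeomorphisms of a neighbourhood of $\mathcal{D}(W)$, glue by the identity using a bump function, and compose with the basepoint lift. Applying this construction to a path in $\Diff_{\partial}(\mathcal{D}(W))$ starting at the identity produces a path in $\Diffhalf^p(W)$ starting at the identity and covering it, which proves the containment $\Diff_{\partial,0}(\mathcal{D}(W)) \subseteq \mathrm{Im}(\res)$.

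The main obstacle is the reverse containment $\mathrm{Im}(\res) \subseteq \Diff_{\partial,0}(\mathcal{D}(W))$: for every $\phi \in \Diffhalf^p(W)$ one must produce an isotopy within $\Diffhalf^p(W)$ from $\phi$ to an element of $\Diff_{\partial}^p(W)$, so that restricting this isotopy to $\mathcal{D}(W)$ exhibits $\phi|_{\mathcal{D}(W)}$ as isotopic to $\id$. I would attempt this by an ``absorbing collar'' construction: attach an external cylinder $\mathcal{D}(W) \times [0,1]$ to $W$ along $\mathcal{D}(W)$ to form a manifold $W^+$ (diffeomorphic to $W$ by the collar theorem), extend $\phi$ by the identity on the cylinder to obtain $\phi^+ \in \Diff_{\partial}^p(W^+)$, and then use a smooth deformation from the inclusion $W \hookrightarrow W^+$ to a chosen diffeomorphism $W \cong W^+$ to transport $\phi^+$ back to a path in $\Diffhalf^p(W)$ joining $\phi$ to an element of $\Diff_{\partial}^p(W)$. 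The technical subtlety is to choose this deformation so that the conjugated diffeomorphism remains a globally defined, product-preserving element of $\Diffhalf^p(W)$ throughout. Once both containments are in place, $\res$ is a principal $\Diff_{\partial}^p(W)$-bundle onto $\Diff_{\partial,0}(\mathcal{D}(W))$, delooping gives the required fibration sequence, and Lemma \ref{lem classifying space} converts the result back to the non-product-preserving groups appearing in the statement.
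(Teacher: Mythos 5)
Your reduction to showing $\mathrm{Im}(\res) \subseteq \Diff_{\partial,0}(\mathcal{D}(W))$, together with the isotopy-extension argument for local triviality and the identification of the kernel, matches the skeleton of the paper's proof. However, the ``absorbing collar'' construction for the key containment does not work.

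The first problem is that the construction is not well-defined: you propose to extend $\phi$ to $\phi^+$ on $W^+ = W \cup_{\mathcal{D}(W)} (\mathcal{D}(W) \times [0,1])$ by taking the identity on the cylinder, but $\phi$ and the identity do not agree on the gluing locus $\mathcal{D}(W) \times \{0\}$ --- indeed $\phi|_{\mathcal{D}(W)} \neq \id$ is exactly the diffeomorphism you are trying to analyse. If instead you interpolate $\phi|_{\mathcal{D}(W)}$ to $\id$ across the cylinder, you need an isotopy from $\phi|_{\mathcal{D}(W)}$ to $\id$ rel $\partial \mathcal{D}(W)$, which is precisely the conclusion sought, so the argument becomes circular.

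The deeper obstruction is that the containment $\mathrm{Im}(\res) \subseteq \Diff_{\partial,0}(\mathcal{D}(W))$ is \emph{not} a soft, generic fact: since $\mathcal{D}(W)$ is a disc, $\pi_0 \Diff_{\partial}(\mathcal{D}(W)) \cong \Theta_{2n}$ can be highly nontrivial, and a priori nothing prevents $\phi|_{\mathcal{D}(W)}$ from representing a nonzero class. Ruling this out requires genuine input from high-dimensional manifold theory that depends on $W$ being in $\Mhalf$. In the paper the argument proceeds by doubling: set $M := W \natural W$ and $\Phi := \phi \natural \id_W$; using Proposition \ref{prop path components} and Remark \ref{rem representatives of path components} one gets $M \cong W_{2g,1}$, hence $\partial M \cong S^{2n-1}$; the induced diffeomorphism of the closed manifold $W_{2g}$ shows that the relevant homotopy sphere lies in the inertia group of $W_{2g}$, which vanishes by a theorem of Kosinski; finally a $\pi_1$-surjectivity claim (using $\Emb^+(D^{2n-1},S^{2n-1}) \simeq SO(2n)$) upgrades the free isotopy to one relative to $\mathbb{D}$. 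None of these ingredients appears in, or is replaceable by, a collar-pushing argument, and a proof that makes no use of them cannot be correct because it would equally ``prove'' the statement in settings where it is false.
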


\begin{proof}
The main step is to show that for $W \in \Mhalf$ the image of 
$$\pi_0(\res): \pi_0(\Diffhalf(W)) \rightarrow \pi_0(\Diff_{\partial}(\mathcal{D}(W)))$$
is trivial.

Assuming the above, $B\res: B \Diffhalf(W) \rightarrow B \Diff_{\partial}(\mathcal{D}(W))$ induces a map $B\res: B \Diffhalf(W) \rightarrow B \Diff_{\partial,0}(\mathcal{D}(W))$.
By the isotopy extension theorem the map $\Diffhalf(W) \xrightarrow{\res} \Diff_{\partial,0}(\mathcal{D}(W))$ is a Serre fibration, and its fibre over $\id$ is  homotopy equivalent to $\Diff_{\partial}(W)$. 
Taking classifying spaces on the corresponding fibration of groups gives the result. 

Now let us show the main step: given $\phi \in \Diffhalf(W)$,  we need to prove that $\phi|_{\mathcal{D}(W)}$ is isotopic to the identity relative to $\partial \mathcal{D}(W)$. 
Let $M:= W \natural W \in \Mhalf$, $\Phi:= \phi \natural \id_{W} \in \Diff(M)$, and $\mathbb{D}:=\overline{\partial M \setminus \mathcal{D}(W_0)}$, where $W_0$ denotes the left copy of $W$:
\begin{figure}[H]
    \begin{center}
    \begin{tikzpicture}[xscale=0.75, yscale=1.5] 
		\draw[densely dotted] 
		(2,0) -- (0,0) -- (0,1.5);
		\huequito[x=1,y=.8]
		 \draw (2,1.5) -- (1.8,1.5) 
		 .. controls (1.3,.5) and (.8,2.5) .. 
		 (.2,1.5) -- (0,1.5);
		 
	\draw[dashed] (2,0) -- (2,1.5);

    \begin{scope}[xshift=2cm]
		\draw[densely dotted] (2,1.5) -- (2,0) -- (0,0);
		\huequito[x=1,y=.8]
		\draw[densely dotted] (2,1.5) -- (1.8,1.5) 
		 .. controls (1.3,.5) and (.8,2.5) .. 
		 (.2,1.5) -- (0,1.5);
    \end{scope}
    \end{tikzpicture}
    \end{center}
    \caption{Manifold $M$, where the dotted part is $\mathbb{D} \subset \partial M$ and the dashed line is where the boundary connected sum is performed}
    \label{fig:my_label}
\end{figure}

Let $\partial \Phi:= \Phi|_{\partial M} \in \Diff(\partial M, \mathbb{D})$, then it suffices to prove that $\partial \Phi$ is isotopic to $\id_{\partial M}$ relative to $\mathbb{D}$. 
Since $M \in \Mhalf$ has $g(M)=2g$, where $g=g(W)$, and $\Arf(M)=2\Arf(W)=0$, then by Proposition \ref{prop path components} and Remark \ref{rem representatives of path components} there is a diffeomorphism $f: M \xrightarrow{\cong} W_{2g,1}$. 
(Moreover, $f$ can be taken to be orientation-preserving by Proposition \ref{prop classifying space}.)
Thus, $\Phi$ induces a diffeomorphism 
\begin{equation*}
    \resizebox{\displaywidth}{!}{$W_{2g}:= W_{2g,1} \cup_{\id_{\partial D^{2n}}} D^{2n} \xrightarrow{f \circ \Phi \circ f^{-1} \cup \id_{D^{2n}}} W_{2g,1} \cup_{\partial f \circ \partial \Phi \circ \partial f^{-1}} D^{2n}=W_{2g} \# \Sigma_{[\partial f \circ \partial \Phi  \circ \partial f^{-1}]}.$}
\end{equation*}
By \cite[Theorem 3.1]{kosinski} the inertia group of $W_{2g}$ is trivial, and hence $\partial f \circ \partial \Phi \circ \partial f^{-1}$ is isotopic to $\id_{S^{2n-1}}$, so $\partial \Phi$ is isotopic to $\id_{\partial M}$. 
It suffices to show that there is an isotopy from $\partial \Phi$ to $\id_{\partial M}$ relative $\mathbb{D}$: 
fix an isotopy $H: I \rightarrow \Diff^+(\partial M)$ such that $H(0)=\id_{\partial M}$ and $H(1)= \partial \Phi$; then we get a commutative square

\centerline{\xymatrix{\partial I \ar[r]^-{\partial H} \ar[d] & \Diff(\partial M,\mathbb{D}) \ar@{^{(}->}[d] \\
I \ar[r]^-{H} \ar@{-->}[ru] & \Diff^+(\partial M)
}}

The obstruction to compress this map is
$$[(H,\partial H)] \in \pi_1(\Diff^+(\partial M),\Diff(\partial M, \mathbb{D}),\id).$$

\begin{claim}
 $\pi_1(\Diff^+(\partial M), \id) \rightarrow \pi_1(\Diff^+(\partial M),\Diff(\partial M, \mathbb{D}),\id)$ is surjective. 
\end{claim}

Given the claim we can finish the proof:
 pick a loop $\gamma: I \rightarrow \Diff^+(\partial M)$ based at $\id$ and mapping to 
 $-[(H,\partial H)] \in \pi_1(\Diff^+(\partial M),\Diff(\partial M, \mathbb{D}),\id)$, and then the new isotopy $H':= \gamma * H: I \rightarrow \Diff^+(\partial M)$ has a compression, and this will be the required isotopy. 

\begin{proof}[Proof of claim.]
 By isotopy extension $\Diff^+(\partial M) \xrightarrow{\res} \Emb^+(\mathbb{D},\partial M)$ is a fibration whose fibre over the inclusion $\mathbb{D} \subset \partial M$ is $\Diff(\partial M, \mathbb{D})$. 
Thus we can identify $\pi_1(\Diff^+(\partial M),\Diff(\partial M, \mathbb{D}),\id) \cong \pi_1(\Emb^+(\mathbb{D},\partial M), \incl)$.
Since $\mathbb{D}$ is diffeomorphic to a disc and  $\partial M \cong S^{2n-1}$ via $\partial f$ then $\Emb^+( \mathbb{D},\partial M)=\Emb^+(D^{2n-1},S^{2n-1}) \simeq \Fr^+(TS^{2n-1})= SO(2n)$.

The composition
\begin{equation*}
    \resizebox{\displaywidth}{!}{$SO(2n) \hookrightarrow \Diff^+(S^{2n-1}) \cong \Diff^+(\partial M) \rightarrow \Emb^+(\mathbb{D},\partial M) \simeq \Fr^+(TS^{2n-1})= SO(2n)$}
\end{equation*}
is a homotopy equivalence, so it induces an isomorphism on $\pi_1(-)$, giving the required surjectivity. 
\end{proof}
\end{proof}

\subsection{Comparing homological stability of \textbf{R} to diffeomorphism groups} \label{section 4.2}

Our ultimate goal is to show homological stability results about $B \Diff_{\partial}(W_{g,1})$ but as we saw in Lemma \ref{lem classifying space} the path-components of $\textbf{R}$ are models of $B \Diff_{\frac{1}{2}\partial}(W)$. 
The main result of this subsection says that homological stability of diffeomorphism groups relative the full boundary is equivalent to homological stability of $\textbf{R}$ itself. 
Before stating the precise statement let us introduce some notation: a map is called \textit{homologically K-connected} if its relative homology groups vanish in degrees $\leq K$, i.e. if it is a homology isomorphism in degrees smaller than $K$ and a homology surjection in degree $K$. 

\begin{note}
Our definition of homologically $K$-connected is called homologically $(K+1)$-connective in \cite{Ek}.
\end{note}

\begin{theorem} \label{theorem stab comparison}
Let $W_0 \in \textbf{R}$ and $W \in \Mhalf$, then
\begin{enumerate}[(i)]
    \item The stabilization map $B(\id_{W_0} \natural -): B \Diffhalf(W_1) \rightarrow B \Diffhalf(W_0 \natural W_1)$ is homologically $K$-connected (with some coefficients $\mathds{k}$) for some $K$ if and only if $B(\id_{W_0} \natural -): B \Diff_{\partial}(W_1) \rightarrow B \Diff_{\partial}(W_0 \natural W_1)$ is homologically $K$-connected (with $\mathds{k}$ coefficients).
    \item The map
    \begin{equation*}
    \begin{aligned}
        \coker\big(H_1(B\Diff_{\partial}(W)) \rightarrow
H_1(B\Diff_{\partial}(W_0 \natural W))\big) \xrightarrow{\cong} \\ \coker\big(H_1(B\Diffhalf(W)) \rightarrow H_1(B\Diffhalf(W_0 \natural W))\big)
\end{aligned}
    \end{equation*}
    is an isomorphism. 
\end{enumerate}
\end{theorem}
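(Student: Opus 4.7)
The plan is to apply Theorem~\ref{theorem fibration} to both $W$ and $W_0 \natural W$, obtaining a commutative diagram of homotopy fibration sequences

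\centerline{\xymatrix{
B\Diff_{\partial}(W) \ar[r] \ar[d] & B\Diffhalf(W) \ar[r] \ar[d] & B\Diff_{\partial,0}(\mathcal{D}(W)) \ar[d] \\
B\Diff_{\partial}(W_0 \natural W) \ar[r] & B\Diffhalf(W_0 \natural W) \ar[r] & B\Diff_{\partial,0}(\mathcal{D}(W_0 \natural W))
}}

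in which the vertical maps are induced by $\id_{W_0} \natural -$. Since the top face of a boundary connected sum is the boundary connected sum of the top faces, $\mathcal{D}(W_0 \natural W) = \mathcal{D}(W_0) \natural \mathcal{D}(W)$, and the right-hand vertical map is induced by $\phi \mapsto \id_{\mathcal{D}(W_0)} \natural \phi$.

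The first key step is to prove that this induced map of bases is a weak homotopy equivalence. Both $\mathcal{D}(W)$ and $\mathcal{D}(W_0) \natural \mathcal{D}(W)$ are diffeomorphic to $D^{2n-1}$; after fixing such an identification the induced map on $\Diff_{\partial,0}(D^{2n-1})$ is the operation of extending a diffeomorphism by the identity on a collared sub-disk, which is homotopic to the identity via an isotopy of sub-disk inclusions to the identity through embeddings (and conjugating the extended diffeomorphism along this isotopy). Moreover, $B\Diff_{\partial,0}(D^{2n-1})$ is simply connected because $\Diff_{\partial,0}(D^{2n-1})$ is by definition path-connected; in particular the Serre spectral sequences of both fibrations carry trivial local coefficient systems.

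For part (i), I compare the two Serre spectral sequences via the Zeeman comparison theorem: since the induced map on bases is a weak equivalence and the local coefficients are trivial, the map on total spaces is homologically $K$-connected with coefficients $\mathds{k}$ if and only if the map on fibers is.

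For part (ii), I use the classical five-term exact sequence
\[ H_2(E_i) \to H_2(B_i) \to H_1(F_i) \to H_1(E_i) \to H_1(B_i) \to 0 \]
obtained from each Serre spectral sequence, where $(F_i,E_i,B_i)$ is the fibration for $W$ when $i = 0$ and for $W_0 \natural W$ when $i = 1$. Comparing these two rows via the vertical maps induced by stabilization, the vertical maps on $H_1(B_i)$ and $H_2(B_i)$ are isomorphisms by the previous step. A short diagram chase, using the commutativity of the diagram to conclude that $\im(c_1) \subset \im(\alpha)$ (where $c_i : H_2(B_i) \to H_1(F_i)$ and $\alpha : H_1(F_0) \to H_1(F_1)$), yields that the induced map $\coker(\alpha) \to \coker(\beta)$ is an isomorphism, where $\beta : H_1(E_0) \to H_1(E_1)$. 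This is precisely the statement of part (ii). The main obstacle is the verification that the induced map on bases is a weak homotopy equivalence; the underlying geometry is clear, but some care is required in constructing the isotopy of embeddings realising the required homotopy of stabilization maps.
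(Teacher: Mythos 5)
Your proposal follows essentially the same strategy as the paper: apply Theorem \ref{theorem fibration} to both $W$ and $W_0 \natural W$ to get a map of fibration sequences, show that the induced map on base spaces $B\Diff_{\partial,0}(\mathcal{D}(W)) \rightarrow B\Diff_{\partial,0}(\mathcal{D}(W_0 \natural W))$ is a homotopy equivalence by identifying the top faces with standard discs and reducing to stabilisation on $B\Diff_{\partial}(D^{2n-1})$, and then conclude by a spectral-sequence comparison. Both the geometric argument for the base map and the overall reduction match the paper's. The difference is purely in how the spectral-sequence comparison is packaged. The paper proves Lemma \ref{lem 2}, which uses the \emph{relative} Serre spectral sequence $E^2_{p,q}=H_p(B;H_q(F',F)) \Rightarrow H_{p+q}(E',E)$ and therefore works over an arbitrary commutative ring $\mathds{k}$; the forward implication of part (i) is immediate from it, the converse follows from the ``moreover'' clause applied at the minimal degree where the fiber map could fail, and part (ii) is the ``moreover'' clause with $K=0$ together with the identification $H_1(E',E)\cong\coker(H_1(E)\to H_1(E'))$ for connected spaces. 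Your appeal to Zeeman's comparison theorem for part (i) is in the same spirit, but the classical statement of Zeeman's theorem assumes the $E^2$-pages split as tensor products $E^2_{p,q}\cong E^2_{p,0}\otimes E^2_{0,q}$, which holds automatically over a field but can fail for $\mathds{k}=\mathbb{Z}$ or $\mathbb{Z}[1/2]$ because of $\Tor$-terms in $H_p(B;H_q(F))$; the relative-spectral-sequence argument of Lemma \ref{lem 2} is the cleaner route precisely because the theorem has to cover these coefficient rings. Your part (ii) argument via the five-term exact sequence and a diagram chase is correct (and the key step, that $\im(c_1)\subset\im(\alpha)$, does hold since the comparison map on $H_2$ of the bases is an isomorphism), but it is more elaborate than needed.
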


In order to prove the above result we need the following result on fibrations
\begin{lemma} \label{lem 2}
Suppose we have a homotopy-commutative diagram of (homotopy) fibrations

\centerline{\xymatrix{F \ar[r] \ar[d]^-{f} & E \ar[r] \ar[d]^-{g} & B \ar[d]^-{h} \\
F' \ar[r] & E' \ar[r] & B'
}}

where $B$, $B'$ are 1-connected and $h$ is a homotopy equivalence. \\
If $H_*(F',F;\mathds{k})=0$ for $* \leq K$ then $H_*(E',E;\mathds{k})=0$ for $* \leq K$ and furthermore $H_{K+1}(F',F;\mathds{k}) \xrightarrow{\cong} H_{K+1}(E',E;\mathds{k})$.
\end{lemma}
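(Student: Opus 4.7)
The plan is to compare Serre spectral sequences of the two fibrations. First I would reduce to the situation where both fibrations share the same base: since $h \colon B \to B'$ is a homotopy equivalence, I can replace the bottom fibration by its pullback along $h$, obtaining a fibration $F' \to \tilde E' \to B$ with $\tilde E' \simeq E'$. The homotopy-commutativity of the given square then supplies a fibrewise map $E \to \tilde E'$ inducing $f$ on fibres, so without loss of generality $B = B'$ and $h = \id$.

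Next I would invoke the relative Serre spectral sequence associated to a fibrewise map of fibrations over a common $1$-connected base. Concretely, one filters $B$ by its skeleta, pulls this filtration back separately to $E$ and $E'$, and uses the induced filtration on the relative chain complex $C_*(E', E; \mathds{k})$. Since $B$ is $1$-connected the local coefficient system is trivial, and the result is a first-quadrant spectral sequence
\[
E^2_{p,q} = H_p\bigl(B;\, H_q(F', F; \mathds{k})\bigr) \;\Longrightarrow\; H_{p+q}(E', E; \mathds{k}).
\]
By hypothesis the entire strip $\{q \leq K\}$ of the $E^2$-page vanishes, which immediately gives $H_n(E', E; \mathds{k}) = 0$ for $n \leq K$. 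In total degree $K+1$ the only potentially non-zero position is $E^2_{0, K+1} \cong H_{K+1}(F', F; \mathds{k})$, and both incoming and outgoing differentials land either in the vanishing strip or outside the first quadrant, so this entry is a permanent cycle, yielding $H_{K+1}(F', F; \mathds{k}) \xrightarrow{\cong} H_{K+1}(E', E; \mathds{k})$.

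The main technical point is justifying the relative Serre spectral sequence in the generality needed. An alternative packaging avoids it: one applies Zeeman's comparison theorem to the two ordinary Serre spectral sequences, using that $h_*$ is an isomorphism on base homology and that the long exact sequence of the pair $(F', F)$ translates the hypothesis into ``$f_*$ is an iso on $H_*(F; \mathds{k})$ for $* < K$ and an epimorphism in degree $K$''. Zeeman then gives the analogous statement on total spaces, and a careful bookkeeping of the long exact sequence of $(E', E)$ in degrees $K$ and $K+1$ recovers both the vanishing and the isomorphism. Either approach should carry the proof through cleanly.
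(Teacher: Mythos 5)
Your proof is correct and takes essentially the same route as the paper: pull back the lower fibration along $h$ to reduce to a common $1$-connected base, invoke the relative Serre spectral sequence $E^2_{p,q}=H_p(B;H_q(F',F;\mathds{k}))\Rightarrow H_{p+q}(E',E;\mathds{k})$, observe that the strip $q\le K$ vanishes, and note that $E^2_{0,K+1}$ supports no non-trivial differentials in either direction. The alternative Zeeman-comparison packaging you sketch is a reasonable workaround for the relative spectral sequence, but it is not the argument the paper uses.
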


\begin{proof}

By pulling back the bottom fibration along the map $h$ and using the naturality of the Serre spectral sequence to compare this new one to the original bottom one, we reduce to the special case that $B=B'$ and $h=\id_B$.
Since $B$ is simply-connected, there is a relative Serre spectral sequence
$$E^2_{p,q}=H_p(B,H_q(F',F)) \Rightarrow H_{p+q}(E',E)$$
where we have removed the coefficients $\mathds{k}$ in the notation as they play no role in this proof. 

By assumption, $H_q(F',F)=0$ for $q \leq K$, and so $E^2_{p,q}=0$ for $p+q \leq K$, hence $H_d(E',E)=0$ for $d \leq K$. 
Moreover, on the line $p+q=K+1$ the only non-vanishing entry on the $E^2$-page of the spectral sequence is $E^2_{0,K+1}=H_{K+1}(F',F)$, so it suffices to show that this group survives to the $E^{\infty}$-page: 
it is immediate that all differentials vanish on this group, and any differential targeting position $(0,K+1)$ must come from a position of the form $(p,K+2-p)$ with $p \geq 2$, but all these entries already vanish in the $E^2$-page. 
\end{proof}

\begin{proof}[Proof of Theorem \ref{theorem stab comparison}]
By Theorem \ref{theorem fibration} there is a commutative diagram of (homotopy) fibrations with simply-connected base spaces

\label{diag 1}
\centerline{\xymatrix{ B\Diff_{\partial}(W_1) \ar[r]^-{B\incl} \ar[d]^-{B(\id_{W_0} \natural -)} & B\Diffhalf(W_1) \ar[r]^-{B\res} \ar[d]^-{B(\id_{W_0}\natural -)} & B\Diff_{\partial,0}(\mathcal{D}(W_1)) \ar[d]^-{B( \id_{\mathcal{D}(W_0)} \natural -)} \\
B\Diff_{\partial}(W_0 \natural W_1) \ar[r]^-{B\incl} & B\Diffhalf(W_0 \natural W_1) \ar[r]^-{B\res} & B\Diff_{\partial,0}(\mathcal{D}(W_0 \natural W_1))
}}

By Lemma \ref{lem 2} it suffices to show that the rightmost vertical map in the diagram is a homotopy equivalence. 
We will show that in fact
$$B(\id_{\mathcal{D}(W_0)} \natural -)): B \Diff_{\partial}(\mathcal{D}(W_1)) \rightarrow B \Diff_{\partial}(\mathcal{D}(W_0 \natural W_1))$$
is a homotopy equivalence:
there is a canonical identification 
$\mathcal{D}(W_0 \natural W_1)=\mathcal{D}(W_0) \natural \mathcal{D}(W_1)$, and both $\mathcal{D}(W_i)$ are abstractly diffeomorphic to $D^{2n-1}$, say via diffeomorphisms $\phi_i: \mathcal{D}(W_i) \xrightarrow{\cong} D^{2n-1}$.
Think of $D^{2n-1}$ as $I^{2n-1}$ and without loss of generality pick $\phi_0$ to be the identity in a neighbourhood of the rightmost face $\{1\} \times I^{2n-2}$ and $\phi_1$ to be the identity in a neighbourhood of the leftmost face $\{0\} \times I^{2n-2}$. 
Thus we get a commutative diagram in which the vertical maps are homeomorphisms

$\centerline{\xymatrix{
\Diff_{\partial}(\mathcal{D}(W_1)) \ar[r]^-{\id_{\mathcal{D}(W_0)} \natural -} \ar[d]^-{\phi_1 \circ - \circ \phi_1^{-1}} & \Diff_{\partial}(\mathcal{D}(W_0 \natural W_1)) \ar[d]^-{(\phi_0 \natural \phi_1) \circ - \circ (\phi_0 \natural \phi_1)^{-1}} \\
\Diff_{\partial}(D^{2n-1}) \ar[r]^-{\id \natural -} & \Diff_{\partial}(D^{2n-1} \natural D^{2n-1}) \ar[r]^-{=} & \Diff_{\partial}(D^{2n-1})
}}$

Thus, it suffices to show that $B(\id_{D^{2n-1}} \natural -)$ is a homotopy equivalence as a self map on $B\Diff_{\partial}(D^{2n-1})$, which is inmediate. 
\end{proof}

\subsection{Mapping class groups and arithmetic groups} \label{section 4.3}

In this subsection we study the first homology groups of $\textbf{R}$.
To do so, recall that by Lemma \ref{lem classifying space} each path-component of $\textbf{R}$ is the classifying space of a certain group of diffeomorphisms; and that for any topological group $G$, $H_1(BG;\mathbb{Z})$ is just the abelianization of $\pi_0(G)$. 
Thus, to understand the first homology of $\textbf{R}$ we need to understand some mapping class groups and their abelianizations. 
We will follow results of \cite{kreck} to achieve this. 

Given $W \in \Mhalf$, its \textit{mapping class group relative to half of the boundary} is $\Gamma_{\frac{1}{2}\partial}(W):= \pi_0(\Diffhalf(W))$. 
Let $\gamma(W):=\Aut(H_n(W),\lambda_W,q_W)$ and $g=g(W)$ be the genus of $W$. 
The arithmetic groups $\gamma(W)$ are well-known groups: 
in dimensions $n=3,7$, $\gamma(W)=\Sp_{2g}(\mathbb{Z})$ is a symplectic group, and for $n \neq 3,7$, $\gamma(W)$ is a quadratic symplectic group, in the sense of \cite[Section 1.1]{Sierra2022-st}: either $\Sp_{2g}^q(\mathbb{Z})$ if $\Arf(W)=0$ or $\Sp_{2g}^a(\mathbb{Z})$ if $\Arf(W)=1$. 

\begin{theorem}[Kreck, Krannich] \label{theorem kreck}
For $W \in \Mhalf$ there is a short exact sequence
$$1 \rightarrow \Hom_{\mathbb{Z}}(H_n(W),S\pi_n(SO(n))) \rightarrow \Gamma_{\frac{1}{2}\partial}(W) \rightarrow \gamma(W) \rightarrow 1$$
where $S\pi_n(SO(n)):=\im(\pi_{n}(SO(n)) \rightarrow \pi_{n}(SO(n+1)))$.
\end{theorem}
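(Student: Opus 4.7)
The plan is to adapt Kreck's classical computation of mapping class groups of highly connected almost-parallelizable $2n$-manifolds (as subsequently refined by Krannich) to the half-boundary setting of $\Mhalf$. The map $\Gamma_{\frac{1}{2}\partial}(W) \to \gamma(W)$ is defined as the action of a diffeomorphism on $H_n(W)$: this action preserves the intersection form $\lambda_W$ because diffeomorphisms in $\Diffhalf(W)$ are orientation-preserving, and it preserves the quadratic refinement $q_W$ because such diffeomorphisms preserve the stable framings of embedded spheres used to define $q_W$, using the s-parallelizability of $W$. Surjectivity of this map follows from Wall's classification of $(n-1)$-connected $2n$-manifolds: given $\alpha \in \gamma(W)$ one builds an abstract cobordism from $W$ to itself realizing $\alpha$, observes (via Whitehead's theorem together with the non-degeneracy of the algebraic data) that it is an $h$-cobordism, and applies the $h$-cobordism theorem. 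The freedom of moving $\mathcal{D}(W)$ is essential here, since the resulting diffeomorphism need not fix $\partial W$ pointwise.

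For the kernel, I would construct a map $\Hom_{\mathbb{Z}}(H_n(W), S\pi_n(SO(n))) \to \Gamma_{\frac{1}{2}\partial}(W)$ via Kreck's ``variation'' construction: pick an embedded basis of $H_n(W)$ by framed $n$-spheres, and given $f \in \Hom_{\mathbb{Z}}(H_n(W), S\pi_n(SO(n)))$, twist the normal framing of each basis sphere by $f$ of its homology class to produce a diffeomorphism supported near those spheres. The image lies in the Torelli kernel by construction, and injectivity is verified by reading off the framing twist of a given Torelli diffeomorphism on any basis of embedded spheres. The appearance of the stable image $S\pi_n(SO(n)) \subset \pi_n(SO(n+1))$ (as opposed to $\pi_n(SO(n))$ itself) reflects geometrically that twists by classes in $\ker(\pi_n(SO(n)) \to \pi_n(SO(n+1)))$ are precisely those that can be undone by isotopies pushing through the extra normal direction made available by $\mathcal{D}(W)$ being free to move.

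The main obstacle is the surjectivity onto the Torelli subgroup, i.e. showing every $\phi \in \ker(\Gamma_{\frac{1}{2}\partial}(W) \to \gamma(W))$ arises from such a variation. This reduces to showing that two Torelli diffeomorphisms inducing the same framing twist on a fixed basis of spheres are isotopic relative to $\partial^- W$, which can be accomplished by a relative isotopy extension argument after arranging both diffeomorphisms to coincide on a standard handle decomposition of $W$. An alternative, more homotopical route is to combine the fibration of Theorem \ref{theorem fibration} with Kreck's full-boundary theorem and identify the image of $\pi_1(\Diff_{\partial,0}(\mathcal{D}(W))) = \pi_1(\Diff_\partial(D^{2n-1}))$ in $\Gamma_\partial(W)$ with exactly the subgroup accounting for the difference between $\pi_n(SO(n))$ and $S\pi_n(SO(n))$, and then pass to the quotient via the exact sequence $\pi_1(\Diff_\partial(D^{2n-1})) \to \Gamma_\partial(W) \to \Gamma_{\frac{1}{2}\partial}(W) \to 1$. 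Either approach rests on the same geometric input, namely the unstable-to-stable passage in $\pi_n(SO)$ realized by the extra degree of freedom of moving $\mathcal{D}(W)$.
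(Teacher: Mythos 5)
There is a genuine gap, centering on where the stable image $S\pi_n(SO(n))$ comes from and what the half-boundary passage actually kills.

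Your geometric interpretation of the stable image is incorrect. You write that twists by classes in $\ker(\pi_n(SO(n)) \to \pi_n(SO(n+1)))$ ``are precisely those that can be undone by isotopies pushing through the extra normal direction made available by $\mathcal{D}(W)$ being free to move.'' In fact $S\pi_n(SO(n))$ already appears in Kreck's result for the \emph{full} boundary: \cite[Proposition 3]{kreck} gives exact sequences $1 \to \mathcal{I}_{\partial}(W) \to \Gamma_{\partial}(W) \to \gamma(W) \to 1$ and $1 \to \Theta_{2n+1} \to \mathcal{I}_{\partial}(W) \to \Hom_{\mathbb{Z}}(H_n(W),S\pi_n(SO(n))) \to 1$, with the stable image present before any half-boundary relaxation has occurred. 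The passage from $\pi_n(SO(n))$ to $S\pi_n(SO(n))$ is a consequence of the stable parallelizability of $W$ and the normal bundle analysis internal to $W$, not of the freedom to move $\mathcal{D}(W)$. What the half-boundary relaxation buys is quotienting out $\Theta_{2n+1}$: in your alternative route via Theorem \ref{theorem fibration}, the boundary map $\pi_2(B\Diff_{\partial,0}(\mathcal{D}(W))) \cong \pi_1(\Diff_\partial(D^{2n-1})) \to \Gamma_\partial(W)$ has image $\Theta_{2n+1}$ (this is where \cite[Lemma 1.2]{krannichmcg} and the identification of $\pi_1\Diff_\partial(D^{2n-1})$ with $\Theta_{2n+1}$ enter), not the subgroup you describe. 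This misidentification also explains why your proposed resolution of ``the main obstacle'' is incomplete: two Torelli diffeomorphisms of $W$ inducing the same framing twist on a basis of spheres are in general \emph{not} isotopic relative to $\partial W$ --- they differ precisely by an element of $\Theta_{2n+1}$. The relative isotopy extension argument you sketch cannot succeed without accounting for this term.

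For comparison, the paper's proof is much more modular and avoids redoing the variation construction: it invokes Kreck's two exact sequences for $\Gamma_\partial(W)$ as input (noting that the proof of \cite[Proposition 3]{kreck} goes through when $\partial W$ is a homotopy sphere rather than the standard one, since the ingredients are the handle structure, s-parallelizability, and vanishing of the inertia group --- the last of which is equivalent to the main step of Theorem \ref{theorem fibration}), then cites \cite[Lemma 1.2]{krannichmcg} to show that $\Theta_{2n+1} \to \Gamma_\partial(W)$ lands in the centre and dies in $\Gamma_{\frac{1}{2}\partial}(W)$, with $\Gamma_\partial(W)/\Theta_{2n+1} \xrightarrow{\cong} \Gamma_{\frac{1}{2}\partial}(W)$, replacing \cite[equation (1.3)]{krannichmcg} by Theorem \ref{theorem fibration} to cover general $W$. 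If you wish to pursue your direct approach you would need to (a) independently prove the analogue of Kreck's Torelli surjectivity and recognize that the obstruction is exactly $\Theta_{2n+1}$, and (b) show that this obstruction vanishes in the half-boundary setting. Both are subtler than your sketch suggests.
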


\begin{proof}
This proof is just a small generalization of the one given in \cite{krannichmcg} and \cite{kreck} to allow manifolds $W \in \mathbf{R}$ which are not $W_{g,1}$, so we will focus on explaining the differences and refer to the original papers for the details of the argument. 
By \cite[Proposition 3]{kreck} for $W \in \Mhalf$ there are short exact sequences
\begin{enumerate} [(i)]
    \item $1 \rightarrow \mathcal{I}_{\partial}(W) \rightarrow \Gamma_{\partial}(W) \rightarrow \gamma(W) \rightarrow 1$
    \item $1 \rightarrow \Theta_{2n+1} \rightarrow \mathcal{I}_{\partial}(W) \rightarrow \Hom_{\mathbb{Z}}(H_n(W),S\pi_n(SO(n))) \rightarrow 1$
\end{enumerate}
where $\Gamma_{\partial}(W):=\pi_0(\Diff_{\partial}(W))$ and $\mathcal{I}_{\partial}(W)$ is the kernel of the obvious map $\Gamma_{\partial}(W) \rightarrow \gamma(W)$.
Let us remark that \cite[Proposition 3]{kreck} assumes that $\partial W$ is the standard sphere, but the same proof works even when $\partial W$ is a homotopy sphere: 
all the ingredients that the proof uses are the handle structure of $W$ relative to a disc in its boundary, that $W$ is s-parallelizable, and that the inertia group of $W$ vanishes. 
The vanishing of the inertia group of $W$ is equivalent to the main step in the proof of Theorem \ref{theorem fibration}. 

Thus we get a short exact sequence
$$ 1 \rightarrow \Hom_{\mathbb{Z}}(H_n(W),S\pi_n(SO(n))) \rightarrow \frac{\Gamma_{\partial}(W)}{\Theta_{2n+1}} \rightarrow \gamma(W) \rightarrow 1$$

By \cite[Lemma 1.2]{krannichmcg} the image of $\Theta_{2n+1} \rightarrow \Gamma_{\partial}(W)$ is central and becomes trivial in $\Gamma_{\frac{1}{2}\partial}(W)$. 
Moreover, the induced map $\frac{\Gamma_{\partial}(W)}{\Theta_{2n+1}} \rightarrow \Gamma_{\frac{1}{2}\partial}(W)$ is an isomorphism. 
Only the case $W=W_{g,1}$ is treated in \cite{krannichmcg}, but the proof works in our context too by following the same steps and replacing \cite[equation (1.3)]{krannichmcg} by  Theorem \ref{theorem fibration}.
The result then follows. 
\end{proof}

The argument of \cite[Lemma 1.3]{krannichmcg} applies in our situation too, giving that the action of $\gamma(W)$ on $\Hom_{\mathbb{Z}}(H_n(W),S\pi_n(SO(n)))$ agrees with the standard action of $\gamma(W)$ on $H_n(W)$ and the trivial one on $S\pi_n(SO(n))$.  
Moreover, $\Hom_{\mathbb{Z}}(H_n(W),S\pi_n(SO(n))) \cong H_n(W) \otimes S\pi_n(SO(n))$ as $\gamma(W)$-modules by Poincaré duality. 
Thus Theorem \ref{theorem kreck} can be re-written in the following way. 

\begin{corollary} \label{cor kreck}
There is a short exact sequence
$$1 \rightarrow H_n(W) \otimes S\pi_n(SO(n)) \rightarrow \Gamma_{\frac{1}{2}\partial}(W) \rightarrow \gamma(W) \rightarrow 1$$
which is compatible with the $\gamma(W)$-action on the first group. 
\end{corollary}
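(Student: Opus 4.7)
The plan is to obtain Corollary \ref{cor kreck} as a direct reformulation of Theorem \ref{theorem kreck}, so the only real work is to identify the kernel $\Hom_{\mathbb{Z}}(H_n(W),S\pi_n(SO(n)))$ with $H_n(W) \otimes S\pi_n(SO(n))$ as a $\gamma(W)$-module. The short exact sequence itself is already given by Theorem \ref{theorem kreck}, so substituting a $\gamma(W)$-equivariant identification of the left-hand term will immediately yield the result.

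First I would describe the $\gamma(W)$-action on the kernel. This is the content of the paragraph immediately preceding the corollary: by the argument of \cite[Lemma 1.3]{krannichmcg}, the conjugation action of $\Gamma_{\frac{1}{2}\partial}(W)$ on $\Hom_{\mathbb{Z}}(H_n(W), S\pi_n(SO(n)))$ descends to a $\gamma(W)$-action which is the tensor of the standard (dual) action on $H_n(W)$ and the trivial action on $S\pi_n(SO(n))$. The argument of \cite[Lemma 1.3]{krannichmcg} is written for $W = W_{g,1}$, but the only inputs are the description of elements of the kernel in terms of twists along embedded $n$-spheres representing homology classes, together with $s$-parallelizability and $(n-1)$-connectedness of $W$; these all hold for $W \in \Mhalf$, so the argument carries over verbatim.

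Second I would invoke Poincaré duality to flip the $\Hom$ into a tensor. Since $W$ is $(n-1)$-connected with homotopy-sphere boundary and $n$ is odd, the intersection form $\lambda_W$ is a non-degenerate skew-symmetric pairing on the free finitely generated $\mathbb{Z}$-module $H_n(W)$, as recorded in Section \ref{section 3.4}, and by definition $\gamma(W) = \Aut(H_n(W),\lambda_W,q_W)$ preserves $\lambda_W$. Hence $\lambda_W$ induces a $\gamma(W)$-equivariant isomorphism $H_n(W) \xrightarrow{\cong} \Hom_{\mathbb{Z}}(H_n(W),\mathbb{Z})$ (the standard action on $H_n(W)$ corresponding to the dual of the standard action on the right). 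Tensoring with $S\pi_n(SO(n))$, on which $\gamma(W)$ acts trivially, yields a $\gamma(W)$-equivariant isomorphism
$$H_n(W) \otimes S\pi_n(SO(n)) \xrightarrow{\cong} \Hom_{\mathbb{Z}}(H_n(W), S\pi_n(SO(n))).$$
Substituting this into the sequence provided by Theorem \ref{theorem kreck} gives the desired short exact sequence together with the claimed compatibility of actions.

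No step looks like a genuine obstacle: the only mildly delicate point is verifying that the determination of the $\gamma(W)$-action from \cite[Lemma 1.3]{krannichmcg} goes through for general $W \in \Mhalf$ rather than just $W_{g,1}$, but this is of the same nature as the generalization already carried out in the proof of Theorem \ref{theorem kreck} and should reduce to remarking that the relevant geometric input (twists along embedded spheres in $W$) is intrinsic to the $s$-parallelization and handle structure of $W$.
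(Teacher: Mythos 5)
Your proposal is correct and follows essentially the same route as the paper: invoke \cite[Lemma 1.3]{krannichmcg} (noting it generalizes to $W \in \Mhalf$) to identify the $\gamma(W)$-action on the kernel, then use Poincaré duality via the unimodular intersection form $\lambda_W$ to pass from $\Hom_{\mathbb{Z}}(H_n(W),S\pi_n(SO(n)))$ to $H_n(W)\otimes S\pi_n(SO(n))$, and substitute into Theorem \ref{theorem kreck}. Your treatment of the duality step is slightly more explicit than the paper's, but the argument is the same.
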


The main result of this subsection is
\begin{theorem} \label{theorem stab reduction arithmetic}
Let $W_0, W \in \textbf{R}$, then the stabilization map by $W_0 \natural-$ induces an isomorphism
\begin{equation*}
    \resizebox{\displaywidth}{!}{$\coker(H_1(\Gamma_{\frac{1}{2}\partial}(W)) \rightarrow H_1(\Gamma_{\frac{1}{2}\partial}(W_0 \natural W))) \xrightarrow{\cong} \coker(H_1(\gamma(W)) \rightarrow H_1(\gamma(W_0 \natural W))).$}
\end{equation*}
\end{theorem}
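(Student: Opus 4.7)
The plan is to reduce the comparison to a very local statement by applying the Hochschild--Serre five-term exact sequence to the central extension from Corollary \ref{cor kreck}. Since the kernel $K(W) := H_n(W) \otimes S\pi_n(SO(n))$ is abelian and $\gamma(W)$ acts trivially on the second factor, the five-term sequence reads
$$H_n(W)_{\gamma(W)} \otimes S\pi_n(SO(n)) \;\longrightarrow\; H_1(\Gamma_{\tfrac{1}{2}\partial}(W)) \;\longrightarrow\; H_1(\gamma(W)) \;\longrightarrow\; 0,$$
and analogously for $W_0 \natural W$. If the leftmost term vanishes for every $W \in \mathbf{R}$, then the map $H_1(\Gamma_{\tfrac12\partial}(W)) \to H_1(\gamma(W))$ is an isomorphism (and likewise for $W_0 \natural W$). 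The theorem then follows formally: the stabilisation map by $W_0 \natural -$ gives a commutative square with these two isomorphisms as vertical arrows, and the cokernels of the two horizontal arrows are therefore canonically identified.

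So the heart of the proof is the vanishing $H_n(W)_{\gamma(W)} = 0$ for any $W \in \mathbf{R}$, i.e. whenever $g := g(W) \geq 1$. Here I would split into the two cases that already appear elsewhere in the paper. When $n = 3, 7$, we have $\gamma(W) = \Sp_{2g}(\mathbb{Z})$ acting on $H_n(W) \cong \mathbb{Z}^{2g}$ by the standard representation, and a symplectic transvection $T_v$ satisfies $(T_v - \mathrm{id})(u) = \lambda_W(u,v)\, v$, so on a standard hyperbolic basis $e_1,f_1,\dots,e_g,f_g$ the elements $T_{e_i}(f_i) - f_i = e_i$ and $T_{f_i}(e_i) - e_i = -f_i$ lie in the augmentation ideal's image; hence every basis vector vanishes in the coinvariants. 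When $n \neq 3, 7$, $\gamma(W)$ is a quadratic symplectic group $\Sp_{2g}^{q}(\mathbb{Z})$ or $\Sp_{2g}^{a}(\mathbb{Z})$; the same argument works using Eichler transvections along hyperbolic basis vectors (which have $q = 0$ and are legitimate elements of $\gamma(W)$), again producing each basis vector in the image of $(g - \mathrm{id})$ for suitable $g \in \gamma(W)$.

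The step I expect to require the most care is the quadratic case in small genus: one must check that enough Eichler transvections actually lie in $\gamma(W)$ in both Arf-invariant flavours so that the coinvariants vanish for $g = 1$ as well. The genus-$1$ case is the only one where the argument is not just a direct consequence of general transitivity results, and for the Arf~$=1$ quadratic symplectic form in genus $1$ one should verify explicitly (by exhibiting specific Eichler elements) that the coinvariants of the defining representation still vanish. Once this combinatorial input is secured, the rest of the argument is bookkeeping with the functorial five-term exact sequence applied to the map of extensions induced by $W_0 \natural -$.
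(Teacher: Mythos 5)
Your overall strategy coincides with the paper's: both start from the five-term exact sequence attached to the extension of Corollary~\ref{cor kreck} and compare the resulting two-term exact rows via the stabilisation map. However, your reduction is stronger than what is needed, and the stronger statement is in fact false.

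You claim that $(H_n(W)\otimes S\pi_n(SO(n)))_{\gamma(W)}=0$ for every $W\in\mathbf{R}$, i.e.\ for all $g(W)\geq 1$, and you derive from this that $H_1(\Gamma_{\frac{1}{2}\partial}(W))\to H_1(\gamma(W))$ is an isomorphism for each $W\in\mathbf{R}$ separately. But this fails at genus $1$ in the Arf-invariant-zero case: with $q(e)=q(f)=0$, the symplectic transvections $T_e$ and $T_f$ do not preserve $q$ (for example $q(T_e f)=q(e+f)=1\neq 0$), and $\Sp_2^q(\mathbb{Z})$ is the theta subgroup $\Gamma_\theta\subset SL_2(\mathbb{Z})$, generated by $S=\bigl(\begin{smallmatrix}0&-1\\1&0\end{smallmatrix}\bigr)$ and $T^2=\bigl(\begin{smallmatrix}1&2\\0&1\end{smallmatrix}\bigr)$. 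One checks directly that $(\mathbb{Z}^2)_{\Gamma_\theta}=\mathbb{Z}^2/\langle e-f,\,e+f\rangle\cong\mathbb{Z}/2\neq 0$, so $(\mathbb{Z}^2\otimes A)_{\Sp_2^q(\mathbb{Z})}\cong A/2A$, which need not vanish. Your appeal to Eichler transvections along hyperbolic basis vectors does not help here, precisely because in genus $1$ with Arf~$0$ those basis vectors have $q=0$ and the corresponding transvections are not in $\gamma(W)$. So the step you flagged as ``requiring the most care'' is exactly where the argument breaks.

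The fix is the paper's: instead of proving both vertical maps are isomorphisms, compare the two exact rows via the snake-lemma-style diagram chase. Because $W_0,W\in\mathbf{R}$ one has $g(W_0\natural W)\geq 2$, so the target coinvariants $(H_n(W_0\natural W)\otimes S\pi_n(SO(n)))_{\gamma(W_0\natural W)}$ vanish; this makes the leftmost vertical arrow automatically surjective, which is all that is needed to conclude that the induced map on cokernels is an isomorphism. You never need any information about the coinvariants at genus $1$, only at genus $\geq 2$, and in particular the quadratic symplectic Arf~$0$ case is only invoked for $g\geq 2$ (where \cite[Lemma A2]{krannichmcg} applies). Your write-up also omits the explicit genus-$2$ computation the paper carries out for $\Sp^a$; that, rather than genus $1$, is the case requiring an explicit hand-verification.
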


\begin{proof}
By Corollary \ref{cor kreck} we get a commutative diagram of exact sequences
\begin{equation*}
\resizebox{\displaywidth}{!}{
\xymatrix{ (H_n(W) \otimes S\pi_n(SO(n)))_{\gamma(W)} \ar[d]^-{s} \ar[r] & H_1(\Gamma_{\frac{1}{2}\partial}(W)) \ar[d]^-{s} \ar[r] & H_1(\gamma(W)) \ar[r] \ar[d]^-{s} & 0 \\
(H_n(W_0 \natural W) \otimes S\pi_n(SO(n)))_{\gamma(W_0 \natural W)} \ar[r] & H_1(\Gamma_{\frac{1}{2}\partial}(W_0 \natural W)) \ar[r] & H_1(\gamma(W_0 \natural W)) \ar[r] & 0.
}}
\end{equation*}
By the snake lemma, it suffices to show that the leftmost vertical map is surjective. 
Since $W,W_0 \in \textbf{R}$ then $g(W_0 \natural W) \geq 2$, so it suffices to show that for any $V \in \mathbf{R}$ with $g(V) \geq 2$, $(H_n(V) \otimes S\pi_n(SO(n)))_{\gamma(V)}=0$. 
We will show that for any abelian group $A$ and $g \geq 2$ we have $(\mathbb{Z}^{2g} \otimes A)_{\gamma_g}=0$ for the following three cases
\begin{enumerate}[(i)]
    \item $\gamma_g=\Sp_{2g}(\mathbb{Z})$
    \item $\gamma_g=\Sp_{2g}^q(\mathbb{Z})$
    \item $\gamma_g=\Sp_{2g}^a(\mathbb{Z})$
\end{enumerate}
where in all the cases $\gamma_g$ has the standard action on $\mathbb{Z}^{2g}$ and acts trivially on $A$. 

Cases (i) and (ii) follow from \cite[Lemma A2]{krannichmcg}. 
For case (iii) fix a hyperbolic basis $e_0, f_0, \cdots e_{g-1}, f_{g-1}$ of $\mathbb{Z}^{2g}$ with $\lambda(e_i,e_j)=\lambda(f_i,f_j)=0$, $\lambda(e_i,f_j)= \delta_{i,j}$, and
$q(e_i)=q(f_i)= \left\{ \begin{array}{lcc}
             1  &  if \; i=0
             \\ 0 &  \text{otherwise}
             \end{array}
   \right.$ $\forall i,j$,
and write $[-]$ for the residue class of a given element of $\mathbb{Z}^{2g} \otimes A$ in the coinvariants.    
The permutations of the $g-1$ hyperbolic summands generated by each pair $e_i,f_i$ with $1 \leq i \leq g-1$ show that $[e_i \otimes a]= [e_j \otimes a]$ and $[f_i \otimes a]= [f_j \otimes a]$ for any $1 \leq i,j \leq g-1$ and any $a \in A$. 
Also, for each fixed $0 \leq i \leq g-1$ the transformation $e_i \mapsto f_i, \; f_i \mapsto -e_i$ lies in $\Sp^a_{2g}(\mathbb{Z})$, and so $[e_i \otimes a]= [f_i \otimes a]$ for any $0 \leq i \leq g-1$ and any $a \in A$. 
Since $g \geq 2$, the transformation given by $e_0 \mapsto e_1-f_1, \; f_0 \mapsto f_1+ e_0+ f_0, \; e_1 \mapsto e_0-e_1+f_1, \; f_1 \mapsto f_0+e_1-f_1$ exists and lies in $\Sp^a_{2g}(\mathbb{Z})$, and hence it implies that $[e_0 \otimes a]= [e_1 \otimes a]-[f_1 \otimes a]=0$ and $[e_1 \otimes a]= [e_0 \otimes a]-[e_1 \otimes a]+[f_1 \otimes a]= [e_0 \otimes a]$ for any $a \in A$. 
\end{proof}

Now we will need the following two inputs about the homology of arithmetic groups. 
The proof of the first result can be found in \cite[Lemma A.1,(i),(ii)]{krannichmcg}, and the proof of the second in \cite[Theorems 7.6, 7.7 and 7.8]{Sierra2022-st}. 
\begin{theorem}[Krannich] \label{theorem aritheoremetic 3,7}
The stabilization map
$$H_1(B\Sp_2(\mathbb{Z});\mathbb{Z}) \xrightarrow{\sigma^{g-1} \cdot -} H_1(B\Sp_{2g}(\mathbb{Z});\mathbb{Z})$$
is always surjective. 
\end{theorem}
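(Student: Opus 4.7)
The plan is to reduce the claim to the observation that the image of the stabilization map contains a generator of the abelianization $\Sp_{2g}(\mathbb{Z})^{\mathrm{ab}} = H_1(B\Sp_{2g}(\mathbb{Z});\mathbb{Z})$.

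I would start by recalling the classical facts that $\Sp_{2g}(\mathbb{Z})$ is generated by the symplectic transvections $T_v(x) = x + \lambda(v,x)v$ for primitive $v \in \mathbb{Z}^{2g}$, and that for $g \geq 1$ the group $\Sp_{2g}(\mathbb{Z})$ acts transitively on primitive vectors. Since $gT_vg^{-1} = T_{gv}$, all such transvections are conjugate. Consequently $H_1(B\Sp_{2g}(\mathbb{Z});\mathbb{Z})$ is cyclic, generated by the class of any single transvection. (In particular, it is $\mathbb{Z}/12$ for $g=1$, $\mathbb{Z}/2$ for $g=2$, and $0$ for $g \geq 3$, but only the cyclic-with-transvection generator statement is needed.)

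Next, by Corollary \ref{cor kreck} and the fact that the boundary connected sum of manifolds in $\mathbf{R}$ induces orthogonal direct sum on middle intersection forms, the stabilization map $\sigma^{g-1} \cdot -$ is identified at the level of the arithmetic quotient with the block-diagonal inclusion $\Sp_2(\mathbb{Z}) \hookrightarrow \Sp_{2g}(\mathbb{Z})$, $M \mapsto M \oplus I_{2g-2}$. The standard transvection $\left(\begin{smallmatrix}1 & 1 \\ 0 & 1\end{smallmatrix}\right) \in SL_2(\mathbb{Z}) = \Sp_2(\mathbb{Z})$ is mapped under this inclusion to a symplectic transvection in $\Sp_{2g}(\mathbb{Z})$, which by the previous step represents a generator of $H_1(B\Sp_{2g}(\mathbb{Z});\mathbb{Z})$. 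Surjectivity follows, with the case $g=1$ being trivial since $\sigma^0 \cdot -$ is the identity.

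The step I would be most cautious about is identifying $\sigma^{g-1} \cdot -$ on $H_1$ with the block-diagonal inclusion on the symplectic quotient $\gamma(W)$: one must check that boundary connected sum of the $W_i$ induces orthogonal direct sum on the triples $(H_n,\lambda,q)$ and that the resulting map is compatible with the extension of Corollary \ref{cor kreck}. This is essentially naturality of the construction, but is the place where the translation between the topology of $\mathbf{R}$ and the arithmetic side has to be made carefully.
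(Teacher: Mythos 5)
Your argument is correct, but it takes a different route from the paper: the paper does not prove this statement itself and simply cites \cite[Lemma A.1 (i),(ii)]{krannichmcg}, whereas you supply a direct elementary argument. Your reasoning is sound at every step: $\Sp_{2g}(\mathbb{Z})$ is generated by symplectic transvections, transitivity of $\Sp_{2g}(\mathbb{Z})$ on primitive vectors (valid for all $g\ge 1$) together with $gT_vg^{-1}=T_{gv}$ makes all transvections conjugate, so $H_1(B\Sp_{2g}(\mathbb{Z});\mathbb{Z})$ is cyclic with the class of any one transvection as a generator; the stabilization map is the block-diagonal inclusion $\Sp_2(\mathbb{Z})\hookrightarrow\Sp_{2g}(\mathbb{Z})$, under which $\left(\begin{smallmatrix}1&1\\0&1\end{smallmatrix}\right)$ goes to the transvection $T_{e_1}$, which therefore generates. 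This buys self-containedness at very low cost; the paper's citation to Krannich buys brevity and keeps the arithmetic-group inputs uniform with those for the quadratic symplectic groups in Theorem \ref{theorem aritheoremetic odd}, which require more work.

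One small clarification on your final caution: the statement of Theorem \ref{theorem aritheoremetic 3,7} is purely a statement about arithmetic groups, with $\sigma^{g-1}\cdot-$ denoting the map on $H_1$ induced by the block-diagonal inclusion $\Sp_2(\mathbb{Z})\hookrightarrow\Sp_{2g}(\mathbb{Z})$. The compatibility between boundary connected sum in $\mathbf{R}$ and orthogonal direct sum of intersection forms is needed later (in Corollary \ref{cor reformulation stab} via Theorem \ref{theorem stab reduction arithmetic}), not inside this lemma. So you need not worry about that translation here; your algebraic argument stands on its own.
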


Let us denote by $\Sp_{2g}^{\epsilon}(\mathbb{Z})$ the quadratic symplectic group of genus $g$ and Arf invariant $\epsilon \in \{0,1\}$ so that $\Sp_{2g}^{0}(\mathbb{Z})=\Sp_{2g}^{q}(\mathbb{Z})$ and $\Sp_{2g}^{1}(\mathbb{Z})=\Sp_{2g}^{a}(\mathbb{Z})$. 

\begin{theorem} \label{theorem aritheoremetic odd}
For any $\epsilon \in \{0,1\}$ the stabilisation map 
$$H_1(B\Sp_2^{\delta-\epsilon}(\mathbb{Z});\mathbb{Z}) \xrightarrow{\sigma_{\epsilon} \cdot \sigma_0^{g-2} \cdot -} H_1(B\Sp_{2g}^{\delta}(\mathbb{Z});\mathbb{Z})$$
\begin{enumerate}[(i)]
    \item Has cokernel isomorphic to $\frac{\mathbb{Z}}{2\mathbb{Z}}$ generated by $Q_{\mathbb{Z}}^1(\sigma_0)=Q_{\mathbb{Z}}^1(\sigma_1)$ if $\delta=0$ and $g=2$. 
    \item Is surjective if $\delta=1$ and $g=2$. 
    \item Is surjective if $g \geq 3$ for any $\delta$. 
\end{enumerate}

\end{theorem}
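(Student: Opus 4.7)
The plan is to compute the abelianisations $H_1(B\Sp_{2g}^{\delta}(\mathbb{Z});\mathbb{Z}) = \Sp_{2g}^{\delta}(\mathbb{Z})^{\mathrm{ab}}$ directly from a Steinberg-type presentation of the quadratic symplectic groups by Eichler transvections $T_{v,t}$ attached to vectors $v \in \mathbb{Z}^{2g}$ with prescribed quadratic values, subject to the standard commutator and braid-type relations. Abelianising this presentation gives an explicit description of $H_1$, which can then be compared across genera under the stabilisation map.

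For case (iii), with $g \geq 3$, I would argue that the additional hyperbolic summands beyond genus $2$ contribute enough Steinberg relations to identify every transvection class in the abelianisation with one already in the image of stabilisation. This is a stable-range argument in the spirit of the van der Kallen/Charney style: two hyperbolic planes suffice to generate, and every further summand only introduces relations that make leftover transvection classes equal to ones coming from lower genus. The surjectivity in case (ii), $\delta = 1$ and $g = 2$, should follow from a similar but more delicate analysis, exploiting the fact that the Arf-$1$ generator $\sigma_1$ together with the exchange of the two hyperbolic planes suffices to hit every class in the abelianisation.

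The main obstacle will be case (i), where one must simultaneously (a) compute $H_1(B\Sp_4^0(\mathbb{Z}))$ explicitly and identify the cokernel of stabilisation from genus $1$ as exactly $\mathbb{Z}/2$, and (b) match the nonzero generator of this cokernel with the homotopical class $Q_{\mathbb{Z}}^1(\sigma_0)$. For part (a), I would detect the extra $\mathbb{Z}/2$ by an invariant in the spirit of a spinor norm or Maslov-type class, which is present for $\delta = 0$ but dies for $\delta = 1$. For part (b), the plan is to unwind the Dyer-Lashof operation $Q^1$ as the $\pi_1$-class of the little $2$-cubes space $\mathcal{C}_2(2)$ acting on two copies of $\sigma_0$, and to identify this with a concrete commutator of transvections in $\Sp_4^0(\mathbb{Z})$ that represents the nontrivial cokernel class. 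The equality $Q_{\mathbb{Z}}^1(\sigma_0) = Q_{\mathbb{Z}}^1(\sigma_1)$ should then follow by naturality of $Q^1$ combined with the relation $\sigma_0^2 = \sigma_1^2$ in $H_{*,0}(\mathbf{\overline{R}})$, after verifying that this symmetric-square relation forces the corresponding equality in $H_{*,1}$; this naturality check is where a careful handling of the adapters construction of Section \ref{section 1.2} will be essential.
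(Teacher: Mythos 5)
The paper itself does not prove this theorem: it is quoted from \cite[Theorems 7.6, 7.7 and 7.8]{Sierra2022-st}, and the sentence immediately preceding the statement in Section~\ref{section 4.3} says so explicitly. There is therefore no in-paper argument to compare against; you are proposing a proof of a result whose proof the author deliberately relegated to a companion paper.

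As for the plan itself, the outline of (ii) and (iii) via a transvection presentation of $\Sp_{2g}^{\delta}(\mathbb{Z})$ plus a stable-range argument is the right flavour, but you would need to actually produce or cite such a presentation for the \emph{quadratic} symplectic groups, which is not standard in the way the Chevalley-group Steinberg presentation is. The serious gap is in your part (i)(b). The claim that $Q^1_{\mathbb{Z}}(\sigma_0) = Q^1_{\mathbb{Z}}(\sigma_1)$ ``should follow by naturality of $Q^1$ combined with the relation $\sigma_0^2 = \sigma_1^2$'' is not an argument: $Q^1$ is not a ring homomorphism, and a relation among degree-$0$ classes does not formally propagate to an equality of their $Q^1$'s in degree $1$. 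Invoking the adapters construction here is also a red herring: adapters exist to endow $\mathbf{\overline{R}}/\sigma$ with a left $\mathbf{\overline{R}}$-module structure, and play no role whatsoever in computing Dyer--Lashof operations on $H_*(\mathbf{\overline{R}})$. What is actually required is a concrete verification: $Q^1_{\mathbb{Z}}(\sigma_\epsilon)$ is represented by the braiding (swap) automorphism of the genus-$2$ Arf-$0$ quadratic form induced by its decomposition into two orthogonal genus-$1$ Arf-$\epsilon$ summands, and one must show by hand that the two swap classes, for $\epsilon=0$ and $\epsilon=1$, agree modulo the image of stabilisation in $H_1(\Sp_4^0(\mathbb{Z});\mathbb{Z})$. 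These two elements of $\Sp_4^0(\mathbb{Z})$ are not conjugate to one another in any obvious way (the genus-$1$ Arf-$0$ and Arf-$1$ forms are not isomorphic, so one cannot conjugate one decomposition into the other), and so this equality is not automatic.
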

The above two results and Theorem \ref{theorem stab reduction arithmetic} imply
\begin{corollary} \label{cor reformulation stab}
\begin{enumerate}[(i)]
    \item If $n=3,7$ then the stabilization map
    $H_{1,1}(\textbf{R}) \rightarrow H_{g,1}(\textbf{R})$
    is surjective for any $g \geq 2$.
    \item If $n$ is odd, $n \neq 3,7$ and $\epsilon \in \{0,1\}$ then the stabilization map
    $$H_{(1,\delta-\epsilon),1}(\textbf{R}) \rightarrow H_{(g,\delta),1}(\textbf{R})$$
    \begin{enumerate}[(i)]
        \item Has cokernel isomorphic to $\frac{\mathbb{Z}}{2\mathbb{Z}}$ generated by $Q_{\mathbb{Z}}^1(\sigma_0)$ if $\delta=0$ and $g=2$. 
    \item Is surjective if $\delta=1$ and $g=2$. 
    \item Is surjective if $g \geq 3$ for any $\delta$. 
    In particular, $\sigma_{1-\epsilon} \cdot Q_{\mathbb{Z}}^1(\sigma_0)$ lies in the image of $\sigma_{\epsilon}^2 \cdot -:H_{(1,1-\epsilon),1}(\textbf{R}) \rightarrow H_{(3,1-\epsilon),1}(\textbf{R})$.
    \end{enumerate}
\end{enumerate}
\end{corollary}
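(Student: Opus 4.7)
The plan is to translate both parts of the corollary into statements about $H_1$ of mapping class groups, reduce via Theorem \ref{theorem stab reduction arithmetic} to statements about $H_1$ of the arithmetic groups $\gamma(W)$, and then quote Theorems \ref{theorem aritheoremetic 3,7} and \ref{theorem aritheoremetic odd} directly. First, for each $x \in \mathsf{G}_n$ I would fix a representative $W_x \in \mathbf{R}(x)$ so that by Lemma \ref{lem classifying space} the component is $\mathbf{R}(x) \simeq B\Diff_{\frac{1}{2}\partial}(W_x)$, giving $H_{x,1}(\mathbf{R};\mathbb{Z}) \cong H_1(\Gamma_{\frac{1}{2}\partial}(W_x);\mathbb{Z})$. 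Under this identification, and by Remark \ref{comparison stab R half bry}, multiplication by $\sigma_\epsilon \in H_{(1,\epsilon),0}(\mathbf{R})$ (and by $\sigma \in H_{1,0}(\mathbf{R})$ in the cases $n = 3, 7$) agrees with the map on $H_1$ of mapping class groups induced by boundary connected sum with a representative manifold of the corresponding component. Theorem \ref{theorem stab reduction arithmetic} then identifies the cokernel of each such stabilisation map with the cokernel of the corresponding stabilisation map on $H_1(\gamma(W_x);\mathbb{Z})$.

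For part (i), with $n = 3, 7$, we have $\gamma(W_x) = \Sp_{2g}(\mathbb{Z})$, so the stabilisation $\sigma^{g-1} \cdot -$ corresponds under abelianisation to the map of Theorem \ref{theorem aritheoremetic 3,7}, and surjectivity is immediate from that theorem. For part (ii), with $n$ odd and $n \neq 3, 7$, we have $\gamma(W_x) = \Sp_{2g}^{\Arf(x)}(\mathbb{Z})$, and multiplication by $\sigma_\epsilon \cdot \sigma_0^{g-2}$ corresponds to the stabilisation map of Theorem \ref{theorem aritheoremetic odd} with the same parameters $\epsilon, \delta, g$; the three subcases (a), (b), (c) follow directly. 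For the final ``in particular'' statement of (c), I would specialise case (iii) to $g = 3$, $\delta = 1 - \epsilon$ and stabilising class $\sigma_0 \cdot \sigma_0 = \sigma_0^2$, concluding that $\sigma_0^2 \cdot -: H_{(1, 1-\epsilon), 1}(\mathbf{R}) \to H_{(3, 1-\epsilon), 1}(\mathbf{R})$ is surjective. Then invoking the relation $\sigma_\epsilon^2 = \sigma_0^2$ in $H_{*,0}(\overline{\mathbf{R}})$ from Corollary \ref{cor 0 connected}, the map $\sigma_\epsilon^2 \cdot -$ equals $\sigma_0^2 \cdot -$ as maps in homology, so the class $\sigma_{1-\epsilon} \cdot Q_\mathbb{Z}^1(\sigma_0)$ lies in its image.

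The main potential obstacle is a compatibility check: one has to verify that the $E_{2n-1}$-algebra generators $\sigma_\epsilon \in H_{(1,\epsilon),0}(\mathbf{R})$ match, under the abelianisation map $\Gamma_{\frac{1}{2}\partial}(W) \to \gamma(W)$ induced by Corollary \ref{cor kreck}, the corresponding generators of $H_0$ of the arithmetic groups used as stabilising classes in Theorems \ref{theorem aritheoremetic 3,7} and \ref{theorem aritheoremetic odd}. This follows from the naturality of the short exact sequence of Corollary \ref{cor kreck} with respect to boundary connected sum together with the fact that $\sigma_\epsilon$ is the canonical generator of $H_0(\mathbf{R}(1,\epsilon))$, so no genuinely new computation is required; the remainder of the argument is merely translating between the $E_k$-algebraic and group-theoretic notations.
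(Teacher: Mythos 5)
Your proof is correct and follows essentially the same approach as the paper, which simply records the corollary as an immediate consequence of Theorems \ref{theorem aritheoremetic 3,7}, \ref{theorem aritheoremetic odd}, and \ref{theorem stab reduction arithmetic} without spelling out the translation. The bookkeeping you provide—identifying $H_{x,1}(\mathbf{R})$ with $H_1(\Gamma_{\frac12\partial}(W_x))$ via Lemma \ref{lem classifying space}, passing to $H_1(\gamma(W_x))$ via Theorem \ref{theorem stab reduction arithmetic}, and the specialisation $g=3$, $\delta=1-\epsilon$ together with $\sigma_\epsilon^2=\sigma_0^2$ for the ``in particular'' clause—is exactly the implicit content of the paper's one-line justification.
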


\subsection{Results on rational homology of diffeomorphism groups} \label{section 4.4}

The aim of this subsection is to understand the rational homology of the diffeomorphism groups we are interested in.  

\begin{theorem} [Berglund-Madsen,Krannich] \label{berglundmadsen}
For $W \in \textbf{R}$ the stabilization map  
$$H_d(B \Diff_{\partial}(W);\mathbb{Q}) \rightarrow H_d(B \Diff_{\partial}(W_{1,1} \natural W);\mathbb{Q})$$
is surjective for $d \leq \min\{g(W),3n-7\}$ and an isomorphism for $d \leq \min \{g(W)-1,3n-7\}$.
\end{theorem}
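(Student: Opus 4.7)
The plan is to reduce to the case $W = W_{g,1}$ (or its Arf-twisted analogue when $n \neq 3,7$) and then apply the rational computation of Berglund--Madsen in the stable range, together with homological stability of Galatius--Randal-Williams.

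\textbf{Step 1.} First I would handle $W=W_{g,1}$. Berglund and Madsen, extending the Madsen--Weiss-type theorem of Galatius and Randal-Williams for $W_{g,1}$, identified the stable rational cohomology $\lim_{g\to \infty} H^*(B\Diff_\partial(W_{g,1});\mathbb{Q})$ as a free graded-commutative algebra on generalised Miller--Morita--Mumford classes $\kappa_c$, where $c$ ranges over monomials in $H^*(BSO(2n);\mathbb{Q})$ of total degree $>2n$; in particular the generators have cohomological degree $\geq 2n-2$, and inspecting the polynomial algebra shows that in total degree $d\leq 3n-7$ the stable rational cohomology already agrees with that of $B\Diff_\partial(W_{1,1})$, so the stabilisation map is an isomorphism (in this restricted range) once one is in the Galatius--Randal-Williams stability range $2d \leq g-4$. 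To push the genus requirement down to $d\leq g(W)$, I would argue directly on the $E_2$-page of the homotopy orbit / Serre spectral sequence used in the Berglund--Madsen computation: for $d\leq 3n-7$ all contributions come from classes that are pulled back from $B\Diff_\partial(W_{\infty,1})$ or from the arithmetic group $\gamma(W_{g,1})$, and for the latter the relevant stability result is already classical (Borel stability for $\Sp_{2g}(\mathbb{Z})$).

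\textbf{Step 2.} Next I would handle the case $n\neq 3,7$ with $\Arf(W)=1$, where $W$ is not the standard $W_{g,1}$ but rather a distinct element of $\mathsf{G}_n$. Here I would appeal to Krannich's extension of Berglund--Madsen's model to the other orbit of the genus-$g$ quadratic refinements; the generating characteristic classes are the same $\kappa_c$ and the arithmetic piece is now $\Sp^a_{2g}(\mathbb{Z})$ (also satisfying Borel-type stability), so the argument of Step 1 goes through verbatim.

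\textbf{Step 3.} Finally I would pass from the model representative to an arbitrary $W\in\mathbf{R}$ of the same genus and Arf invariant. By Proposition \ref{prop path components}, such a $W$ differs from the standard representative only by an exotic boundary sphere, i.e.\ through the part of its mapping class group coming from $\Theta_{2n+1}$. Using the fibration sequence of Theorem \ref{theorem fibration} together with the comparison argument of Theorem \ref{theorem stab comparison}, $B\Diff_\partial(W)$ and $B\Diff_\partial(\text{model})$ differ only by the contribution of $B\Diff_{\partial,0}(\mathcal{D}(W))\simeq B\Diff_\partial(D^{2n-1})$, which is the same for $W$ and for $W_{1,1}\natural W$. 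Since $\Theta_{2n+1}$ is finite, this difference is rationally invisible, so stability for the model representative transfers to stability for $W$.

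\textbf{Main obstacle.} I expect the hardest part to be verifying that the $3n-7$ bound is really the sharp constraint coming from Berglund--Madsen's generators, i.e.\ that no generator of the stable rational cohomology appears in degree $\leq 3n-7$ beyond those already present in low genus; this requires carefully tracking which monomials $c$ in $H^*(BSO(2n);\mathbb{Q})$ produce $\kappa_c$-classes in low degree. Handling the Arf-twisted case of Step 2 is also delicate because the Berglund--Madsen model uses specific properties of the Torelli group of $W_{g,1}$ that must be re-derived for the twisted form.
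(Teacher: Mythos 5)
Your plan misidentifies the source of the degree bound $3n-7$ and, as a result, Step~1 has a real gap. The number $3n-7$ does not come from the degrees of the Miller--Morita--Mumford generators of the stable rational cohomology. It comes from the \emph{block diffeomorphism} comparison: by \cite[Proposition 4.3, Theorem 4.1]{secondderivative}, for $V\in\mathbf{R}$ the sequence $B\Diff_{\partial}(D^{2n})\to B\Diff_{\partial}(V)\to B\widetilde{\Diff}_{\partial}(V)$ is a \emph{rational} fibration in degrees $\leq 3n-7$, and this range is governed by pseudoisotopy/Weiss--Williams considerations, not by the grading of $\kappa$-classes. This is the key ingredient absent from your outline, and without it Step~1 does not work: knowing the stable rational cohomology (Berglund--Madsen/GRW) together with Galatius--Randal-Williams stability of slope $1/2$ does not yield slope-$1$ stability, and your suggestion to ``argue directly on the $E_2$-page'' to ``push the genus requirement down'' is not a proof --- the spectral sequence you need is the one that compares $B\widetilde{\Diff}_{\partial}(V)$ with the arithmetic group $\gamma(V)$ and the Torelli-type piece, and the slope-$1$ stability is Krannich's theorem on block diffeomorphisms, not something that falls out of the stable computation plus GRW. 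Your ``main obstacle'' paragraph is therefore diagnosing the wrong difficulty.

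The paper's proof runs differently: it uses the rational fibration above and Lemma~\ref{lem 2} to reduce the whole statement to stability with slope $1$ for the block diffeomorphism groups $B\widetilde{\Diff}_{\partial}(V)$, then invokes \cite[Theorem 1.1]{krannich} (extended to allow $V\in\mathbf{R}$ not of the form $W_{g,1}$, including the Arf-twisted arithmetic group $\Sp_{2g}^a(\mathbb{Z})$), and finally improves the surjectivity range by one degree via \cite[Theorem 2]{tshishiku} together with a transfer argument, Borel's computation, and \cite[Proposition 12]{improvingrationalrange}. Your Steps~2 and~3 correctly identify some auxiliary moves (the Arf-twisted case, and the reduction from a general $W$ to a model representative using Theorem~\ref{theorem fibration}/Theorem~\ref{theorem stab comparison}); note, though, that the reduction in Step~3 does not really hinge on $\Theta_{2n+1}$ being finite --- the relevant comparison is that the base $B\Diff_{\partial,0}(\mathcal{D}(W))$ has the same homotopy type for all $W$. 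But the central step --- why slope-$1$ stability holds at all in the range $d\leq 3n-7$ --- is missing, and that is the heart of the theorem.
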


Let us remark that the proof is just a small generalization of the one given in \cite[Theorem A]{krannich} to allow manifolds $W \in \mathbf{R}$ which are not $W_{g,1}$, so we will focus on explaining the differences and refer to the original paper for the details of the argument. 

\begin{proof}

By combining  \cite[Proposition 4.3, Theorem 4.1]{secondderivative} it follows that for both $V=W$ and $V=W_{1,1} \natural W$, $B \Diff_{\partial}(D^{2n}) \rightarrow B \Diff_{\partial}(V) \rightarrow B \widetilde{\Diff}_{\partial}(V)$ is a rational fibre sequence in degrees $\leq 3n-7$. 
Note that we need to generalize the above citation slightly to take into account the possibility that $V \in \textbf{R}$ is not a $W_{g,1}$, but the identical method of proof works in this case too. 
Moreover, the action of $\pi_{1}(B \widetilde{\Diff}_{\partial}(V))$ on $B \Diff_{\partial}(D^{2n})$ is trivial since any diffeomorphism can be isotoped to fix a disc near the boundary. 

Now consider the corresponding diagram of rational fibrations for both $V=W$ and $V=W_{1,1} \natural W$ with stabilization maps between the rows. 
The map on $B \Diff_{\partial}(D^{2n})$ is a self-homotopy equivalence. 
Thus, by the relative Serre spectral sequence with rational coefficients, similar to the one of Lemma \ref{lem 2}, it suffices to show that the stabilisation map 
$$H_d(B \widetilde{\Diff}_{\partial}(W);\mathbb{Q}) \rightarrow H_d(B \widetilde{\Diff}_{\partial}(W_{1,1} \natural W);\mathbb{Q})$$
is surjective for $d \leq g(W)$ and an isomorphism for $d \leq g(W)-1$.

In our context \cite[Theorem 1.1]{krannich} also applies, the only difference being that when $n \neq 3,7$ the group called $\mathbf{G_g}$ can be either $\Sp_{2g}^q$ or $\Sp_{2g}^a$, depending on the Arf invariant of the manifold. 
In order to get the surjectivity range in one degree higher than the isomorphism range we need the surjectivity range of \cite[Theorem 1.1]{krannich} to be one degree higher than the isomorphism range too.
We can improve the surjectivity range as follows: 
by \cite[Theorem 2]{tshishiku}, it suffices to show that $H_{g}(\Sp_{2g}^{\epsilon}(\mathbb{Z});\mathbb{Q}) \rightarrow H_{g}(\Sp_{2(g+1)}^{\epsilon}(\mathbb{Z});\mathbb{Q})$ is surjective when $\epsilon$ is either $q,a$ or nothing.
By transfer $H_{g}(\Sp_{2g}^{\epsilon}(\mathbb{Z});\mathbb{Q}) \rightarrow H_{g}(\Sp_{2g}(\mathbb{Z});\mathbb{Q})$ is surjective for $\epsilon$ as above, 
the stable rational homology of the quadratic symplectic groups is isomorphic to the stable rational homology of the usual symplectic groups by the work of Borel, and by \cite[Theorem 1.1]{krannich} the groups $H_{g}(\Sp_{2(g+1)}^{\epsilon}(\mathbb{Z});\mathbb{Q})$ are already stable. 
Thus, it suffices to show that $H_{g}(\Sp_{2g}(\mathbb{Z});\mathbb{Q}) \rightarrow H_{g}(\Sp_{2(g+1)}(\mathbb{Z});\mathbb{Q})$ is surjective, which is precisely \cite[Proposition 12]{improvingrationalrange}. 

The whole of \cite[Section 2]{krannich} also applies in our case: the methods of \cite{berglundmadsen} generalize to any $W \in \mathbf{R}$ and give the same expression for the homology of block diffeomorphisms homotopic to the identity as the one for $W_{g(W),1}$: 
this is because rationally both $W$ and $W_{g(W),1}$ have the same homology, intersection product and boundary map. 

Thus, the spectral sequence argument presented in \cite{krannich} also applies to our case, and the surjectivity result on arithmetic groups in one degree higher gives the required surjectivity range for Block diffeomorphisms too. 
\end{proof}

\subsection{Proof of Theorem \hyperref[theorem A]{A}} \label{section 4.5}

Now we will finally prove Theorem \hyperref[theorem A]{A} in two parts: one for $\mathbb{Z}$ and $\mathbb{Z}[1/2]$-coefficients, and another one for $\mathbb{Q}$-coefficients. 

\begin{theorem} \label{theorem integral stab}
For $n \geq 3$ odd, consider the stabilization map
$$H_d(B \Diff_{\partial}(W_{g-1,1});\mathds{k}) \rightarrow H_d(B \Diff_{\partial}(W_{g,1});\mathds{k}).$$
Then
\begin{enumerate}[(i)]
    \item If $n=3,7$ and $\mathds{k}=\mathbb{Z}$ it is surjective for $3d \leq 2g-1$ and an isomorphism for $3d \leq 2g-4$. 
    \item If $n \neq 3,7$ and $\mathds{k}=\mathbb{Z}$ it is surjective for $2d \leq g-2$ and an isomorphism for $2d \leq g-4$. 
    \item If $n \neq 3,7$ and $\mathds{k}=\mathbb{Z}[\frac{1}{2}]$ it is surjective for $3d \leq 2g-4$ and an isomorphism for $3d \leq 2g-7$. 
\end{enumerate}
\end{theorem}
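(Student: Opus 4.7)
The plan is to apply the generic $E_k$-algebra stability results of Section \ref{section 2} to the algebra $\mathbf{R}_{\mathds{k}}$ in order to obtain vanishing ranges for $H_{x,d}(\mathbf{\overline{R}}_{\mathds{k}}/\sigma_0)$; then to translate these, via the long exact sequence of the cofibration $S^{1,0}_{\mathds{k}} \otimes \mathbf{\overline{R}}_{\mathds{k}} \xrightarrow{\sigma_0 \cdot -} \mathbf{\overline{R}}_{\mathds{k}} \to \mathbf{\overline{R}}_{\mathds{k}}/\sigma_0$, into homological stability of $B\Diffhalf(W_{g,1})$; and finally to transfer the result to $B\Diff_{\partial}(W_{g,1})$ via Theorem \ref{theorem stab comparison}(i), using Remark \ref{rem representatives of path components} to identify the relevant path components of $\mathbf{R}$ with the $W_{g,1}$. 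The three ingredients demanded by the Section \ref{section 2} lemmas have all been set up in advance: the standard connectivity estimate $H^{E_{2n-1}}_{x,d}(\mathbf{R}_{\mathds{k}})=0$ for $d<\rk(x)-1$ holds by Corollary \ref{cor theorem key}; the ring $H_{*,0}(\mathbf{\overline{R}}_{\mathds{k}})$ is computed in Corollary \ref{cor 0 connected}; and the partial low-degree stability of $H_{*,1}$ needed to sharpen slope $1/2$ to slope $2/3$ is supplied by Corollary \ref{cor reformulation stab}.

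For part (ii), with $n\neq 3,7$ and $\mathds{k}=\mathbb{Z}$, Proposition \ref{prop path components} identifies $\mathsf{G}_n$ with $\mathsf{H}$ and Corollary \ref{cor 0 connected}(ii) produces the ring presentation needed, so Lemma \ref{lem stab 1} applies and gives $H_{x,d}(\mathbf{\overline{R}}_{\mathbb{Z}}/\sigma_0)=0$ for $2d\leq \rk(x)-2$; the cofibration long exact sequence then yields surjectivity of the $\sigma_0$-stabilization on $H_{g-1,d}\to H_{g,d}$ for $2d\leq g-2$ and an isomorphism for $2d\leq g-4$, which combined with Theorem \ref{theorem stab comparison}(i) is precisely (ii). Part (iii) is the same argument with Lemma \ref{lem stab 1} replaced by Lemma \ref{lem stab 2}: its hypotheses (i)--(iii) are exactly what Corollary \ref{cor reformulation stab}(ii) provides once $2$ is inverted, since the $\mathbb{Z}/2$-cokernel generated by $Q^1_{\mathds{k}}(\sigma_0)$ in genus $2$ becomes trivial over $\mathbb{Z}[1/2]$; Lemma \ref{lem stab 2} then yields $H_{x,d}(\mathbf{\overline{R}}_{\mathbb{Z}[1/2]}/\sigma_0)=0$ for $3d\leq 2\rk(x)-4$, which converts as above into the slope-$2/3$ range in (iii).

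Part (i) requires the $\mathbb{N}$-graded analogue of the argument: for $n=3,7$, Proposition \ref{prop path components} gives $\mathsf{G}_n\cong \mathbb{N}$, Corollary \ref{cor 0 connected}(i) gives the single-generator presentation $H_{*,0}(\mathbf{\overline{R}}_{\mathbb{Z}})=\mathbb{Z}[\sigma]$, and the Arf invariant is absent, so there is no $\mathbb{Z}/2$-obstruction and Corollary \ref{cor reformulation stab}(i) supplies the full surjectivity of the stabilization map on $H_{*,1}$. This is the input for the $\mathbb{N}$-graded counterpart of Lemma \ref{lem stab 2} (extracted from the same circle of ideas in \cite{Sierra2022-st}), which produces vanishing of $H_{g,d}(\mathbf{\overline{R}}_{\mathbb{Z}}/\sigma)$ in the slope-$2/3$ range required for (i). Theorem \ref{theorem stab comparison}(i) then finishes the proof.

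The main obstacle is bookkeeping rather than mathematics: each case requires a different generic stability lemma tailored to the shape of $H_{*,0}$ and the low-degree behavior of $H_{*,1}$, and case (i) in particular needs the $\mathbb{N}$-graded version of Lemma \ref{lem stab 2} since the $\mathsf{H}$-grading degenerates when $n=3,7$. Once these lemmas are in hand, the conversion from vanishing of $H_{*,*}(\mathbf{\overline{R}}_{\mathds{k}}/\sigma_0)$ to stabilization-map stability is a direct long-exact-sequence computation, and the step from $B\Diffhalf(W_{g,1})$ to $B\Diff_{\partial}(W_{g,1})$ is exactly what Theorem \ref{theorem stab comparison}(i) was engineered to deliver.
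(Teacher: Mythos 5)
Your proposal matches the paper's proof in all essentials: reduce via Remarks \ref{comparison stab R half bry}, \ref{rem representatives of path components} and Theorem \ref{theorem stab comparison} to stability of $\mathbf{R}_{\mathds{k}}$, then apply the generic stability results of Section \ref{section 2} with hypotheses verified by Corollaries \ref{cor theorem key}, \ref{cor 0 connected}, and \ref{cor reformulation stab} (plus the universal coefficients theorem when passing to $\mathbb{Z}[1/2]$). The one correction is to part (i): the paper does not construct an ``$\mathbb{N}$-graded counterpart of Lemma \ref{lem stab 2}'' from the ideas of \cite{Sierra2022-st}, but simply cites \cite[Theorem 18.1]{Ek} directly --- this is the original generic homological-stability theorem of which Lemmas \ref{lem stab 1} and \ref{lem stab 2} are refinements, and it already applies in the $\mathbb{N}$-graded setting appropriate to $n=3,7$ given the vanishing line of Corollary \ref{cor theorem key}, the ring $H_{*,0}(\mathbf{\overline{R}})=\mathbb{Z}[\sigma]$ of Corollary \ref{cor 0 connected}(i), and the $H_{*,1}$ surjectivity of Corollary \ref{cor reformulation stab}(i).
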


\begin{proof}[Proof of Theorem \ref{theorem integral stab}]
By Remarks \ref{comparison stab R half bry}, \ref{rem representatives of path components} and Theorem \ref{theorem stab comparison} it suffices to prove the corresponding stability results for $\textbf{R}$. As we explained in Section \ref{section 1.2}, we can show them for $\textbf{R}_{\mathds{k}}$ instead. 

When $n=3,7$ apply \cite[Theorem 18.1]{Ek}: the assumptions to verify are the vanishing line in $E_2$-homology, which holds by Theorem \hyperref[theorem C]{C} and Corollary \ref{cor theorem key}, the surjectivity of the stabilisation map on first homology and grading 2, which holds by Corollary \ref{cor reformulation stab},(i), and the computation of zero-th homology, which holds by Corollary \ref{cor 0 connected}.

When $n \neq 3,7$ we take $\mathds{k}=\mathbb{Z}$ for part (ii) or $\mathds{k}=\mathbb{Z}[1/2]$ for part (iii). 
Then apply Lemmas \ref{lem stab 1}, \ref{lem stab 2}: the vanishing line in $E_2$-homology holds by Theorem \hyperref[theorem C]{C} and Corollary \ref{cor theorem key}, and the remaining assumptions hold by Corollary \ref{cor 0 connected} and by the universal coefficients theorem in homology and Corollary \ref{cor reformulation stab}. 
\end{proof}

\begin{theorem} \label{theorem rational stability}
For $n \geq 3$ odd, the stabilization maps 
$$H_d(B \Diff_{\partial}(W_{g-1,1});\mathbb{Q}) \rightarrow H_d(B \Diff_{\partial}(W_{g,1});\mathbb{Q})$$
are surjective for $d< \frac{3n-6}{3n-5}(g-c_n)$ and isomorphisms for $d< \frac{3n-6}{3n-5}(g-c_n)-1$, where $c_n=0$ for $n=3,7$ and $c_n=1$ otherwise. 
\end{theorem}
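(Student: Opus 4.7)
The plan is to reduce the statement to a vanishing range for $H_{g,d}(\mathbf{\overline{R}}_{\mathbb{Q}}/\sigma_0)$ and then apply Theorem~\ref{theorem rational stability general}. Exactly as in the proof of Theorem~\ref{theorem integral stab}, Theorem~\ref{theorem stab comparison}(i) together with Remarks~\ref{comparison stab R half bry} and~\ref{rem representatives of path components} identify the relative homology of the stabilisation map $B\Diff_{\partial}(W_{g-1,1})\to B\Diff_{\partial}(W_{g,1})$ with $H_{g,d}(\mathbf{\overline{R}}_{\mathbb{Q}}/\sigma_0)$, where $\mathbf{R}_{\mathbb{Q}}$ is regarded as $\mathbb{N}$-graded by the genus (combining the two Arf components when $n\neq 3,7$). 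Via the long exact sequence of the cofibration $S^{1,0}\otimes\mathbf{\overline{R}}_{\mathbb{Q}}\xrightarrow{\sigma_0\cdot -}\mathbf{\overline{R}}_{\mathbb{Q}}\to\mathbf{\overline{R}}_{\mathbb{Q}}/\sigma_0$, the two claimed stability ranges follow as soon as one establishes $H_{g,d}(\mathbf{\overline{R}}_{\mathbb{Q}}/\sigma_0)=0$ for $d<\tfrac{3n-6}{3n-5}(g-c_n)$.

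To obtain this vanishing I would invoke Theorem~\ref{theorem rational stability general} with $k=2n-1\geq 3$, $\mathbf{X}=\mathbf{R}_{\mathbb{Q}}$, and $D=3n-6$. The required standard connectivity estimate $H_{g,d}^{E_{2n-1}}(\mathbf{R}_{\mathbb{Q}})=0$ for $d<g-1$ is provided by Corollary~\ref{cor theorem key}, and $H_{0,0}(\mathbf{R}_{\mathbb{Q}})=0$ is immediate from Definition~\ref{definition R}. The partial-stability input needed by that theorem comes from the Berglund-Madsen-Krannich range, Theorem~\ref{berglundmadsen}: applied to an arbitrary $W\in\mathbf{R}$ (not only to $W_{g,1}$), it yields surjectivity of the stabilisation in degrees $d\leq\min\{g(W),3n-7\}$ and isomorphism in degrees $d\leq\min\{g(W)-1,3n-7\}$, in both Arf components simultaneously. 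Pulling this back through Theorem~\ref{theorem stab comparison} and translating to the cofibre yields precisely $H_{g,d}(\mathbf{\overline{R}}_{\mathbb{Q}}/\sigma_0)=0$ for $d<\min\{g,3n-6\}$ in the $\mathbb{N}$-graded setting, and the exceptional identification $H_{1,0}(\mathbf{\overline{R}}_{\mathbb{Q}}/\sigma_0)=\mathbb{Q}\{\sigma_1\}$ required in case (ii) is read off directly from Corollary~\ref{cor 0 connected}(ii).

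Finally I would split on the value of $n$. For $n=3,7$ case (i) of Theorem~\ref{theorem rational stability general} applies with $\mathbf{A}=\mathbf{E_k}(S_{\mathbb{Q}}^{1,0}\sigma_0)$ as in Remark~\ref{rem application rational}, producing $H_{g,d}(\mathbf{\overline{R}}_{\mathbb{Q}}/\sigma_0)=0$ for $d<\tfrac{3n-6}{3n-5}g$, which gives the theorem with $c_n=0$. For $n\neq 3,7$ case (ii) applies with $\mathbf{A}=\mathbf{E_k}(S_{\mathbb{Q}}^{1,0}\sigma_0\oplus S_{\mathbb{Q}}^{1,0}\sigma_1)\cup_{\sigma_1^2-\sigma_0^2}^{E_k}D_{\mathbb{Q}}^{2,1}\rho$, whose well-definedness as a cell attachment is justified by the relation $\sigma_1^2=\sigma_0^2$ in Corollary~\ref{cor 0 connected}(ii); the conclusion is $H_{g,d}(\mathbf{\overline{R}}_{\mathbb{Q}}/\sigma_0)=0$ for $d<\tfrac{3n-6}{3n-5}(g-1)$, giving the theorem with $c_n=1$. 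No additional geometric input is needed beyond Theorem~\hyperref[theorem C]{C}, Theorem~\ref{theorem stab comparison} and Theorem~\ref{berglundmadsen}; the only non-routine point is identifying $D=3n-6$ as the correct partial-stability constant emerging from Theorem~\ref{berglundmadsen} and carefully translating the Arf-graded bookkeeping into the $\mathbb{N}$-grading when $n\neq 3,7$, both of which are handled by the ingredients cited above.
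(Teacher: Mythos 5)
Your proposal is correct and follows essentially the same route as the paper: reduce to a vanishing range for $H_{g,d}(\mathbf{\overline{R}}_{\mathbb{Q}}/\sigma_0)$ via Theorem~\ref{theorem stab comparison}, feed the standard connectivity estimate from Corollary~\ref{cor theorem key} and the Berglund--Madsen--Krannich partial stability (Theorem~\ref{berglundmadsen}) with $D=3n-6$ into Theorem~\ref{theorem rational stability general}, using the cellular models $\mathbf{A}\to\mathbf{R}_{\mathbb{Q}}$ of Remark~\ref{rem application rational} to split into cases (i) and (ii). The only cosmetic imprecision is that $H_{1,0}(\mathbf{\overline{R}}_{\mathbb{Q}}/\sigma_0)=\mathbb{Q}\{\sigma_1\}$ (and $=0$ in the $n=3,7$ case) is computed from the long exact sequence in Remark~\ref{rem application rational} rather than being literally contained in Corollary~\ref{cor 0 connected}, but you invoke that remark anyway, so nothing is missing.
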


\begin{proof}
We proceed as in the above proof to reduce it to verifying the assumptions of Theorem \ref{theorem rational stability general} for $\textbf{X}=\mathbf{R}_{\mathbb{Q}}$ and $D=3n-6$. 
The vanishing line in $E_{2n-1}$-homology holds by Theorem \hyperref[theorem C]{C} and Corollary \ref{cor theorem key}. 
The existence of appropriate maps $\textbf{A} \rightarrow \textbf{R}_{\mathbb{Q}}$ follows from Remark \ref{rem application rational}. 
Now we need to verify the remaining part of assumption (i) if $n=3,7$ or assumption (ii) otherwise. 

When $g=1$ only the case $d=0$ needs to be considered, but this case is fine by Remark \ref{rem application rational}. 
When $g \geq 2$, Remark \ref{rem representatives of path components} says that the class $\sigma_0$ is generated by a model of $W_{1,1}$ so by Theorem \ref{theorem stab comparison} it suffices to check that for any $W \in \textbf{R}$ the stabilization map 
$$ H_d(B \Diff_{\partial}(W);\mathbb{Q}) \rightarrow H_d(B \Diff_{\partial}(W_{1,1} \natural W);\mathbb{Q})$$
is surjective for $d \leq \min\{g(W),3n-7\}$ and an isomorphism for $d \leq \min \{g(W)-1,3n-7\}$, which follows from Theorem \ref{berglundmadsen}.
\end{proof}

\subsection{Proof of Theorem \hyperref[theorem B]{B}} \label{section 4.6}

The proof of Theorem \hyperref[theorem B]{B} is based on \cite[Corollary 3.5]{Sierra2022-st} which says the following: 
let $\mathbf{X} \in \Alg_{E_2}(\mathsf{sMod}_{\FF}^{\mathsf{H}})$ satisfy the assumptions of Lemma \ref{lem stab 2} (but with $\mathds{k}=\FF$ instead), and that $\sigma_0 \cdot Q_2^1(\sigma_0)$ destabilizes twice by $\sigma_0$, then one of the following two options holds
\begin{enumerate}[(i)]
     \item $H_{(4k,0),2k}(\mathbf{\overline{X}}/\sigma_{\epsilon}) \neq 0$ for all $k \geq 1$, and in particular the optimal slope for the stability is $1/2$. 
    \item $H_{x,d}(\mathbf{\overline{X}}/\sigma_{\epsilon})=0$ for $3d \leq 2\rk(x)-6$, so $\mathbf{X}$ satisfies homological stability of slope at least $2/3$ with respect to $\sigma_{\epsilon}$. 
\end{enumerate}

The proof of this result is based on a secondary stability result given in \cite[Theorem 2.3]{Sierra2022-st}.

Now we can finally prove Theorem \hyperref[theorem B]{B}.

\begin{proof}
We let $\mathbf{X}=\mathbf{R}_{\FF}$ and $\epsilon=0$, then it satisfies all the assumptions needed to apply \cite[Corollary 3.5]{Sierra2022-st} by Theorem \hyperref[theorem C]{C}, Corollaries \ref{cor theorem key}, \ref{cor 0 connected} and by the universal coefficients theorem and Corollary \ref{cor reformulation stab}.
Now we have two cases to consider. 

\textbf{Case (i)} Suppose $H_{(4k,0),2k}(\overline{\mathbf{R}_{\FF}}/\sigma_0) \neq 0$ for all $k \geq 1$. 
By Remark \ref{rem representatives of path components} and Theorem \ref{theorem stab comparison} this implies that for each $k \geq 1$ there is some $d(k) \leq 2k$ for which $H_{d(k)}(B \Diff_{\partial}(W_{4k,1}), B \Diff_{\partial}(W_{4k-1,1});\FF) \neq 0$. 
By Theorem \hyperref[theorem A]{A}(ii) and the universal coefficients theorem we have $d(k) \geq 2k$, and thus 
$$H_{2k}(B \Diff_{\partial}(W_{4k,1}), B \Diff_{\partial}(W_{4k-1,1});\mathbb{Z}) \neq 0$$
for all $k \geq 1$ by another application of the universal coefficients theorem. 

\textbf{Case (ii)} Suppose $H_{x,d}(\overline{\mathbf{R}_{\FF}}/\sigma_0)=0$ for $3d \leq 2\rk(x)-6$.
Then by Remark \ref{rem representatives of path components} and Theorem \ref{theorem stab comparison} we have 
$$H_d(B \Diff_{\partial}(W_{g,1}),B \Diff_{\partial}(W_{g-1,1});\FF)=0$$
for $3d \leq 2g-6$. 
Moreover, by Theorem \hyperref[theorem A]{A}(iii), 
$$H_d(B \Diff_{\partial}(W_{g,1}),B \Diff_{\partial}(W_{g-1,1});\mathbb{Z}[1/2])=0$$
for $3d \leq 2g-4$.
Finally, the required integral vanishing follows from the universal coefficients theorem and the finite generation of $H_d(B \Diff_{\partial}(W_{g,1});\mathbb{Z})$ for any $g,d$, which follows from \cite[Theorem 6.1, Remark 6.2]{finitenessproperties}. 
\end{proof}

\begin{rem} \label{remark quantised}
 In \cite{Sierra2022-st} we find ``quantisation stability results'' using \cite[Theorem A, Theorem B, Corollary 3.5]{Sierra2022-st}, but they are consequence of a secondary stability result. 
 However, our case is different: the secondary stabilisation map lives in the algebra $\mathbf{R}$ itself, but it is not defined in $\bigsqcup_{g}{B \Diff_{\partial}(W_{g,1})}$. 
 However, Theorem \ref{theorem stab comparison} allows us to ``pull-back'' the quantisation stability from $\mathbf{R}$ even if we cannot pull-back the secondary stability. 
\end{rem}

Let us also mention that quoting the finite generation of homology groups from \cite[Theorem 6.1, Remark 6.2]{finitenessproperties} is not really needed: one can use the ideas of the proofs of \cite[Theorems 2.1, 2.2, 2.3]{Sierra2022-st} to remove the hypothesis of finite generation and the expense of working with a more elaborated CW $E_2$-algebra model.  

\section{Splitting complexes and the proof of Theorem \hyperref[theorem C]{C}} \label{section 5}

In this section we will prove Theorem \hyperref[theorem C]{C}. 
The overall argument is based on \cite[Section 4]{E2} but we will need some additional steps to deal with issues associated to the non-discreteness of the diffeomorphism groups.

\subsection{The arc complex}

We will make use of a high-dimensional analogue of an ``arc complex'', inspired by \cite[Definition 4.7]{E2}. 
The intuition is to replace the choice of two points in the boundary by an isotopy class of embeddings of $S^{n-1}$, to replace arcs by embeddings of $D^n$, and to replace ``non-separating'' with the assumption that the corresponding ``cut manifold'' remains $(n-1)$-connected.   
Observe that when $n=1$ we basically recover the arc complex for surfaces, except that in the surface case arcs are only taken up to isotopy. 

\begin{definition} \label{definition valid geometric data}
A \textit{valid geometric data} is a pair $(W,\Delta)$ where $W \in \mathcal{M}[A]$, and $\Delta$ is an isotopy class of embeddings $ S^{n-1} \hookrightarrow \{1\} \times \int(A) \subset \partial W$.
\end{definition}

\begin{definition}\label{definition arc complex}
Given a valid geometric data $(W,\Delta)$, we define the \textit{arc complex} $\mathcal{A}(W,\Delta)$ to be the following simplicial complex:

\begin{enumerate}[(1)]
   \item A vertex is an embedding 
   $a: D^n \hookrightarrow W$ such that
   \begin{enumerate}[(i)]
       \item Its boundary $\partial a:=a|_{\partial D^n}$ has image contained in $\int(A) \subset W$ and lies in the isotopy class $\Delta$.
       \item $a$ intersects $\partial W$ only in $\partial a$ and the intersection is transversal. 
       \item The cut manifold $W \setminus a:= W \setminus a(D^n)$ is $(n-1)$-connected.
   \end{enumerate}
   
\item Vertices $a_0, \cdots, a_p$ span a $p$-simplex if and only if 
    \begin{enumerate}[(i)]
        \item The images of the embeddings $a_i$ are pairwise disjoint. 
        \item The (jointly) cut manifold $W \setminus \{a_0,\cdots,a_p\}:= W \setminus \bigsqcup_{i=0}^{p}{a_i(D^n)}$ is $(n-1)$-connected. 
    \end{enumerate}
\end{enumerate}
\end{definition}

The key property we need about the arc complex is the following result, which is an analogue of \cite[Theorem 4.8]{E2}. 
Its proof will be delayed to Section \ref{section 6}. 

\begin{theorem}\label{theorem arc complex connectivity}
If $(W,\Delta)$ is valid geometric data then $\mathcal{A}(W,\Delta)$ is $(g(W)-2)$-connected.
\end{theorem}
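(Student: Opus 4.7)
The plan is to follow the strategy outlined by the author at the end of Section \ref{section 1.3}: reduce to an algebraic analogue of the arc complex whose connectivity can be established by standard methods, and then transfer the high connectivity to $\mathcal{A}(W,\Delta)$ geometrically, in the spirit of \cite[Lemma 5.5]{high}.

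First I would define an \emph{algebraic arc complex} $\mathcal{A}^{\mathrm{alg}}(W,\Delta)$ attached to the quadratic module $(H_n(W),\lambda_W,q_W)$ of Section \ref{section 3.4}. A vertex $a$ of $\mathcal{A}(W,\Delta)$ has an associated class $[a]\in H_n(W,\partial W)\cong H_n(W)$ (using $(n-1)$-connectivity and Poincar\'e--Lefschetz duality), and the condition that $W\setminus a$ remains $(n-1)$-connected should force $[a]$ to be primitive --- to extend to part of a hyperbolic basis of the form $(H_n(W),\lambda_W,q_W)$, and in particular to have vanishing quadratic refinement in the cases where $q_W$ is defined. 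Pairwise disjointness of the disks makes the relevant classes pairwise orthogonal under $\lambda_W$. I would therefore define $\mathcal{A}^{\mathrm{alg}}(W,\Delta)$ to have $p$-simplices given by $(p+1)$-tuples of primitive, $q$-isotropic, pairwise orthogonal classes in $H_n(W)$ spanning a direct summand. Its $(g(W)-2)$-connectivity is then an instance of the classical connectivity of complexes of isotropic vectors over the symplectic or quadratic-symplectic groups appearing in Proposition \ref{prop path components}, and can be proved by adapting the inductive Maazen-type argument used for the analogous ``complex of hyperbolic unimodular sequences''. This step, while technical, is purely algebraic and the dimension bound in the statement of the theorem matches the rank bounds coming from those classical arguments.

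The main obstacle is the geometric-to-algebraic comparison. There is a tautological forgetful map $\pi:\mathcal{A}(W,\Delta)\to\mathcal{A}^{\mathrm{alg}}(W,\Delta)$, and the goal is to apply a standard ``fiber criterion'' for maps of simplicial complexes (Zeeman/Quillen Theorem~A style) to reduce the connectivity of $\mathcal{A}(W,\Delta)$ to the connectivity of $\mathcal{A}^{\mathrm{alg}}(W,\Delta)$ together with the contractibility of the fibers of $\pi$ over simplices of $\mathcal{A}^{\mathrm{alg}}(W,\Delta)$. Contractibility of these fibers is precisely the assertion that any tuple of isotropic primitive classes with pairwise zero intersection can be realized by pairwise disjoint embedded $n$-discs with boundary in $\Delta$, and that the space of such realizations is connected. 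Here I would invoke the Whitney trick: in dimension $2n\geq 6$, any collection of immersed disks representing the prescribed classes can be made pairwise disjoint by cancelling intersection points in algebraically matched pairs, and similarly two realizations can be joined by a regular homotopy which can be promoted to an isotopy. The hypothesis $n\geq 3$ is essential at two points: it is what enables the Whitney trick, and it gives enough room for the boundary of the disk to be isotoped into the prescribed class $\Delta$ inside $A$. The most delicate remaining technical point is ensuring that after performing the trick the cut manifold $W\setminus\{a_0,\ldots,a_p\}$ is genuinely $(n-1)$-connected; this should follow from the combination of the primitivity of the classes (giving the correct middle-dimensional homology of the complement) and a Mayer--Vietoris computation together with the $(n-1)$-connectivity hypothesis on $W$.
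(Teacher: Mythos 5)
Your proposed algebraic arc complex is not the one that makes the argument work, and this is the central gap. The complex the paper actually uses (Definition~\ref{definition alg complex}) has as vertices the unimodular elements $\alpha$ of $M^{\vee}:=H_n(W)^{\vee}\cong H_n(W,\partial W)$ whose residue class in $\partial M=\coker(\lambda_W^{\vee})$ equals the boundary invariant $\delta$ of $\Delta$, and a set of distinct vertices spans a simplex precisely when it is \emph{jointly} unimodular in $M^{\vee}$. There is no isotropy, no quadratic vanishing, and no pairwise orthogonality condition. The reason is Proposition~\ref{prop simplicial map}: $(n-1)$-connectivity of the cut manifold $W\setminus\{a_0,\ldots,a_p\}$ is \emph{equivalent} to joint unimodularity of $\{\alpha_0,\ldots,\alpha_p\}$, so the arc condition is captured by unimodularity alone. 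Your identification $H_n(W,\partial W)\cong H_n(W)$ only holds when $\lambda_W$ is unimodular (i.e.\ when $\partial W$ is a homology sphere), and exactly the degenerate cases with complicated boundary $A$ are what the induction in Section~\ref{section 5.5} requires; even when the form is unimodular, $\lambda_W$ and $q_W$ live on $H_n(W)$ rather than $H_n(W,\partial W)$, an embedded disc carries no natural quadratic refinement, and disjointness of discs is not an orthogonality condition. Because the correct complex is a unimodular coset complex, its connectivity is established by adapting Friedrich's $GL_n$-style argument for posets of unimodular sequences in a coset of a submodule (\cite[Theorem 1.4]{Nina}), not the symplectic/quadratic-symplectic Maazen argument you suggest, and what is actually proved and needed is the stronger \emph{weakly Cohen-Macaulay} property, not merely high connectivity.

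The geometric transfer is also handled differently, and your version would not close. The paper does not use a fiber criterion; it uses the PL-extension (``coloring'') lemma \cite[Theorem 2.4]{high} to extend $\Phi\circ f\colon S^k\to\mathcal{A}^{\text{alg}}$ to a simplicial map $g\colon D^{k+1}\to\mathcal{A}^{\text{alg}}$ (this is precisely where the weakly Cohen-Macaulay property of the algebraic complex is indispensable), and then lifts $g$ along $\Phi$ one new vertex at a time: each new algebraic class is realized by an immersed disc, made disjoint from the previously chosen discs and from $f(L)$ by the half-Whitney trick, and embedded by Haefliger's trick. Proving contractibility of the simplicial fibers of $\Phi$, as you propose, is a substantially harder isotopy problem than is actually needed, and your outline provides no route to the weakly Cohen-Macaulay statement that the lifting mechanism relies on.
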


\subsection{Definition of the splitting complexes and posets}

\begin{definition}\label{definition splitting complex}
For $W \in \mathcal{M}[A]$  we define the \textit{($E_1$-)splitting complex} $S_{\bullet}^{E_1}(W)$ to be the following semisimplicial space: 
\begin{enumerate}[(1)]
    \item The space of $0$-simplices, $S_{0}^{E_1}(W)$, is given as a set by the collection of triples $(\omega,t,\epsilon)$, called ``walls'', where $\omega: [t-\epsilon,t+\epsilon] \times I^{2n-1} \hookrightarrow W$ is an embedding, $0<t-\epsilon<t<t+\epsilon<1$, such that
    \begin{enumerate}[(i)]
        \item $\omega$ agrees pointwise with the inclusion $[t-\epsilon,t+\epsilon] \times I^{2n-1} \hookrightarrow I^{2n}$ in a neighbourhood of $[t-\epsilon,t+\epsilon] \times \overline{\partial I^{2n-1} \setminus I^{2n-2} \times \{1\}}$ in $[t-\epsilon,t+\epsilon] \times I^{2n-1}$. 
        In other words, $\omega$ looks standard near its boundary except the top face. 
        Moreover, $\omega$ maps the top face $[t-\epsilon,t+\epsilon] \times I^{2n-2} \times \{1\}$ inside the interior of the top face $\mathcal{D}(W)=\partial W \cap (I^{2n-1} \times \{1\} \times \mathbb{R}^{\infty})$, and $\omega$ maps $[t-\epsilon,t+\epsilon] \times \int(I^{2n-1})$ inside the interior of $W$. 
        \item $\omega$ is compatible with the product structure near its top face in the sense of Definition \ref{definition moduli}. 
        In particular, $\omega$ is transversal to $\partial W$.  
        \item $W \setminus \im(\omega|_{\{t\} \times I^{2n-1}})$ has precisely two path components. 
        We denote by $W_{\leq \omega}$ the \textit{left region}, i.e. the closure of the path-component containing $\{0\} \times I^{2n-1}$, and by $W_{\geq \omega}$ \textit{right region}, i.e. the closure of the other path-component. 
         \item Both $H_n(W_{\leq \omega})$ and $H_n(W_{\geq \omega})$ are non-zero. 
    \end{enumerate}
    We then topologyse it as a subspace of $\Emb([-1,1] \times I^{2n-1},I^{2n} \times \mathbb{R}^{\infty}) \times (0,1)^2$ by using the parameters $t,\epsilon$ to reparametrise the embedding $\omega$.  
    \item For $p \geq 0$, $S_{p}^{E_1}(W) \subset (S_{0}^{E_1}(W))^{p+1}$ consists of the space of tuples of $0$-simplices $((\omega_0,t_0,\epsilon_0), \cdots,(\omega_p,t_p,\epsilon_p))$ such that $t_i+\epsilon_i<t_{i+1}-\epsilon_{i+1}$, $\im(\omega_i) \subset \int(W_{\leq \omega_{i+1}})$, and the region of $W$ in between these, denoted $W_{\omega_i \leq - \leq \omega_{i+1}}$, has $H_n(W_{\omega_i \leq - \leq \omega_{i+1}})$ non-zero for $0 \leq i \leq p-1$.
    \item The i-th face map is given by forgetting $(\omega_i,t_i,\epsilon_i)$. 
\end{enumerate}
We denote by $S^{E_1}(W):=||S_{\bullet}^{E_1}(W)||$ its geometric realization. 
\end{definition}

The thickness of the walls will be relevant because it gives $W_{\leq \omega}$, $W_{\geq \omega}$ and $W_{\omega \leq - \leq \omega'}$ product structures near their boundaries, but from an intuitive point of view one should think of a wall as a codimension one disc cutting the manifold into two ``nice'' pieces. 
In fact, we will often abuse notation and remove $t,\epsilon$ and say ``$\omega \in S^{E_1}(W)$ is a wall''. 

\begin{figure}[H]
   \begin{center}
     
\begin{tikzpicture}[scale=3]
	\begin{scope}[xshift=.885cm,yshift=.115cm,scale=1.09]
    \draw[rotate=45,scale=0.5] (0,1)
    -- (.1,.9) 
    to[out=0, in=180] (.5,.5)
    to[out=0, in=180] (1,0)
    to[out=0, in=180] (1.5,-.5)
    to[out=0, in=180] (2,-1)
    to[out=0, in=150] (2.5,-1.5)
    --
    (2.6,-1.6)
    ;
    \end{scope}

    \draw  (1,0) -- (0,0) -- (0.5,0.5) ;
	\churro[scale=0.25, x=0.65, y=0.25]
    
	\begin{scope}[xshift=1cm]
    \draw (1,0) -- (0,0);
	\churro[scale=0.25, x=0.65, y=0.25]
    \end{scope}
    
	\begin{scope}[xshift=1.5cm]
    \draw[dotted,thick,rotate=90,scale=0.5] (0,1)
    -- (.1,.9) 
    to[out=0, in=180] (.5,.5)
    to[out=0, in=180] (.9,.1)
    -- (.98,0.02);
    \end{scope}
    
	\begin{scope}[xshift=2.5cm]
    \draw[rotate=90,scale=0.5] (0,1)
    -- (.1,.9) 
    to[out=0, in=180] (.5,.5)
    to[out=0, in=180] (.9,.1)
    -- (1,0);
    \end{scope}

\end{tikzpicture}

   \end{center}
   \caption{A $0$-simplex represented by the dotted line.}
\end{figure}

In order to understand the above semisimplicial space it will be useful to consider its levelwise discretization too: 

\begin{definition}
For $W \in \mathcal{M}[A]$  we define the \textit{discretized ($E_1$-)splitting complex} $S_{\bullet}^{E_1,\delta}(W)$ to be the semisimplicial set obtained by taking the levelwise discretization of $S_{\bullet}^{E_1}(W)$. 
\end{definition}

We will view the above semisimplicial objects as nerves of certain posets. 
To do so, let us firstly define what we mean by topological posets and their nerves. 

\begin{definition}\label{top poset}
A \textit{(non-unital) topological poset} $(P,<)$ is a topological space $P$ with a strict partial ordering $<$ on the underlying set of $P$. 
\end{definition}

Any (non-unital) topological poset $(P,<)$ can be viewed as a non-unital topological category in the sense of \cite[Definition 3.1]{ssSpaces} by taking the space of objects to be $P_0=P$, the space of morphisms to be $P_1=\{(a,b) \in P^2: a <b\}$, and the source and target maps given by the projections onto the first and second coordinates respectively. 
The composition map is then canonically defined by the axioms of a partial ordering.
Thus, the nerve $N_{\bullet} P$ defines a semisimplicial space whose layers can be explicitly described via $P_p= N_p P:=\{x_0< x_1<\cdots<x_p: x_i \in P\}$, topologized as a subspace of $P^{p+1}$, where the i-th face map forgets $x_i$. 

\begin{exmp}
The $E_1$-splitting complex $S_{\bullet}^{E_1}(W)$ of Definition \ref{definition splitting complex} is the nerve of the non-unital topological poset $\mathcal{S}^{E_1}(W)$ whose space of elements is the space of $0$-simplices $S_{0}^{E_1}(W)$, and whose partial ordering $<$ is given by the space of $1$-simplices. 
\end{exmp}

\begin{exmp}
Given any topological poset $(P,<)$ we can construct its discretization $(P^{\delta},<)$, whose underlying set is $P$ with the discrete topology and where we use the same strict partial ordering. 
By definition $P^{\delta}_{\bullet}$ is the semisimplicial set given by levelwise discretizing $P_{\bullet}$. 
In particular, we can form the \textit{discretized ($E_1$)-splitting poset} $\mathcal{S}^{E_1,\delta}(W)$ whose nerve is the discretized splitting complex. 
\end{exmp}

\subsection{Recollection on simplicial complexes and posets}

To state some of the intermediate results to prove Theorem \hyperref[theorem C]{C} we need
some definitions and constructions about simplicial complexes and posets. All the complexes and posets appearing in this section are assumed to be discrete. 

Let us begin by recalling the relationship between posets and simplicial complexes: 
any poset gives a simplicial complex whose vertices are the elements of the poset, and where a set of $p+1$ vertices defines a $p$-simplex if and only if they are strictly ordered. 
This simplicial complex has the same geometric realization as the nerve of the poset. 
Conversely, any simplicial complex has an associated \textit{face poset} whose elements are the simplices and the partial ordering is given by inclusion. 
The associated simplicial complex to the face poset of any complex is the barycentric subdivision of the original complex, and hence homeomorphic to the original one. 

For the rest of the paper we will refer to topological properties of a poset to mean the properties of its associated simplicial complex. 
Thus, the topological properties of a simplicial complex agree with the ones of its face poset; in the rest of the paper we will not make distinctions between topological properties holding for a simplicial complex or its face poset. 

If $(P,<)$ is a poset and $x \in P$ we let 
$$\dim(x):=\sup\{n: \; \text{there is a chain} \; x_1<\cdots <x_{n} < x\},$$
and we define the \textit{dimension} of $(P,<)$ via $\dim(P):=\sup_{x \in P}\{\dim(x)\} \in \mathbb{N} \cup \{\infty\}$, which agrees with the dimension of the corresponding simplicial complex.  

\begin{definition}
For a given function of sets $f: P \rightarrow \mathbb{Z}$ we say that $(P,<)$ is \textit{$f$-weakly Cohen-Macaulay of dimension} $n$ if the following holds
\begin{enumerate}[(i)]
    \item $P$ is $(n-1)$-connected. 
    \item For each $x \in P$ the poset $P_{<x}$ is $(f(x)-2)$-connected.
    \item For each $x \in P$ the poset $P_{>x}$ is $(n-2-f(x))$-connected. 
    \item For each $x<y$ in $P$ the poset $P_{x<-<y}=P_{>x} \cap P_{<y}$ is $(f(y)-f(x)-3)$-connected.
\end{enumerate}
When $f=\dim(-)$ we say that $P$ is weakly Cohen-Macaulay of dimension $n$. 
\end{definition}

We will say that a simplicial complex is weakly Cohen-Macaulay of dimension $n$ if its associated face poset has the same property. 
In this case conditions (ii) and (iv) above are automatic as we take $f=\dim(-)$, and condition (iii) is equivalent to saying that  for a $p$-simplex $\sigma$, its link $\Lk(\sigma)$ is $(n-p-2)$-connected. 

\subsection{Some technical results}

One of the main steps in proving Theorem \hyperref[theorem C]{C} will be Corollary \ref{cor weakly cohen macaulay}, which says that for $W \in \Mhalf$ the discretized splitting poset $\mathcal{S}^{E_1,\delta}(W)$ is $f$-weakly Cohen-Macaulay of a certain dimension. 
In this section we will study the posets $\mathcal{S}^{E_1,\delta}(W)_{<\omega}$, $\mathcal{S}^{E_1,\delta}(W)_{>\omega}$ and $\mathcal{S}^{E_1,\delta}(W)_{\omega<-<\omega'}$ and show that they are isomorphic to splitting complexes of certain manifolds. 
Finally we will show a lemma about splitting complexes of cut manifolds, which will be used in the proof of Corollary \ref{cor weakly cohen macaulay}. 

\begin{lemma} \label{lemma technical}
Let $W,W' \in \mathcal{M}[A]$ lie in the same path-component.
Then  $\mathcal{S}_0^{E_1}(W)$ and $\mathcal{S}_0^{E_1}(W')$ are isomorphic topological posets. 
Thus, their discretizations are also isomorphic. 
\end{lemma}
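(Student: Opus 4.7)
The plan is to leverage Proposition \ref{prop classifying space}: since $W$ and $W'$ lie in the same path component of $\mathcal{M}[A]$, both are in the image of the map $\Emb_{\partial^- W}^p(W, I^{2n} \times \mathbb{R}^{\infty}) \to \mathcal{M}[A]$, $\epsilon \mapsto \epsilon(W)$. Hence there exists $\epsilon \in \Emb_{\partial^- W}^p(W, I^{2n} \times \mathbb{R}^{\infty})$ with $\epsilon(W) = W'$, and I will regard this $\epsilon$ as a diffeomorphism $\epsilon : W \to W'$ which (i) fixes a neighbourhood of $\partial^- W$ pointwise, (ii) has a product structure near the whole of $\partial W$, and in particular carries $\mathcal{D}(W)$ to $\mathcal{D}(W')$.

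The isomorphism of posets $\Phi : \mathcal{S}_0^{E_1}(W) \to \mathcal{S}_0^{E_1}(W')$ will then be defined by $(\omega, t, s) \mapsto (\epsilon \circ \omega, t, s)$. The main verification is that $\epsilon \circ \omega$ still satisfies conditions (i)--(iv) of Definition \ref{definition splitting complex}. For (i): near $[t-s, t+s] \times \overline{\partial I^{2n-1} \setminus I^{2n-2} \times \{1\}}$, the original embedding $\omega$ agrees with the standard inclusion into $I^{2n} \times \{0\}$, so its image lies inside $\partial^- W$, on which $\epsilon$ is the identity; thus $\epsilon \circ \omega$ agrees with the standard inclusion there. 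The top face condition is preserved because $\epsilon(\mathcal{D}(W)) = \mathcal{D}(W')$ and $\epsilon$ maps the interior of $W$ to the interior of $W'$. Condition (ii) is preserved because both $\omega$ and $\epsilon$ have a product structure near their respective top faces, so the composition does too. Conditions (iii) and (iv) are invariants of the diffeomorphism type of the pair $(W, \omega(\{t\} \times I^{2n-1}))$, hence are preserved by $\epsilon$.

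Continuity of $\Phi$ in the $C^{\infty}$-topology follows from continuity of post-composition by a fixed smooth embedding, and the parameters $t, s$ are left unchanged. The partial ordering is preserved: the disjointness condition $\epsilon(\omega_i(\cdot)) \cap \epsilon(\omega_{i+1}(\cdot)) = \emptyset$ follows from disjointness for $\omega_i, \omega_{i+1}$ plus injectivity of $\epsilon$, the containment of regions $\epsilon(W_{\leq \omega_i}) \subset \epsilon(\mathrm{int}(W_{\leq \omega_{i+1}}))$ is immediate from $\epsilon$ being a diffeomorphism carrying $W$-regions to the corresponding $W'$-regions, and the nonvanishing of $H_n$ of the intermediate regions is preserved. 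An inverse is constructed symmetrically by composition with $\epsilon^{-1} : W' \to W$, so $\Phi$ is an isomorphism of topological posets. The statement about the discretizations is then immediate, since levelwise discretization is functorial.

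The arguments are all routine, and the only mildly delicate step will be verifying that the product-structure hypothesis carries through the composition --- this uses the product form built into the definition of $\Emb^p$ in Definition \ref{definition moduli} applied near each $(2n-1)$-face separately, combined with the corresponding hypothesis on $\omega$ at its top face. I would do this face-by-face to avoid any ambiguity about what ``product structure'' means at the corners where the top face of $\omega$ meets the lateral faces.
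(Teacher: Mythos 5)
Your argument is correct and matches the paper's proof: both take the product-structure-preserving diffeomorphism $\phi\colon W \to W'$ supplied (via Proposition \ref{prop classifying space}) by the fact that $W$ and $W'$ lie in the same path-component, and define the poset isomorphism by post-composition $(\omega,t,\epsilon)\mapsto(\phi\circ\omega,t,\epsilon)$, with inverse given by post-composition with $\phi^{-1}$. The paper's write-up is terser and leaves the verification of conditions (i)--(iv) implicit, whereas you spell them out; the underlying idea is the same.
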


\begin{proof}
Let $\phi: W \xrightarrow{\cong} W'$ be a diffeomorphism fixing pointwise a neighbourhood of $\partial^-W=\partial^-W'$ and preserving the product structures. 
Then we get an induced map $\phi_*: \mathcal{S}_0^{E_1}(W) \rightarrow \mathcal{S}_0^{E_1}(W')$, $(\omega,t,\epsilon) \mapsto (\phi \circ \omega,t,\epsilon)$, which is order-preserving and has an inverse given by $(\phi^{-1})_*$. 
\end{proof}

The next technical result will allow us to view the manifolds $W_{\leq \omega}$, $W_{\geq \omega}$ as elements in the moduli spaces of Definition \ref{definition moduli}. 

\begin{lemma} \label{lemma pieces splitting complex} 
Let $W \in \mathcal{M}[A]$ and $(\omega,t,\epsilon) \in S_0^{E_1}(W)$, then
\begin{enumerate}[(a)]
    \item The inclusion $W_{\leq \omega} \hookrightarrow I^{2n} \times \mathbb{R}^{\infty}$ is isotopic to an embedding $e: W_{\leq \omega}\hookrightarrow I^{2n} \times \mathbb{R}^{\infty}$ such that
    \begin{enumerate}[(i)]
        \item $\im(e) \in \Mhalf$. 
        \item On a neighbourhood of 
        $$\partial W_{\leq \omega} \cap (J_{2n-1} \times \mathbb{R}^{\infty})=J_{2n-1} \cap \{p=(x_1,\cdots,x_{2n},y) \in I^{2n}\times \mathbb{R}^{\infty}: \; 0 \leq x_1 \leq t \}$$
        it agrees with $(x_1,\cdots,x_{2n},0) \mapsto (x_1/t,x_2,\cdots,x_{2n},0)$. 
        \item $e \circ \omega|_{\{t\} \times I^{2n-1}}$ agrees with the standard inclusion $I^{2n-1} \cong \{1\} \times I^{2n-1} \times \{0\} \subset I^{2n} \times \mathbb{R}^{\infty}$.
        \item $e$ gives a diffeomorphism from $\mathcal{D}(W) \cap \partial W_{\leq \omega}$ to $\mathcal{D}(\im(e))$. 
        \item $e$ preserves the product structures (in the sense of Definition \ref{definition moduli}).
    \end{enumerate}
    Moreover, $[\im(e)] \in \pi_0(\Mhalf)$ is independent of the choice of embedding $e$ satisfying all the above conditions. 
    \item The inclusion $W_{\geq \omega} \hookrightarrow I^{2n} \times \mathbb{R}^{\infty}$ is isotopic to an embedding $e: W_{\geq \omega}\hookrightarrow I^{2n} \times \mathbb{R}^{\infty}$ such that
    \begin{enumerate}[(i)]
        \item $\im(e) \in \mathcal{M}[A]$. 
        \item On a neighbourhood of 
        $$\partial W_{\geq \omega} \cap (J_{2n-1} \times \mathbb{R}^{\infty})=J_{2n-1} \cap \{p=(x_1,\cdots,x_{2n},y) \in I^{2n}\times \mathbb{R}^{\infty}: \; t \leq x_1 \leq 1 \}$$
        it agrees with $(x_1,\cdots,x_{2n},0) \mapsto ((x_1-t)/(1-t),x_2,\cdots,x_{2n},0)$. 
        \item $e \circ \omega|_{\{t\} \times I^{2n-1}}$ agrees with the standard inclusion $I^{2n-1} \cong \{0\} \times I^{2n-1} \times \{0\} \subset I^{2n} \times \mathbb{R}^{\infty}$.
        \item $e$ gives a diffeomorphism from $\mathcal{D}(W) \cap \partial W_{\geq \omega}$ to $\mathcal{D}(\im(e))$. 
        \item $e$ preserves the product structures. 
    \end{enumerate}
    Moreover, $[\im(e)] \in \pi_0(\mathcal{M}[A])$ is independent of the choice of embedding $e$ satisfying all the above conditions. 
\end{enumerate}
\end{lemma}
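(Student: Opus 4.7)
The two parts are symmetric, so I focus on (a); (b) follows by the identical argument after swapping the roles of left and right throughout. The strategy is to construct $e$ explicitly in two phases, verify the listed conditions, and then observe that the resulting embedding is isotopic to the inclusion.

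\emph{Phase one: straightening the wall.} First I would use a parametric isotopy extension argument, together with the fact that $\omega$ is already standard near the parts of its boundary lying in $J_{2n-1} \times \mathbb{R}^{\infty}$ (by Definition \ref{definition splitting complex}(i), (ii)), to produce an ambient isotopy of $I^{2n} \times \mathbb{R}^{\infty}$ fixing $\partial^- W$ pointwise and carrying the composite $[t-\epsilon, t+\epsilon] \times I^{2n-1} \xhookrightarrow{\omega} W \hookrightarrow I^{2n} \times \mathbb{R}^{\infty}$ to the standard thickened slice $(x_1, x') \mapsto (x_1, x', 0)$. Since the source and target of the straightening already agree on a neighbourhood of the meeting locus with $J_{2n-1}$, such an isotopy exists by applying isotopy extension to an arbitrarily small extension of $\omega$. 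The isotopy drags $W$ to a new manifold still in $\mathcal{M}[A]$, for which the wall is the standard slice.

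\emph{Phase two: rescaling.} After straightening, $W_{\leq \omega}$ sits inside $[0,t] \times I^{2n-1} \times \mathbb{R}^{\infty}$ with a standard collar along its cut face. I then compose the inclusion with the diffeomorphism $(x_1, x', y) \mapsto (x_1/t, x', y)$ of $[0,t] \times I^{2n-1} \times \mathbb{R}^{\infty}$ onto $I^{2n} \times \mathbb{R}^{\infty}$, producing the embedding $e$. Conditions (ii) and (iii) hold by construction, (iv) follows because the top face of $W_{\leq \omega}$ maps diffeomorphically to the top face of $\mathrm{im}(e)$, and (v) is inherited from the product structures preserved in both phases. Membership $\mathrm{im}(e) \in \mathcal{M}[I^{2n-1}]$ reduces to checking $s$-parallelisability (inherited from $W$ since the wall splits a stable framing), $(n-1)$-connectivity of $W_{\leq \omega}$ (a Mayer--Vietoris argument from the $(n-1)$-connectivity of $W$ and of the cutting disc), and contractibility of the top face $\mathcal{D}(W) \cap \partial W_{\leq \omega}$. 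Concatenating the two phases yields a smooth family of embeddings from the inclusion to $e$, giving the isotopy claim.

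For the final ``moreover'' clause, given two embeddings $e, e'$ satisfying the listed conditions, both are produced by the above procedure from the same starting inclusion and hence are isotopic through embeddings; so $[\mathrm{im}(e)] = [\mathrm{im}(e')] \in \pi_0(\mathcal{M}[I^{2n-1}])$ by the path-connectedness result underlying Proposition \ref{prop classifying space}. The hardest step in the whole plan will be verifying that $\mathcal{D}(W) \cap \partial W_{\leq \omega}$ is contractible: this is a piece of the contractible disc $\mathcal{D}(W)$ cut off by a properly embedded $(2n-2)$-disc whose boundary lies on $\partial \mathcal{D}(W)$, and showing it is a disc requires a Schönflies/Alexander-type argument in dimension $2n-1 \geq 5$, using the collar provided by the top face of the wall together with the $h$-cobordism theorem in the spirit of the trivialisation argument in the proof of Proposition \ref{prop path components}.
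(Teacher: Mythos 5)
Your overall plan---straighten the wall by an ambient isotopy, then rescale---is a reasonable alternative to the paper's argument, but two of the steps have genuine gaps as written, and a third is more heavyweight than necessary.

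The most serious issue is in Phase two. After the ambient isotopy puts $\omega(\{t\}\times I^{2n-1})$ into the standard slice $\{t\}\times I^{2n-1}\times\{0\}$, you assert that $W_{\leq\omega}$ ``sits inside $[0,t]\times I^{2n-1}\times\mathbb{R}^\infty$.'' That is not automatic: $W_{\leq\omega}$ is defined by closing up a path component of $W\setminus\omega(\{t\}\times I^{2n-1})$, which is an intrinsic condition, and an ambient isotopy that fixes only the wall can leave handles of $W_{\leq\omega}$ sticking into the region $x_1>t$ as long as they avoid the wall disc in the $\mathbb{R}^\infty$ directions. You would need a further ambient isotopy, compactly supported away from the wall and from $J_{2n-1}$, to compress $W_{\leq\omega}$ into the left slab before the linear rescaling $(x_1,x',y)\mapsto(x_1/t,x',y)$ makes sense. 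This is fixable but has to be said; as written the phase is wrong. The paper avoids the problem entirely by never trying to straighten the wall globally: it builds $e$ directly on the decomposition $\partial W_{\leq\omega}=(\partial W_{\leq\omega}\cap J_{2n-1})\cup\omega(\{t\}\times I^{2n-1})\cup(\mathcal D(W)\cap\partial W_{\leq\omega})$, prescribes $e$ on the first two pieces by (a)(ii) and (a)(iii), and uses isotopy extension twice to handle the rest.

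The uniqueness clause is also not established by your argument. You argue that $e$ and $e'$ are ``both produced by the above procedure'' and hence isotopic, but the lemma demands uniqueness of $[\im(e)]$ over \emph{all} embeddings satisfying (i)--(v), not only those your construction outputs, and even two outputs of the same construction differ by choices. The paper's argument is sharper and you should adopt it: conditions (a)(ii), (a)(iii) and (a)(v) force $e'\circ e^{-1}$ to lie in $\Emb^p_{\partial^-\im(e)}(\im(e),I^{2n}\times\mathbb{R}^\infty)$, which is path-connected by the Whitney embedding theorem, so $\im(e)$ and $\im(e')$ lie in the same component of $\mathcal M[I^{2n-1}]$.

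Finally, the contractibility of $\mathcal D(W)\cap\partial W_{\leq\omega}$ is not as hard as you suggest, and invoking a smooth Schönflies theorem and the $h$-cobordism theorem is more machinery than needed. The paper simply applies Seifert--van Kampen and Mayer--Vietoris to the decomposition of the contractible manifold $\mathcal D(W)$ along the top face $I^{2n-2}$ of the wall: since both $\mathcal D(W)$ and $I^{2n-2}$ have trivial $\pi_1$ and reduced homology, so do the two pieces, and Whitehead finishes the job. (That $\mathcal D(\im(e))$ is then a disc is already built into Definition~\ref{definition moduli} via the $h$-cobordism theorem, so you never need a Schönflies-style recognition result.)
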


\begin{proof}
The proofs of (a) and (b) are almost identical, so we will focus on part (a). 
Let us show the existence of $e$ first and then the uniqueness of the path-component. 

We can decompose $\partial W_{\leq \omega}$ as $\partial W_{\leq \omega} \cap (J_{2n-1} \times \mathbb{R}^{\infty}) \cup \im(\omega|_{\{t\} \times I^{2n-1}}) \cup \mathcal{D}(W) \cap \partial W_{\leq \omega}$, and conditions (a)(ii) and (a)(iii) tell us what $e$ should do on the first two pieces of the decomposition. 
Moreover condition (a)(v) forces the behaviour of $e$ on a closed neighbourhood of the first two pieces of the boundary, and by construction this partial embedding $e$ is isotopic to the inclusion where defined. 
Now we use isotopy extension to extend it to the remaining piece of the boundary $\mathcal{D}(W) \cap \partial W_{\leq \omega}$ in such a way that $e$ sends it inside $I^{2n-1} \times \{1\} \times \mathbb{R}^{\infty}$. 
We then use condition (a)(v) again to extend $e$ to a closed neighbourhood of the whole boundary $\partial W_{\leq \omega}$. 
Finally we use isotopy extension again to define $e$ on the whole of $W_{\leq \omega}$ such that its interior maps to $\int(I^{2n}) \times \mathbb{R}^{\infty}$. 

It remains to check that the manifold $\im(e) \subset I^{2n} \times \mathbb{R}^{\infty}$ lies in $\Mhalf$: 
the $(n-1)$-connectivity follows by Seifert-Van Kampen and Mayer-Vietoris applied to the decomposition $W= W_{\leq \omega} \cup_{I^{2n-1}} W_{\geq \omega}$, the $s$-parallelizability is immediate as $W$ is itself $s$-parallelizable. 
Conditions (ii),(iii),(iv)(a),(v) and (vi) of Definition \ref{definition moduli} hold by construction; and hence it remains to verify condition (iv)(b), i.e. that $\mathcal{D}(\im(e))$ is contractible. 
This follows by another application of Seifert-Van Kampen and Mayer-Vietoris, this time to $\mathcal{D}(W)$ decomposed as a union of two pieces along $I^{2n-2}$. 

For the uniqueness of the path-component suppose that $e'$ is another embedding, then $\partial^-(\im(e))=\partial^-(\im(e'))=J_{2n-1} \cup \{1\} \times I^{2n-1}$ and $e' \circ e^{-1} \in \Emb^p_{\partial^- \im(e)}(\im(e),I^{2n} \times \mathbb{R}^{\infty})$ by condition a(v). 
Since the embedding space is path-connected by the Whitney embedding theorem then the result follows. 
\end{proof}

As a consequence of Lemmas \ref{lemma technical} and \ref{lemma pieces splitting complex} we can meaningfully write $\mathcal{S}^{E_1}(W_{\leq \omega})$, $\mathcal{S}^{E_1}(W_{\geq \omega})$ and their discretized analogues to really mean the corresponding splitting complexes on $\im(e)$ for $e$ as in Lemma \ref{lemma pieces splitting complex}. 
By consecutively applying parts (a) and (b) Lemma \ref{lemma pieces splitting complex} one can also make sense of
$\mathcal{S}^{E_1}(W_{\omega\leq- \leq \omega'})$.

\begin{corollary} \label{cor pieces of splitting complex}
For $W \in \mathcal{M}[A]$ and $(\omega,t,\epsilon) \in S_0^{E_1}(W)$ we have isomorphism of posets
\begin{enumerate}[(i)]
    \item $\mathcal{S}^{E_1,\delta}(W)_{<\omega} \cong \mathcal{S}^{E_1,\delta}(W_{\leq \omega})$. 
    \item $\mathcal{S}^{E_1,\delta}(W)_{>\omega} \cong \mathcal{S}^{E_1,\delta}(W_{\geq \omega})$.
\end{enumerate}
Moreover for $(\omega',t',\epsilon')>(\omega,t,\epsilon)$ we have $\mathcal{S}^{E_1,\delta}(W)_{\omega<-<\omega'} \cong \mathcal{S}^{E_1,\delta}(W_{\omega \leq -\leq \omega'})$.
\end{corollary}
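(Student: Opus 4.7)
The plan is to construct the asserted isomorphisms directly from the embeddings provided by Lemma \ref{lemma pieces splitting complex}, using the fact that walls in $W$ lying strictly to the left of $\omega$ are entirely contained in $\int(W_{\leq \omega})$ (and dually on the right), so they can be transported along the chosen embedding and reparametrised.

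First I will treat (i). Fix an embedding $e : W_{\leq \omega} \hookrightarrow I^{2n} \times \mathbb{R}^{\infty}$ satisfying the conclusions of Lemma \ref{lemma pieces splitting complex}(a), so that $\overline{W} := \im(e) \in \Mhalf$ and $e$ acts on a neighbourhood of the $J_{2n-1}$-part of the boundary by the rescaling $(x_1,\ldots,x_{2n},0) \mapsto (x_1/t,x_2,\ldots,x_{2n},0)$. Given any $(\omega',t',\epsilon') < (\omega,t,\epsilon)$ in $\mathcal{S}^{E_1,\delta}(W)$, we have $t'+\epsilon' < t-\epsilon$, so $\omega'$ takes values in $\int(W_{\leq \omega})$, and I define
\[
\Phi(\omega',t',\epsilon') := \bigl(e \circ \omega' \circ \psi_t^{-1},\, t'/t,\, \epsilon'/t\bigr),
\]
where $\psi_t$ is the obvious rescaling of the domain $[t'-\epsilon',t'+\epsilon'] \times I^{2n-1}$ onto $[t'/t-\epsilon'/t,t'/t+\epsilon'/t] \times I^{2n-1}$. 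Condition (i) of Lemma \ref{lemma pieces splitting complex}(a) together with the standardisation of $e$ near $J_{2n-1}$ guarantees that $\Phi(\omega',t',\epsilon')$ satisfies axioms (i)--(ii) of Definition \ref{definition splitting complex}. Because $e$ is an embedding, $\overline{W} \setminus e\circ\omega'|_{\{t'\}\times I^{2n-1}}$ still has exactly two path components, which are the images under $e$ of $W_{\leq \omega'}$ and of $W_{\omega'\leq - \leq \omega}$; their middle homologies are non-zero by the assumption on $\omega'$ and the fact that $\omega'<\omega$ in $\mathcal{S}^{E_1,\delta}(W)$, yielding axioms (iii)--(iv).

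An inverse $\Psi$ is defined completely symmetrically: given a wall in $\overline{W}$, apply $e^{-1}$ and rescale by multiplication by $t$ in the first coordinate. Both maps preserve the strict partial ordering, because $\omega_1 < \omega_2 < \omega$ in $\mathcal{S}^{E_1,\delta}(W)$ if and only if the corresponding $t$-parameters are strictly increasing and each image lies in the left region of the next, and these conditions are preserved under $e$ and its inverse. Therefore $\Phi$ and $\Psi$ are mutually inverse isomorphisms of posets. Since Lemma \ref{lemma technical} shows that the source poset $\mathcal{S}^{E_1,\delta}(W_{\leq \omega})$ is independent of the choice of representative in its path component, this proves (i). Part (ii) is identical after substituting Lemma \ref{lemma pieces splitting complex}(b) and using the rescaling $x_1 \mapsto (x_1-t)/(1-t)$.

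For the "moreover" statement I would apply (i) to $\omega'$ to identify $\mathcal{S}^{E_1,\delta}(W)_{<\omega'}$ with $\mathcal{S}^{E_1,\delta}(W_{\leq \omega'})$; under this identification, $\mathcal{S}^{E_1,\delta}(W)_{\omega<-<\omega'}$ corresponds to the walls in $W_{\leq \omega'}$ lying above the image of $\omega$, which by (ii) applied inside $W_{\leq \omega'}$ is exactly $\mathcal{S}^{E_1,\delta}((W_{\leq \omega'})_{\geq \omega}) = \mathcal{S}^{E_1,\delta}(W_{\omega \leq - \leq \omega'})$. The main obstacle is really just the bookkeeping: keeping track of the two separate rescalings and verifying that the middle-homology non-vanishing conditions transfer correctly, but this follows from the Mayer--Vietoris decompositions already used in the proof of Lemma \ref{lemma pieces splitting complex} together with the fact that $e$ is a diffeomorphism onto its image.
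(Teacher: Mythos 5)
Your proposal is correct and follows essentially the same route as the paper's proof: push a wall forward along the embedding $e$ from Lemma \ref{lemma pieces splitting complex}, rescale the parameters $(t',\epsilon')$ to $(t'/t,\epsilon'/t)$, verify the wall conditions (with the middle-homology non-vanishing coming from the identifications $W_{\leq \tilde\omega}=(W_{\leq\omega})_{\leq\tilde\omega}$ and $W_{\tilde\omega\leq-\leq\omega}=(W_{\leq\omega})_{\geq\tilde\omega}$), note the inverse is given by precomposing with $e^{-1}$ and rescaling back, and deduce the "moreover" statement by first applying (i) at $\omega'$ and then (ii) inside $W_{\leq\omega'}$. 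You spell out the reparametrisation and axiom checks a bit more explicitly than the paper does, but the argument is the same.
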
 

\begin{proof}
Part (i): the map is given by sending $(\Tilde{\omega}<\omega) \mapsto e \circ \Tilde{\omega}$ and reparametrizing its ``thickening coordinate'' so that its centre is at coordinate $\Tilde{t}/t$ and is compatible with the product. 
Let us check that it indeed defines a wall: 
conditions (i), (ii) and (iii) of Definition \ref{definition splitting complex} hold automatically. 
Condition (iv) holds because $W_{\leq \Tilde{\omega}}= (W_{\leq \omega})_{\leq \Tilde{\omega}}$ and $W_{\Tilde{\omega} \leq - \leq \omega}= (W_{\leq \omega})_{\geq \Tilde{\omega}}$, so both have non-zero n-th homology. 
 
The fact that the correspondence is a bijection is because we can write an inverse map by precomposing with $e^{-1}$ and reparametrizing. 

Part (ii) is similar to part (i). 

For the moreover part we firstly apply part (i) to identify $\mathcal{S}^{E_1,\delta}(W)_{<\omega'} \cong \mathcal{S}^{E_1,\delta}(W_{\leq \omega'})$ and then part (ii) on the right hand side with $\omega$. 
\end{proof}

In a similar flavour we have the following lemma which allows us to make sense of the splitting complex of the manifold obtained by cutting a collection of arcs, and to give an interpretation to it. 
It will be very useful in the proof of Theorem \ref{theorem splitting poset connectivity}.
Before stating it we will need a small variation of the discretized splitting complex which will also be useful in the next section. 

\begin{definition}
For $W \in \mathcal{M}[A]$ we define the discrete poset
$\Tilde{\mathcal{S}}(W)$ to be same as $\mathcal{S}^{E_1,\delta}(W)$ but changing condition (1).(iv) on each wall $\omega$ to $H_n(W_{\leq \omega}) \neq 0$ only, so there is no condition on $H_n(W_{\geq \omega})$. 
The ordering relation is identical to the one of Definition \ref{definition splitting complex}.
\end{definition}

\begin{lemma} \label{lemma splitting cut manifold}
Let $(W,\Delta)$ be a valid geometric data where $W \in \mathcal{M}[A]$ and let $\alpha=\{a_0,\cdots,a_p\}$ be a $p$-simplex in $\mathcal{A}(W,\Delta)$. 
Denote by $A'$ the result of performing surgery on $A$ along some appropriate framings of the boundaries $\partial a_i$ for $0 \leq i \leq p$; then $A'$ is valid, there is $W' \in \mathcal{M}[A']$ such that $\mathcal{D}(W')=\mathcal{D}(W)$ and
\begin{enumerate}[(i)]
    \item The interiors of $W'$ and $W \setminus \alpha$ are diffeomorphic via a diffeomorphism $\phi$ which is the identity in a neighbourhood of $J_{2n-1} \cup \mathcal{D}(W)$. 
    \item Extending $\phi$ by the identity on $J_{2n-1} \cup \mathcal{D}(W)$ induces a bijection between the poset of walls $\omega \in \mathcal{S}^{E_1,\delta}(W)$ such that $\im(a_i)$ lies strictly to the right of $\im(\omega)$ for $0 \leq i \leq p$ and the poset $\Tilde{\mathcal{S}}(W')$.  
\end{enumerate}
\end{lemma}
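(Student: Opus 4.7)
The plan is to realize $W'$ as a re-embedding of the cut manifold $W\setminus\alpha$ inside $I^{2n}\times\mathbb{R}^\infty$. First I would remove open tubular neighborhoods $\nu(a_i)\cong\int(D^n\times D^n)$ of the arcs, using the canonical framings supplied by the trivial normal bundles of the contractible discs $a_i$. The effect of this cutting on the face $A$ is precisely surgery along the framed $(n-1)$-spheres $\partial a_i\subset\int(A)$, and I define $A'$ to be this surgered manifold. Its $(n-2)$-connectivity---required for validity---is automatic by Mayer--Vietoris, since both sides of the surgery ($S^{n-1}\times D^n$ and $D^n\times S^{n-1}$) and their common boundary $S^{n-1}\times S^{n-1}$ are $(n-2)$-connected for $n\geq 3$; smoothness, the boundary condition, and the standard form near $\partial A'$ are all inherited from $A$ because the surgery takes place in $\int(A)$.

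Next I would apply the collar-straightening and isotopy extension argument used in Lemma \ref{lemma pieces splitting complex} to produce an embedding of $W\setminus\alpha$ into $I^{2n}\times\mathbb{R}^\infty$ whose image lies in $\mathcal{M}[A']$ and which is the identity on a neighborhood of $J_{2n-1}\cup\mathcal{D}(W)$; this is possible because the arcs $a_i$ are disjoint from both those faces. Letting $W'$ denote this image, $\mathcal{D}(W')=\mathcal{D}(W)$ holds by construction, and the required diffeomorphism $\phi\colon W'\to W\setminus\alpha$ of interiors is the inverse of this embedding. The $(n-1)$-connectivity of $W'$ is precisely the simplex condition on $\alpha$ in $\mathcal{A}(W,\Delta)$, $s$-parallelizability is inherited from $W$, and the remaining conditions of Definition \ref{definition moduli} hold by construction.

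For part (ii), I would use $\phi$, extended by the identity on $J_{2n-1}\cup\mathcal{D}(W)$, to transport walls back and forth. Any wall $\omega\in\mathcal{S}^{E_1,\delta}(W)$ strictly to the left of every $a_i$ is disjoint from the removed tubular neighborhoods, so $\omega':=\phi^{-1}(\omega)$ is a wall in $W'$ and the entire product structure defining the wall transfers; ordered tuples clearly go to ordered tuples, giving a poset morphism. The left-region $H_n$-condition is preserved verbatim since $W_{\leq\omega}$ is canonically diffeomorphic to $W'_{\leq\omega'}$, so the assignment lands in $\Tilde{\mathcal{S}}(W')$, and $\phi$ itself defines the inverse assignment without modification.

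The only step requiring genuine work is checking that $H_n(W_{\geq\omega})\neq 0$ is automatic for every $\omega$ coming from $\Tilde{\mathcal{S}}(W')$, which is where the asymmetry between $\mathcal{S}^{E_1,\delta}(W)$ and $\Tilde{\mathcal{S}}(W')$ must be exploited to guarantee the bijection. I would apply Mayer--Vietoris to the decomposition $W_{\geq\omega}=W'_{\geq\omega'}\cup\bigsqcup_{i=0}^{p}\nu(a_i)$, in which each $\nu(a_i)\cong D^n\times D^n$ is attached along $D^n\times S^{n-1}$. Lemma \ref{lemma pieces splitting complex} applied inside $W'$ places $W'_{\geq\omega'}$ in the corresponding moduli space and hence makes it $(n-1)$-connected, forcing $H_{n-1}(W'_{\geq\omega'})=0$. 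For $n\geq 3$ the resulting sequence therefore collapses to a short exact sequence $0\to H_n(W'_{\geq\omega'})\to H_n(W_{\geq\omega})\to \mathbb{Z}^{p+1}\to 0$, whose rightmost term is non-zero since $p+1\geq 1$. This Mayer--Vietoris computation is the main obstacle; all remaining verifications follow the template already established in Lemmas \ref{lemma technical} and \ref{lemma pieces splitting complex}.
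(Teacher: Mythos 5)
Your proposal is correct and follows essentially the same route as the paper: remove framed tubular neighbourhoods of the arcs (which effects the surgery on $A$), re-embed the result via isotopy extension into $\mathcal{M}[A']$ to produce $W'$, and transport walls using $\phi$ extended by the identity on $J_{2n-1}\cup\mathcal{D}(W)$. Your closing Mayer--Vietoris argument (giving the short exact sequence $0\to H_n(W'_{\geq\omega'})\to H_n(W_{\geq\omega})\to\mathbb{Z}^{p+1}\to 0$) is a sound and welcome explicit verification that the inverse map really does land in $\mathcal{S}^{E_1,\delta}(W)$, a point the paper's proof leaves implicit.
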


The reason why we need $\Tilde{\mathcal{S}}(W')$ in the statement is that the wall $\omega'$ we get in $W'$ might satisfy that $H_n(W'_{\geq \omega'})=0$. 
However, condition $H_n(W'_{\leq \omega'}) \neq 0$ will be guaranteed because by the construction of the bijection in the following proof we have $W'_{\leq \omega'} \cong W_{\leq \omega}$. 

\begin{proof}
For each arc $a_i$ pick a trivialization of its normal bundle to get an embedding $\Tilde{a_i}: D^n \times D^n \hookrightarrow W$ such that $\Tilde{a_i}|_{D^n \times \{0\}}=a_i$, $\Tilde{a_i}(\partial D^n \times D^n) \subset \int(A)$, $\Tilde{a_i}$ intersects $\partial W$ transversally and $\Tilde{a_i}^{-1}(\partial W)= \partial D^n \times D^n$. 
Shrink the thickenings if needed to ensure that the images of the $\Tilde{a_i}$ are pairwise disjoint. 
Let $\Tilde{W}:= W \setminus \bigsqcup_{i=0}^p{\Tilde{a_i}(D^n \times \int(D^n))}$. 
Then $\Tilde{W}$ is a compact $2n$-manifold with boundary, whose boundary is the result of performing surgeries to $\partial W$ along the embeddings $\Tilde{a_i}|_{\partial D^n \times D^n}$; and hence we can decompose it as $J_{2n-1} \cup \mathcal{D}(W) \cup A'$, where $A'$ is the result of performing the surgeries on $A$. 
Since $A$ is $(n-2)$-connected then so is $A'$, and since the surgeries take place in the interior of $A$ then $A$ and $A'$ agree on a neighbourhood of their boundary. 
Moreover, $\int(\Tilde{W})$ is diffeomorphic to $\int(W \setminus \alpha)$ via a diffeomorphism $\Tilde{\phi}$ which can be chosen to be the identity near $J_{2n-1} \cup \mathcal{D}(W)$. 

Using isotopy extension, we pick an isotopy from the inclusion $A' \hookrightarrow I^{2n} \times \mathbb{R}^{\infty}$ to an embedding with image contained in the rightmost face $\{1\} \times I^{2n-1} \times \mathbb{R}^{\infty}$ such that the isotopy is constant near the boundary of $A'$. 
We will abuse notation and call its image $A'$ too. 
We then extend the isotopy to one on $\Tilde{W}$ such that it is constant on $J_{2n-1} \cup \mathcal{D}(W)$, and we let $W' \subset I^{2n} \times \mathbb{R}^{\infty}$ be the resulting manifold at the end of the isotopy. 
One can proceed as in Lemma \ref{lemma pieces splitting complex} to ensure that $W'$ has a product structure near the boundary compatible with the one on $J_{2n-1} \cup \mathcal{D}(W)$. 
We let $\phi$ be the composition of $\Tilde{\phi}$ with the end map of the isotopy, so that property (i) is satisfied. 

For part (ii) the bijection is given as follows: 
take a wall $\omega \in \mathcal{S}^{E_1}(W)$ such that $\im(a_i)$ lies strictly to the right of $\im(\omega)$ for all $i$, then  $\im(\omega) \subset W \setminus \alpha$, so we can precompose it with $\phi^{-1}$ so that it lies in $W'$, and then it will define a wall there because $\phi$ is the identity near $J_{2n-1} \cup \mathcal{D}(W)$, so the standard part of the wall and the product structure are preserved. 
The inverse map is given analogously but by precomposing with $\phi$ instead. 
\end{proof}

\subsection{Connectivity of discretized splitting complexes} \label{section 5.5}

The goal of this section is to show that for any $W \in \Mhalf$, its discretized splitting poset is $f$-weakly Cohen-Macaulay of dimension $g(W)-2$ for some appropriate $f$. 
In order to do so we will do an inductive argument on $W,$ and we will allow $W \in \mathcal{M}[A]$ at the inductive step for a general $A$ as in Definition \ref{definition moduli}; so we will in fact prove a more general result. 

In order to state this general result let us define \textit{genus} for a general $W \in \mathcal{M}[A]$, generalizing the case $A=I^{2n-1}$ treated in  Section \ref{section 3.4}: the pair
$(H_n(W),\lambda_W)$ defines a skew-symmetric bilinear form, and the \textit{genus} of $W$ is
$$g(W):= \sup\{g: \exists \; \text{morphism} \; \phi: H^{\oplus g} \rightarrow (H_n(W),\lambda_W)\},$$
which recovers the previous definition when $A=I^{2n-1}$. 
We will study skew symmetric forms in more detail and show some basic properties of the genus in Section \ref{section skew sym}; but for now the property that we need is: 
$g(W)=g(W_{\leq \omega})+g(W_{\geq \omega})$, which is a consequence of the additivity of the genus under orthogonal direct sums, see Section \ref{section skew sym}, plus the decomposition $(H_n(W),\lambda_W) \cong (H_n(W_{\leq \omega}), \lambda_{W_{\leq \omega}}) \oplus (H_n(W_{\geq \omega}), \lambda_{W_{\geq \omega}})$ from Mayer-Vietoris.

\begin{theorem} \label{theorem splitting poset connectivity}
Let $W \in \mathcal{M}[A]$, then 
\begin{enumerate}[(i)]
    \item $\mathcal{S}^{E_1, \delta}(W)$ is $(g(W)-3+C(A))$-connected, where $C(A)=0$ if $H_{n-1}(A)=0$ and $C(A)=1$ otherwise.  
    \item $\Tilde{\mathcal{S}}(W)$ is $(g(W)-2)$-connected. 
\end{enumerate}
\end{theorem}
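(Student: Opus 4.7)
The plan is to prove both parts simultaneously by induction on the genus $g(W)$, following the strategy used for surfaces in \cite[Theorem 3.4]{E2}. The main geometric input is Theorem \ref{theorem arc complex connectivity} on the connectivity of the arc complex, while the main structural tool is Corollary \ref{cor pieces of splitting complex}, which identifies the lower and upper intervals at a wall $\omega$ with splitting posets of the pieces $W_{\leq \omega}$ and $W_{\geq \omega}$; combined with the genus additivity $g(W)=g(W_{\leq \omega})+g(W_{\geq \omega})$ coming from Mayer--Vietoris, this reduces link computations to the inductive hypothesis. Lemma \ref{lemma splitting cut manifold} provides a second structural tool, relating walls lying to the left of a collection of arcs to walls in the cut manifold $W'$.

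For part (ii), given a map $\sigma: S^k \to \Tilde{\mathcal{S}}(W)$ with $k\leq g(W)-2$, I would extend it to $D^{k+1}$ by producing a common lower bound $\omega^\ast$ for the walls in the image of $\sigma$, possibly after modifying $\sigma$ within its homotopy class. Such a cone point can be built from the arc complex: picking an arc $a$ far enough to the left in $W$ (with the cut-boundary $A'$ having sufficient homology to accommodate the rest of $\sigma$), one thickens $a$ into a wall lying to the left of every wall appearing in $\sigma$. The connectivity of $\mathcal{A}(W,\Delta)$ provides the freedom to choose such arcs coherently across all faces of a fine enough triangulation of $S^k$, while induction on genus, applied via Lemma \ref{lemma splitting cut manifold} to the cut manifold $W'$, furnishes the compatible wall data. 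The base case $g(W)\leq 1$ reduces to nonemptiness, which follows from a direct construction using a hyperbolic pair in $H_n(W)$.

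Part (i) is proved by the same method, now enforcing the two-sided nonvanishing condition $H_n(W_{\leq \omega}),\,H_n(W_{\geq \omega})\neq 0$; this extra constraint consumes an additional unit of genus and accounts for the one-genus deficit in the bound. The improvement $C(A)=1$ when $H_{n-1}(A)\neq 0$ reflects the fact that nontrivial $(n-1)$-dimensional homology of $A$ can be absorbed by the surgery of Lemma \ref{lemma splitting cut manifold} without spending a unit of genus on a hyperbolic summand, producing one extra degree of connectivity. The link conditions inside $\mathcal{S}^{E_1,\delta}(W)$ are reduced via Corollary \ref{cor pieces of splitting complex} to part (ii) applied to the two pieces on either side of a wall, once one knows the poset is itself highly connected.

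The main obstacle will be the careful bookkeeping of how both the genus of the cut manifold and the homology class of the new boundary face $A'$ change under Lemma \ref{lemma splitting cut manifold}: the arc-complex input at each step must have a dimension that matches not only the residual genus but also the type, measured by $C(A')$, of the cut manifold, so that the inductive hypothesis produces precisely the required connectivity. Verifying the base cases for small $g(W)$ with the sharp dependence on $C(A)$ will similarly require explicit geometric constructions of walls from arcs and from hyperbolic pairs in $H_n(W)$.
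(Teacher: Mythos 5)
Your proposal has some of the right ingredients (induction, arc complex as the geometric input, Corollary \ref{cor pieces of splitting complex} for link identifications, Lemma \ref{lemma splitting cut manifold} for the cut manifold), but there are several genuine gaps relative to what actually makes the proof work.

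First, the coning direction for part (ii) is backwards. You propose producing a common \emph{lower} bound $\omega^\ast$ — a far-left wall — for all walls in the image of $\sigma$. But every wall in $\Tilde{\mathcal{S}}(W)$ must satisfy $H_n(W_{\leq\omega^\ast})\neq 0$, which fails precisely for a wall pushed far to the left. The asymmetry in $\Tilde{\mathcal{S}}(W)$ was designed the other way: the condition on $H_n(W_{\geq\omega})$ is dropped, so a common \emph{upper} bound — a far-right wall near the face $A$ — is what can always be found (this is exactly Lemma \ref{lem augmented splitting complex}(iii)). Your left-wall construction does not produce elements of $\Tilde{\mathcal{S}}(W)$ at all. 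Even once corrected, the argument for part (ii) is not a one-shot cone: in the case $H_{n-1}(A)=0$ one first needs to show the inclusion $\mathcal{S}^{E_1,\delta}(W)\hookrightarrow\Tilde{\mathcal{S}}(W)$ is highly connected via \cite[Proposition 2.5]{high} (using that the extra vertices have no relations and their links are identified with $\mathcal{S}^{E_1,\delta}(W_{\leq\omega})$), and only then combine with the cone point to conclude vanishing of homotopy groups.

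Second, part (i) is not "the same method with the two-sided condition enforced." For $\mathcal{S}^{E_1,\delta}(W)$ there is no accessible cone vertex on either side — pushing a wall to the right violates $H_n(W_{\geq\omega})\neq 0$ and pushing it left violates $H_n(W_{\leq\omega})\neq 0$. The paper instead applies the nerve theorem \cite[Corollary 4.2]{E2}: one builds a functor $\mathcal{A}(W,\Delta)^{\mathrm{op}}\to\{\text{closed subposets of }\mathcal{S}^{E_1,\delta}(W)\}$ sending a simplex of arcs to the walls lying strictly to their left, identifies $F(\alpha)$ with $\Tilde{\mathcal{S}}(W\setminus\alpha)$ via Lemma \ref{lemma splitting cut manifold}, and verifies the three nerve-theorem hypotheses using Theorem \ref{theorem arc complex connectivity}, Proposition \ref{prop cut manifold}, and Corollary \ref{cor pieces of splitting complex}. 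This is a genuinely different mechanism from coning, and it is where the arc complex enters as an indexing poset rather than merely as a supply of cone vertices.

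Third, the induction variable matters: the paper inducts on $\rk H_n(W)$, not on $g(W)$. This is essential because when $\omega\in\Tilde{\mathcal{S}}(W)\setminus\mathcal{S}^{E_1,\delta}(W)$ one has $g(W_{\leq\omega})=g(W)$ (genus does not drop), yet in Step 2 for $\omega\in\mathcal{S}^{E_1,\delta}(W)$ the paper shows $\rk H_n(W_{\leq\omega}) < \rk H_n(W)$ by checking that $H_n(W_{\geq\omega})$ is nonzero and torsion-free. An induction on genus alone would be circular at exactly this point. The two steps are then ordered so that part (i) at rank $r$ is proved assuming both parts at ranks $< r$, and part (ii) at rank $r$ is proved assuming part (i) at ranks $\leq r$ (i.e., including the just-established case). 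Your proposal does not engage with this bookkeeping, and genus induction would not close.
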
 \label{rem splitting poset CM}

Before proving it let us show a technical lemma. 

\begin{lemma} \label{lem augmented splitting complex}
\begin{enumerate}[(i)]
    \item If $H_{n-1}(A) \neq 0$ then $\Tilde{\mathcal{S}}(W)= S^{E_1,\delta}(W)$.
    \item If $H_{n-1}(A)=0$ then $\mathcal{S}^{E_1,\delta}(W)$ is a subposet of $\Tilde{\mathcal{S}}(W)$ and the poset $\Tilde{\mathcal{S}}(W) \setminus \mathcal{S}^{E_1,\delta}(W)$ has no relations, and for any $\omega \in \Tilde{\mathcal{S}}(W) \setminus \mathcal{S}^{E_1,\delta}(W)$ we have $\Tilde{\mathcal{S}}(W)_{>\omega}= \emptyset$ and $\Tilde{\mathcal{S}}(W)_{< \omega} \cong \mathcal{S}^{E_1,\delta}(W_{\leq \omega})$. 
    \item If $H_{n-1}(A)=0$ then the inclusion $\mathcal{S}^{E_1,\delta}(W) \hookrightarrow \Tilde{\mathcal{S}}(W)$ induces the zero map in homotopy groups.  
\end{enumerate}
\end{lemma}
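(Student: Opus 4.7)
For Part (i), I would apply Lemma \ref{lemma pieces splitting complex}(b) to realise $W_{\geq \omega}$ as an element of $\mathcal{M}[A]$, hence $(n-1)$-connected with $\partial^- W_{\geq \omega} \cong J_{2n-1} \cup \{1\} \times A$ and contractible $\mathcal{D}(W_{\geq \omega})$. A Mayer-Vietoris calculation using that $J_{2n-1}$ is contractible and meets $\{1\} \times A$ in $\partial A \cong S^{2n-2}$ (noting $H_{n-1}(S^{2n-2}) = H_n(S^{2n-2}) = 0$ since $n \geq 3$) identifies $H_{n-1}(\partial^- W_{\geq \omega}) \cong H_{n-1}(A)$. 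Poincar\'e-Lefschetz duality then gives $H_n(W_{\geq \omega}, \partial^- W_{\geq \omega}) \cong H^n(W_{\geq \omega}, \mathcal{D}(W_{\geq \omega})) \cong H^n(W_{\geq \omega})$ (the last step since $\mathcal{D}(W_{\geq \omega})$ is contractible), and the universal coefficients theorem identifies $H^n(W_{\geq \omega}) \cong \Hom(H_n(W_{\geq \omega}), \mathbb{Z})$ (using $H_{n-1}(W_{\geq \omega}) = 0$). If $H_n(W_{\geq \omega}) = 0$, the long exact sequence of the pair collapses and forces $H_{n-1}(A) = 0$; taking the contrapositive yields (i).

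For Part (ii), the key point is additivity of $H_n$ via Mayer-Vietoris. For any relation $\omega_1 < \omega_2$ in $\Tilde{\mathcal{S}}(W)$, the decomposition $W_{\geq \omega_1} = W_{\omega_1 \leq - \leq \omega_2} \cup_D W_{\geq \omega_2}$ over a $(2n-1)$-disc $D$ (with $H_n(D) = H_{n-1}(D) = 0$) yields $H_n(W_{\geq \omega_1}) \cong H_n(W_{\omega_1 \leq - \leq \omega_2}) \oplus H_n(W_{\geq \omega_2})$. The relation condition $H_n(W_{\omega_1 \leq - \leq \omega_2}) \neq 0$ is then incompatible with $H_n(W_{\geq \omega_1}) = 0$, so no element of the complement can participate in a relation; this simultaneously gives the claims $\Tilde{\mathcal{S}}(W)_{>\omega} = \emptyset$ and absence of relations inside the complement. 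The identification $\Tilde{\mathcal{S}}(W)_{<\omega} \cong \mathcal{S}^{E_1,\delta}(W_{\leq \omega})$ then follows from Corollary \ref{cor pieces of splitting complex}(i): each wall $\omega' < \omega$ in $\Tilde{\mathcal{S}}$ corresponds to a wall of $W_{\leq \omega}$, and the $\Tilde{\mathcal{S}}$-relation condition $H_n(W_{\omega' \leq - \leq \omega}) \neq 0$ is precisely what upgrades $\omega'$ to the stricter $\mathcal{S}^{E_1,\delta}$ condition on $W_{\leq \omega}$.

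For Part (iii), the plan is to produce, for any compact image of $f: S^k \to \mathcal{S}^{E_1,\delta}(W)$, a common upper bound $c \in \Tilde{\mathcal{S}}(W) \setminus \mathcal{S}^{E_1,\delta}(W)$; by part (ii) the subposet $\Tilde{\mathcal{S}}(W)_{\leq c}$ is then a cone on $\Tilde{\mathcal{S}}(W)_{<c}$, through which $f$ factors, yielding a null-homotopy. I would place the central slice of $c$ to the right of all walls in the image and close enough to $\{1\} \times A$ that the Mayer-Vietoris argument of (ii) guarantees $H_n(W_{\omega \leq - \leq c}) \cong H_n(W_{\geq \omega}) \neq 0$ for each wall $\omega$ in the image (non-zero since $\omega \in \mathcal{S}^{E_1,\delta}$), so that $c > \omega$ in $\Tilde{\mathcal{S}}$. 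The crucial remaining requirement $H_n(W_{\geq c}) = 0$ is where the hypothesis enters: by Hurewicz, $H_{n-1}(A) = 0$ upgrades $A$ to being $(n-1)$-connected, so $\pi_n(A) \cong H_n(A)$, and the Whitney trick (valid for $n \geq 3$) permits embedding $n$-spheres representing generators of $H_n(A)$ into a collar of $\{1\} \times A$, along which $(n+1)$-handles may be attached to kill the middle homology. Using Lemma \ref{lemma technical}, one chooses an embedding of $W$ within its isotopy class realising this handle-cancelled collar as $W_{\geq c}$. The main obstacle lies in this construction: verifying that the resulting $W_{\geq c}$ fits cleanly into $\mathcal{M}[A]$ with contractible $\mathcal{D}$-face and that the modified embedding of $W$ indeed lands in the original path component are the technically delicate steps, while the remaining parts of the argument reduce to duality, Mayer-Vietoris, and the identifications from earlier in Section \ref{section 5}.
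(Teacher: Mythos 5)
Your treatment of parts (i) and (ii) is essentially correct, with a slightly different bookkeeping in (i): the paper's own proof applies the long exact sequence and Poincar\'e--Lefschetz duality to the pair $(W_{\geq\omega},\partial W_{\geq\omega})$ with the full boundary (using that $\partial W_{\geq\omega}$ is $A$ glued to a contractible piece along $S^{2n-2}$), whereas you use the pair $(W_{\geq\omega},\partial^- W_{\geq\omega})$ together with relative duality into $H^n(W_{\geq\omega},\mathcal{D}(W_{\geq\omega}))$. Both routes work and the contrapositive phrasing is fine. For (ii), the additivity argument you give is exactly the paper's, and you correctly note that Corollary \ref{cor pieces of splitting complex} transfers the link condition.

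Part (iii) has a genuine gap: you miss the key observation that $H_{n-1}(A)=0$ forces $A$ to be \emph{contractible}. Indeed $A$ is $(2n-1)$-dimensional, $(n-2)$-connected, with $\partial A=S^{2n-2}$; if additionally $H_{n-1}(A)=0$ then $A$ is $(n-1)$-connected and Poincar\'e--Lefschetz duality plus Whitehead's theorem kill all remaining homology, so $A\simeq *$ (hence diffeomorphic to $I^{2n-1}$ by the $h$-cobordism theorem). The paper then simply places a wall $\omega$ inside a collar $[1-\epsilon,1]\times A$ sitting to the right of all the given walls; because that collar deformation retracts to the contractible $A$, $H_n(W_{\geq\omega})=0$ automatically, so $\omega\in\Tilde{\mathcal{S}}(W)\setminus\mathcal{S}^{E_1,\delta}(W)$ and it dominates the given finite family. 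Your proposal to instead represent generators of $H_n(A)$ by embedded $n$-spheres via the Whitney trick and attach $(n+1)$-handles to ``kill middle homology'' is both unnecessary and incorrect in this framework: one must exhibit a wall $c$ \emph{inside the given $W$}, not modify $W$ by surgery, and Lemma \ref{lemma technical} only transports walls along a diffeomorphism of $W$ fixing $\partial^-W$ --- it cannot realise a handle-attached collar, which has a different diffeomorphism type. The genuine subtlety in this step, which the paper addresses and your proposal does not, is that $A$ may fail to be diffeomorphic to $I^{2n-1}$ \emph{relative} $\partial A$ (exotic phenomena), so the wall cannot be taken standard at its top face; this is handled by the freedom of $\mathcal{D}(W)$ in Definition \ref{definition moduli} together with geodesic interpolation as in Lemma \ref{lem classifying space}.

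Also, a minor quibble with (iii): one should reduce to a \emph{finite} collection of walls via simplicial approximation (as the paper does), rather than reasoning directly about a compact image in the realisation; and the phrase ``guarantees $H_n(W_{\omega\leq-\leq c})\cong H_n(W_{\geq\omega})$'' is circular unless you already know $H_n(W_{\geq c})=0$, which is precisely the point that needed the contractibility of $A$.
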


Parts (i),(ii) are some basic properties, whereas part (iii) is more technical and it will be used in the proof of Theorem \ref{theorem splitting poset connectivity} to deduce that $\Tilde{\mathcal{S}}(W)$ has more connectivity than $\mathcal{S}^{E_1,\delta}(W)$ when $H_{n-1}(A)=0$. 

\begin{proof}
There is a natural inclusion of posets $\mathcal{S}^{E_1,\delta}(W) \subset \Tilde{\mathcal{S}}(W)$. 

\textbf{Part (i).} We will show that the inclusion is surjective. 
Let $\omega \in \Tilde{\mathcal{S}}(W)$, we will show that $\omega \in \mathcal{S}^{E_1,\delta}(W)$, i.e. that $H_n(W_{\geq \omega}) \neq 0$. 

By Lemma \ref{lemma pieces splitting complex} we can view $W_{\geq \omega}$ as an element in $\mathcal{M}[A]$, so its boundary is the union of $A$ and a contractible $(2n-1)$-manifold along $\partial A = \partial I^{2n-1}$, hence $H_{n-1}(\partial W_{\geq \omega}) \cong H_{n-1}(A) \neq 0$. 
Since $W_{\leq \omega}$ is $(n-1)$-connected, the homology long exact sequence of $(W_{\leq \omega},\partial W_{\leq \omega})$ gives that $H_n(W_{\leq \omega},\partial W_{\leq \omega}) \rightarrow H_{n-1}(A)$ is surjective, and so by Poincaré-Lefschetz it follows that $H^n(W_{\geq \omega}) \neq 0$. 
Thus $H_n(W_{\geq \omega}) \neq 0$ by the universal coefficients theorem and the $(n-1)$-connectivity of $W_{\geq \omega}$. 

\textbf{Part (ii).}
For any $\omega \in \Tilde{\mathcal{S}}(W) \setminus \mathcal{S}^{E_1,\delta}(W)$ we have $H_{n}(W_{\geq \omega})=0$, and since the piece in between two walls has non-zero n-th homology it follows that $\mathcal{S}^{E_1,\delta}(W)_{> \omega}=\emptyset$ and that $\Tilde{\mathcal{S}}(W) \setminus \mathcal{S}^{E_1,\delta}(W)$ has no relations. 
By an analogous argument to Corollary \ref{cor pieces of splitting complex}, $\Tilde{\mathcal{S}}(W)_{< \omega} \cong \mathcal{S}^{E_1,\delta}(W_{\leq \omega})$. 

\textbf{Part (iii).}
By simplicial approximation and considering the map of associated simplicial complexes it suffices to show that given any finite collection of elements $\omega_1, \cdots, \omega_u \in \mathcal{S}^{E_1,\delta}(W)$ we can pick $\omega \in \Tilde{\mathcal{S}}(W)$ such that $\omega_i < \omega$ for $1 \leq i \leq u$. 

By (1).(i) in Definition \ref{definition splitting complex}, each $\omega_i$ is disjoint from $A$, hence we can use the product structure of $W$ to pick $\epsilon>0$ such that $[1-\epsilon,1] \times A \subset W$ is contained in $W_{\geq \omega_i}$ for all $i$. 
Since by assumption $A$ is $(2n-1)$-dimensional, $(n-2)$-connected, has boundary $\partial I^{2n-1}$ and $H_{n-1}(A)=0$ then Poincaré-Lefschetz duality and Whitehead's theorem imply that $A$ is contractible. 
By the $h$-cobordism theorem we find that $A$ is diffeomorphic to $I^{2n-1}$, but not necessarily diffeomorphic to $I^{2n-1}$ relative to $\partial A= \partial I^{2n-1}$ (because of the existence of exotic spheres in generic even dimensions). 

Fix a diffeomorphism $\phi: I^{2n-1} \rightarrow A$, which by isotopy extension we can assume to be the identity on a neighbourhood of $\overline{\partial I^{2n-1} \setminus I^{2n-2} \times \{1\}}$.
Moreover, by geodesic interpolation we can further isotope it so that it preserves the natural product structure of $I^{2n-1}$ near its top face $I^{2n-2} \times \{1\}$, as in Lemma \ref{lem classifying space}. 
Then the composition $\omega$ given by $[1-\epsilon/2,1-\epsilon/4] \times I^{2n-1} \xrightarrow{\id \times \phi} [1-\epsilon/2,1-\epsilon/4] \times A \hookrightarrow W$ defines a wall in $\Tilde{\mathcal{S}}(W)$ lying to the right of all the $\omega_i$.

Finally, to check that $\omega_i<\omega$ we just need to check that $H_n(W_{\omega_i \leq - \leq \omega}) \neq 0$: we have $H_n(W_{\geq \omega_i})=H_n(W_{\omega_i \leq - \leq \omega}) \oplus H_n(W_{\geq \omega})$, and the left-hand-side is non-zero by assumption whereas the second summand of the right-hand-side vanishes, giving the result. 
\end{proof}

\begin{rem} \label{remark D(W)}
The proof of part (iii) explains why in Definition \ref{definition moduli} we wanted to allow $\mathcal{D}(W)$ to be a homotopy disc in general: the wall $\omega$ constructed above cannot be chosen to be standard near all its boundary if $A$ is not diffeomorphic to $I^{2n-1}$ relative its whole boundary. 
Thus we need to allow the walls to be non-standard on the top face, which means that the boundaries of the left pieces of the walls are allowed to be homotopy spheres. 
As we will see in the proof of Proposition \ref{prop hofib} this suggests that the $E_k$-algebra under study should contain manifolds whose boundary is a homotopy sphere too. 
\end{rem}

Now we can finally proof the main result of the section.
Before doing so, let us remark that the proof will make use not only of the arc complex connectivity but also of Proposition \ref{prop cut manifold}, which gives bounds for the genus and the rank of the n-th homology of the cut manifold.
As we will see in Remark \ref{rem n odd} the proof of Proposition \ref{prop cut manifold} uses that $n$ is odd in a fundamental way. 

\begin{proof}[Proof of Theorem \ref{theorem splitting poset connectivity}]
We will prove the result by induction on $\rk(H_n(W))$:
when $\rk(H_n(W))=0$ both (i) and (ii) are vacuously true since $g(W)=0$ too. 

\textbf{Step 1.} We will show part (ii) assuming that part (i) holds for any $W' \in \mathcal{M}[A']$ such that $\rk(H_n(W')) \leq \rk(H_n(W))$, where $A'$ is any valid choice. 
We have two cases: 

If $H_{n-1}(A) \neq 0$ then Lemma \ref{lem augmented splitting complex} (i) gives $\Tilde{\mathcal{S}}(W)=\mathcal{S}^{E_1,\delta}(W)$, which by induction  is $(g(W)-2)$-connected, as required. 

If $H_{n-1}(A)=0$ then view $\mathcal{S}^{E_1,\delta}(W)$ and $\Tilde{\mathcal{S}}(W)$ as simplicial complexes and consider the inclusion $\mathcal{S}^{E_1,\delta}(W) \hookrightarrow \Tilde{\mathcal{S}}(W)$. 
The smaller complex is a full subcomplex of the other one; and if $\sigma$ is a $p$-simplex in $\Tilde{\mathcal{S}}(W)$ with no vertex in $\mathcal{S}^{E_1,\delta}(W)$ then we must have $p=0$ by Lemma \ref{lem augmented splitting complex}(ii) so that $\sigma=\{\omega\}$.
Also, $\Lk(\sigma) \cap \mathcal{S}^{E_1,\delta}(W)$ is the simplicial complex corresponding to the poset $\mathcal{S}^{E_1,\delta}(W_{\leq \omega})$ by Lemma \ref{lem augmented splitting complex}(ii), which is $(g(W_{\leq \omega})-3)$-connected by induction. 
Moreover, additivity of the genus and $\omega \notin \mathcal{S}^{E_1,\delta}(W)$ implies that $g(W_{\leq \omega})=g(W)$. 
Therefore the inclusion $\mathcal{S}^{E_1,\delta}(W) \hookrightarrow \Tilde{\mathcal{S}}(W)$ is $(g(W)-2)$-connected by \cite[Proposition 2.5]{high}. 
Thus, by Lemma \ref{lem augmented splitting complex}(iii) we get that $\Tilde{\mathcal{S}}(W)$ is $(g(W)-2)$-connected, as required. 

\textbf{Step 2.} We will show part (i) assuming both parts (i) and (ii) hold for any $W' \in \mathcal{M}[A']$ such that $\rk(H_n(W')) < \rk(H_n(W))$, where $A'$ is any valid choice. 
We will apply the ``nerve theorem'', \cite[Corollary 4.2]{E2}. 
To do so, we will use the arc complex to index an appropriate family of closed subposets of 
$\mathcal{S}^{E_1,\delta}(W)$: 

Pick $\delta$ to be a generator of a direct summand of $H_{n-1}(\partial W) \cong H_{n-1}(A)$ of maximal order, so $\delta=0$ when $H_{n-1}(A)=0$, and use that $A$ is $(n-2)$-connected and the Hurewicz theorem to represent $\delta$ by a continuous map $f: S^{n-1} \rightarrow \int(A)$. 
Then use transversality to homotope $f$ to a smooth embedding and let $\Delta$ be its isotopy class. 
In principle this isotopy class depends on the choice of smooth embedding, but we just pick one.

Consider the functor of posets
$$F: \mathcal{A}(W,\Delta)^{\text{op}} \rightarrow \{\text{closed subposets of} \; \mathcal{S}^{E_1,\delta}(W)\} $$
given by sending $\alpha= \{a_0,\cdots,a_p\}$ to the subposet of $\mathcal{S}^{E_1,\delta}(W)$ consisting of walls $\omega$ such that all the $a_i$'s lie strictly to the right of $\omega$. 

Now let us verify the hypotheses of \cite[Corollary 4.2]{E2} with $n=g(W)-2+C(A)$ (not to be confused with half the dimension of the manifolds in this proof), $t_{\mathcal{A}}(\alpha)=p$ where $\alpha=\{a_0, \cdots, a_p\}$, and $t_{\mathcal{S}}(\omega)=g(W_{\leq \omega})-1$.

(i) By Theorem \ref{theorem arc complex connectivity} the poset $\mathcal{A}(W,\Delta)$ is $g(W)-2$-connected, and in particular $(n-1)$-connected. 

(ii) Let $\alpha=\{a_0, \cdots,a_p\} \in \mathcal{A}(W,\Delta)$, then $\mathcal{A}(W,\Delta)_{< \alpha}$ is the boundary of a $p$-simplex, and so $(p-2)=(t_{\mathcal{A}}(\alpha)-2)$-connected. 

On the other hand, by Lemma \ref{lemma splitting cut manifold} the poset $F(\alpha)$ is isomorphic to $\Tilde{\mathcal{S}}(W \setminus \alpha)$. 
By induction hypothesis, which applies by Proposition \ref{prop cut manifold}(i), this is  $(g(W \setminus \alpha)-2)$-connected.
By our choice of $\Delta$ and Proposition \ref{prop cut manifold}(ii) it follows that $g(W \setminus \alpha) \geq g(W)-(p+1)+C(A)$.
Thus, $F(\alpha)$ is $(n-(t_{\mathcal{A}}(\alpha)+1))$-connected, as required. 

(iii) Let $\omega \in \mathcal{S}^{E_1,\delta}(W)$.
By Corollary \ref{cor pieces of splitting complex}, $\mathcal{S}^{E_1,\delta}(W)_{<\omega} \cong \mathcal{S}^{E_1,\delta}(W_{\leq\omega})$. 
Now we claim that $\rk(H_n(W_{\leq \omega}))<\rk(H_n(W))$. 

Since $H_n(W)=H_n(W_{\leq \omega}) \oplus H_n(W_{\geq \omega})$ it suffices to show that $\rk(H_n(W_{\geq \omega}))>0$. 
Since $\omega \in \mathcal{S}^{E_1,\delta}(W)$ then $H_n(W_{\geq \omega}) \neq 0$, so it suffices to show that this group is torsion-free. 
By the universal coefficients theorem it is enough to show that $H^{n+1}(W_{\geq \omega})= 0$, and by Poincaré-Lefschetz duality it suffices to show that $H_{n-1}(W_{\geq \omega},\partial W_{\geq \omega}) = 0$, which follows from the $(n-1)$-connectivity of $W_{\geq \omega}$ and the $(n-2)$-connectivity of $\partial W_{\geq \omega}$ (see Lemma \ref{lemma pieces splitting complex}). 

Thus, induction hypothesis applies to $W_{\leq \omega}$ giving that $\mathcal{S}^{E_1,\delta}(W)_{<\omega}$ is $(g(W_{\leq \omega})-3)$-connected, i.e. $(t_{\mathcal{S}}(\omega)-2)$-connected. 

On the other hand, the poset $\mathcal{A}(W,\Delta)_{\omega}:=\{\alpha \in \mathcal{A}(W,\Delta): \; \omega \in F(\alpha)\}$ is isomorphic to $\mathcal{A}(W_{\geq \omega},\Delta)$.
Thus, by Theorem \ref{theorem arc complex connectivity} it is $(g(W_{\geq \omega})-2)$-connected. 
By additivity of the genus we find that $\mathcal{A}(W,\Delta)_{\omega}$ is $(n-C(A)-(t_{\mathcal{S}}(\omega)+1))$-connected, as required since $C(A) \leq 1$. 

The nerve theorem applies giving that $\mathcal{S}^{E_1,\delta}(W)$ is $(g(W)-3+C(A))$-connected, completing the induction step.  
\end{proof}

\begin{corollary} \label{cor weakly cohen macaulay}
For any allowed $A$ and $W \in \mathcal{M}[A]$, $\mathcal{S}^{E_1,\delta}(W)$ is $f$-weakly Cohen-Macaulay of dimension $g(W)-2+C(A)$, where $f(\omega)=g(W_{\leq \omega})-1$. 
\end{corollary}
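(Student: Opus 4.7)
The plan is to verify directly the four conditions in the definition of $f$-weakly Cohen-Macaulay of dimension $g(W)-2+C(A)$ with $f(\omega)=g(W_{\leq\omega})-1$, using Theorem \ref{theorem splitting poset connectivity}(i) together with the identifications from Corollary \ref{cor pieces of splitting complex} and the additivity $g(W)=g(W_{\leq\omega})+g(W_{\geq\omega})$ (and analogously $g(W_{\leq\omega'})=g(W_{\leq\omega})+g(W_{\omega\leq-\leq\omega'})$) noted before Theorem \ref{theorem splitting poset connectivity}.

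Condition (i), that $\mathcal{S}^{E_1,\delta}(W)$ is $(g(W)-3+C(A))$-connected, is exactly Theorem \ref{theorem splitting poset connectivity}(i). For condition (ii), fix $\omega$. By Corollary \ref{cor pieces of splitting complex}(i) we have $\mathcal{S}^{E_1,\delta}(W)_{<\omega}\cong \mathcal{S}^{E_1,\delta}(W_{\leq\omega})$, and by Lemma \ref{lemma pieces splitting complex}(a) the manifold $W_{\leq\omega}$ sits in $\mathcal{M}[I^{2n-1}]$, so the relevant $C$-value is $0$. Theorem \ref{theorem splitting poset connectivity}(i) then yields $(g(W_{\leq\omega})-3)$-connectivity, which is exactly $f(\omega)-2$. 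For condition (iii), by Corollary \ref{cor pieces of splitting complex}(ii) we have $\mathcal{S}^{E_1,\delta}(W)_{>\omega}\cong \mathcal{S}^{E_1,\delta}(W_{\geq\omega})$, and by Lemma \ref{lemma pieces splitting complex}(b) the right piece $W_{\geq\omega}$ lies in $\mathcal{M}[A]$, so it inherits the same value $C(A)$. Theorem \ref{theorem splitting poset connectivity}(i) gives $(g(W_{\geq\omega})-3+C(A))$-connectivity, and a short arithmetic check using additivity of $g$ rewrites this as $(g(W)-2+C(A))-2-(g(W_{\leq\omega})-1)$, i.e.\ $n-2-f(\omega)$.

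For condition (iv), pick $\omega<\omega'$. The ``moreover'' part of Corollary \ref{cor pieces of splitting complex} identifies $\mathcal{S}^{E_1,\delta}(W)_{\omega<-<\omega'}$ with $\mathcal{S}^{E_1,\delta}(W_{\omega\leq-\leq\omega'})$, and the middle piece has standard right-hand face (inherited from $\omega'$), so it sits in $\mathcal{M}[I^{2n-1}]$ by iterating Lemma \ref{lemma pieces splitting complex}, and again $C=0$. Applying Theorem \ref{theorem splitting poset connectivity}(i) gives $(g(W_{\omega\leq-\leq\omega'})-3)$-connectivity; additivity $g(W_{\leq\omega'})=g(W_{\leq\omega})+g(W_{\omega\leq-\leq\omega'})$ rewrites this as $f(\omega')-f(\omega)-3$, as required.

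There is no real obstacle here: all four conditions reduce to a single application of Theorem \ref{theorem splitting poset connectivity}(i) to the appropriate sub-cobordism after using the poset isomorphisms from Corollary \ref{cor pieces of splitting complex}. The only points that require care are bookkeeping: tracking which sub-piece inherits the face $A$ (so that its $C$-value is $C(A)$) versus which sub-pieces acquire the standard face $I^{2n-1}$ (so that $C=0$), and matching the genus additivity with the numerology in the definition of $f$-weakly Cohen-Macaulay. This is exactly why $f(\omega)=g(W_{\leq\omega})-1$ is the correct rank function: the ``left'' pieces drop the $C(A)$-correction, while the ``right'' piece retains it, which is consistent with conditions (ii) and (iv) being stated independently of $C(A)$.
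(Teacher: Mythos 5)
Your proof is correct and follows exactly the same route as the paper: apply Theorem \ref{theorem splitting poset connectivity}(i) to each piece after identifying the relevant subposets via Corollary \ref{cor pieces of splitting complex}, then use genus additivity to match the numerology. You simply make explicit the bookkeeping (which sub-piece lives in $\mathcal{M}[I^{2n-1}]$ with $C=0$ and which retains the face $A$) that the paper leaves implicit.
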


\begin{proof}
By Theorem \ref{theorem splitting poset connectivity} we already know that $\mathcal{S}^{E_1,\delta}(W)$ is $(g(w)-3+C(A))$-connected. 
From Theorem \ref{theorem splitting poset connectivity} and Corollary \ref{cor pieces of splitting complex} it follows that 

(i) For any $\omega \in \mathcal{S}^{E_1,\delta}(W)$, $\mathcal{S}^{E_1,\delta}(W)_{\leq \omega}$ is $(g(W_{\leq \omega})-3)$-connected, i.e. $(f(\omega)-2)$-connected. 

(ii) For any $\omega \in \mathcal{S}^{E_1,\delta}(W)$, $\mathcal{S}^{E_1,\delta}(W)_{\geq \omega}$ is $(g(W_{\geq \omega})-3+C(A))$-connected, which by additivity of the genus is $((g(W)-2+C(A))-2-f(\omega))$-connected. 

(iii) For any $\omega < \omega' \in \mathcal{S}^{E_1,\delta}(W)$, $\mathcal{S}^{E_1,\delta}(W)_{\omega <-<\omega'}$ is $(g(W_{\omega \leq - \leq \omega'})-3)$-connected. 
By additivity of the genus this is $(f(\omega')-f(\omega)-3)$-connected. 
\end{proof}

\begin{rem}
It can be shown that $f(\omega)=\dim(\omega)$ and $\dim(\mathcal{S}^{E_1,\delta}(W))=g(W)-2+C(A)$, so actually $\mathcal{S}^{E_1,\delta}(W)$ is Cohen-Macaulay of dimension $g(W)-2+C(A)$. 
However this fact will not be used in the rest of this paper. 
\end{rem}

\subsection{The discretization argument and the connectivity of splitting complexes}

In this section we give a general tool which allows to show high connectivity of the nerve of a topological poset by proving that its discretization is $f$-weakly Cohen-Macaulay.
The statement and proof of the following result are inspired by the proof of \cite[Theorem 5.6]{high}. 

\begin{theorem}\label{poset discretization}
Let $(P,<)$ be a (non-unital) topological poset such that $P$ is a Hausdorff space and the relation $<$ is an open subset of $P^2$.
If $(P^{\delta},<)$ is $f$-weakly Cohen-Macaulay of dimension $n$ for some function $f$, then $||N_{\bullet}P||$ is $(n-1)$-connected.  
\end{theorem}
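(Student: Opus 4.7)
The plan is to reduce to the case of a discrete poset, which is immediate from the hypothesis. The identity function $P^{\delta} \to P$ is continuous and order-preserving, so it induces a continuous map of nerves $\iota: ||N_{\bullet}P^{\delta}|| \to ||N_{\bullet}P||$; this $\iota$ is a bijection of underlying sets but is typically not a homeomorphism, since the source has a strictly finer CW topology. Since $P^{\delta}$ is $f$-weakly Cohen--Macaulay of dimension $n$, condition (i) in the definition yields that $||N_{\bullet}P^{\delta}||$, regarded as a simplicial complex, is $(n-1)$-connected. Hence it suffices to show that every continuous $\alpha: S^k \to ||N_{\bullet}P||$ with $k \leq n-1$ is homotopic to one of the form $\iota \circ \beta$ for some continuous $\beta: S^k \to ||N_{\bullet}P^{\delta}||$; equivalently, a map whose chain coordinate is locally constant on a suitable triangulation of $S^k$.

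For the discretization, both hypotheses on $P$ are essential. Openness of $<$ in $P^2$ implies that the chain space $N_p P \subseteq P^{p+1}$ is open, so for any chain $c = (x_0 < \cdots < x_p)$ one has product neighborhoods $V_c = V_0 \times \cdots \times V_p \subset N_p P$, which by Hausdorffness of $P$ can be taken pairwise disjoint and arbitrarily small. Using compactness of $S^k$ and a Lebesgue-number argument, I would choose a fine triangulation $T$ of $S^k$ together with, for each simplex $\sigma \in T$, a chain $c_\sigma$ and a chart $V_{c_\sigma} \times \int(\Delta^{\dim\sigma})$ of $||N_{\bullet}P||$ containing $\alpha(\int(\sigma))$, arranged compatibly so that face inclusions $\tau \subset \sigma$ in $T$ yield the corresponding face chain relation $c_\tau \subset c_\sigma$. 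This setup is obtained after replacing $\alpha$ by a homotopic map which is piecewise affine in the $\Delta$-coordinate, via a semisimplicial approximation of $\alpha$ on $T$. Then $\alpha$ is deformed to the combinatorial map $\alpha'$ that on each $\sigma$ is the canonical identification $\sigma \cong \Delta^{\dim\sigma} \hookrightarrow \{c_\sigma\} \times \Delta^{\dim\sigma}$; the homotopy is assembled inductively over the skeleta of $T$, exploiting contractibility of each $\sigma$ and the fact that both endpoints lie inside the same chart, with compatibility along faces ensured by the compatibility of the chosen charts.

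By construction, $\alpha'$ is continuous into the finer topology $||N_{\bullet}P^{\delta}||$, providing the factorization $\alpha' = \iota \circ \beta$. Since $k \leq n-1$ and $||N_{\bullet}P^{\delta}||$ is $(n-1)$-connected, $\beta$ is null-homotopic, hence so are $\alpha'$ and $\alpha$. I expect the main obstacle to be the combined step of simplicial approximation of $\alpha$ and compatible chart selection: one must simultaneously choose $T$, the chains $\{c_\sigma\}$, the neighborhoods $\{V_{c_\sigma}\}$, and the homotopy, all respecting face inclusions in $T$. A clean treatment probably proceeds by induction on the skeleta of $T$ with further subdivision when needed, using openness of $<$ to guarantee that perturbed chains remain chains and Hausdorffness to ensure that product neighborhoods can be shrunk without losing compatibility. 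Mild additional care is needed when the approximation sends a simplex of $T$ to a lower-dimensional stratum of $||N_{\bullet}P||$; this is handled by further subdivision and does not affect the conclusion.
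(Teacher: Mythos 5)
Your approach is genuinely different from the paper's, and unfortunately it has a real gap. The paper builds an auxiliary bi-semisimplicial space $A_{\bullet,\bullet}$ whose $(p,q)$-simplices are a chain in $P$ together with a chain in $P^{\delta}$ that are \emph{jointly} totally ordered. This comes with two augmentations $\epsilon_{\bullet}: A_{\bullet,\bullet} \to N_{\bullet}P$ and $\delta_{\bullet}: A_{\bullet,\bullet} \to N_{\bullet}P^{\delta}$, together with a join-type homotopy showing $||\iota_{\bullet}|| \circ ||\delta_{\bullet}|| \simeq ||\epsilon_{\bullet}||$. The heart of the paper's argument is then showing that $||\epsilon_{\bullet}||$ is $(n-1)$-connected, which is done via the microfibration argument of \cite[Corollary 2.9]{high}: Hausdorffness of $P_p$ and openness of the relation ensure the hypotheses, and the remaining task is to compute the connectivity of the fiber $X_{\bullet}(x_0<\cdots<x_p)$, which decomposes as a join $P^{\delta}_{<x_0}*P^{\delta}_{x_0<-<x_1}*\cdots*P^{\delta}_{>x_p}$. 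It is precisely here that \emph{all four} conditions (ii)--(iv) of the $f$-weakly Cohen--Macaulay hypothesis enter. Your proof uses only condition (i), the $(n-1)$-connectivity of $P^\delta$; nothing in it touches the link and interval conditions, so if your argument were correct it would prove a strictly stronger theorem than the one stated. This should make you suspicious.

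The concrete gap in your argument is the deformation step, ``$\alpha$ is deformed to the combinatorial map $\alpha'$ ... exploiting contractibility of each $\sigma$ and the fact that both endpoints lie inside the same chart.'' Contractibility of $\sigma$ alone is not enough: to extend a homotopy from $\partial\sigma$ over $\sigma$ rel boundary, the relevant obstruction lives in $\pi_{\dim\sigma}$ of the chart, and that chart is a neighborhood built from an open set $V_{c_\sigma} \subset N_{\dim c_\sigma}P \subset P^{\dim c_\sigma + 1}$. The hypothesis on $P$ is only Hausdorffness; there is no local contractibility assumption, so $V_{c_\sigma}$ (no matter how small) need not have vanishing homotopy groups, and the inductive extension over the skeleta of $T$ is obstructed. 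This is exactly the difficulty that Rognes-style microfibration arguments such as \cite[Corollary 2.9]{high} are designed to avoid, and why the paper relies on them rather than a simplicial approximation on the domain. More mildly, you also cannot invoke the ordinary simplicial approximation theorem here, since $||N_{\bullet}P||$ is the realization of a semisimplicial \emph{space} rather than a simplicial complex or CW complex, and the compatible choice of chains $c_\sigma$ over all simplices (with $c_\tau \subset c_\sigma$ for $\tau \subset \sigma$, agreeing on shared faces) would need a careful argument that you have not supplied. Both of these obstructions disappear in the paper's setup because the ``interleaving'' trick reduces connectivity questions about $||\epsilon_{\bullet}||$ to purely combinatorial connectivity statements about joins of discrete posets.

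To salvage a direct approach one would at minimum need to add local contractibility of $P$ as a hypothesis and then carefully justify the compatible chart selection; but this still would not recover the theorem in the generality stated, and it still would not explain why the full Cohen--Macaulay hypothesis should be necessary. You should study the proof via $A_{\bullet,\bullet}$ and the microfibration criterion.
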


\begin{proof}
Let $\iota: P^{\delta} \rightarrow P$ be the identity map, viewed as a continuous map of posets. 

Consider the bi-semisimplicial space $A_{\bullet,\bullet}$ whose space of $(p,q)$-simplices is given by 
\begin{equation*}
    \begin{split}
       A_{p,q}:=\{(x_0 < x_1 <\cdots < x_p), (y_0< y_1< \cdots < y_q): \; \text{the set} \; \{x_0,\cdots,x_p,y_0,\cdots,y_q\} \\
        \text{is totally ordered}\} 
    \end{split}
\end{equation*}
topologized as a subspace of $P^{p+1} \times (P^{\delta})^{q+1}$, and whose face maps are given by forgetting the $x_i$'s and the $y_j$'s. 

This bi-semisimplicial space has two augmentations $\epsilon_{\bullet}: A_{\bullet,\bullet} \rightarrow P_{\bullet}=N_{\bullet}P$ and $\delta_{\bullet}: A_{\bullet,\bullet} \rightarrow P^{\delta}_{\bullet}=N_{\bullet}P^{\delta}$. 

\begin{claim}
The diagram 

\centerline{\xymatrix{ & ||A_{\bullet,\bullet}|| \ar[ld]_-{||\delta_{\bullet}||} \ar[rd]^-{||\epsilon_{\bullet}||} & \\
||P^{\delta}_{\bullet}|| \ar[rr]^-{||\iota_{\bullet}||} & & ||P_{\bullet}|| }}

is homotopy-commutative. 
\end{claim}

This claim is similar to \cite[Lemma 4.2]{ssSpaces}

\begin{proof}[Proof of Claim.]
For each $p,q \geq 0$ we will construct a map $H_{p,q}: I \times A_{p,q} \times \Delta^p \times \Delta^q \rightarrow ||P_{\bullet}||$ in such a way that the $H_{p,q}$'s are compatible with the face maps, and hence glue to produce the required homotopy. \\
We define $H_{p,q}$ on a given $(p,q)$-simplex $ \sigma= ((x_0<\cdots<x_p),(y_0<\cdots<y_q)) \in A_{p,q}$ to be the map $H_{p,q}(\sigma): I \times \Delta^p \times \Delta^q \rightarrow \Delta^{p+q+1}$ given as follows:

$\epsilon_{\bullet}$ at simplex $\sigma$ gives an inclusion $a:\Delta^p \hookrightarrow \Delta^{p+q+1}$ as a $p$-face corresponding to the positions of the $x_i$'s in the chain defined by $\{x_0, \cdots, x_p\} \sqcup \{y_0, \cdots, y_q\}$, and similarly $\iota_{\bullet} \circ \delta_{\bullet}$ at simplex $\sigma$ gives an inclusion $b:\Delta^q \hookrightarrow \Delta^{p+q+1}$ as a $q$-face. 

The homotopy $H$ is then given by $H(t,u,v):=ta (u)+(1-t) b(v)$ for $u \in \Delta^p$, $v \in \Delta^q$ and $t \in I$. 
\end{proof}

Thus, using that $P^{\delta}_{\bullet}$ is $(n-1)$-connected, it suffices to show that $||\epsilon_{\bullet}||$ is $(n-1)$-connected. 

By \cite[Proposition 2.7]{high} it suffices to show that for each $p$ the map $\epsilon_p: |A_{p,\bullet}| \rightarrow P_p$ is $(n-1-p)$-connected. 
This will be shown by applying \cite[Corollary 2.9]{high} with $Y_{\bullet}=P^{\delta}_{\bullet}$, which is a semisimplicial set as ${P^{\delta}}$ is discrete, $Z=P_{p}$, which is Hausdorff as $P$ is, and $X_{\bullet}= A_{p,\bullet} \subset Z \times Y_{\bullet}$, which is an open subset on each degree as $<$ is open. 
The only condition to check is that for any given $x_0<\cdots<x_p \in P_p$ the semisimplicial set $X_{\bullet}(x_0<\cdots<x_p)$ defined via $\{x_0<\cdots<x_p\} \times X_{\bullet}(x_0<\cdots<x_p)= X_{\bullet} \cap (\{x_0<\cdots<x_p\} \times Y_{\bullet})$ is $(n-1-p-1)$-connected. 

By definition $X_{\bullet}(x_0<\cdots<x_p)=(P^{\delta}_{<x_0}*P^{\delta}_{x_0<-<x_1}* \cdots * P^{\delta}_{x_{p-1}<-<x_p} * P^{\delta}_{>x_p})_{\bullet}$, and since
$P^{\delta}$ is $f$-weakly Cohen-Macaulay of dimension $n$ then $P^{\delta}_{<x_0}$ is $(f(x_0)-2)$-connected, each $P^{\delta}_{x_{i-1}<-<x_i}$ is $(f(x_i)-f(x_{i-1})-3)$-connected and $P^{\delta}_{>x_p}$ is $(n-2-f(x_p))$-connected. 
Thus the join is $(n-2-2-3p+2(p+1))$-connected, i.e. $(n-2-p)$-connected, as required.
\end{proof}

From Corollary \ref{cor weakly cohen macaulay} and the above result it follows that

\begin{corollary} \label{cor key} 
For each $W \in \Mhalf$ the $E_1$-splitting complex $S^{E_1}(W)$ is $(g(W)-3)$-connected.
More generally, for any allowed $A$ and $W \in \mathcal{M}[A]$ the $E_1$-splitting complex $S^{E_1}(W)$ is $(g(W)-3+C(A))$-connected.
\end{corollary}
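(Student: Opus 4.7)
The plan is to combine Corollary \ref{cor weakly cohen macaulay} with the discretization principle of Theorem \ref{poset discretization} applied to the topological poset $\mathcal{S}^{E_1}(W)$, whose nerve is $S^{E_1}(W)$ by construction. Since $\mathcal{S}^{E_1,\delta}(W)$ is $f$-weakly Cohen-Macaulay of dimension $g(W)-2+C(A)$ by Corollary \ref{cor weakly cohen macaulay}, Theorem \ref{poset discretization} will immediately yield that $S^{E_1}(W)$ is $(g(W)-3+C(A))$-connected, specializing to $(g(W)-3)$-connected when $A = I^{2n-1}$ (since then $H_{n-1}(A) = 0$, so $C(A)=0$).

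The only thing that needs verification is that the two topological hypotheses of Theorem \ref{poset discretization} hold for $\mathcal{S}^{E_1}(W)$, namely that the underlying space is Hausdorff and that the strict order relation $<$ is an open subset of $\mathcal{S}^{E_1}(W)^2$. For the Hausdorff property: $\mathcal{S}_0^{E_1}(W)$ is topologized as a subspace of $\Emb([-1,1] \times I^{2n-1}, I^{2n} \times \mathbb{R}^{\infty}) \times (0,1)^2$, and both factors are Hausdorff (the embedding space with the $C^{\infty}$-topology is a metric space, and $(0,1)^2$ is a subspace of $\mathbb{R}^2$), so the subspace is Hausdorff.

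For the openness of $<$: given $(\omega,t,\epsilon) < (\omega',t',\epsilon')$ we need to find an open neighbourhood in the product in which the order is preserved. The conditions $t+\epsilon < t'-\epsilon'$ and $\im(\omega) \subset \int(W_{\leq \omega'})$ are both open conditions, the former directly in the coordinates $(t,\epsilon,t',\epsilon')$ and the latter because having image in an open submanifold is an open condition in the $C^{\infty}$-topology. The only nontrivial condition is $H_n(W_{\omega \leq - \leq \omega'}) \neq 0$; however, small perturbations of $(\omega,t,\epsilon,\omega',t',\epsilon')$ produce manifolds $W_{\omega \leq - \leq \omega'}$ that are diffeomorphic to the original one (via a diffeotopy of the ambient cube obtained from the isotopy extension theorem applied to the perturbation of $\omega$ and $\omega'$), so the $n$-th homology is preserved under such perturbations and the condition is open.

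The bulk of the work for this corollary is already encapsulated in Corollary \ref{cor weakly cohen macaulay} and Theorem \ref{poset discretization}. The main expected obstacle is the verification of openness of the order relation, because it mixes embedding-space topology with homological conditions on cut pieces of the manifold; this is handled by the isotopy-extension argument above, which ensures that small perturbations of the two walls give ambient-isotopic left, middle and right regions, so that the non-vanishing of the middle region's $n$-th homology is a locally constant (hence open) condition.
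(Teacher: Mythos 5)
Your proposal is correct and follows exactly the paper's route: the paper derives the corollary immediately by applying Theorem \ref{poset discretization} to the conclusion of Corollary \ref{cor weakly cohen macaulay}, leaving the verification of the Hausdorff and openness hypotheses for the topological poset $\mathcal{S}^{E_1}(W)$ implicit. Your verification of those two hypotheses is sound; the only small imprecision is that the isotopy-extension diffeotopy used to show local constancy of $H_n(W_{\omega\leq -\leq\omega'})$ should be taken inside the manifold $W$ (fixing a neighbourhood of $\partial^- W$) rather than of the ambient cube, since a diffeotopy of $I^{2n}\times\mathbb{R}^{\infty}$ need not preserve $W$ --- but this does not affect the argument.
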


\subsection{Splitting complexes and $E_1$-homology} \label{section 5.7}
In this section we will relate the required vanishing in the $E_1$-homology of $\textbf{R}$ of Theorem \hyperref[theorem C]{C} to the connectivity bound on the splitting complex of Corollary \ref{cor key}.
This will be based on results of \cite[Section 13]{Ek}, which express the $E_1$-indecomposables in terms of bar constructions. 
We will make use of different bar constructions defined in \cite[Section 9, Section 13]{Ek}. 

Recall also \cite[Section 12.2.1]{Ek}: 
given an $E_1$-algebra $\textbf{X}$ we can define a unital associative replacement $\mathbf{\overline{X}}$ whose underlying object is given by $([0,\infty) \times \mathds{1}) \sqcup ((0,\infty) \times \mathbf{X})$, where $\mathds{1}$ is the monoidal unit of the underlying category. 
A small modification of this construction gives a non-unital associative replacement that we shall denote $\mathbf{X'}$, whose underlying object is $(0,\infty) \times \mathbf{X}$, and such that the inclusion $\mathbf{X'} \rightarrow \mathbf{\overline{X}}$ preserves the associative product. 
In particular, $\mathbf{X'}$ becomes an strict $\mathbf{\overline{X}}$-bimodule in the obvious way. 

Our first lemma relates the $E_1$-homology of $\textbf{R}$ to a certain bar construction

\begin{lemma} \label{lem Q vs B}
The homotopy cofibre of $B(\mathbf{R'},\mathbf{\bR},\mathbf{R'}) \rightarrow B(\mathbf{\bR},\mathbf{\bR},\mathbf{R'}) \simeq \mathbf{R'}$ is homologically $(\rk(x)-2)$-connected in grading $x \in \mathsf{G}_n$ if and only if $H_{x,d}^{E_1}(\mathbf{R})=0$ for $d<\rk(x)-1$. 
\end{lemma}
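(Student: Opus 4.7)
The plan is to identify the homotopy cofibre of the map $B(\mathbf{R'},\mathbf{\bR},\mathbf{R'}) \rightarrow B(\mathbf{\bR},\mathbf{\bR},\mathbf{R'})$ with $Q_{\mathbb{L}}^{E_1}(\mathbf{R})$, after which the claimed equivalence of vanishing ranges is immediate from the definition of $E_1$-homology. The whole argument is an application of standard properties of the two-sided bar construction.

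First I would observe that, by construction, $\mathbf{R'}$ sits inside $\mathbf{\bR}$ as the ``augmentation ideal''; more precisely, the underlying object of $\mathbf{\bR}$ decomposes as $([0,\infty)\times \mathds{1})\sqcup((0,\infty)\times\mathbf{R})$, so the inclusion $\mathbf{R'}\hookrightarrow\mathbf{\bR}$ is a map of $\mathbf{\bR}$-bimodules whose cofibre is $\mathds{1}$. This gives a cofibre sequence of right $\mathbf{\bR}$-modules
$$\mathbf{R'} \longrightarrow \mathbf{\bR} \longrightarrow \mathds{1}.$$
Next I would apply the functor $B(-,\mathbf{\bR},\mathbf{R'})$. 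By \cite[Lemma 9.16]{Ek} the bar construction computes the derived tensor product $-\otimes^{\mathbb{L}}_{\mathbf{\bR}}\mathbf{R'}$, and is therefore exact in the first variable. Hence we obtain a cofibre sequence
$$B(\mathbf{R'},\mathbf{\bR},\mathbf{R'}) \longrightarrow B(\mathbf{\bR},\mathbf{\bR},\mathbf{R'}) \longrightarrow B(\mathds{1},\mathbf{\bR},\mathbf{R'}).$$

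Now I would identify the right-hand term. The equivalence $B(\mathbf{\bR},\mathbf{\bR},\mathbf{R'})\simeq \mathbf{R'}$ asserted in the statement is the standard fact that $\mathbf{\bR}\otimes^{\mathbb{L}}_{\mathbf{\bR}}\mathbf{R'}\simeq \mathbf{R'}$, and since $\mathbf{R'}\simeq \mathbf{R}$ as $E_1$-algebras it is in particular $\simeq\mathbf{R}$. For the cofibre term, the same equivalence gives $B(\mathds{1},\mathbf{\bR},\mathbf{R'})\simeq B(\mathds{1},\mathbf{\bR},\mathbf{R})$, and the formula $Q_{\mathbb{L}}^{E_1}(\mathbf{R})\simeq B(\mathds{1},\mathbf{\overline{R}},\mathbf{R})$ from \cite[Corollary 9.17, Theorem 13.7]{Ek} (quoted in Section \ref{section 1.2}) then identifies this with $Q_{\mathbb{L}}^{E_1}(\mathbf{R})$. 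Therefore the homotopy cofibre of $B(\mathbf{R'},\mathbf{\bR},\mathbf{R'}) \to B(\mathbf{\bR},\mathbf{\bR},\mathbf{R'})$ is canonically equivalent to $Q_{\mathbb{L}}^{E_1}(\mathbf{R})$.

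Finally, the cofibre is homologically $(\rk(x)-2)$-connected in grading $x$ precisely when $H_d(Q_{\mathbb{L}}^{E_1}(\mathbf{R})(x))=0$ for $d\leq \rk(x)-2$, i.e.\ for $d<\rk(x)-1$, which by definition of $E_1$-homology is the statement $H_{x,d}^{E_1}(\mathbf{R})=0$ for $d<\rk(x)-1$. There is no real obstacle in the argument; the only point that requires any care is making sure the initial cofibre sequence $\mathbf{R'}\to\mathbf{\bR}\to\mathds{1}$ is of right $\mathbf{\bR}$-modules so that applying $B(-,\mathbf{\bR},\mathbf{R'})$ is justified, which follows from the explicit construction of $\mathbf{\bR}$ and $\mathbf{R'}$ recalled at the start of Section \ref{section 5.7}.
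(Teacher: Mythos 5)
Your argument is correct and takes essentially the same route as the paper: both identify the homotopy cofibre of $B(\mathbf{R'},\mathbf{\bR},\mathbf{R'}) \to B(\mathbf{\bR},\mathbf{\bR},\mathbf{R'})$ with $B(\mathds{1},\mathbf{\bR},\mathbf{R'}) \simeq Q_{\mathbb{L}}^{E_1}(\mathbf{R})$ via the cofibre sequence $\mathbf{R'} \to \mathbf{\bR} \to \mathds{1}$ of $\mathbf{\bR}$-modules together with exactness of the bar construction and the weak equivalence $\mathbf{R'}\simeq\mathbf{R}$. The only difference is that the paper first runs this argument in $\mathsf{sMod}_{\mathbb{Z}}^{\mathsf{G}_n}$, where cofibrancy and the formula $Q_{\mathbb{L}}^{E_1}\simeq B(\mathds{1},\overline{(-)},-)$ hold cleanly, and then transfers back to $\mathsf{Top}^{\mathsf{G}_n}$ using that $(-)_{\mathbb{Z}}$ commutes with bar constructions; you work in $\mathsf{Top}^{\mathsf{G}_n}$ directly, which is the same argument modulo this technical detour.
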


\begin{proof}
By \cite[Corollary 9.17,Theorem 13.7]{Ek} and using that $\mathbf{R'}_{\mathbb{Z}} \rightarrow \mathbf{R}_{\mathbb{Z}}$ is a weak equivalence of $\mathbf{R}_{\mathbb{Z}}$-modules we get that 
$Q_{\mathbb{L}}^{E_1}(\mathbf{R}_{\mathbb{Z}}) \simeq B(\mathds{1},\mathbf{\overline{R_{\mathbb{Z}}}},\mathbf{R_{\mathbb{Z}}'})$.
Since
$$B(\mathbf{R_{\mathbb{Z}}'},\mathbf{\overline{R_{\mathbb{Z}}}},\mathbf{R_{\mathbb{Z}}'}) \rightarrow B(\mathbf{\overline{R_{\mathbb{Z}}}},\mathbf{\overline{R_{\mathbb{Z}}}},\mathbf{R_{\mathbb{Z}}'}) \simeq \mathbf{R_{\mathbb{Z}}'} \rightarrow B(\mathds{1},\mathbf{\overline{R_{\mathbb{Z}}}},\mathbf{R_{\mathbb{Z}}'})$$
is a cofibration then $Q_{\mathbb{L}}^{E_1}(\mathbf{R_{\mathbb{Z}}})$ is the homotopy cofibre of $B(\mathbf{R_{\mathbb{Z}}'},\mathbf{\overline{R_{\mathbb{Z}}}},\mathbf{R_{\mathbb{Z}}'}) \rightarrow B(\mathbf{\overline{R_{\mathbb{Z}}}},\mathbf{\overline{R_{\mathbb{Z}}}},\mathbf{R_{\mathbb{Z}}'}) \simeq \mathbf{R_{\mathbb{Z}}'}$.  

Now perform the associative replacements $\mathbf{R'}$ and $\mathbf{\overline{R}}$ in the original category $\mathsf{Top}^{\mathsf{G}_n}$ and consider the map  $B(\mathbf{R'},\mathbf{\bR},\mathbf{R'}) \rightarrow B(\mathbf{\bR},\mathbf{\bR},\mathbf{R'}) \simeq \mathbf{R'}$. 
By construction, if we apply the functor $(-)_{\mathbb{Z}}$ then we get $B(\mathbf{R_{\mathbb{Z}}'},\mathbf{\overline{R_{\mathbb{Z}}}},\mathbf{R_{\mathbb{Z}}'}) \rightarrow B(\mathbf{\overline{R_{\mathbb{Z}}}},\mathbf{\overline{R_{\mathbb{Z}}}},\mathbf{R_{\mathbb{Z}}'}) \simeq \mathbf{R_{\mathbb{Z}}'}$. 

Thus, $H_{x,d}^{E_1}(\mathbf{R})=0$ for $d<\rk(x)-1$ if and only if $Q_{\mathbb{L}}^{E_1}(\mathbf{R_{\mathbb{Z}}})$ is homologically $(\rk(x)-2)$-connected in grading $x$, which is equivalent to the map $B(\mathbf{R_{\mathbb{Z}}'},\mathbf{\overline{R_{\mathbb{Z}}}},\mathbf{R_{\mathbb{Z}}'}) \rightarrow B(\mathbf{\overline{R_{\mathbb{Z}}}},\mathbf{\overline{R_{\mathbb{Z}}}},\mathbf{R_{\mathbb{Z}}'}) \simeq \mathbf{R_{\mathbb{Z}}'}$ being homologically $(\rk(x)-2)$-connected in grading $x$, which in turn is equivalent to the map $B(\mathbf{R'},\mathbf{\bR},\mathbf{R'}) \rightarrow B(\mathbf{\bR},\mathbf{\bR},\mathbf{R'}) \simeq \mathbf{R'}$ being itself homologically $(\rk(x)-2)$-connected in grading $x$. 
\end{proof}

In $\mathsf{Top}^{\mathsf{G}_n}$ we can also make sense of the bar construction $B_{\bullet}(\mathbf{R'},\mathbf{R'},\mathbf{R'})$, which has an augmentation $B_{\bullet}(\mathbf{R'},\mathbf{R'},\mathbf{R'}) \rightarrow \mathbf{R'}$ which factors trough the augmentation $B_{\bullet}(\mathbf{R'},\mathbf{\bR},\mathbf{R'}) \rightarrow \mathbf{R'}$.
Moreover, we also have a weak equivalence $\mathbf{R'} \rightarrow \mathbf{R}$ induced by projection. 

\begin{lemma} \label{lem B}
The map $B(\mathbf{R'},\mathbf{R'},\mathbf{R'}) \rightarrow B(\mathbf{R'},\mathbf{\bR},\mathbf{R'})$ induced by the inclusion $\mathbf{R'} \rightarrow \mathbf{\bR}$ is a weak equivalence. 
Thus, the homotopy cofibres of $B_{\bullet}(\mathbf{R'},\mathbf{R'},\mathbf{R'}) \rightarrow \mathbf{R}$ and $B(\mathbf{R'},\mathbf{\bR},\mathbf{R'}) \rightarrow \mathbf{R'}$ are equivalent. 
\end{lemma}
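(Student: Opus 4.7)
The plan is to use the explicit form of $\mathbf{\bR}$ from \cite[Section 12.2.1]{Ek} as an adapted unitalisation: its underlying object is $\mathbf{\bR} = \mathbf{R'} \sqcup U$ with $U := [0,\infty) \times \mathds{1}$ the contractible thickening of the unit, and the inclusion $\mathbf{R'} \hookrightarrow \mathbf{\bR}$ identifies the first summand.

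Distributivity of $\otimes$ over $\sqcup$ yields a decomposition
\[
B_p(\mathbf{R'},\mathbf{\bR},\mathbf{R'}) = \bigsqcup_{\vec{\epsilon}\in\{0,1\}^p} \mathbf{R'}\otimes \mathbf{\bR}_{\epsilon_1}\otimes \cdots\otimes \mathbf{\bR}_{\epsilon_p}\otimes \mathbf{R'},
\]
with $\mathbf{\bR}_1=\mathbf{R'}$ and $\mathbf{\bR}_0=U$. The summand for $\vec{\epsilon} = (1,\ldots,1)$ is $B_p(\mathbf{R'},\mathbf{R'},\mathbf{R'})$, and the map of the lemma is precisely the inclusion of this summand. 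I would filter $B_\bullet(\mathbf{R'},\mathbf{\bR},\mathbf{R'})$ by letting $F_k$ consist of the summands with at most $k$ entries $\epsilon_i = 0$. The multiplication on $\mathbf{\bR}$ satisfies $U\cdot U\subset U$ (addition on $[0,\infty)$), $U\cdot \mathbf{R'}\subset \mathbf{R'}$ and $\mathbf{R'}\cdot \mathbf{R'}\subset \mathbf{R'}$ (by gradings), so face maps never increase the number of unit factors and hence preserve the filtration, with $F_0 = B_\bullet(\mathbf{R'},\mathbf{R'},\mathbf{R'})$.

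I would then argue by induction on $k\geq 1$ that $|F_{k-1}|\hookrightarrow |F_k|$ is a weak equivalence. Each summand of $F_k/F_{k-1}$ contains a factor $[0,\infty)^k$ from the unit thickenings; deforming a single $[0,\infty)$-parameter to $0$ turns the corresponding factor into the strict unit of $\mathbf{\bR}$, which is absorbed into the adjacent factor by the unit multiplication and lands in $F_{k-1}$. The resulting homotopy yields a deformation retract of $|F_k|$ onto $|F_{k-1}|$. To make this rigorous at the semisimplicial level one has to replace $F_k$ by the bi-semisimplicial ``adapted'' object of \cite[Section 12.2]{Ek}, which is exactly what the parameter space $[0,\infty)$ is designed for; passing to the colimit over $k$ gives the weak equivalence in the lemma.

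For the moreover part, both bar constructions carry compatible augmentations: $B(\mathbf{R'},\mathbf{R'},\mathbf{R'})\to \mathbf{R'}$ and $B(\mathbf{R'},\mathbf{\bR},\mathbf{R'})\to \mathbf{R'}$ by iterated multiplication (using the inclusion $\mathbf{R'}\hookrightarrow \mathbf{\bR}$ in the second case), and composing with $\mathbf{R'}\xrightarrow{\simeq}\mathbf{R}$ recovers the stated augmentations. The first part of the lemma then identifies the homotopy cofibres. The main technical obstacle is the rigorous implementation of the absorption argument: semisimplicial objects lack strict degeneracies, so the absorption of a unit factor must be realised through a homotopy parameterised by $[0,\infty)$, and the combinatorial bookkeeping across the summands of $F_k/F_{k-1}$ is somewhat delicate.
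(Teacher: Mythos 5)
Your proof is morally correct but takes a more hands-on route than the paper's. The paper introduces the strict unitalisation $(\mathbf{R'})^+ := \mathds{1} \sqcup \mathbf{R'}$ as an intermediary: the map $(\mathbf{R'})^+ \to \mathbf{\bR}$ is a levelwise weak equivalence of unital associative algebras (the unit thickening $U=[0,\infty)\times\mathds{1}$ contracts to $\mathds{1}$), so $B_\bullet(\mathbf{R'},(\mathbf{R'})^+,\mathbf{R'}) \to B_\bullet(\mathbf{R'},\mathbf{\bR},\mathbf{R'})$ is a levelwise weak equivalence of semisimplicial objects; then $B_\bullet(\mathbf{R'},(\mathbf{R'})^+,\mathbf{R'})$ is visibly the simplicial object obtained by freely adding degeneracies to $B_\bullet(\mathbf{R'},\mathbf{R'},\mathbf{R'})$ (each degeneracy inserts $\mathds{1}$), and the paper cites the general fact that this operation preserves geometric realizations up to weak equivalence. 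Your decomposition of $B_p(\mathbf{R'},\mathbf{\bR},\mathbf{R'})$ by $\vec\epsilon$-patterns, and your filtration by the number of unit factors with an induction on $k$, is exactly the explicit combinatorial content that the ``freely adding degeneracies'' lemma packages abstractly: the summands with $k$ copies of $U$ are precisely the $k$-fold degenerate pieces once $U$ is contracted to $\mathds{1}$. The trade-off is that your route requires actually constructing the deformation retractions $|F_k|\to|F_{k-1}|$ by sliding a single $[0,\infty)$-parameter to $0$ and absorbing the resulting strict unit, a step you rightly flag as delicate because in a semisimplicial object the absorption must be implemented as a homotopy on realizations rather than via a degeneracy map; if you want to complete this route, you must in addition take care that the retraction is coherent across the face maps (which permute and collide the $U$-factors). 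The paper's factorisation through $(\mathbf{R'})^+$ and the freely-adding-degeneracies lemma circumvents all of this bookkeeping at once, so it is worth adopting.
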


\begin{proof}
The object $(\mathbf{R'})^+:= \mathds{1} \sqcup \mathbf{R'}$ has an obvious unital associative algebra structure, and the inclusion $(\mathbf{R'})^+ \rightarrow \mathbf{\overline{R}}$ is a weak equivalence and a map of unital associative algebras, so it induces a map of semisimplicial objects $B_{\bullet}(\mathbf{R'},(\mathbf{R'})^+,\mathbf{R'}) \rightarrow B_{\bullet}(\mathbf{R'},\mathbf{\bR},\mathbf{R'})$, which is levelwise a weak equivalence, and hence a weak equivalence on geometric realizations by \cite[Theorem 2.2]{ssSpaces}.

Finally, $B_{\bullet}(\mathbf{R'},(\mathbf{R'})^+,\mathbf{R'})$ admits the structure of a simplicial graded space where degeneracies are given by inserting the unit $\mathds{1}$, which consists of a point $*$ in grading $0 \in \mathsf{G}_n$. 
Thus, this simplicial object is given by freely adding degeneracies to the semisimplicial object $B_{\bullet}(\mathbf{R'},\mathbf{R'},\mathbf{R'})$.
By \cite[Lemma 2.6]{ssSpaces} the map $B_{\bullet}(\mathbf{R'},\mathbf{R'},\mathbf{R'}) \rightarrow B_{\bullet}(\mathbf{R'},(\mathbf{R'})^+,\mathbf{R'})$ is a weak equivalence in geometric realizations, giving the result. 
\end{proof}

Denote by $\pi_{\bullet}$ the map $B_{\bullet}(\mathbf{R'},\mathbf{R'},\mathbf{R'}) \rightarrow \mathbf{R}$, and for each $W \in \textbf{R}$ let 
$$F(W):= \hofib_W(B(\mathbf{R'},\mathbf{R'},\mathbf{R'}) \xrightarrow{\pi} \mathbf{R})$$
be the corresponding homotopy fibre. 

\begin{proposition} \label{prop hofib}
For $W \in \textbf{R}$, $F(W)$ is weakly equivalent to $S^{E_1}(W)$.
\end{proposition}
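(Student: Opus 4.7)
The strategy will be to realize both $F(W)$ and $S^{E_1}(W)$ as geometric realizations of semisimplicial spaces that agree level by level up to weak equivalence. First I will unwind the bar construction: a $p$-simplex of $B_\bullet(\mathbf{R'},\mathbf{R'},\mathbf{R'})$ is a tuple $((W_0,\ell_0),\ldots,(W_{p+1},\ell_{p+1}))$ of $p+2$ manifolds $W_i \in \mathbf{R}$ with length parameters $\ell_i \in (0,\infty)$, and the augmentation $\pi_p$ is the iterated $\mathbf{R'}$-multiplication, i.e.\ rescaled iterated boundary connected sum. Thus the strict fibre of $\pi_p$ over $W$ parametrizes ordered $(p+2)$-fold factorizations of $W$ in $\mathbf{R'}$.

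Next I will construct a levelwise map $\Phi_p: S^{E_1}_p(W) \to \mathrm{fib}_W \pi_p$. Given a $p$-simplex $((\omega_0,t_0,\epsilon_0) < \cdots < (\omega_p,t_p,\epsilon_p))$ of the splitting complex, cut $W$ along the central slices $\omega_i(\{t_i\} \times I^{2n-1})$ into the $p+2$ regions $W_{\leq \omega_0},\, W_{\omega_0 \leq - \leq \omega_1},\, \ldots,\, W_{\geq \omega_p}$. By Lemma \ref{lemma pieces splitting complex} each region sits canonically in $\Mhalf$, and by Definition \ref{definition splitting complex}(iv) each has nontrivial $H_n$, hence lies in $\mathbf{R}$; the wall positions $t_i$ furnish length parameters $t_0,\, t_1-t_0,\, \ldots,\, 1-t_p$. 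The map $\Phi_\bullet$ is semisimplicial because forgetting $\omega_i$ on the splitting side corresponds to $\mathbf{R'}$-multiplying the two factors that meet at $\omega_i$.

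I will then argue that $\Phi_p$ is a weak equivalence by analyzing its fibres. Given a factorization in $\mathrm{fib}_W \pi_p$, the interfaces between consecutive pieces are codimension-one slicing discs in $W$, and the product-structure collars that the pieces inherit from being in $\mathbf{R'}$ equip these interfaces with canonical neighbourhoods on which $W$ is standard. The fibre of $\Phi_p$ over such a factorization therefore parametrizes compatible choices of thickenings $\epsilon_i$, which form an open convex subset of $(0,\infty)^{p+1}$ and is therefore contractible. Applying \cite[Theorem 2.2]{ssSpaces} then yields a weak equivalence on realizations. To conclude that $|\mathrm{fib}_W \pi_\bullet|$ computes the homotopy fibre $F(W)$, I will verify that each $\pi_p$ is a Serre fibration via an isotopy-extension argument in the spirit of Theorem \ref{theorem fibration}, so that the homotopy fibre commutes with geometric realization.

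The principal obstacle will be the careful matching of product-structure data on the two sides: walls in $S^{E_1}_p(W)$ explicitly carry thickness parameters $\epsilon_i$, whereas elements of $\mathbf{R'}$ encode collar data only implicitly through their product-structure requirement. Unpacking Definitions \ref{definition moduli} and \ref{definition splitting complex} in detail will be needed to confirm that the fibres of $\Phi_p$ are indeed contractible and that $\Phi_\bullet$ is continuous in the $C^\infty$-topology. The subsidiary point that homotopy fibres commute with geometric realization in this setting is more standard but must be handled cleanly.
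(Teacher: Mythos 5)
Your proposal has a genuine gap: $\pi_p$ is not a Serre fibration, and the strict fibre $\operatorname{fib}_W\pi_p$ is not the homotopy fibre. An element of $B_p(\mathbf{R'},\mathbf{R'},\mathbf{R'})$ maps under $\pi_p$ to a manifold built by $E_1$-gluing rescaled pieces into standard sub-rectangles of $I^{2n}$; any such manifold necessarily coincides with $I^{2n}\times\{0\}$ near each internal interface $\{t_i\}\times I^{2n-1}\times\{0\}$. For a generic $W\in\mathbf{R}$, no slice $\{t\}\times I^{2n-1}\times\mathbb{R}^\infty$ meets $W$ in such a standard way, so the strict fibre $\pi_p^{-1}(W)$ is empty even though the homotopy fibre is not. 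This is exactly why the paper does not work with the strict fibre: it models $F_p(W)$ as a pullback over the weakly contractible embedding space $\Emb^p_{\partial^-W}(W, I^{2n}\times\mathbb{R}^\infty)$, so a point of $F_p(W)$ carries not only a tuple of pieces but also an embedding $e$ recording how the factored manifold $\im(e)$ sits relative to $W$.

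The same issue makes your $\Phi_p$ ill-defined. Cutting $W$ along walls $\omega_i$ and rescaling via Lemma \ref{lemma pieces splitting complex} produces a tuple whose $\pi_p$-image is some manifold $W'$ in the same path-component as $W$ but generically \emph{not equal} to $W$, so it does not lie in the strict fibre over $W$; moreover, that lemma only pins down the path-component of the rescaled piece, not a canonical representative, so the construction requires unnatural choices. The paper's proof sidesteps both problems by going in the opposite direction via a zigzag $F_\bullet(W) \xleftarrow{U_\bullet} F'_\bullet(W) \xrightarrow{\Psi_\bullet} S^{E_1}_\bullet(W)$: the intermediate object $F'_\bullet(W)$ adds thickness data to $F_\bullet(W)$, $U_\bullet$ forgets it (a microfibration with contractible fibres), and $\Psi_\bullet$ transports the flat walls of $\im(e)$ back into $W$ by conjugating with $e^{-1}$ (a Serre fibration with weakly contractible fibres, by the Whitney embedding theorem). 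The canonical, well-defined map runs from the fibre model \emph{to} the splitting complex precisely because only there does one have the embedding $e$ available to conjugate by. Your write-up would need to confront the non-fibration of $\pi_p$ and the non-canonicality of rescaling head-on; as stated, neither step can be completed.
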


\begin{proof}
For each $p \geq 0$ let $F_p(W):= \hofib_W(\pi_p: B_p(\mathbf{R'},\mathbf{R'},\mathbf{R'}) \rightarrow \textbf{R})$, where $\pi_p: (\mathbf{R'})^{\otimes p+2} \rightarrow \textbf{R}$ is given by using the associative product in $\mathbf{R'}$ followed by the weak equivalence $\mathbf{R'} \rightarrow \textbf{R}$. 
We will find explicit functorial models of the $F_p(W)$'s so that they define a semisimplicial space $F_{\bullet}(W)$, and then $F(W) \simeq || F_{\bullet}(W)||$. 

The space $\Emb_{\partial^- W}^p(W, I^{2n} \times \mathbb{R}^{\infty})$ considered in Definition \ref{definition moduli} is weakly contractible by the Whitney embedding theorem, and the map $\Emb_{\partial^- W}^p(W, I^{2n} \times \mathbb{R}^{\infty}) \rightarrow \mathcal{M}[I^{2n-1}]$ is a fibration by the isotopy extension theorem. 
Thus, we can fix a model for $F_p(W)$ via the pullback square 

\centerline{
\xymatrix{F_p(W) \ar[d] \ar[r] & \pi_p^{-1}(\mathcal{M}[I^{2n-1};W]) \subset (\mathbf{R'})^{\otimes p+2} \ar[d] \\
\Emb_{\partial^- W}^p(W, I^{2n} \times \mathbb{R}^{\infty}) \ar[r] & \mathcal{M}[I^{2n-1};W]}
}

where the bottom right corner means the path-component of $W$ in $\Mhalf$, which by Proposition \ref{prop classifying space} agrees with the image of $\Emb_{\partial^- W}^p(W, I^{2n} \times \mathbb{R}^{\infty}) \rightarrow \mathcal{M}[I^{2n-1}]$. 

Hence a model for $F_p(W)$ is the set of tuples
$((s_0,W_0), \cdots,(s_{p+1},W_{p+1});e)$
where $(s_i,W_i) \in \mathbf{R'}=(0,\infty) \times \mathbf{R}$ for all $i$, $e \in \Emb_{\partial^- W}^p(W, I^{2n} \times \mathbb{R}^{\infty})$, and 
$$(s_0,W_0) \bullet \cdots \bullet (s_{p+1},W_{p+1})= (s_0+\cdots+s_{p+1},\im(e)) \in \mathbf{R'}$$
where $\bullet$ denotes the product in $\mathbf{R'}$. 
Changing variables $t_i:=\frac{s_0+ \cdots +s_i}{s_0+ \cdots s_{p+1}} \in (0,1)$ for $0 \leq i \leq p$, we can also view $F_p(W)$ as the collection of tuples 
$$(0<t_0< \cdots <t_p< 1;W_0, \cdots, W_{p+1}; e) \in \Delta^{p} \times \textbf{R}^{p+2} \times \Emb_{\partial^- W}^p(W, I^{2n} \times \mathbb{R}^{\infty})$$
such that the $E_1$-multiplication of the $W_i$'s using the partition 
$$[0,t_0] \sqcup [t_0,t_1] \sqcup \cdots \sqcup [t_p,1] \in \mathcal{C}_1(p+2)$$
is precisely $\im(e) \in \mathcal{M}[I^{2n-1};W]$. 

With this explicit description, it is clear that $F_{\bullet}(W)$ defines a semisimplicial space, where the i-th face map forgets $t_i$ and glues together $W_i$ and $W_{i+1}$ using the $E_1$-product with partition $[0,\frac{t_{i}-t_{i-1}}{t_{i+1}-t_{i-1}}] \sqcup [\frac{t_{i}-t_{i-1}}{t_{i+1}-t_{i-1}},1]$. 

To finish the proof we will construct a zigzag of semisimplicial spaces
$$F_{\bullet}(W) \xleftarrow{U_{\bullet}} F'_{\bullet}(W) \xrightarrow{\Psi_{\bullet}} S_{\bullet}^{E_1}(W)$$
which are levelwise weak equivalences, and hence weak equivalences on geometric realizations by \cite[Lemma 2.6]{ssSpaces}. 

The semisimplicial space $F'_{\bullet}(W)$ is defined as follows: 
$F'_p(W)$ is the collection of tuples 
$$(0<t_0< \cdots t_p< 1;W_0, \cdots, W_{p+1}; e; 0<\epsilon_0,\cdots,\epsilon_p<1) \in F_p(W) \times (0,1)^{p+1}$$
such that 
$0<t_0-\epsilon_0< t_0+\epsilon_0<t_1-\epsilon_1< \cdots < t_p-\epsilon_p < t_p+\epsilon_p <1$ and for $0 \leq i \leq p$, $\im(e)$ agrees pointwise with $I^{2n} \times \{0\} \subset I^{2n} \times \mathbb{R}^{\infty}$ in an open neighbourhood of $[t_i-\epsilon_i,t_i+\epsilon_i] \times I^{2n-1} \times \mathbb{R}^{\infty}$. 

The map $U_p: F'_p(W) \rightarrow F_p(W)$ is given by forgetting the extra data $\epsilon_0,\cdots,\epsilon_p$. 
$U_p$ has contractible fibres, and it is a microfibration because if we pick some valid $(\epsilon_0,\cdots,\epsilon_1)$ for a point in $F_p(W)$ then the same choice of $(\epsilon_0,\cdots,\epsilon_1)$ works in a neighbourhood of that point. 
Thus, $U_p$ is a weak equivalence by \cite[Lemma 2.2]{weiss}. 

The map $\Psi_p$ is defined via
$$(0<t_0< \cdots t_p< 1; W_0, \cdots, W_{p+1}; e; 0<\epsilon_0,\cdots,\epsilon_p<1) \mapsto ((\omega_0,t_0,\epsilon_0), \cdots , (\omega_p,t_p,\epsilon_0))$$
where the wall
$\omega_i$ is given by the composition $[t_i-\epsilon_i,t_i+\epsilon_i] \times I^{2n-1} \hookrightarrow \im(e) \xrightarrow{e^{-1}} W$. 
By the isotopy extension theorem, $\Psi_p: F'_p(W) \rightarrow
{S}_p^{E_1}(W)$ is a Serre fibration, so it suffices to check that it has weakly contractible fibres. 

The fibre over a given $((\omega_0,t_0,\epsilon_0), \cdots , (\omega_p,t_p,\epsilon_0)) \in S^{E_1}_p(W)$ is the subspace of $e \in \Emb_{\partial^- W}^p(W, I^{2n} \times \mathbb{R}^{\infty})$ such that the compositions $e \circ \omega_i$ agree with the standard inclusions $[t_i-\epsilon_i,t_i+\epsilon_i] \times I^{2n-1} \times \{0\} \subset I^{2n} \times \mathbb{R}^{\infty}$ for all $i$.  
This space is then weakly contractible by the Whitney embedding theorem since the images of the $\omega_i$ form a closed submanifold. 
\end{proof}

Now we can finally prove Theorem \hyperref[theorem C]{C}.

\begin{proof}
For each $W \in \Mhalf$ the $E_1$-splitting complex $S^{E_1}(W)$ is $(g(W)-3)$-connected by Corollary \ref{cor key}.
Hence Proposition \ref{prop hofib} implies that $F(W)$ is $(g(W)-3)$-connected for any $W \in \textbf{R}$. 
Thus, the cofibre of $B(\mathbf{R'},\mathbf{R'},\mathbf{R'}) \xrightarrow{\pi} \mathbf{R}$ is homologically $(\rk(x)-2)$-connected in degree $x$. 
Then Lemmas \ref{lem Q vs B} and \ref{lem B} give the result. 
\end{proof}

\section{The connectivity of the arc complex} \label{section 6}
In order to finish the proof of Theorem \hyperref[theorem C]{C} we need to show Theorem \ref{theorem arc complex connectivity} and Proposition \ref{prop cut manifold}, which are the only results used in Section \ref{section 5} that are left to prove. 
To do so we will firstly construct an algebraic model for the arc complex, and then we will compare it to the geometric one. 

\subsection{The algebraic arc complex} \label{section alg complex}

\begin{definition} \label{definition valid algebraic data}
A \textit{valid algebraic data} consists of a triple $(M,\lambda,\delta)$, where
\begin{enumerate}[(i)]
    \item $M$ is a finitely generated free $\mathbb{Z}$-module.
    \item $\lambda: M \otimes M \rightarrow \mathbb{Z}$ is a skew-symmetric bilinear form on $M$. We write $\lambda^{\vee}: M \rightarrow M^{\vee}$ for the corresponding map $m \mapsto \lambda(m,-)$. 
    \item $\delta \in \partial (M,\lambda)$ is an element, where $\partial (M,\lambda):= \coker(\lambda^{\vee})=\frac{M^{\vee}}{\lambda^{\vee}(M)}$.
\end{enumerate}
\end{definition}

We will usually remove $\lambda$ from the notation in all future expressions to make them easier to read, for example we shall write $\partial M$ instead of $\partial (M,\lambda)$.

Any valid geometric data $(W,\Delta)$ as in Definition \ref{definition valid geometric data} gives a valid algebraic data $(H_n(W),\lambda_W,\delta)$ as follows: compactness of $W$ implies that $H_n(W)$ is finitely generated, 
the long exact sequence of the pair $(W,\partial W)$ and the connectivity assumptions on $W$ and $\partial W$ give an exact sequence
$$ 0 \rightarrow H_{n}(\partial W) \rightarrow H_n(W) \rightarrow H_n(W,\partial W) \rightarrow H_{n-1}(\partial W) \rightarrow 0$$
and that $H_{n-1}(W,\partial W)=0$. 
By Poincaré-Lefschetz duality we have isomorphisms $0=H_{n-1}(W,\partial W) \cong H^{n+1}(W)$ and
$H_n(W,\partial W) \cong H^n(W)$.
The universal coefficients theorem implies that $H_n(W)$ is torsion-free and $H^n(W) \cong H_n(W)^{\vee}$.
Composing the isomorphisms $H_n(W,\partial W) \cong H^n(W) \cong H_n(W)^{\vee}$, the inclusion $H_n(W) \rightarrow H_n(W,\partial W)$ becomes $\lambda_W^{\vee}: H_n(W) \rightarrow H_n(W)^{\vee}$.
Thus, the above exact sequence implies that $H_n(\partial W) \cong \ker(\lambda_W^{\vee})$ and that $H_{n-1}(\partial W) \cong \partial H_n(W)$ in such a way that the first map becomes the inclusion of the kernel and the last one becomes the projection to the cokernel of $\lambda_W^{\vee}$. 
In particular, the isotopy class $\Delta$ gives a well-defined element $\delta \in \partial H_n(W)$.

\begin{definition} \label{definition alg complex}
For a valid algebraic data $(M,\lambda,\delta)$ the algebraic arc complex $\mathcal{A}^{\text{alg}}(M,\lambda,\delta)$ is the simplicial complex with vertex set
$$\{\alpha \in M^{\vee}: \;  \alpha \; \text{is unimodular and} \; \alpha \mod \lambda^{\vee}(M) =\delta \}$$
and where a set $\{\alpha_0, \cdots, \alpha_p\}$ of (distinct) vertices spans a $p$-simplex if and only if it is unimodular in $M^{\vee}$. 
\end{definition}

The geometric interpretation of the above definition is explained by the following result. 

\begin{proposition} \label{prop simplicial map}
If $(W,\Delta)$ is valid geometric data then there is a simplexwise injective simplicial map 
$$\Phi: \mathcal{A}(W,\Delta) \rightarrow \mathcal{A}^{\text{alg}}(H_{n}(W),\lambda_W,\delta)$$
given (on vertices) by $a \mapsto \alpha:=a_*([D^n , \partial D^n]) \in H_n(W,\partial W) \cong H_n(W)^{\vee}$.
\end{proposition}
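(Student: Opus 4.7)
The plan is to exploit a Mayer--Vietoris decomposition of $W$ along tubular neighborhoods of the arcs, using the $(n-1)$-connectivity of the cut manifold to force surjectivity of the relevant connecting map, which in turn will be identified with geometric intersection against the arcs. Throughout I would use the pairing $H_n(W) \otimes H_n(W,\partial W) \to \mathbb{Z}$ reviewed at the start of Section \ref{section alg complex} to move freely between $\alpha = a_*([D^n,\partial D^n]) \in H_n(W,\partial W)$ and the corresponding functional in $H_n(W)^{\vee} = M^{\vee}$.

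For well-definedness on vertices, two conditions need checking. The identity $\alpha \equiv \delta \pmod{\lambda^{\vee}(M)}$ follows from the naturality of the boundary map $\partial_* : H_n(W,\partial W) \to H_{n-1}(\partial W)$, which sends $\alpha$ to $[\partial a]$; the latter represents $\delta$ by hypothesis on the vertex $a$, and $\partial_*$ becomes the canonical projection $M^{\vee} \twoheadrightarrow \partial M$ under the identifications recalled before Definition \ref{definition alg complex}. For unimodularity, pick a tubular neighborhood $\nu(a) \cong D^n \times D^n$ of $a$ respecting $\partial W$, let $W'' := W \setminus \int(\nu(a))$ (so $W'' \simeq W \setminus a$ is $(n-1)$-connected), and observe that the internal piece of $\partial\nu(a)$ is $D^n \times S^{n-1}$. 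For $n \geq 3$, all the neighbouring Mayer--Vietoris groups vanish, yielding
\[
0 \to H_n(W'') \to H_n(W) \xrightarrow{\partial_{\mathrm{MV}}} H_{n-1}(D^n \times S^{n-1}) \cong \mathbb{Z} \to 0,
\]
and the connecting map $\partial_{\mathrm{MV}}$ is (up to sign) geometric intersection with $a$, i.e.\ evaluation of $\alpha$, so its surjectivity is precisely unimodularity of $\alpha$.

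The simplicial and simplexwise injectivity conditions follow from the same argument applied to a $p$-simplex $\{a_0,\dots,a_p\}$ of $\mathcal{A}(W,\Delta)$: disjointness permits a choice of pairwise disjoint tubular neighborhoods, and the $(n-1)$-connectivity of the jointly cut manifold $W \setminus \{a_0,\dots,a_p\}$ upgrades the same Mayer--Vietoris calculation to a surjection $(\alpha_0,\dots,\alpha_p) : H_n(W) \twoheadrightarrow \mathbb{Z}^{p+1}$. Dually this exhibits $\{\alpha_0,\dots,\alpha_p\}$ as a unimodular family in $M^{\vee}$, hence a $p$-simplex of $\mathcal{A}^{\mathrm{alg}}(M,\lambda,\delta)$; and since a unimodular family is in particular linearly independent, the $\alpha_i$ are pairwise distinct, giving simplexwise injectivity.

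The main point, and in my view the only conceptually delicate step, is the identification of the Mayer--Vietoris connecting map with geometric intersection against the arcs; once that standard fact is in hand, the remainder is routine homological bookkeeping governed entirely by the connectivity hypotheses built into Definition \ref{definition arc complex}.
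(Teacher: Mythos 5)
Your argument is correct and is essentially the paper's proof, merely packaged differently: the paper runs the long exact sequence of the pair $(W, W\setminus\{a_0,\dots,a_p\})$ together with excision, while you run the equivalent Mayer--Vietoris sequence for the decomposition of $W$ into the cut manifold and the tubular neighbourhoods of the arcs, and both hinge on the same identification of the connecting map with the intersection pairing against the arcs. The treatment of the $\delta$ condition via the boundary map $H_n(W,\partial W)\to H_{n-1}(\partial W)$ and the deduction of simplexwise injectivity from unimodularity are also the same as in the paper.
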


\begin{proof}
Firstly we claim that a set $\{a_0, \cdots, a_p\}$ of pairwise disjoint arcs satisfies that the cut manifold $W':=W \setminus \{a_0, \cdots, a_p\}$ is $(n-1)$-connected if and only if the corresponding set $\{\alpha_0,\cdots,\alpha_p\} \subset H_n(W)^{\vee}$ is unimodular and has size precisely $p+1$, which implies the simplexwise injectivity part. 

Indeed, the long exact sequence of the pair $(W,W')$ gives an exact sequence
$$0 \rightarrow H_n(W') \rightarrow H_n(W) \rightarrow H_n(W,W') \rightarrow H_{n-1}(W') \rightarrow 0.$$
The pairwise disjointness of the arcs and excision imply that $H_n(W,W') \cong H_n(\bigsqcup_{i=0}^p{D^n \times D^n},\bigsqcup_{i=0}^p{ D^n \times \partial D^n}) \cong \mathbb{Z}^{p+1}$ in such a way that $H_n(W) \rightarrow H_n(W,W')$ agrees with $\bigoplus_{i=0}^{p}{\alpha_i}: H_n(W) \rightarrow \mathbb{Z}^{p+1}$, hence giving the result. 

Secondly, the boundary map $H_n(W,\partial W) \rightarrow H_{n-1}(W)$ agrees with the quotient map $H_n(W)^{\vee} \rightarrow \partial H_n(W)$ and hence any vertex $a$ of the arc complex will map to $\alpha \in H_n(W)^{\vee}$ such that $[\alpha]=\delta \in \partial H_n(W)$. 
\end{proof}

\subsubsection{The algebraic argument}

In this section we will prove that the algebraic arc complex is weakly Cohen-Macaulay of a certain dimension. 
To state the precise result we need some notation: let $t(M):=\max\{\rk(U): U \subset \lambda^{\vee}(M) \; \text{is a direct summand of} \; M^{\vee}\}$. 

\begin{theorem}\label{theorem connectivity alg complex}
If $(M,\lambda, \delta)$ is valid algebraic data then the algebraic arc complex $\mathcal{A}^{\text{alg}}(M,\lambda,\delta)$ is weakly Cohen-Macaulay of dimension $t(M)-2$.
\end{theorem}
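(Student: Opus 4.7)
The plan is to argue by induction on $t := t(M)$. The base cases $t \le 1$ are vacuous, since weak Cohen–Macaulay-ness of dimension $\le -1$ imposes no conditions.

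For the inductive step, I would first establish the link connectivity by identifying the link of any $p$-simplex with a smaller algebraic arc complex. Fix $\sigma=\{\alpha_0,\ldots,\alpha_p\}$, a $p$-simplex in $\mathcal{A}^{\mathrm{alg}}(M,\lambda,\delta)$. Unimodularity of $\sigma$ lets us choose duals $m_0,\ldots,m_p\in M$ with $\alpha_i(m_j)=\delta_{ij}$, giving a splitting $M=M_\sigma\oplus \langle m_0,\ldots,m_p\rangle$ with $M_\sigma=\bigcap_i\ker(\alpha_i)$ of corank $p+1$, and dually $M^\vee=\langle\alpha_0,\ldots,\alpha_p\rangle\oplus M_\sigma^\vee$. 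Restricting $\lambda$ gives $\lambda_\sigma$ on $M_\sigma$. Any vertex $\beta$ of $\mathrm{Lk}(\sigma)$ decomposes uniquely as $\beta=\sum k_i\alpha_i+\bar\beta$ with $\bar\beta\in M_\sigma^\vee$; unimodularity of $\sigma\cup\{\beta\}$ is equivalent to unimodularity of $\bar\beta$ in $M_\sigma^\vee$. Writing a general element of $\lambda^\vee(M)$ in the splitting, the coset condition $\beta\equiv\delta$ translates into $[\bar\beta]=\delta_\sigma$ for a specific $\delta_\sigma\in\partial M_\sigma$ obtained from the classes of $\lambda(m_i,-)|_{M_\sigma}$ and a lift of $\delta$. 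This produces a simplicial projection $\mathrm{Lk}(\sigma)\to\mathcal{A}^{\mathrm{alg}}(M_\sigma,\lambda_\sigma,\delta_\sigma)$; its fiber over a vertex is either a point or a contractible path-graph (a chain of consecutive integer heights), and by Quillen's Theorem~A for simplicial maps the projection is a weak equivalence. Combined with the arithmetic bound $t(M_\sigma)\ge t-(p+1)$ — obtained by projecting any rank-$t$ direct summand $U\subset M^\vee$ in $\lambda^\vee(M)$ to $M_\sigma^\vee$ and saturating — the inductive hypothesis gives that $\mathrm{Lk}(\sigma)$ is weakly Cohen–Macaulay of dimension $t-p-3$, in particular $(t-p-4)$-connected, which is clause (iii) of weak Cohen–Macaulay-ness; clauses (ii) and (iv) follow similarly (or are automatic for face posets of simplicial complexes).

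For the global $(t-3)$-connectivity, clause (i), I would use the link estimates combined with a Hatcher–Vogtmann-style bad-simplex argument. Given a simplicial map $S^{t-3}\to\mathcal{A}^{\mathrm{alg}}$, after PL approximation one picks an offending top-dimensional simplex, uses the high connectivity of the link of a suitable face to produce a filling, and iterates to reduce the sphere into the star of a single vertex $\alpha_*\in\mathcal{A}^{\mathrm{alg}}$ (which exists since for $t\ge 2$ the hypothesis $t(M)\ge 2$ easily implies a unimodular lift of $\delta$ exists). The star is contractible, giving the required extension.

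The hardest step will be pinning down $\delta_\sigma$ and verifying the fibers of the link projection are contractible. The affine/coset nature of the condition $\alpha\in\delta+\lambda^\vee(M)$ means that $\delta_\sigma$ depends at first glance on the choice of duals $m_i$, and the appearance of potential torsion in $\partial M_\sigma$ forces a careful treatment of when the map $\lambda^\vee(M)\to M_\sigma^\vee$ is surjective onto $\lambda_\sigma^\vee(M_\sigma)$ or only onto a finite-index extension. Establishing that the fiber over each simplex is a contractible integer-height lattice — and in particular that the weak equivalence holds regardless of these choices — is the principal technical input; once it is in place, the induction runs smoothly and the Hatcher–Vogtmann step is essentially formal.
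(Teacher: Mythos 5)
Your proposed route differs substantially from the paper's, which first reformulates $\mathcal{A}^{\mathrm{alg}}(M,\lambda,\delta)$ as the complex $U^{\mathrm{unord}}(M^\vee,\lambda^\vee(M),\delta)$ of unimodular elements of $M^\vee$ lying in a fixed coset of $\lambda^\vee(M)$, and then proves a general connectivity statement about unimodular sequences in a coset of an arbitrary submodule by adapting the argument of \cite{Nina} (this is Theorem \ref{theorem technical}, whose four interlocking statements (1),(2),(a),(b) include the crucial ``union of two parallel cosets'' variants). Unfortunately, your more direct induction contains a concrete false step.

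The arithmetic bound $t(M_\sigma)\ge t(M)-(p+1)$ does not hold. Take $M=H^{\oplus g}$ with the standard hyperbolic form, $\delta=0\in\partial M=0$, and $\sigma=\{\alpha_0\}$ with $\alpha_0=e_1^\vee$ for a hyperbolic basis $e_1,f_1,\dots,e_g,f_g$. Then $M_\sigma=\ker(\alpha_0)=\langle f_1,e_2,f_2,\dots,e_g,f_g\rangle$ has $f_1$ in its radical, so $g(M_\sigma)=g-1$ and, since $t=2g$ by Remark \ref{rem algebraic formulae}, $t(M_\sigma)=2g-2=t-2$, strictly less than $t-(p+1)=t-1$. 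The failure in your heuristic derivation is the ``and saturating'' step: the projection of $U\subset\lambda^\vee(M)$ to $M_\sigma^\vee$ lands in $\lambda_\sigma^\vee(M_\sigma)+\langle\lambda(m_i,-)|_{M_\sigma}\rangle_i$, a possibly strictly larger submodule than $\lambda_\sigma^\vee(M_\sigma)$ (in the example above, $\pi(\lambda^\vee(M))=M_\sigma^\vee$ has rank $2g-1$ while $\lambda_\sigma^\vee(M_\sigma)$ has rank $2g-2$), so the saturation need not be a summand contained in $\lambda_\sigma^\vee(M_\sigma)$ and does not witness a lower bound on $t(M_\sigma)$. The correct genus bound from Corollary \ref{cor cut manifold properties} gives only $t(M_\sigma)\ge t-2(p+1)$, which is one-off per vertex of $\sigma$ from what your induction requires. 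This one-off deficit is exactly what the paper's Theorem \ref{theorem technical} addresses via the union-of-cosets statements (a), (b), which gain one extra degree of connectivity relative to the single-coset statements (1), (2); your induction as stated has no mechanism to recover it. The coset/torsion issue you flag yourself ($\pi(\lambda^\vee(M))$ being a finite-index extension of $\lambda_\sigma^\vee(M_\sigma)$) compounds the problem, since the link then projects onto a union of several arc complexes rather than a single one, but even setting that aside, the rank bookkeeping already fails.
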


We will deduce this result from a more general one inspired by \cite[Theorem 1.4]{Nina}
We will use the following notation from \cite{Nina} to state it and prove it:
for $X$ a set we let $\mathcal{O}(X)$ denote the poset of non-empty ordered finite sequences of elements of $X$, with partial order given by refinement. 
For $V$ a $\mathbb{Z}$-module let $\mathcal{U}(V)$ denote the subposet of $\mathcal{O}(V)$ consisting of the unimodular sequences. 
If $F \subset \mathcal{O}(V)$ is a subposet and $(v_1,\cdots,v_k) \in \mathcal{U}(V)$, we write $F \cap \mathcal{U}(V)_{(v_1,\cdots,v_k)}$ for the poset of sequences $(w_1,\cdots,w_l) \in F$ such that $(v_1,\cdots,v_k,w_1,\cdots,w_l) \in \mathcal{U}(V)$. 

\begin{theorem} \label{theorem technical}
Let $N':=d_1\mathbb{Z} \oplus \cdots \oplus d_r \mathbb{Z} \oplus \mathbb{Z}^t$, where $d_i \geq 2$. 
Let $N:= \mathbb{Z}^{r+t+l}$, with standard basis elements $x_1,\cdots,x_{r+t+l}$, so that $d_1x_1, \cdots,d_r x_r, x_{r+1},\cdots,x_{r+t}$ is a basis for $N'$. 
Let $N^{\infty}:= N \oplus \mathbb{Z}^{\infty}$, and let the standard basis of $\mathbb{Z}^{\infty}$ be $e_1,e_2,\cdots$.
Fix an element $\delta_0 \in N$.

For any $(v_1,\cdots,v_k) \in \mathcal{U}(N^{\infty})$ with $k \geq 1$
\begin{enumerate}[(1)]
    \item $\mathcal{O}(\delta_0+ N') \cap \mathcal{U}(N^{\infty})$ is $(t-3)$-connected. 
    \item $\mathcal{O}(\delta_0+ N') \cap \mathcal{U}(N^{\infty})_{(v_1,\cdots,v_k)}$ is $(t-3-k)$-connected. 
\end{enumerate}
\begin{enumerate}[(a)]
     \item $\mathcal{O}(\delta_0+ N' \cup \delta_0+ N'+e_1) \cap \mathcal{U}(N^{\infty})$ is $(t-2)$-connected. 
    \item $\mathcal{O}(\delta_0+ N' \cup \delta_0 + N'+e_1) \cap \mathcal{U}(N^{\infty})_{(v_1,\cdots,v_k)}$ is $(t-2-k)$-connected. 
\end{enumerate}
\end{theorem}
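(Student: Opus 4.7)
The plan is a simultaneous induction on $t$ that establishes all four statements together, following the blueprint of \cite{Nina}. For each fixed $t$ I will prove (1) and (a) first, and then deduce (2) and (b) from (1) and (a) respectively; the base cases (small $t$) are vacuous or reduce to non-emptiness, which is settled by exhibiting a single unimodular vertex using a generator of the $\mathbb{Z}^t$-summand of $N'$ together with a fresh basis vector $e_j$ of the $\mathbb{Z}^\infty$-tail not appearing among the coordinates of $(v_1,\ldots,v_k)$.

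The link statements (2) and (b) will follow from (1) and (a) at the same $t$ by a quotient argument. Given a unimodular sequence $(v_1,\ldots,v_k)\in\mathcal{U}(N^\infty)$, the quotient $N^\infty/\langle v_1,\ldots,v_k\rangle$ remains of the form $N''\oplus\mathbb{Z}^\infty$ with $N''$ finitely generated free, and the image of $\delta_0+N'$ is an affine subset $\overline{\delta_0}+\overline{N'}$ of the same shape. Writing $\overline{N'}$ in Smith normal form and keeping track of how $\langle v_1,\ldots,v_k\rangle$ meets $N'$, the free rank $t'$ of $\overline{N'}$ satisfies $t'\geq t-k$. The poset of unimodular extensions of $(v_1,\ldots,v_k)$ inside $\mathcal{O}(\delta_0+N')$ is isomorphic to $\mathcal{O}(\overline{\delta_0}+\overline{N'})\cap\mathcal{U}(N''\oplus\mathbb{Z}^\infty)$, and the desired $(t-3-k)$-connectivity (respectively $(t-2-k)$) then drops out of (1) (respectively (a)) applied at rank $t'$.

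For (a) from (1), the key observation is that every element of $\delta_0+N'+e_1$ has $e_1$-coordinate one and is therefore automatically unimodular in $N^\infty$; hence $\mathcal{O}(\delta_0+N'+e_1)\cap\mathcal{U}(N^\infty)=\mathcal{O}(\delta_0+N'+e_1)$, whose realization is contractible. Writing $P_0$, $P_1$, $P$ for the posets of (1), the $+e_1$-shifted version, and (a), a nerve-type or Mayer--Vietoris argument compares the pair $(P,P_0)$ with the links of vertices of $P_1$ inside $P$; after quotienting by such a vertex these links fall under (2) at rank $\geq t-1$, and combining the $(t-3)$-connectivity of $P_0$ with the contractibility of $P_1$ upgrades the connectivity by one, giving (a).

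The main obstacle is the induction step for (1). Given a PL map $f:S^k\to|P_0|$ with $k\leq t-3$, I extend it to $\bar f:D^{k+1}\to|\mathcal{O}(\delta_0+N')|$ using that the latter has contractible realization, and then I run a bad-simplex/replacement argument in the style of Quillen and van der Kallen: any vertex $v$ of $\bar f(D^{k+1})\setminus f(S^k)$ that contributes a non-unimodular simplex must be replaced by a unimodular vertex of $\delta_0+N'$ compatible with the neighbouring simplices. The difficulty is that neither modifications by $e_j$'s (which would leave $\delta_0+N'$) nor arbitrary $N$-shifts (obstructed by the torsion summands $d_i\mathbb{Z}$) are permitted. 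The fix is a two-step replacement: first move $v$ into $\delta_0+N'+e_j$ for some fresh $j$ using (a) applied to the link of the simplex containing $v$, then push back into $\delta_0+N'$ along the free $\mathbb{Z}^t$-summand using (2); this is precisely why the auxiliary statements (a), (b) must be bundled into the same induction.
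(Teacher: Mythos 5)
There are two concrete errors in load-bearing places, and the ordering of the induction is also problematic. First, the claim that $\mathcal{O}(\delta_0+N'+e_1)\cap\mathcal{U}(N^\infty)=\mathcal{O}(\delta_0+N'+e_1)$ (and hence that this piece is contractible) is false: unimodularity is a condition on the entire sequence, not on each entry, and two entries of $\delta_0+N'+e_1$ differ by an element of $N'$ that need not be primitive. For instance (take $\delta_0=0$, $t\geq 1$) the pair $(e_1,\,2x_{r+1}+e_1)$ has both entries in $N'+e_1$, yet spans $\langle e_1,2x_{r+1}\rangle$, which is not a direct summand of $N^\infty$ since the quotient has $2$-torsion. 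Second, the asserted isomorphism between the link poset $\mathcal{O}(\delta_0+N')\cap\mathcal{U}(N^\infty)_{(v_1,\ldots,v_k)}$ and the corresponding poset in $N^\infty/\langle v_1,\ldots,v_k\rangle$ fails whenever $\langle v_1,\ldots,v_k\rangle\cap N'\neq 0$: then the reduction map on the coset $\delta_0+N'$ is not injective (e.g.\ $v_1=x_{r+1}$, $w=x_{r+2}$ and $w'=x_{r+1}+x_{r+2}$ are two distinct vertices of the link with the same image). At best one could hope the reduction map is a homotopy equivalence, but that would require a separate argument you have not supplied, so the deduction of (2) and (b) does not go through.

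Beyond these two gaps, your proposed ordering within a given $t$ is the reverse of what the paper (and the van der Kallen-style template it follows) uses. The paper's proof is essentially that of \cite[Theorem 1.4]{Nina}, adapted: induct on $t$, and within each step prove (b) first, then (2), then (1) and (a); the relative-position statements are handled not by quotienting but by producing $f\in GL(N^\infty)$ fixing $\delta_0$ and the union $Y=(\delta_0+N')\cup(\delta_0+N'+e_1)$ setwise while making the $\mathbb{Z}^t$-component of $f(v_1)$ unimodular, after which one reduces to the set $X$ with $\mathbb{Z}^{t-1}$ in place of $\mathbb{Z}^t$. Your step for (1) invokes statements (a) and (2), which under your scheduling are to be proved after (1) at the same $t$; you need to say explicitly that these invocations occur at strictly smaller rank (inside the link of a simplex), otherwise the induction is circular. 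I would recommend following \cite{Nina}'s argument directly rather than reconstructing it: the only genuine adaptations needed are the normal form for $N'$, the translation by $\delta_0$, and the replacement of the stable-rank condition by the transitive action of $GL_t(\mathbb{Z})$ on unimodular vectors of $\mathbb{Z}^t$.
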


In order to state the application that will be relevant for us we need to fix some extra notation:
given a finitely generated free $\mathbb{Z}$-module $N$ and a submodule $N' \subset N$ we let 
\begin{equation*}
    \resizebox{\displaywidth}{!}{$t(N,N'):= \max\{\rk(U): U \subset N' \; \text{is a submodule such that} \; U \subset N \; \text{is a direct summand}\}$}
\end{equation*}
For example, if we take $N=M^{\vee}$, $N'=\lambda^{\vee}(M)$ then $t(N,N')=t(M)$. 

Given the additional data of an element $\delta \in N/N'$ we define $U^{\text{unord}}(N,N',\delta)$ to be the simplicial complex whose vertices are unimodular elements $x \in N$ such that $x \mod N' = \delta$ and whose $p$-simplices are sets of (distinct) $p+1$ vertices $\{x_0,\cdots,x_p\}$ which are unimodular in $N$. 

\begin{corollary}
Let $N$ be a finitely generated free $\mathbb{Z}$-module, let $N' \subset N$ be a submodule and let $\delta \in N/N'$ be an element. 
Then $U^{\text{unord}}(N,N',\delta)$ is weakly Cohen-Macaulay of dimension $t(N,N')-2$. 
\end{corollary}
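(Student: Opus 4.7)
The plan is to deduce the corollary from Theorem \ref{theorem technical} in two steps: first by normalising the pair $(N,N')$ via Smith normal form, and second by comparing the unordered simplicial complex $U^{\text{unord}}(N,N',\delta)$ to the ordered structures appearing in the theorem.

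First I would invoke Smith normal form to choose a basis $x_1,\dots,x_{r+t+l}$ of $N$ and integers $d_1,\dots,d_r\ge 2$ such that $d_1x_1,\dots,d_rx_r,x_{r+1},\dots,x_{r+t}$ is a basis of $N'$; this is always possible by the structure theorem for finitely generated modules over a PID. A short computation shows that $t=t(N,N')$: the submodule $\langle x_{r+1},\dots,x_{r+t}\rangle$ is a direct summand of $N$ of rank $t$ contained in $N'$, and conversely if $U\subset N'$ is any direct summand of $N$ with complement $V$, then $N'=U\oplus(N'\cap V)$ and the quotient $V/(N'\cap V)\cong N/N'$ must accommodate the $r$ torsion factors of $N/N'$, forcing $\rk(U)\le t$. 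I would then choose any lift $\delta_0\in N$ of $\delta$; the vertex set of $U^{\text{unord}}(N,N',\delta)$ is the set of unimodular elements of $\delta_0+N'$, and a sequence of elements of $N$ is unimodular in $N$ if and only if it is unimodular in $N^\infty=N\oplus\mathbb{Z}^\infty$, since any splitting in $N^\infty$ projects via $N^\infty\twoheadrightarrow N$ to a splitting in $N$. This places the corollary exactly inside the setup of Theorem \ref{theorem technical}.

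Next I would compare ordered sequences with unordered subsets. The forgetful operation $(v_1,\dots,v_k)\mapsto\{v_1,\dots,v_k\}$ identifies $U^{\text{unord}}(N,N',\delta)$ with the unordered analogue of $\mathcal{O}(\delta_0+N')\cap\mathcal{U}(N^\infty)$, and similarly for any $p$-simplex $\sigma=\{x_0,\dots,x_p\}$ the link $\Lk(\sigma)$ corresponds to the unordered analogue of $\mathcal{O}(\delta_0+N')\cap\mathcal{U}(N^\infty)_{(x_0,\dots,x_p)}$. A Quillen Theorem A argument---which is the standard comparison between ordered and unordered complexes of unimodular sequences in the tradition of van der Kallen, Maazen and Charney---shows that the natural map between the two versions transfers connectivity bounds. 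The two defining conditions of weakly Cohen-Macaulay of dimension $n=t-2$ then follow: the $(n-1)=(t-3)$-connectivity required by condition (i) is exactly the conclusion of Theorem \ref{theorem technical}(1), while Theorem \ref{theorem technical}(2) applied with $k=p+1\ge 1$ gives that $\Lk(\sigma)$ is $(t-3-(p+1))=(t-p-4)=(n-p-2)$-connected, which is condition (iii).

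The main technical obstacle is the ordered-to-unordered comparison; although such comparisons are classical, the precise form of the refinement order used to define $\mathcal{O}$ must be unpacked carefully in order to match the plain simplicial complex $U^{\text{unord}}$. The reduction in the first step, by contrast, is purely algebraic and routine.
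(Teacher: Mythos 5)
Your first step---Smith normal form, identifying $t=t(N,N')$, choosing a lift $\delta_0$, and observing that unimodularity in $N$ and in $N^\infty$ coincide---matches the paper's reduction exactly and is correct. The gap is in the second step: you appeal to ``a Quillen Theorem A argument'' to pass from the ordered poset $\mathcal{O}(\delta_0+N')\cap\mathcal{U}(N^\infty)$ to the unordered simplicial complex $U^{\text{unord}}(N,N',\delta)$, but the forgetful map is \emph{not} a homotopy equivalence and Quillen's Theorem A does not apply to it. Concretely, the fiber of the forgetful map $g$ over a $1$-simplex $\{a,b\}$, i.e.\ $g^{-1}\bigl(U^{\text{unord}}_{\le\{a,b\}}\bigr)$, is the poset with elements $(a),(b),(a,b),(b,a)$ and relations $(a)<(a,b)$, $(a)<(b,a)$, $(b)<(a,b)$, $(b)<(b,a)$; its nerve is a $4$-cycle, hence $\simeq S^1$, not contractible. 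In general the ordered poset can be strictly \emph{less} connected than the unordered complex, so there is no map inducing equivalence in both directions, and the classical Charney/van der Kallen ordered-vs-unordered comparisons go in the opposite direction (from connectivity of the complex to connectivity of the ordered poset), which is not what you need here.

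What does work, and what the paper uses, is a retraction rather than an equivalence. Fix a total order on the vertex set, i.e.\ on the one-element sequences of $\mathcal{U}(N,N',\delta):=\mathcal{O}(\delta_0+N')\cap\mathcal{U}(N^\infty)$. There are poset maps
$$U^{\text{unord}}(N,N',\delta)\xrightarrow{\ f\ }\mathcal{U}(N,N',\delta)\xrightarrow{\ g\ }U^{\text{unord}}(N,N',\delta),$$
where $f$ sends a simplex $\{v_0,\dots,v_p\}$ to the sequence listing its elements in increasing order and $g$ forgets the order, both compatible with the subsequence/inclusion partial orders. Then $g\circ f=\mathrm{id}$, so $|U^{\text{unord}}(N,N',\delta)|$ is a retract of $|\mathcal{U}(N,N',\delta)|$, and since $\pi_k$ of a retract factors through $\pi_k$ of the ambient space this transfers the $(t-3)$-connectivity of Theorem \ref{theorem technical}(1) to $U^{\text{unord}}(N,N',\delta)$. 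The same retraction, restricted over a given $(k-1)$-simplex, carries $f(\Lk(\sigma))\subset\mathcal{O}(\delta_0+N')\cap\mathcal{U}(N^\infty)_{(v_1,\dots,v_k)}$ and $g$ surjects the latter onto $\Lk(\sigma)$, giving the required $(t-3-k)$-connectivity of links from Theorem \ref{theorem technical}(2). Replacing your Quillen A appeal with this retraction closes the gap; your numerology ($n=t-2$, links $(n-p-2)$-connected) is otherwise correct.
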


This implies Theorem \ref{theorem connectivity alg complex} by taking $N=M^{\vee}$, $N'=\lambda^{\vee}(M)$ and the given $\delta \in \partial M= N/N'$.

\begin{proof}
By Smith Normal Form it suffices to consider the situation $N'=d_1\mathbb{Z} \oplus \cdots \oplus d_r \mathbb{Z} \oplus \mathbb{Z}^t \subset N= \mathbb{Z}^{r+t+l}$, where $d_i \geq 2$ for all $i$. 
Then $t=t(N,N')$, and we can pick a representative $\delta_0 \in N$ of $\delta$.

Let $\mathcal{U}(N,N',\delta)$ be the poset of non-empty finite unimodular sequences in $N$ whose elements lie in the coset $\delta= \delta_0+N'$. 
Since a finite sequence of elements in $N$ is unimodular in $N$ if and only if it is unimodular in $N^{\infty}:=N \oplus \mathbb{Z}^{\infty}$ then $\mathcal{
U}(N,N',\delta)=\mathcal{O}(\delta_0+ N') \cap \mathcal{U}(N^{\infty})$ is $(t-3)$-connected Theorem \ref{theorem technical}(1). 

Now view $U^{\text{unord}}(N,N',\delta)$ as a poset, and pick a total ordering $<$ in the set of one-element sequences in $\mathcal{U}(N,N',\delta)$.
There are poset maps 
$${U}^{\text{unord}}(N,N',\delta) \xrightarrow{f} \mathcal{U}(N,N',\delta) \xrightarrow{g} {U}^{\text{unord}}(N,N',\delta)$$
whose composition is the identity, where $f$ sends $\{v_0,\cdots,v_p\} \in {U}^{\text{unord}}(N,N',\delta)$ to the sequence $(v_{i_0},\cdots,v_{i_p})$ with $v_{i_0}< \cdots < v_{i_p}$ and where $g$ sends a sequence to its underlying set. 
Thus, the poset ${U}^{\text{unord}}(N,N',\delta)$ is also $(t-3)$-connected, as required. 

Moreover, for a $(k-1)$-simplex $\{v_1,\cdots, v_k\}$ of $U^{\text{unord}}(N,N',\delta)$, we have $(v_1,\cdots,v_k) \in \mathcal{U}(N^{\infty})$ so $\mathcal{O}(\delta_0+ N') \cap \mathcal{U}(N^{\infty})_{(v_1,\cdots,v_k)}$ is $(t-3-k)$-connected by Theorem \ref{theorem technical}(2).  
Also,
$$f \big(\Lk_{{U}^{\text{unord}}(N,N',\delta)}(\{v_1,\cdots,v_k\})\big) \subset \mathcal{O}(\delta_0+ N') \cap \mathcal{U}(N^{\infty})_{(v_1,\cdots,v_k)}$$ 
and 
$$g \big(\mathcal{O}(\delta_0+ N') \cap \mathcal{U}(N^{\infty})_{(v_1,\cdots,v_k)} \big) = \Lk_{{U}^{\text{unord}}(N,N',\delta)}(\{v_1,\cdots,v_k\}).$$ 
Thus $\Lk_{{U}^{\text{unord}}(N,N',\delta)}(\{v_1,\cdots,v_k\})$ is $(t-3-k)$-connected. 
\end{proof}

The proof of Theorem \ref{theorem technical} is almost identical to the one of \cite[Theorem 1.4]{Nina}, so we will refer to it for the details and only indicate the differences: we always work with the ring $R=\mathbb{Z}$, $sr(R)=2$. 
The analogue of the variable $g$ in \cite{Nina} is what we call $t$. 
When $\delta_0=0$, $r=l=0$ we recover the exact same proof as in \cite{Nina}. 

\begin{proof}[Proof of Theorem \ref{theorem technical}]
We will show the result by induction on $t$. 
For $t \leq 1$ parts (1),(2) and (b) are vacuously true since $k \geq 1$ and any poset is $(-2)$-connected. 

Statement (a) is also true for $t \leq 1$ since $\mathcal{O}(\delta_0+ N' \cup \delta_0+ N'+e_1) \cap \mathcal{U}(N^{\infty})$ contains the one element sequence $(\delta_0+e_1)$ and hence is non-empty, so $(-1)$-connected. 

Thus we will assume that $t \geq 2$ throughout the proof. 
Without loss of generality we can assume that $\delta_0 \in \langle x_1,\cdots,x_r,x_{r+t+1},\cdots,x_{r+t+l} \rangle$, so that its projection to $\mathbb{Z}^t=\langle x_{r+1},\cdots,x_{r+t} \rangle$ vanishes. 

\textbf{Proof of (b).} Let $Y=\delta_0+N' \cup \delta_0+N'+e_1$ and $F:=\mathcal{O}(Y) \cap \mathcal{U}(N^{\infty})_{(v_1,\cdots,v_k)}$. 
We need to show that $F$ is $(t-2-k)$-connected. 
Since $GL_t(\mathbb{Z})$ acts transitively on the set of unimodular vectors of $\mathbb{Z}^t$ for any $t \geq 1$ we don't need to consider two separate cases as in \cite{Nina}, and instead we can use the argument for the case ``$g>sr(R)$''. 

We need to find $f \in GL(N^{\infty})$ such that $f(Y)=Y$ and the projection of $f(v_1)$ to $\mathbb{Z}^{t}=\langle x_{r+1},\cdots,x_{r+t}\rangle$, denoted $f(v_1)|_{\mathbb{Z}^t}$, is unimodular. 
Once $f$ is found we can proceed as in \cite{Nina} to inductively prove the result.
The construction of $f$ itself is analogous to the one in \cite{Nina}: the given $f$ preserves both the set $Y$ and the element $\delta_0$. 

\textbf{Proof of (2).}
We let 
$$X= (\delta_0+ d_1 \mathbb{Z} \oplus \cdots \oplus d_r \mathbb{Z} \oplus \mathbb{Z}^{t-1} \oplus 0) \cup (\delta_0+ d_1 \mathbb{Z} \oplus \cdots \oplus d_r \mathbb{Z} \oplus \mathbb{Z}^{t-1} \oplus x_{r+t})$$
and 
$$F=\mathcal{O}(\delta_0+N') \cap \mathcal{U}(N^{\infty})_{(v_1,\cdots,v_k)}$$
and then mimic the corresponding step in \cite{Nina}. 

\textbf{Proof of (1) and (a).}
For (1) we take the same $X$ and $F$ as above. 
For (2) we take instead $X=\delta_0+N'$, $F=\mathcal{O}(\delta_0+N' \cup \delta_0+N'+e_1) \cap \mathcal{U}(N^{\infty})_{(v_1,\cdots,v_k)}$ and $y_0=\delta_0+e_1$. 
In both cases we then follow the exact same proof as in \cite{Nina}.
\end{proof}

\subsubsection{Skew symmetric forms and genus} \label{section skew sym}

In this section we state a classification result of skew symmetric forms over the integers and use it derive some algebraic results that will be useful. 

The following classification theorem can be found in \cite[Theorem IV.I]{skew}: 

\begin{theorem} \label{classification}
Any pair $(M,\lambda)$ where $M$ is a finitely generated free $\mathbb{Z}$-module and $\lambda: M \otimes M \rightarrow \mathbb{Z}$ is a skew-symmetric form is isomorphic to a unique canonical form
$$H^{\oplus g} \oplus \bigoplus_{i=1}^r{\begin{pmatrix} 0 & d_i \\ -d_i & 0\end{pmatrix}} \oplus (\mathbb{Z}^b,0)$$
where $H$ is the standard hyperbolic form with matrix $\begin{pmatrix} 0 & 1 \\ -1 & 0\end{pmatrix}$, $g \geq 0$, $d_1| d_2 \cdots |d_r$ and $d_i \geq 2$, $r \geq 0$, $(\mathbb{Z}^b,0)$ denotes the zero form of rank $b \geq 0$, and $\oplus$ denotes orthogonal direct sum. 
\end{theorem}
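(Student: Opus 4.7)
The plan has two parts: existence of the canonical form by first splitting off the radical and then inductively peeling off rank-$2$ orthogonal summands in order of increasing minimum value, and uniqueness via the Smith normal form of $\lambda^\vee$.

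First I would split off the radical. The submodule $\rad(\lambda) := \ker(\lambda^\vee : M \to M^\vee)$ is saturated in $M$: if $km \in \rad(\lambda)$ with $k \neq 0$, then $k\lambda^\vee(m) = 0$ in the torsion-free module $M^\vee$ forces $\lambda^\vee(m) = 0$. Hence there is a splitting $M = \rad(\lambda) \oplus M_0$ with $M_0$ free, the restriction of $\lambda$ to $\rad(\lambda)$ is zero (giving the summand $(\mathbb{Z}^b, 0)$ with $b = \rk(\rad(\lambda))$), and the restriction to $M_0$ is non-degenerate. This reduces existence to the non-degenerate case.

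For non-degenerate $(M_0,\lambda)$ I would induct on $\rk(M_0)$. Let $d := \min\{\lambda(x,y) > 0 : x, y \in M_0\}$ and pick $e, f \in M_0$ with $\lambda(e,f) = d$. A Euclidean division argument shows $d \mid \lambda(e, z)$ and $d \mid \lambda(f, z)$ for every $z \in M_0$: if $\lambda(e, z) = qd + r$ with $0 < r < d$, then $\lambda(e, z - qf) = r$ contradicts minimality. Setting $a := \lambda(z,f)/d$ and $b := \lambda(e,z)/d \in \mathbb{Z}$, the element $z - ae - bf$ lies in $N^\perp := \{u \in M_0 : \lambda(e,u) = \lambda(f,u) = 0\}$, so $M_0 = \langle e, f\rangle \oplus N^\perp$ as an orthogonal decomposition of forms (intersection is zero because $\lambda|_{\langle e,f\rangle}$ is non-degenerate). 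The first summand realizes $\bigl(\begin{smallmatrix} 0 & d \\ -d & 0 \end{smallmatrix}\bigr)$, which is $H$ when $d = 1$, and induction applies to $(N^\perp, \lambda|_{N^\perp})$.

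The divisibility chain $d_1 \mid d_2 \mid \cdots \mid d_r$ is the main subtlety, and I would handle it as follows. Writing $d'$ for the minimum positive value of $\lambda$ on the next summand and choosing $e', f' \in N^\perp$ with $\lambda(e', f') = d'$, bilinearity together with $e', f' \perp e, f$ gives
$$\lambda(ae + be',\, f + f') \;=\; ad + bd' \quad \text{for all } a, b \in \mathbb{Z}.$$
Thus every element of the ideal $(d, d') \subset \mathbb{Z}$ is realized as a value of $\lambda$, so minimality of $d$ forces $\gcd(d, d') = d$, i.e. $d \mid d'$. Peeling off $H$-blocks first while $d = 1$ and then blocks with $d_i \geq 2$ therefore produces the canonical form with the stated divisibility. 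For uniqueness, the canonical form of the statement has
$$\lambda^\vee \;\sim\; \text{diag}\bigl(\underbrace{1,1,\ldots,1}_{2g},\, d_1, d_1,\, d_2, d_2,\, \ldots,\, d_r, d_r,\, \underbrace{0,\ldots,0}_{b}\bigr)$$
as a map of free $\mathbb{Z}$-modules, so $b$, $g$, $r$ and the multiset $\{d_i\}$ are determined by the Smith normal form of $\lambda^\vee$ (an isomorphism invariant of $(M,\lambda)$). The step I expect to be the main obstacle is the divisibility verification; the bilinearity identity above is the clean trick that sidesteps it.
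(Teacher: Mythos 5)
The paper does not prove this theorem --- it cites a reference and takes the result as a black box --- so there is no in-paper argument to compare against. Your proof is a correct and essentially classical symplectic-basis argument over $\mathbb{Z}$: the radical is saturated because $M^\vee$ is torsion-free, the Euclidean-division step correctly forces $d \mid \lambda(e,z)$ and $d \mid \lambda(f,z)$ for the minimizing pair, and the resulting orthogonal splitting $M_0 = \langle e, f\rangle \oplus N^\perp$ is valid since the intersection is killed by $\lambda(e,-), \lambda(f,-)$ and every $z$ decomposes after subtracting $ae + bf$. The bilinearity identity $\lambda(ae+be',\, f+f') = ad + bd'$ is a clean way to obtain the divisibility chain $d_1 \mid d_2 \mid \cdots \mid d_r$, and uniqueness via the Smith Normal Form of $\lambda^\vee$ is the right invariant: change of basis on $M$ acts on $\lambda^\vee$ by $(P^T)^{-1}(\cdot)P^{-1}$, which preserves the Smith form, and the Smith form in turn determines $b$, $g$, $r$ and the $d_i$ because the constraints $d_i \geq 2$ and the pairing of entries leave no ambiguity. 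One step you should make explicit for the induction to run is that $\lambda|_{N^\perp}$ remains non-degenerate: any $u$ in the radical of $\lambda|_{N^\perp}$ is already orthogonal to $e$ and $f$ by definition of $N^\perp$, hence lies in the radical of $\lambda|_{M_0}$, which is zero. With that remark included, the proof is complete.
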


The number $g=g(M)$ is called the \textit{genus} of the form, and can also be defined as $g(M):=\max\{k: H^{\oplus k} \hookrightarrow (M,\lambda)\}$, hence generalizing the definition given in Section \ref{section 5.5}. 
The integer $b$ agrees with the rank of the radical $\rad(M):=\ker(\lambda^{\vee})$, which is also equal to $\rk(\partial M)$ by rank-nullity. 
The values of $d_i$ are certain invariants of the form; and the number $r=r(M)$ satisfies that $2r$ is the minimum number of generators of $\Tors(\partial M) \cong \bigoplus_{i=1}^r{{\mathbb{Z}}/{d_i \mathbb{Z}}\oplus {\mathbb{Z}}/{d_i \mathbb{Z}}}$.
In fact, when $(M,\lambda)$ is written in canonical form then $\partial M= \bigoplus_{i=1}^r{({\mathbb{Z}}/{d_i \mathbb{Z}}\oplus {\mathbb{Z}}/{d_i \mathbb{Z}})} \oplus \mathbb{Z}^{\rk(M)-2r-2g}$.

\begin{rem}\label{rem algebraic formulae}
Canonical form gives the following formulae for $g(M)$ and $t(M)$ which will be useful later:
$g(M)=\frac{1}{2}(\rk(M)-\rk(\partial M)-2 r(M))$ and $t(M)=2g(M)$. 
\end{rem}

The additivity of the genus under orthogonal direct sum follows from Theorem \ref{classification}: 
by taking Smith Normal Form of the canonical from matrix we see that $g(M)$ is twice the number of $1$'s in the diagonal. 
The number of $1$'s in Smith Normal Form is additive under direct sum of matrices. 

\subsubsection{Properties of the cut manifold}

In this section we will show the properties of the cut manifold that were needed in the proof of Theorem \ref{theorem splitting poset connectivity}. 

\begin{lemma} \label{lem technical cutting one arc}
Suppose $(M,\partial,\delta)$ is valid algebraic data such that $\partial M \neq 0$ and $\delta \in \partial M$ is a generator of a cyclic direct summand of $\partial M$ of maximum order.
Let $\alpha \in M^{\vee}$ be unimodular such that $[\alpha]=\delta \in \partial M$, and denote $M'=\ker(\alpha)$ with $\lambda':=\lambda|_{M'}$. Then $g(M')= g(M)$. 
\end{lemma}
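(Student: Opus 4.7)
The plan is to show both inequalities. The inequality $g(M')\leq g(M)$ is immediate, since a symplectic embedding $H^{\oplus k}\hookrightarrow M'$ composes with the inclusion $M'\hookrightarrow M$. For the reverse direction I would exhibit a symplectic embedding of $H^{\oplus g}$, $g:=g(M)$, into $M'$.

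I would begin by putting $M=H^{\oplus g}\oplus E\oplus R$ in canonical form via Theorem \ref{classification}, where $E=\bigoplus_{i=1}^{r}\mathbb{Z}\{u_i,v_i\}$ with $\lambda(u_i,v_i)=d_i$, and $R=\rad(M)\cong\mathbb{Z}^b$. Decomposing $\alpha=\alpha_H+\alpha_E+\alpha_R$ dually, the identity $\lambda^\vee(H^{\oplus g})=(H^{\oplus g})^\vee$ gives $[\alpha_H]=0\in\partial M$, so $\delta=[\alpha_E]+\alpha_R$ under the natural splitting $\partial M=\Tors(\partial M)\oplus R^\vee$. The argument then splits on the order of $\delta$.

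If $\delta$ has infinite order, then $R\neq 0$, and a splitting of $\partial M\twoheadrightarrow\langle\delta\rangle\cong\mathbb{Z}$ restricts to the free part of $\partial M$ and forces $\alpha_R\in R^\vee$ to be primitive. Picking $z_0\in R$ with $\alpha(z_0)=1$ and replacing $x_i\mapsto x_i-\alpha(x_i)z_0$, $y_i\mapsto y_i-\alpha(y_i)z_0$ preserves all pairings (since $z_0\in\rad(M)$) and places the new basis inside $M'$. If $\delta$ has finite order $d_r$ (and $R=0$), I would pick $e\in M$ with $\lambda^\vee(e)=d_r\alpha$. Short calculations give $e\in M'\cap\rad(M')$, $e$ primitive in $M$ (else the order of $\delta$ would be strictly less than $d_r$), and $\gcd\{\lambda(e,m):m\in M\}=d_r$. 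The crucial step is then a Witt-type conjugacy statement: any primitive element whose pairing-gcd equals the top elementary divisor $d_r$ is $\Aut(M,\lambda)$-conjugate to the standard generator $u_r$ of the top block. Granting this, I may assume $e=u_r$, so $d_r\alpha=\lambda^\vee(u_r)=d_rv_r^*$ in the torsion-free module $M^\vee$ forces $\alpha=v_r^*$, whence $M'=\ker(v_r^*)\cong H^{\oplus g}\oplus\bigoplus_{i<r}\mathbb{Z}\{u_i,v_i\}\oplus\mathbb{Z}\{u_r\}$ with $u_r\in\rad(M')$, giving $g(M')=g$.

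The main obstacle is the Witt-type conjugacy step in the second case. I would establish it by induction on $\rk(M)$: the quotient $e^\perp/\mathbb{Z}e$ carries an induced non-degenerate form, which admits a canonical basis by the inductive hypothesis; this lifts to $e^\perp\subset M$ and, combined with a partner $s$ satisfying $\lambda(e,s)=d_r$ (obtained from the unimodularity of $\alpha$), produces a canonical basis of $M$ in which $e=u_r$. Should this step prove too delicate, a fallback is to argue via the rank formula $2g(M)=\rk(M)-\rk\rad(M)-2r(M)$: the primitivity of $e$ in $\rad(M')$ gives $\rk\rad(M')=1$ directly, so it would remain to show $r(M')=r(M)-1$, which follows from the block-matrix identity $|\Tors(\partial M')|=\prod_{i<r}d_i^2$ together with a tracking of elementary divisors through the short exact sequence relating $\partial M'$ to $\partial M/\langle\delta\rangle$ whose kernel is generated by a single element.
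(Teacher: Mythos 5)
Your treatment of the infinite-order case is correct and in fact more concrete than the paper's: you explicitly shear the hyperbolic basis off $z_0\in\rad(M)$ to land it in $M'$, rather than passing through $\partial M'$ and the rank formula as the paper does. Both routes hinge on the same observation (that $\alpha$ restricted to $\rad(M)$ is onto $\mathbb{Z}$), so this part is fine and arguably preferable.

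The finite-order case is where there is a genuine gap. Your primary route rests on the Witt-type conjugacy lemma — that any primitive $e$ with pairing-gcd equal to $d_r$ is $\Aut(M,\lambda)$-conjugate to $u_r$ — which you flag as the main obstacle but do not prove. This is not a standard citable fact for integral alternating forms with non-trivial elementary divisors, and the inductive sketch (lift a canonical basis of $e^\perp/\mathbb{Z}e$ into $e^\perp$, then adjoin a partner $s$) glosses over the step of showing the lifted basis and $s$ can be adjusted to reproduce the canonical form of all of $M$, which is exactly where such Witt arguments become delicate over $\mathbb{Z}$. The fallback is also incomplete: $\rk\rad(M')=1$ is not ``direct'' from primitivity of $e$ (it does follow, but via the argument that $\lambda^\vee(f)=c\alpha$ forces $d_r f=ce$ in $\rad(M)=0$), and the asserted identity $\lvert\Tors(\partial M')\rvert=\prod_{i<r}d_i^2$ presupposes the very structure of $M'$ you were trying to establish. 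The paper sidesteps all of this with a uniform device you don't use: it defines $\delta':=[\lambda(x,-)|_{M'}]\in\partial M'$ for any $x$ with $\alpha(x)=1$, proves the isomorphism $\partial M/\langle\delta\rangle\cong\partial M'/\langle\delta'\rangle$ once and for all, and then in the finite-order case shows $\delta'$ has infinite order via a short computation that crucially exploits skew-symmetry ($\lambda(t,t)=0$, $\lambda(x',x')=0$ — this is precisely the point highlighted in Remark~\ref{rem n odd}). That pins down $\rk\rad(M')=1$ and the inequality $r(M')\le r(M)-1$ (equality is not needed), after which the rank formula finishes. If you want to keep your explicit approach, the cleanest repair is probably to prove the isomorphism $\partial M/\langle\delta\rangle\cong\partial M'/\langle\delta'\rangle$ and the infinite-order of $\delta'$ rather than the Witt lemma.
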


\begin{proof}
Let $\delta' \in \partial M'$ be defined as follows: $\alpha \in M^{\vee}$ is unimodular, so there is $x \in M$ such that $\alpha(x)=1$, then $\lambda(x,-)|_{M'} \in (M')^{\vee}$ and we let $\delta':= [\lambda(x,-)|_{M'}] \in \partial M'$. 
This element is well-defined, i.e. independent of the choice of $x$, because if we are given $x'$ with $\alpha(x')=1$ then $x-x' \in \ker(\alpha)=M'$ so $\lambda(x-x',-)|_{M'} \in \lambda^{\vee}(M')$. 
We claim that there is an isomorphism ${\partial M}/{\langle \delta \rangle} \cong {\partial M'}/{\langle \delta' \rangle}$.

Indeed, consider the composition of surjections $\phi: M^{\vee} \rightarrow (M')^{\vee} \rightarrow {\partial M'}/{\langle \delta' \rangle}$. 
It suffices to show that $\ker(\phi)$ is the subgroup of $M^{\vee}$ generated by $\lambda^{\vee}(M)$ and $\alpha$.
To do so we check both inclusions:
if $\beta \in \ker(\phi)$ then $\beta|_{M'}=\lambda(y,-)|_{M'}+k \lambda(x,-)|_{M'}$ for some $y \in M'$ and $k \in \mathbb{Z}$, since $\delta'=[\lambda(x,-)|_{M'}] \in \partial M'$. 
Thus, $\beta= \lambda(y+k x,-)-\lambda(y,x) \alpha$ because both sides of this equation agree on $M'$ and on $x$ by construction, and $M$ is generated by $M'$ and $x$; so $\beta$ lies in the subgroup of $M^{\vee}$ generated by $\lambda^{\vee}(M)$ and $\alpha$. 
Conversely, if $\beta=\lambda(z,-)+l \alpha$ for some $z \in M$ and $l \in \mathbb{Z}$ then we can write $z=z'+\alpha(z) x$ where $z' \in M'$ and then $\beta|_{M'}=\lambda(z',-)|_{M'}+\alpha(z) \lambda(x,-)|_{M'}+0$, so $\phi(\beta)=0$. 

By rank-nullity we have $\rk(M')=\rk(M)-1$.
Now we have two cases to consider: 

(i)$\delta$ has infinite order.
     We claim that the evaluation map $\rad(M) \rightarrow \Hom_{\mathbb{Z}}(\partial M, \mathbb{Z})$, $x \mapsto (\beta \mapsto \beta(x))$ is an isomorphism.
     Indeed, $\Hom_{\mathbb{Z}}(\partial M, \mathbb{Z})=\Ann(\lambda^{\vee}(M) \subset M^{\vee})$, and under the evaluation isomorphism $M \xrightarrow{\cong} (M^{\vee})^{\vee}$ this annihilator is precisely $\rad(M)$. (This is the algebraic analogue of Poincaré duality in $\partial W$.) 
    Since $\delta \in \partial M$ generates a free $\mathbb{Z}$-summand then there is $x \in \rad(M)$ such that $\alpha(x)=1$. But then $\delta'=0$ since $x \in \rad(M)$. 
    Thus $\partial M' \cong \frac{\partial M}{\langle \delta \rangle}$ and so $\rk(\partial M')=\rk(\partial M)-1$ and $r(M)=r(M')$, giving $g(M)=g(M')$ by Remark \ref{rem algebraic formulae}. 
    
(ii) $\delta$ has finite order, so $\partial M$ must be torsion, and by taking canonical form we can assume that $\delta$ generates the last $\frac{\mathbb{Z}}{d_r \mathbb{Z}}$ summand. 
    We  claim that $\delta' \in \partial M'$ must have infinite order: 
    otherwise let $N \in \mathbb{Z}_{>0}$ be such that $N \delta'=0$, then there is some $x' \in M'$ such that $N \lambda(x,-)|_{M'}=\lambda(x',-)|_{M'}$, where $x \in M$ satisfies $\alpha(x)=1$. 
    $\delta$ has finite order $d_r$ by assumption so $d_r \alpha= \lambda(t,-)$ for some $t \in M$. 
    Then, $0 \neq N d_r= \alpha(N d_r x)= \lambda(t, N x)=-N \lambda(x,t)=-\lambda(x',t)=\lambda(t,x')=d_r \alpha(x')=0$, where we have used that $x' \in M'=\ker(\alpha)$ and that $t \in M'$ because $d_r \alpha(t)=\lambda(t,t)=0$. 
    This gives a contradiction and hence shows the claim. 
    Since $\partial M$ is torsion then so is $\partial M/\langle \delta \rangle$, which is isomorphic to $\partial M'/ \langle \delta' \rangle$; thus  $\rk(\partial M')=1$, and $\Tors(\partial M') \subset \Tors({\partial M'}/{\langle \delta' \rangle})= \partial M/\langle \delta \rangle = \Big(\bigoplus_{i=1}^{r-1}{{\mathbb{Z}}/{d_i \mathbb{Z}}\oplus {\mathbb{Z}}/{d_i \mathbb{Z}}} \Big) \oplus {\mathbb{Z}}/{d_r \mathbb{Z}}$. 
    In particular $r(M')<r(M)$ as the right hand side of the previous formula has less than $2r=2r(M)$ summands, so $r(M) \geq r(M')+1$ and hence the formula for genus in Remark \ref{rem algebraic formulae} gives $g(M') \geq g(M)$, hence the result as the reverse inequality is trivial because $M' \subset M$. 
\end{proof}

\begin{rem} \label{rem n odd}
The above proof uses Remark \ref{rem algebraic formulae} plus the fact that $\lambda(t,t)=0=\lambda(x,x)$ for $t,x$ as in the proof. 
These facts use that the form is skew-symmetric, and one can check that the analogue of Lemma \ref{lem technical cutting one arc} for symmetric forms is false.
This is the key step in which we need to take $n$ odd: Lemma \ref{lem technical cutting one arc} is used in proving Proposition \ref{prop cut manifold}, which in turn is used in the inductive step of the proof of Theorem \ref{theorem splitting poset connectivity}, which is needed to prove Theorem \hyperref[theorem C]{C}. 
\end{rem}

\begin{corollary} \label{cor cut manifold properties}
Let $(M,\partial,\delta)$ be valid algebraic data such that $\delta \in \partial M$ generates a cyclic direct summand of $\partial M$ of maximum order. 
Let $\{\alpha_0,\cdots,\alpha_p\}$ be a $p$-simplex of $\mathcal{A}^{\text{alg}}(M,\lambda,\delta)$ and let $M':= \bigcap_{i=0}^{p}{\ker(\alpha_i)}$ and $\lambda'=\lambda|_{M'}$, then 
\begin{enumerate}
    \item $\rk(M')=\rk(M)-(p+1)$. 
    \item $g(M') \geq g(M)-(p+1)$. Moreover, if $\partial M \neq 0$ then $g(M') \geq g(M)-p$. 
\end{enumerate}
\end{corollary}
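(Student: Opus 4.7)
The plan is to handle (1) directly from unimodularity, then reduce (2) to iterating a \emph{single-cut lemma} (each unimodular cut drops the genus by at most one), using Lemma \ref{lem technical cutting one arc} to get the sharper bound on the first cut in the presence of $\partial M \neq 0$.

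\emph{Step 1 (rank and bookkeeping).} Since $\{\alpha_0, \dots, \alpha_p\}$ is a unimodular set in $M^{\vee}$, it extends to a $\mathbb{Z}$-basis $\{\alpha_0, \dots, \alpha_p, \beta_1, \dots, \beta_q\}$ with $q = \rk(M) - (p+1)$. The dual basis $\{x_0, \dots, x_p, y_1, \dots, y_q\}$ of $M$ exhibits $M' = \bigcap_i \ker(\alpha_i) = \langle y_1, \dots, y_q \rangle$ as a free direct summand of rank $q$, proving (1). The same dual-basis description shows that for each $0 \le i \le p$, the restrictions $\alpha_{i+1}|_{\ker \alpha_0 \cap \cdots \cap \ker \alpha_i}, \dots, \alpha_p|_{\ker \alpha_0 \cap \cdots \cap \ker \alpha_i}$ remain unimodular in the dual of $\ker \alpha_0 \cap \cdots \cap \ker \alpha_i$. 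Hence one may cut iteratively, one vertex at a time.

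\emph{Step 2 (single-cut lemma).} I claim that for any unimodular $\alpha \in M^{\vee}$, the submodule $\ker \alpha$ satisfies $g(\ker \alpha) \geq g(M) - 1$. To prove this, choose an embedding $\phi : H^{\oplus g(M)} \hookrightarrow M$ realizing the genus and let $N := \phi(H^{\oplus g(M)})$, on which $\lambda|_N$ is standard hyperbolic, hence non-degenerate. If $\alpha|_N = 0$, then $N \subseteq \ker \alpha$ and $g(\ker \alpha) \geq g(M)$. Otherwise, non-degeneracy of $\lambda$ on $N$ produces a unique $v \in N$ with $\alpha|_N = \lambda(v,-)$; write $v = k v_0$ with $v_0 \in N$ primitive and $k \ne 0$, so that $\ker(\alpha|_N)$ equals the skew-orthogonal complement $v_0^{\perp}$ inside $N$. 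Transitivity of $Sp(N) = Sp_{2g(M)}(\mathbb{Z})$ on primitive vectors of $H^{\oplus g(M)}$ allows us to take $v_0 = e_1$, whereupon $v_0^{\perp} = \langle e_1, e_2, f_2, \dots, e_{g(M)}, f_{g(M)} \rangle$ visibly contains a hyperbolic summand $H^{\oplus (g(M)-1)}$. This sits inside $\ker \alpha$, giving the claim. This is the main obstacle: the argument relies on the skew-symmetry of $\lambda$ (so one has $\lambda(v_0, v_0) = 0$ for free), and is exactly the structural input that fails for symmetric forms, consistent with the oddness assumption on $n$ discussed in Remark \ref{rem n odd}.

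\emph{Step 3 (iteration).} Iterating Step 2 along $\alpha_0, \dots, \alpha_p$, which is legitimate by Step 1, yields $g(M') \geq g(M) - (p+1)$. For the \emph{moreover}, when $\partial M \neq 0$ the hypothesis on $\delta$ means that Lemma \ref{lem technical cutting one arc} applies to the first cut $\alpha_0$: it gives the \emph{exact} equality $g(\ker \alpha_0) = g(M)$, not just a one-step loss. Each of the remaining $p$ cuts then costs at most $1$ by Step 2, so $g(M') \geq g(M) - p$, completing the proof.
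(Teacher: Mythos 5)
Your proof is correct and has the same overall architecture as the paper's: rank--nullity (unimodularity) for (1), iteration of a single-cut genus bound for the first part of (2), and Lemma~\ref{lem technical cutting one arc} on the first cut plus the single-cut bound on the remaining $p$ cuts for the ``moreover'' statement. The one substantive difference is Step~2: the paper simply cites \cite[Corollary 4.2]{high} for the bound $g(\ker\alpha)\ge g(M)-1$, whereas you give a short self-contained argument --- fix a maximal hyperbolic summand $N\cong H^{\oplus g(M)}$, use non-degeneracy of $\lambda|_N$ to write $\alpha|_N=\lambda(v,-)$, reduce to $v_0=e_1$ by transitivity of $\Sp_{2g(M)}(\mathbb{Z})$ on primitive vectors, and read off $H^{\oplus(g(M)-1)}\subset v_0^\perp\subset\ker\alpha$, where $e_1\in v_0^\perp$ is exactly where skew-symmetry $\lambda(e_1,e_1)=0$ enters. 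Your inline derivation makes the dependence on skew-symmetry (and hence on $n$ odd, cf.\ Remark~\ref{rem n odd}) explicit and keeps the argument self-contained, at the cost of re-deriving a known lemma; the paper's citation is terser. Your Step~1 also spells out the unimodularity of the restricted $\alpha_i$ on each intermediate kernel, which is the justification the paper leaves implicit in the phrase ``the general case follows by iterating.'' Both are valid; yours is more verifiable from first principles, the paper's is shorter.
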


\begin{proof}
Rank-nullity implies (i) since $\alpha_0,\cdots,\alpha_p$ is unimodular in $M^{\vee}$. 
The first part of (ii) is shown in \cite[Corollary 4.2]{high} for the case $p=0$, and the general case follows by iterating. 
Lemma \ref{lem technical cutting one arc} gives the second part of (ii) in the special case $p=0$, and the general case $p \geq 1$ is a consequence of \cite[Corollary 4.2]{high}. 
\end{proof}

Now we will show the geometric analogue of the above result. 

\begin{proposition} \label{prop cut manifold}
Let $(W,\Delta)$ be valid geometric data and $\{a_0, \cdots, a_p\} \in \mathcal{A}(W,\Delta)$ be a $p$-simplex. Then 
\begin{enumerate}[(i)]
    \item $\rk(H_n(W \setminus \{a_0, \cdots, a_p\}))=\rk(H_n(W))-(p+1)<\rk(H_n(W))$. 
    \item $g(W \setminus \{a_0, \cdots, a_p\}) \geq g(W)-(p+1)$. 
     Moreover if $H_{n-1}(A)= H_{n-1}(\partial W) \neq 0$ and $\delta$ generates a direct summand of maximum order then $g(W \setminus \{a_0, \cdots, a_p\}) \geq g(W)-p$.
\end{enumerate}
\end{proposition}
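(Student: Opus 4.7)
The plan is to reduce the geometric statement to the purely algebraic Corollary \ref{cor cut manifold properties} (together with its predecessor \cite[Corollary 4.2]{high} for the weaker bound) via the simplicial map $\Phi \colon \mathcal{A}(W,\Delta) \to \mathcal{A}^{\text{alg}}(H_n(W),\lambda_W,\delta)$ of Proposition \ref{prop simplicial map}. Write $W' := W \setminus \{a_0,\dots,a_p\}$, $M := H_n(W)$, and $\alpha_i := \Phi(a_i) \in M^\vee$. Using excision and the pairwise disjointness of the arcs (exactly as in the proof of Proposition \ref{prop simplicial map}), the long exact sequence of $(W,W')$ becomes
$$0 \to H_n(W') \to M \xrightarrow{\bigoplus_i \alpha_i} \mathbb{Z}^{p+1} \to H_{n-1}(W') \to 0,$$
and by definition of the arc complex $\{\alpha_0,\dots,\alpha_p\}$ spans a simplex in $\mathcal{A}^{\text{alg}}$, i.e.\ is unimodular in $M^\vee$. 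In particular $H_n(W') \cong M' := \bigcap_i \ker(\alpha_i)$.

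For (i), unimodularity makes $\bigoplus_i \alpha_i$ surjective, so rank-nullity gives $\rk(H_n(W')) = \rk(M) - (p+1)$, which is the desired (strict) inequality. For (ii) I first have to verify that the algebraic genus of $(M',\lambda_W|_{M'})$ really does equal the geometric genus $g(W')$ as defined in Section \ref{section 5.5}. This amounts to identifying the intersection form $\lambda_{W'}$ with the restriction $\lambda_W|_{M'}$ under the inclusion $M' \hookrightarrow M$: a class in $H_n(W')$ is represented by an $n$-cycle supported in $W'$, and its intersection number with another such cycle can be computed either inside $W'$ or inside $W$ with the same answer. Granted this identification, both sides of the inequalities in (ii) are genera of the same algebraic forms.

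Once the intersection forms are identified, the first bound $g(W') \geq g(W) - (p+1)$ follows by iterating the single-arc case of \cite[Corollary 4.2]{high}, and the sharper bound $g(W') \geq g(W) - p$ under the extra hypotheses is the second clause of Corollary \ref{cor cut manifold properties}(ii); here one uses that the isomorphism $H_{n-1}(\partial W) \cong \partial M$ (from the proof of the valid algebraic data construction in Section \ref{section alg complex}) translates $H_{n-1}(A) = H_{n-1}(\partial W) \neq 0$ into $\partial M \neq 0$, while $\delta$ generating a cyclic direct summand of maximum order is by assumption (and is exactly the running hypothesis of Corollary \ref{cor cut manifold properties}). The only genuine technical point is thus the intersection-form identification; the rest is bookkeeping.
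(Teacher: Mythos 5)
Your proposal is correct and follows essentially the same route as the paper: use the exact sequence from the proof of Proposition \ref{prop simplicial map} to identify $H_n(W')$ with $\bigcap_i \ker(\alpha_i)$ and $\lambda_{W'}$ with the restricted form (the paper simply invokes naturality of the intersection product, which is the same observation you spell out), observe via Proposition \ref{prop simplicial map} that $\{\alpha_0,\dots,\alpha_p\}$ is a simplex of the algebraic arc complex, and then quote Corollary \ref{cor cut manifold properties}. The only cosmetic difference is that you re-derive part (i) by rank-nullity rather than citing Corollary \ref{cor cut manifold properties}(1), which amounts to the same thing.
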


\begin{proof}
Let $W'=W \setminus a$ denote the cut manifold and consider the exact sequence of the proof of Proposition \ref{prop simplicial map}
$$0 \rightarrow H_n(W') \rightarrow H_n(W) \xrightarrow{\bigoplus_{i=0}^p{\alpha_i}} \mathbb{Z}^{p+1} \rightarrow H_{n-1}(W') \rightarrow 0$$
where $\alpha_i=(a_i)_*[D^n ,\partial D^n] \in H_n(W,\partial W) \cong H_n(W)^{\vee}$. 
Then $H_n(W')=\bigcap_{i=0}^p \ker(\alpha_i)$ and $\lambda_{W'}$ is the restriction of $\lambda_W$ by naturality of the intersection product. 

The triple $(H_n(W),\lambda_W,\delta)$ defines valid algebraic data, and by Proposition \ref{prop simplicial map} $\{\alpha_0,\cdots,\alpha_p\}$ is a $p$-simplex of $\mathcal{A}^{\text{alg}}(H_n(W),\lambda_W,\delta)$.
Thus, Corollary \ref{cor cut manifold properties} gives the result.
\end{proof}

\subsection{The connectivity of the arc complex} \label{section arc complex}

In this section we will prove Theorem \ref{theorem arc complex connectivity} saying that the arc complex is highly connected. 
The proof is inspired by the one of \cite[Lemma 5.5]{high}.

\begin{proof}[Proof of Theorem \ref{theorem arc complex connectivity}.]

We can assume that $g(W) \geq 1$ as otherwise the result is vacuously true. 

Let $k$ be fixed, $0 \leq k \leq g(W)-2$, we will show that any continuous map $f: S^k \rightarrow |\mathcal{A}(W,\Delta)|$ is nullhomotopic. 
Firstly by the simplicial approximation theorem we can suppose that $S^k \cong |L|$ for $L$ a PL triangulation and that $f: L \rightarrow \mathcal{A}(W,\Delta)$ is simplicial. 
Consider the composition 
$\Phi \circ f: S^k \xrightarrow{f} |\mathcal{A}(W,\Delta)| \rightarrow |\mathcal{A}^{\text{alg}}(H_{n}(W),\lambda_W,\delta)|$.

By Theorem \ref{theorem connectivity alg complex}, $\mathcal{A}^{\text{alg}}(H_n(W),\lambda_W,\delta)$ is weakly Cohen-Macaulay of dimension $t(H_n(W))-2$, and by Remark \ref{rem algebraic formulae} $t(H_n(W))=2g(W)$. 
Since we are assuming that $g(W) \geq 1$ then $2g(W)-2 \geq g(W)-1$, so $\mathcal{A}^{\text{alg}}(H_n(W),\lambda_W,\delta)$ is also weakly Cohen-Macaulay of dimension $g(W)-1$. 
Since $k \leq g(W)-2$ then by \cite[Theorem 2.4]{high} $L$ can be extended to a PL triangulation $K$ on $D^{k+1}$ with the property that the star of each vertex $v \in K \setminus L$ intersects $L$ at a single (possibly empty) simplex; and such that that there is a simplicial map $g: K \rightarrow \mathcal{A}^{\text{alg}}(H_n(W),\lambda_W,\delta)$ extending $\Phi \circ f$ with the additional property that  $g(\Lk_K(v)) \subset \Lk_{\mathcal{A}^{\text{alg}}(H_n(W),\lambda_W,\delta)}(g(v))$ for any vertex $v \in K \setminus L$. 

We will prove that $g: K \rightarrow \mathcal{A}^{\text{alg}}(H_{n}(W),\lambda_W,\delta)$ lifts along $\Phi$ to a nullhomotopy $G: K \rightarrow \mathcal{A}(W,\Delta)$ of $f$.
Observe that $G|_L=f$ is fixed. 

Choose an enumeration of the vertices of $K \setminus L$ as $v_1,\cdots,v_N$.
For each $1 \leq i \leq N$ we shall inductively pick smooth embeddings $a_i: D^{n} \hookrightarrow W$ satisfying: 
\begin{enumerate}[(i)]
    \item Each $\partial a_i$ has image in $\int(A) \subset W$ and lies in the isotopy class $\Delta$.
    \item $a_i$ is transverse to $\partial W$ and intersects it precisely along $\partial a_i$.
    \item $(a_i)_*([D^n,\partial D^n])= (\Phi \circ f)(v_i) \in \mathcal{A}^{\text{alg}}(H_{n}(W),\lambda_W,\delta)$.  
    \item If $i \neq j$ then $\im({a_i})$ and $\im({a_j})$ are disjoint.
    \item If $w \in L$ then $\im({a_i})$ and $\im(f(w))$ are disjoint. 
\end{enumerate}

Suppose that the embeddings ${a_1},\cdots,{a_{s-1}}$ have already been chosen and satisfy the above properties, we will construct the embedding ${a_s}$. 

Pick an embedding $\iota: S^{n-1} \hookrightarrow A \subset \partial W$ representing the isotopy class $\Delta$ and make it transverse, and hence disjoint, to all the $\partial f(w)$ for $w \in L$ and all the $\partial a_i$ for $i<s$. 
We have $(\Phi \circ f)(v_s) \in H_n(W,\partial W) \cong \pi_n(W,\partial W)$ such that $(\Phi \circ f)(v_s) \mapsto \delta \in H_{n-1}(\partial W) \cong \pi_{n-1}(\partial W)$ and hence there is a continuous map ${a_s}: D^n \rightarrow W$ such that $\partial {a_s}=\iota$ and ${a_s}$ represents the homology class $(\Phi \circ f)(v_s)$. 
By transversality we can make ${a_s}$ to be smooth, immersed with at worse double point singularities, transverse to all the ${a_i}$ for $i<s$, to all the $f(w)$ for $w \in L$ and to $\partial W$, and intersecting $\partial W$ only along $\partial {a_s}$.
Moreover we can also make it be in general position with respect to all the ${a_i}$ for $i<s$, to all the $f(w)$ for $w \in L$, so that there are no triple intersection points. 

Now, by the half-Whitney trick, see \cite[Claim 6.18]{stablemoduli}, we can isotope $a_s$ to cancel any intersection point with ${a_i}$, $i<s$, or with $f(w)$ for $w \in L$.
This procedure does not create any new intersection points each time we cancel because we can pick each Whitney disc disjoint from the $a_i$'s and $f(w)$'s not in play by transversality, using that $2+n<2n$ and that there are no triple intersections. 
This isotopy changes $\partial a_s$ but it keeps it in $\int(A)$ and in the isotopy class $\Delta$, and moreover we can always use transversality at the end of the procedure to ensure condition (ii) above still holds. 

We can use Haefliger's trick to homotope $a_s$ so that it becomes embedded, without changing the above disjointness conditions and the behaviour of $a_s$ near the boundary: 
for each self-intersection point the cancelling procedure takes place in a small neighbourhood of that point, and $a_s$ was already embedded near the boundary. 

Thus, the new collection ${a_1},\cdots,{a_s}$ will satisfy all the properties (i)-(v) above, as required. 

Finally set $G(v_i):=a_i$ for $1 \leq i \leq N$.  
By conditions (i),(ii) and (iii) on the choice of the ${a_i}$'s, $G$ gives a lift of $g$ along $\Phi$ on the vertices of $K$, so to finish the proof it suffices to check that each $a_i$ is indeed non-separating so that it defines a vertex in the arc complex, and that such $G$ is simplicial. 
Let $\tau$ be a simplex in $K$, we will show that $G(\tau)$ is a simplex in $\mathcal{A}(W,\Delta)$.
The pairwise disjointness condition holds because $f$ is itself simplicial and the embeddings $a_i$ are pairwise disjoint and disjoint to all the $f(w)$ for $w \in L$. 
The connectivity of the jointly cut manifold holds because it is equivalent to the unimodularity of the corresponding elements in the algebraic arc complex by the first step in the proof of Proposition \ref{prop simplicial map}, and by assumption $g(\tau)$ is a simplex as $g$ is simplicial. 
\end{proof}

\bibliographystyle{amsalpha}
\bibliography{bibliography}

\providecommand{\bysame}{\leavevmode\hbox to3em{\hrulefill}\thinspace}
\providecommand{\MR}{\relax\ifhmode\unskip\space\fi MR }
\providecommand{\MRhref}[2]{%
  \href{http://www.ams.org/mathscinet-getitem?mr=#1}{#2}
}
\providecommand{\href}[2]{#2}
\begin{thebibliography}{GKRW19}

\bibitem[BKK23]{finitenessproperties}
Mauricio Bustamante, Manuel Krannich, and Alexander Kupers, \emph{Finiteness
  properties of automorphism spaces of manifolds with finite fundamental
  group}, Mathematische Annalen (2023), 1--51.

\bibitem[BM20]{berglundmadsen}
Alexander Berglund and Ib~Madsen, \emph{Rational homotopy theory of
  automorphisms of manifolds}, Acta Math. \textbf{224} (2020), no.~1, 67--185.
  \MR{4086715}

\bibitem[ERW19]{ssSpaces}
Johannes Ebert and Oscar Randal-Williams, \emph{Semisimplicial spaces}, Algebr.
  Geom. Topol. \textbf{19} (2019), no.~4, 2099--2150. \MR{3995026}

\bibitem[Fri17]{Nina}
Nina Friedrich, \emph{Homological stability of automorphism groups of quadratic
  modules and manifolds}, Doc. Math. \textbf{22} (2017), 1729--1774.
  \MR{3760514}

\bibitem[GKRW18]{Ek}
S{\o}ren Galatius, Alexander Kupers, and Oscar Randal-Williams, \emph{{Cellular
  $E_k$-algebras}}, arXiv e-prints (2018), arXiv:1805.07184.

\bibitem[GKRW19]{E2}
\bysame, \emph{{$E_2$}-cells and mapping class groups}, Publ. Math. Inst.
  Hautes \'{E}tudes Sci. \textbf{130} (2019), 1--61. \MR{4028513}

\bibitem[GKT21]{improvingrationalrange}
Jeffrey Giansiracusa, Alexander Kupers, and Bena Tshishiku,
  \emph{Characteristic classes of bundles of {K}3 manifolds and the {N}ielsen
  realization problem}, Tunis. J. Math. \textbf{3} (2021), no.~1, 75--92.
  \MR{4103767}

\bibitem[GRW14]{stablemoduli}
S{\o}ren Galatius and Oscar Randal-Williams, \emph{Stable moduli spaces of
  high-dimensional manifolds}, Acta Math. \textbf{212} (2014), no.~2, 257--377.
  \MR{3207759}

\bibitem[GRW18]{high}
{S\o}ren Galatius and Oscar Randal-Williams, \emph{Homological stability for
  moduli spaces of high dimensional manifolds. {I}}, J. Amer. Math. Soc.
  \textbf{31} (2018), no.~1, 215--264. \MR{3718454}

\bibitem[Kos67]{kosinski}
A.~Kosi\'{n}ski, \emph{On the inertia group of {$\pi $}-manifolds}, Amer. J.
  Math. \textbf{89} (1967), 227--248. \MR{214085}

\bibitem[Kra20a]{krannichmcg}
Manuel Krannich, \emph{Mapping class groups of highly connected
  {$(4k+2)$}-manifolds}, Selecta Math. (N.S.) \textbf{26} (2020), no.~5, Paper
  No. 81, 49. \MR{4182837}

\bibitem[Kra20b]{krannich}
\bysame, \emph{A note on rational homological stability of automorphisms of
  manifolds}, Q. J. Math. \textbf{71} (2020), no.~3, 1069--1079. \MR{4142724}

\bibitem[Kre79]{kreck}
M.~Kreck, \emph{Isotopy classes of diffeomorphisms of {$(k-1)$}-connected
  almost-parallelizable {$2k$}-manifolds}, Algebraic topology, {A}arhus 1978
  ({P}roc. {S}ympos., {U}niv. {A}arhus, {A}arhus, 1978), Lecture Notes in
  Math., vol. 763, Springer, Berlin, 1979, pp.~643--663. \MR{561244}

\bibitem[KRW21]{secondderivative}
Manuel {Krannich} and Oscar Randal-Williams, \emph{{Diffeomorphisms of discs
  and the second Weiss derivative of BTop(-)}}, arXiv e-prints (2021),
  arXiv:2109.03500.

\bibitem[Mic80]{minchor}
Peter~W. Michor, \emph{Manifolds of differentiable mappings}, Shiva Mathematics
  Series, vol.~3, Shiva Publishing Ltd., Nantwich, 1980. \MR{583436}

\bibitem[New72]{skew}
Morris Newman, \emph{Integral matrices}, Pure and Applied Mathematics, Vol. 45,
  Academic Press, New York-London, 1972. \MR{0340283}

\bibitem[Sie22]{Sierra2022-st}
Ismael Sierra, \emph{Homological stability of spin mapping class groups and
  quadratic symplectic groups}, arXiv preprint arXiv:2211.06219 (2022).

\bibitem[Tsh19]{tshishiku}
Bena Tshishiku, \emph{Borel's stable range for the cohomology of arithmetic
  groups}, J. Lie Theory \textbf{29} (2019), no.~4, 1093--1102. \MR{4022145}

\bibitem[Wal62]{wall}
C.~T.~C. Wall, \emph{Classification of {$(n-1)$}-connected {$2n$}-manifolds},
  Ann. of Math. (2) \textbf{75} (1962), 163--189. \MR{145540}

\bibitem[Wei05]{weiss}
Michael Weiss, \emph{What does the classifying space of a category classify?},
  Homology Homotopy Appl. \textbf{7} (2005), no.~1, 185--195. \MR{2175298}

\end{thebibliography}

\end{document}